\documentclass[
a4paper,
11pt, 
DIV=13,
toc=bibliography, 
headsepline, 
parskip=half-,
abstract=true,
]{scrartcl}

\usepackage[english]{babel}
\usepackage[utf8]{inputenc}
\usepackage[T1]{fontenc}
\usepackage[english,cleanlook]{isodate}
\usepackage{lmodern}
\usepackage[babel=true]{microtype}
\usepackage{amsmath}
\usepackage{mathtools}
\usepackage{amssymb}
\usepackage{amsthm}
\usepackage{mathrsfs}
\usepackage{scrlayer-scrpage} 
\usepackage{enumitem}
\usepackage{tikz} 
\usepackage{tikz-3dplot} 
\usetikzlibrary{plotmarks,positioning} 

\ihead{M.\,B.\,Schulz}
\ohead{\csname @title\endcsname}
\pagestyle{scrheadings}
\setkomafont{pageheadfoot}{\footnotesize}

\providecommand{\R}{\mathbb{R}}
\providecommand{\Z}{\mathbb{Z}}
\providecommand{\N}{\mathbb{N}}
\providecommand{\B}{\mathbb{B}}
\providecommand{\dih}{\mathbb{D}}
\providecommand{\st}{\, :\ }
\providecommand{\rotation}{\textsf{R}}
\providecommand{\reflection}{\textsf{R}}
\providecommand{\translation}{\textsf{T}} 
\providecommand{\hsd}{\mathscr{H}}
\providecommand{\gsum}{\mathfrak{g}}  
\providecommand{\bsum}{\mathfrak{b}} 
\providecommand{\betti}{\beta}  
\providecommand{\symdiff}{\mathbin{\triangle}}
\providecommand{\heli}{X} 
\providecommand{\height}{\ell}  
\providecommand{\ambient}[1][\rho]{M^{#1}} 
\providecommand{\sol}{\mathcal{O}} 

\DeclarePairedDelimiter\abs{\lvert}{\rvert}
\DeclarePairedDelimiter\nm{\lVert}{\rVert}
\DeclarePairedDelimiter\sk{\langle}{\rangle}
\DeclarePairedDelimiter\interval{]}{[}
\DeclarePairedDelimiter\Interval{[}{[}
\DeclarePairedDelimiter\intervaL{]}{]}
\DeclarePairedDelimiter\IntervaL{[}{]}

\makeatletter
\tikzset{
    scale plot marks/.is choice,
    scale plot marks/true/.style={},	
    scale plot marks/false/.code={
        \def\pgfuseplotmark##1{\pgftransformresetnontranslations\csname pgf@plot@mark@##1\endcsname}
    },
every mark/.append style={scale plot marks=false},
plus/.style={mark=+,mark size=2.25pt},
vdash/.style={mark=|,mark size=2.25pt},
hdash/.style={mark=-,mark size=2.25pt},
bullet/.style={mark=*,mark size=1.125pt},
}
\makeatother
\definecolor{FarbeA}{cmyk}{0,0.5,1,0}
\tikzset{facet/.style={draw=black,fill=FarbeA!55}}
\pgfmathsetmacro{\hoehe}{pi/2}
\tikzset{axis/.style={red,ultra thick}}
\providecommand{\cylinder}[1]{
\begin{tikzpicture}[tdplot_main_coords,line cap=round,line join=round,semithick,scale=\textwidth/6.5cm]
\shade[right color=gray,left color=gray!5] 
({cos(\phiO)},{sin(\phiO)},\hoehe)
arc[start angle=\phiO,end angle=\phiO+90,radius=1]
--+(0,0,-\hoehe)
arc[start angle=\phiO+90,end angle=\phiO,radius=1]
--cycle;
\shade[left color=gray,right color=gray!5] 
({-cos(\phiO)},{-sin(\phiO)},\hoehe)
arc[start angle=\phiO+180,end angle=\phiO+90,radius=1]
--+(0,0,-\hoehe)
arc[start angle=\phiO+90,end angle=\phiO+180,radius=1]
--cycle;
\draw[shade,top color=gray!80,bottom color=gray!80,middle color=gray!25,shading angle=-cos(\thetaO)*\phiO]({cos(\phiO)},{sin(\phiO)},0)
arc[start angle=\phiO,end angle=\phiO+360,radius=1];
#1
\draw({cos(\phiO)},{sin(\phiO)},\hoehe)
arc[start angle=\phiO,end angle=\phiO+360,radius=1];
\draw
({ cos(\phiO)},{ sin(\phiO)},0)--++(0,0,\hoehe)
({-cos(\phiO)},{-sin(\phiO)},0)--++(0,0,\hoehe);
\end{tikzpicture}}

\pgfmathsetmacro{\picscale}{0.46}
\pgfmathsetmacro{\unitscale}{\picscale*\textwidth/6.5cm}
\providecommand{\FBMS}[2][\picscale]{\draw(0,0,0)node[inner sep=0]{\includegraphics[width=#1\textwidth]{figures-toroids/#2}};} 
\providecommand{\torusback}{
\path[facetback](  3.1265,  0.6515,  0.0000)..controls(  3.0901,  0.8232,  0.1581)and(  3.0348,  1.0646,  0.0146)..(  2.9430,  1.2190,  0.1671)..controls(  2.9519,  1.2227,  0.1120)and(  2.9564,  1.2246,  0.0560)..(  2.9564,  1.2246,  0.0000)..controls(  3.0254,  1.0581,  0.0000)and(  3.0841,  0.8571,  0.0000)..(  3.1265,  0.6515,  0.0000)--cycle;
\path[facetback](  2.9564,  1.2246,  0.0000)..controls(  2.9564,  1.2246,  0.0581)and(  2.9515,  1.2226,  0.1162)..(  2.9420,  1.2186,  0.1733)..controls(  2.7543,  1.5264,  0.4780)and(  2.5385,  1.9962,  0.2160)..(  2.1876,  2.1876,  0.4466)..controls(  2.2362,  2.2362,  0.3084)and(  2.2627,  2.2627,  0.1545)..(  2.2627,  2.2627,  0.0000)..controls(  2.5293,  1.9962,  0.0000)and(  2.8121,  1.5729,  0.0000)..(  2.9564,  1.2246,  0.0000)--cycle;
\path[facetback](  2.2627,  2.2627,  0.0000)..controls(  2.2627,  2.2627,  0.1566)and(  2.2355,  2.2355,  0.3125)..(  2.1856,  2.1856,  0.4522)..controls(  1.8609,  2.3585,  0.6620)and(  1.5359,  2.7078,  0.4782)..(  1.1475,  2.7702,  0.6015)..controls(  1.1967,  2.8891,  0.4293)and(  1.2246,  2.9564,  0.2152)..(  1.2246,  2.9564,  0.0000)..controls(  1.5729,  2.8121,  0.0000)and(  1.9962,  2.5293,  0.0000)..(  2.2627,  2.2627,  0.0000)--cycle;
\path[facetback](  1.2246,  2.9564,  0.0000)..controls(  1.2246,  2.9564,  0.2157)and(  1.1966,  2.8888,  0.4304)..(  1.1471,  2.7693,  0.6028)..controls(  0.7888,  2.8254,  0.7153)and(  0.3705,  2.9800,  0.6268)..(  0.0000,  2.9480,  0.6636)..controls(  0.0000,  3.1080,  0.4824)and(  0.0000,  3.2000,  0.2419)..(  0.0000,  3.2000,  0.0000)..controls(  0.3770,  3.2000,  0.0000)and(  0.8763,  3.1007,  0.0000)..(  1.2246,  2.9564,  0.0000)--cycle;
\path[facetback](  0.0000,  3.2000,  0.0000)..controls(  0.0000,  3.2000,  0.2419)and(  0.0000,  3.1080,  0.4824)..(  0.0000,  2.9480,  0.6636)..controls( -0.3445,  2.9177,  0.6974)and( -0.7983,  2.8274,  0.6974)..( -1.1281,  2.7236,  0.6636)..controls( -1.1894,  2.8714,  0.4824)and( -1.2246,  2.9564,  0.2419)..( -1.2246,  2.9564,  0.0000)..controls( -0.8763,  3.1007,  0.0000)and( -0.3770,  3.2000,  0.0000)..(  0.0000,  3.2000,  0.0000)--cycle;
\path[facetback]( -1.2246,  2.9564,  0.0000)..controls( -1.2246,  2.9564,  0.2419)and( -1.1894,  2.8714,  0.4824)..( -1.1281,  2.7236,  0.6636)..controls( -1.4827,  2.6114,  0.6268)and( -1.8100,  2.3085,  0.7153)..( -2.1196,  2.1196,  0.6028)..controls( -2.2110,  2.2110,  0.4304)and( -2.2627,  2.2627,  0.2157)..( -2.2627,  2.2627,  0.0000)..controls( -1.9962,  2.5293,  0.0000)and( -1.5729,  2.8121,  0.0000)..( -1.2246,  2.9564,  0.0000)--cycle;
\path[facetback]( -2.2627,  2.2627,  0.0000)..controls( -2.2627,  2.2627,  0.2152)and( -2.2112,  2.2112,  0.4293)..( -2.1202,  2.1202,  0.6015)..controls( -2.4552,  1.9140,  0.4782)and( -2.6218,  1.4668,  0.6620)..( -2.8556,  1.1828,  0.4522)..controls( -2.9208,  1.2098,  0.3125)and( -2.9564,  1.2246,  0.1566)..( -2.9564,  1.2246,  0.0000)..controls( -2.8121,  1.5729,  0.0000)and( -2.5293,  1.9962,  0.0000)..( -2.2627,  2.2627,  0.0000)--cycle;
\path[facetback]( -2.9564,  1.2246,  0.0000)..controls( -2.9564,  1.2246,  0.1545)and( -2.9218,  1.2102,  0.3084)..( -2.8582,  1.1839,  0.4466)..controls( -3.1092,  0.8728,  0.2160)and( -3.1288,  0.3561,  0.4780)..( -3.1844,  0.0000,  0.1733)..controls( -3.1947,  0.0000,  0.1162)and( -3.2000,  0.0000,  0.0581)..( -3.2000,  0.0000,  0.0000)..controls( -3.2000,  0.3770,  0.0000)and( -3.1007,  0.8763,  0.0000)..( -2.9564,  1.2246,  0.0000)--cycle;
\path[facetback]( -3.2000,  0.0000,  0.0000)..controls( -3.2000,  0.0000,  0.0560)and( -3.1951,  0.0000,  0.1120)..( -3.1855,  0.0000,  0.1671)..controls( -3.2113, -0.1784,  0.0142)and( -3.1695, -0.4240,  0.1586)..( -3.1374, -0.5972,  0.0000)..controls( -3.1771, -0.3901,  0.0000)and( -3.2000, -0.1810,  0.0000)..( -3.2000,  0.0000,  0.0000)--cycle;
\path[facetback](  1.2162,  0.0000,  0.1764)..controls(  1.2055,  0.0000,  0.1183)and(  1.2000,  0.0000,  0.0592)..(  1.2000,  0.0000,  0.0000)..controls(  1.2000,  0.0679,  0.0000)and(  1.1914,  0.1463,  0.0000)..(  1.1765,  0.2239,  0.0000)..controls(  1.1777,  0.2176, -0.0058)and(  1.2173,  0.0071,  0.1824)..(  1.2162,  0.0000,  0.1764)--cycle;
\path[facetback]( -1.1725, -0.2443,  0.0000)..controls( -1.1565, -0.3214,  0.0000)and( -1.1345, -0.3968,  0.0000)..( -1.1087, -0.4592,  0.0000)..controls( -1.1087, -0.4592,  0.0592)and( -1.1137, -0.4613,  0.1183)..( -1.1236, -0.4654,  0.1764)..controls( -1.1273, -0.4595,  0.1823)and( -1.1712, -0.2504, -0.0056)..( -1.1725, -0.2443,  0.0000)--cycle;
\path[facetback]( -1.1087, -0.4592,  0.0000)..controls( -1.1087, -0.4592, -0.2618)and( -1.2082, -0.5005, -0.5220)..( -1.3793, -0.5713, -0.7071)..controls( -1.3119, -0.7338, -0.7071)and( -1.1800, -0.9313, -0.7071)..( -1.0556, -1.0556, -0.7071)..controls( -0.9247, -0.9247, -0.5220)and( -0.8485, -0.8485, -0.2618)..( -0.8485, -0.8485,  0.0000)..controls( -0.9485, -0.7486,  0.0000)and( -1.0546, -0.5898,  0.0000)..( -1.1087, -0.4592,  0.0000)--cycle;
\path[facetback]( -0.8485, -0.8485,  0.0000)..controls( -0.8485, -0.8485, -0.2618)and( -0.9247, -0.9247, -0.5220)..( -1.0556, -1.0556, -0.7071)..controls( -0.9313, -1.1800, -0.7071)and( -0.7338, -1.3119, -0.7071)..( -0.5713, -1.3793, -0.7071)..controls( -0.5005, -1.2082, -0.5220)and( -0.4592, -1.1087, -0.2618)..( -0.4592, -1.1087,  0.0000)..controls( -0.5898, -1.0546,  0.0000)and( -0.7486, -0.9485,  0.0000)..( -0.8485, -0.8485,  0.0000)--cycle;
\path[facetback]( -0.4592, -1.1087,  0.0000)..controls( -0.4592, -1.1087, -0.2618)and( -0.5005, -1.2082, -0.5220)..( -0.5713, -1.3793, -0.7071)..controls( -0.4088, -1.4466, -0.7071)and( -0.1759, -1.4929, -0.7071)..( -0.0000, -1.4929, -0.7071)..controls( -0.0000, -1.3078, -0.5220)and( -0.0000, -1.2000, -0.2618)..( -0.0000, -1.2000,  0.0000)..controls( -0.1414, -1.2000,  0.0000)and( -0.3286, -1.1628,  0.0000)..( -0.4592, -1.1087,  0.0000)--cycle;
\path[facetback]( -0.0000, -1.2000,  0.0000)..controls( -0.0000, -1.2000, -0.2618)and( -0.0000, -1.3078, -0.5220)..( -0.0000, -1.4929, -0.7071)..controls(  0.1759, -1.4929, -0.7071)and(  0.4088, -1.4466, -0.7071)..(  0.5713, -1.3793, -0.7071)..controls(  0.5005, -1.2082, -0.5220)and(  0.4592, -1.1087, -0.2618)..(  0.4592, -1.1087,  0.0000)..controls(  0.3286, -1.1628,  0.0000)and(  0.1414, -1.2000,  0.0000)..( -0.0000, -1.2000,  0.0000)--cycle;
\path[facetback](  0.4592, -1.1087,  0.0000)..controls(  0.4592, -1.1087, -0.2618)and(  0.5005, -1.2082, -0.5220)..(  0.5713, -1.3793, -0.7071)..controls(  0.7338, -1.3119, -0.7071)and(  0.9313, -1.1800, -0.7071)..(  1.0556, -1.0556, -0.7071)..controls(  0.9247, -0.9247, -0.5220)and(  0.8485, -0.8485, -0.2618)..(  0.8485, -0.8485,  0.0000)..controls(  0.7486, -0.9485,  0.0000)and(  0.5898, -1.0546,  0.0000)..(  0.4592, -1.1087,  0.0000)--cycle;
\path[facetback](  0.8485, -0.8485,  0.0000)..controls(  0.8485, -0.8485, -0.2618)and(  0.9247, -0.9247, -0.5220)..(  1.0556, -1.0556, -0.7071)..controls(  1.1800, -0.9313, -0.7071)and(  1.3119, -0.7338, -0.7071)..(  1.3793, -0.5713, -0.7071)..controls(  1.2082, -0.5005, -0.5220)and(  1.1087, -0.4592, -0.2618)..(  1.1087, -0.4592,  0.0000)..controls(  1.0546, -0.5898,  0.0000)and(  0.9485, -0.7486,  0.0000)..(  0.8485, -0.8485,  0.0000)--cycle;
\path[facetback](  1.1087, -0.4592,  0.0000)..controls(  1.1087, -0.4592, -0.2618)and(  1.2082, -0.5005, -0.5220)..(  1.3793, -0.5713, -0.7071)..controls(  1.4466, -0.4088, -0.7071)and(  1.4929, -0.1759, -0.7071)..(  1.4929, -0.0000, -0.7071)..controls(  1.3078, -0.0000, -0.5220)and(  1.2000, -0.0000, -0.2618)..(  1.2000, -0.0000,  0.0000)..controls(  1.2000, -0.1414,  0.0000)and(  1.1628, -0.3286,  0.0000)..(  1.1087, -0.4592,  0.0000)--cycle;
\path[facetback](  1.4929,  0.0000, -0.7071)..controls(  1.6780,  0.0000, -0.8922)and(  1.9382,  0.0000, -1.0000)..(  2.2000,  0.0000, -1.0000)..controls(  2.2000,  0.2592, -1.0000)and(  2.1317,  0.6025, -1.0000)..(  2.0325,  0.8419, -1.0000)..controls(  1.7907,  0.7417, -1.0000)and(  1.5503,  0.6421, -0.8922)..(  1.3793,  0.5713, -0.7071)..controls(  1.4466,  0.4088, -0.7071)and(  1.4929,  0.1759, -0.7071)..(  1.4929,  0.0000, -0.7071)--cycle;
\path[facetback](  1.3793,  0.5713, -0.7071)..controls(  1.5503,  0.6421, -0.8922)and(  1.7907,  0.7417, -1.0000)..(  2.0325,  0.8419, -1.0000)..controls(  1.9334,  1.0814, -1.0000)and(  1.7389,  1.3724, -1.0000)..(  1.5556,  1.5556, -1.0000)..controls(  1.3705,  1.3705, -1.0000)and(  1.1865,  1.1865, -0.8922)..(  1.0556,  1.0556, -0.7071)..controls(  1.1800,  0.9313, -0.7071)and(  1.3119,  0.7338, -0.7071)..(  1.3793,  0.5713, -0.7071)--cycle;
\path[facetback](  1.0556,  1.0556, -0.7071)..controls(  1.1865,  1.1865, -0.8922)and(  1.3705,  1.3705, -1.0000)..(  1.5556,  1.5556, -1.0000)..controls(  1.3724,  1.7389, -1.0000)and(  1.0814,  1.9334, -1.0000)..(  0.8419,  2.0325, -1.0000)..controls(  0.7417,  1.7907, -1.0000)and(  0.6421,  1.5503, -0.8922)..(  0.5713,  1.3793, -0.7071)..controls(  0.7338,  1.3119, -0.7071)and(  0.9313,  1.1800, -0.7071)..(  1.0556,  1.0556, -0.7071)--cycle;
\path[facetback](  0.5713,  1.3793, -0.7071)..controls(  0.6421,  1.5503, -0.8922)and(  0.7417,  1.7907, -1.0000)..(  0.8419,  2.0325, -1.0000)..controls(  0.6025,  2.1317, -1.0000)and(  0.2592,  2.2000, -1.0000)..(  0.0000,  2.2000, -1.0000)..controls(  0.0000,  1.9382, -1.0000)and(  0.0000,  1.6780, -0.8922)..(  0.0000,  1.4929, -0.7071)..controls(  0.1759,  1.4929, -0.7071)and(  0.4088,  1.4466, -0.7071)..(  0.5713,  1.3793, -0.7071)--cycle;
\path[facetback](  0.0000,  1.4929, -0.7071)..controls(  0.0000,  1.6780, -0.8922)and(  0.0000,  1.9382, -1.0000)..(  0.0000,  2.2000, -1.0000)..controls( -0.2592,  2.2000, -1.0000)and( -0.6025,  2.1317, -1.0000)..( -0.8419,  2.0325, -1.0000)..controls( -0.7417,  1.7907, -1.0000)and( -0.6421,  1.5503, -0.8922)..( -0.5713,  1.3793, -0.7071)..controls( -0.4088,  1.4466, -0.7071)and( -0.1759,  1.4929, -0.7071)..(  0.0000,  1.4929, -0.7071)--cycle;
\path[facetback]( -0.5713,  1.3793, -0.7071)..controls( -0.6421,  1.5503, -0.8922)and( -0.7417,  1.7907, -1.0000)..( -0.8419,  2.0325, -1.0000)..controls( -1.0814,  1.9334, -1.0000)and( -1.3724,  1.7389, -1.0000)..( -1.5556,  1.5556, -1.0000)..controls( -1.3705,  1.3705, -1.0000)and( -1.1865,  1.1865, -0.8922)..( -1.0556,  1.0556, -0.7071)..controls( -0.9313,  1.1800, -0.7071)and( -0.7338,  1.3119, -0.7071)..( -0.5713,  1.3793, -0.7071)--cycle;
\path[facetback]( -1.0556,  1.0556, -0.7071)..controls( -1.1865,  1.1865, -0.8922)and( -1.3705,  1.3705, -1.0000)..( -1.5556,  1.5556, -1.0000)..controls( -1.7389,  1.3724, -1.0000)and( -1.9334,  1.0814, -1.0000)..( -2.0325,  0.8419, -1.0000)..controls( -1.7907,  0.7417, -1.0000)and( -1.5503,  0.6421, -0.8922)..( -1.3793,  0.5713, -0.7071)..controls( -1.3119,  0.7338, -0.7071)and( -1.1800,  0.9313, -0.7071)..( -1.0556,  1.0556, -0.7071)--cycle;
\path[facetback]( -1.3793,  0.5713, -0.7071)..controls( -1.5503,  0.6421, -0.8922)and( -1.7907,  0.7417, -1.0000)..( -2.0325,  0.8419, -1.0000)..controls( -2.1317,  0.6025, -1.0000)and( -2.2000,  0.2592, -1.0000)..( -2.2000,  0.0000, -1.0000)..controls( -1.9382,  0.0000, -1.0000)and( -1.6780,  0.0000, -0.8922)..( -1.4929,  0.0000, -0.7071)..controls( -1.4929,  0.1759, -0.7071)and( -1.4466,  0.4088, -0.7071)..( -1.3793,  0.5713, -0.7071)--cycle;
\path[facetback]( -1.4929,  0.0000, -0.7071)..controls( -1.6780,  0.0000, -0.8922)and( -1.9382,  0.0000, -1.0000)..( -2.2000,  0.0000, -1.0000)..controls( -2.2000, -0.2592, -1.0000)and( -2.1317, -0.6025, -1.0000)..( -2.0325, -0.8419, -1.0000)..controls( -1.7907, -0.7417, -1.0000)and( -1.5503, -0.6421, -0.8922)..( -1.3793, -0.5713, -0.7071)..controls( -1.4466, -0.4088, -0.7071)and( -1.4929, -0.1759, -0.7071)..( -1.4929,  0.0000, -0.7071)--cycle;
\path[facetback]( -1.3793, -0.5713, -0.7071)..controls( -1.5503, -0.6421, -0.8922)and( -1.7907, -0.7417, -1.0000)..( -2.0325, -0.8419, -1.0000)..controls( -1.9334, -1.0814, -1.0000)and( -1.7389, -1.3724, -1.0000)..( -1.5556, -1.5556, -1.0000)..controls( -1.3705, -1.3705, -1.0000)and( -1.1865, -1.1865, -0.8922)..( -1.0556, -1.0556, -0.7071)..controls( -1.1800, -0.9313, -0.7071)and( -1.3119, -0.7338, -0.7071)..( -1.3793, -0.5713, -0.7071)--cycle;
\path[facetback]( -1.0556, -1.0556, -0.7071)..controls( -1.1865, -1.1865, -0.8922)and( -1.3705, -1.3705, -1.0000)..( -1.5556, -1.5556, -1.0000)..controls( -1.3724, -1.7389, -1.0000)and( -1.0814, -1.9334, -1.0000)..( -0.8419, -2.0325, -1.0000)..controls( -0.7417, -1.7907, -1.0000)and( -0.6421, -1.5503, -0.8922)..( -0.5713, -1.3793, -0.7071)..controls( -0.7338, -1.3119, -0.7071)and( -0.9313, -1.1800, -0.7071)..( -1.0556, -1.0556, -0.7071)--cycle;
\path[facetback]( -0.5713, -1.3793, -0.7071)..controls( -0.6421, -1.5503, -0.8922)and( -0.7417, -1.7907, -1.0000)..( -0.8419, -2.0325, -1.0000)..controls( -0.6025, -2.1317, -1.0000)and( -0.2592, -2.2000, -1.0000)..( -0.0000, -2.2000, -1.0000)..controls( -0.0000, -1.9382, -1.0000)and( -0.0000, -1.6780, -0.8922)..( -0.0000, -1.4929, -0.7071)..controls( -0.1759, -1.4929, -0.7071)and( -0.4088, -1.4466, -0.7071)..( -0.5713, -1.3793, -0.7071)--cycle;
\path[facetback]( -0.0000, -1.4929, -0.7071)..controls( -0.0000, -1.6780, -0.8922)and( -0.0000, -1.9382, -1.0000)..( -0.0000, -2.2000, -1.0000)..controls(  0.2592, -2.2000, -1.0000)and(  0.6025, -2.1317, -1.0000)..(  0.8419, -2.0325, -1.0000)..controls(  0.7417, -1.7907, -1.0000)and(  0.6421, -1.5503, -0.8922)..(  0.5713, -1.3793, -0.7071)..controls(  0.4088, -1.4466, -0.7071)and(  0.1759, -1.4929, -0.7071)..( -0.0000, -1.4929, -0.7071)--cycle;
\path[facetback](  0.5713, -1.3793, -0.7071)..controls(  0.6421, -1.5503, -0.8922)and(  0.7417, -1.7907, -1.0000)..(  0.8419, -2.0325, -1.0000)..controls(  1.0814, -1.9334, -1.0000)and(  1.3724, -1.7389, -1.0000)..(  1.5556, -1.5556, -1.0000)..controls(  1.3705, -1.3705, -1.0000)and(  1.1865, -1.1865, -0.8922)..(  1.0556, -1.0556, -0.7071)..controls(  0.9313, -1.1800, -0.7071)and(  0.7338, -1.3119, -0.7071)..(  0.5713, -1.3793, -0.7071)--cycle;
\path[facetback](  1.0556, -1.0556, -0.7071)..controls(  1.1865, -1.1865, -0.8922)and(  1.3705, -1.3705, -1.0000)..(  1.5556, -1.5556, -1.0000)..controls(  1.7389, -1.3724, -1.0000)and(  1.9334, -1.0814, -1.0000)..(  2.0325, -0.8419, -1.0000)..controls(  1.7907, -0.7417, -1.0000)and(  1.5503, -0.6421, -0.8922)..(  1.3793, -0.5713, -0.7071)..controls(  1.3119, -0.7338, -0.7071)and(  1.1800, -0.9313, -0.7071)..(  1.0556, -1.0556, -0.7071)--cycle;
\path[facetback](  1.3793, -0.5713, -0.7071)..controls(  1.5503, -0.6421, -0.8922)and(  1.7907, -0.7417, -1.0000)..(  2.0325, -0.8419, -1.0000)..controls(  2.1317, -0.6025, -1.0000)and(  2.2000, -0.2592, -1.0000)..(  2.2000, -0.0000, -1.0000)..controls(  1.9382, -0.0000, -1.0000)and(  1.6780, -0.0000, -0.8922)..(  1.4929, -0.0000, -0.7071)..controls(  1.4929, -0.1759, -0.7071)and(  1.4466, -0.4088, -0.7071)..(  1.3793, -0.5713, -0.7071)--cycle;
\path[facetback](  2.2000,  0.0000, -1.0000)..controls(  2.4618,  0.0000, -1.0000)and(  2.7220,  0.0000, -0.8922)..(  2.9071,  0.0000, -0.7071)..controls(  2.9071,  0.3425, -0.7071)and(  2.8169,  0.7961, -0.7071)..(  2.6858,  1.1125, -0.7071)..controls(  2.5148,  1.0417, -0.8922)and(  2.2744,  0.9421, -1.0000)..(  2.0325,  0.8419, -1.0000)..controls(  2.1317,  0.6025, -1.0000)and(  2.2000,  0.2592, -1.0000)..(  2.2000,  0.0000, -1.0000)--cycle;
\path[facetback](  2.0325,  0.8419, -1.0000)..controls(  2.2744,  0.9421, -1.0000)and(  2.5148,  1.0417, -0.8922)..(  2.6858,  1.1125, -0.7071)..controls(  2.5548,  1.4289, -0.7071)and(  2.2978,  1.8135, -0.7071)..(  2.0556,  2.0556, -0.7071)..controls(  1.9247,  1.9247, -0.8922)and(  1.7408,  1.7408, -1.0000)..(  1.5556,  1.5556, -1.0000)..controls(  1.7389,  1.3724, -1.0000)and(  1.9334,  1.0814, -1.0000)..(  2.0325,  0.8419, -1.0000)--cycle;
\path[facetback](  1.5556,  1.5556, -1.0000)..controls(  1.7408,  1.7408, -1.0000)and(  1.9247,  1.9247, -0.8922)..(  2.0556,  2.0556, -0.7071)..controls(  1.8135,  2.2978, -0.7071)and(  1.4289,  2.5548, -0.7071)..(  1.1125,  2.6858, -0.7071)..controls(  1.0417,  2.5148, -0.8922)and(  0.9421,  2.2744, -1.0000)..(  0.8419,  2.0325, -1.0000)..controls(  1.0814,  1.9334, -1.0000)and(  1.3724,  1.7389, -1.0000)..(  1.5556,  1.5556, -1.0000)--cycle;
\path[facetback](  0.8419,  2.0325, -1.0000)..controls(  0.9421,  2.2744, -1.0000)and(  1.0417,  2.5148, -0.8922)..(  1.1125,  2.6858, -0.7071)..controls(  0.7961,  2.8169, -0.7071)and(  0.3425,  2.9071, -0.7071)..(  0.0000,  2.9071, -0.7071)..controls(  0.0000,  2.7220, -0.8922)and(  0.0000,  2.4618, -1.0000)..(  0.0000,  2.2000, -1.0000)..controls(  0.2592,  2.2000, -1.0000)and(  0.6025,  2.1317, -1.0000)..(  0.8419,  2.0325, -1.0000)--cycle;
\path[facetback](  0.0000,  2.2000, -1.0000)..controls(  0.0000,  2.4618, -1.0000)and(  0.0000,  2.7220, -0.8922)..(  0.0000,  2.9071, -0.7071)..controls( -0.3425,  2.9071, -0.7071)and( -0.7961,  2.8169, -0.7071)..( -1.1125,  2.6858, -0.7071)..controls( -1.0417,  2.5148, -0.8922)and( -0.9421,  2.2744, -1.0000)..( -0.8419,  2.0325, -1.0000)..controls( -0.6025,  2.1317, -1.0000)and( -0.2592,  2.2000, -1.0000)..(  0.0000,  2.2000, -1.0000)--cycle;
\path[facetback]( -0.8419,  2.0325, -1.0000)..controls( -0.9421,  2.2744, -1.0000)and( -1.0417,  2.5148, -0.8922)..( -1.1125,  2.6858, -0.7071)..controls( -1.4289,  2.5548, -0.7071)and( -1.8135,  2.2978, -0.7071)..( -2.0556,  2.0556, -0.7071)..controls( -1.9247,  1.9247, -0.8922)and( -1.7408,  1.7408, -1.0000)..( -1.5556,  1.5556, -1.0000)..controls( -1.3724,  1.7389, -1.0000)and( -1.0814,  1.9334, -1.0000)..( -0.8419,  2.0325, -1.0000)--cycle;
\path[facetback]( -1.5556,  1.5556, -1.0000)..controls( -1.7408,  1.7408, -1.0000)and( -1.9247,  1.9247, -0.8922)..( -2.0556,  2.0556, -0.7071)..controls( -2.2978,  1.8135, -0.7071)and( -2.5548,  1.4289, -0.7071)..( -2.6858,  1.1125, -0.7071)..controls( -2.5148,  1.0417, -0.8922)and( -2.2744,  0.9421, -1.0000)..( -2.0325,  0.8419, -1.0000)..controls( -1.9334,  1.0814, -1.0000)and( -1.7389,  1.3724, -1.0000)..( -1.5556,  1.5556, -1.0000)--cycle;
\path[facetback]( -2.0325,  0.8419, -1.0000)..controls( -2.2744,  0.9421, -1.0000)and( -2.5148,  1.0417, -0.8922)..( -2.6858,  1.1125, -0.7071)..controls( -2.8169,  0.7961, -0.7071)and( -2.9071,  0.3425, -0.7071)..( -2.9071,  0.0000, -0.7071)..controls( -2.7220,  0.0000, -0.8922)and( -2.4618,  0.0000, -1.0000)..( -2.2000,  0.0000, -1.0000)..controls( -2.2000,  0.2592, -1.0000)and( -2.1317,  0.6025, -1.0000)..( -2.0325,  0.8419, -1.0000)--cycle;
\path[facetback]( -2.2000,  0.0000, -1.0000)..controls( -2.4618,  0.0000, -1.0000)and( -2.7220,  0.0000, -0.8922)..( -2.9071,  0.0000, -0.7071)..controls( -2.9071, -0.3425, -0.7071)and( -2.8169, -0.7961, -0.7071)..( -2.6858, -1.1125, -0.7071)..controls( -2.5148, -1.0417, -0.8922)and( -2.2744, -0.9421, -1.0000)..( -2.0325, -0.8419, -1.0000)..controls( -2.1317, -0.6025, -1.0000)and( -2.2000, -0.2592, -1.0000)..( -2.2000,  0.0000, -1.0000)--cycle;
\path[facetback]( -2.0325, -0.8419, -1.0000)..controls( -2.2744, -0.9421, -1.0000)and( -2.5148, -1.0417, -0.8922)..( -2.6858, -1.1125, -0.7071)..controls( -2.5548, -1.4289, -0.7071)and( -2.2978, -1.8135, -0.7071)..( -2.0556, -2.0556, -0.7071)..controls( -1.9247, -1.9247, -0.8922)and( -1.7408, -1.7408, -1.0000)..( -1.5556, -1.5556, -1.0000)..controls( -1.7389, -1.3724, -1.0000)and( -1.9334, -1.0814, -1.0000)..( -2.0325, -0.8419, -1.0000)--cycle;
\path[facetback]( -1.5556, -1.5556, -1.0000)..controls( -1.7408, -1.7408, -1.0000)and( -1.9247, -1.9247, -0.8922)..( -2.0556, -2.0556, -0.7071)..controls( -1.8135, -2.2978, -0.7071)and( -1.4289, -2.5548, -0.7071)..( -1.1125, -2.6858, -0.7071)..controls( -1.0417, -2.5148, -0.8922)and( -0.9421, -2.2744, -1.0000)..( -0.8419, -2.0325, -1.0000)..controls( -1.0814, -1.9334, -1.0000)and( -1.3724, -1.7389, -1.0000)..( -1.5556, -1.5556, -1.0000)--cycle;
\path[facetback]( -0.8419, -2.0325, -1.0000)..controls( -0.9421, -2.2744, -1.0000)and( -1.0417, -2.5148, -0.8922)..( -1.1125, -2.6858, -0.7071)..controls( -0.7961, -2.8169, -0.7071)and( -0.3425, -2.9071, -0.7071)..( -0.0000, -2.9071, -0.7071)..controls( -0.0000, -2.7220, -0.8922)and( -0.0000, -2.4618, -1.0000)..( -0.0000, -2.2000, -1.0000)..controls( -0.2592, -2.2000, -1.0000)and( -0.6025, -2.1317, -1.0000)..( -0.8419, -2.0325, -1.0000)--cycle;
\path[facetback]( -0.0000, -2.2000, -1.0000)..controls( -0.0000, -2.4618, -1.0000)and( -0.0000, -2.7220, -0.8922)..( -0.0000, -2.9071, -0.7071)..controls(  0.3425, -2.9071, -0.7071)and(  0.7961, -2.8169, -0.7071)..(  1.1125, -2.6858, -0.7071)..controls(  1.0417, -2.5148, -0.8922)and(  0.9421, -2.2744, -1.0000)..(  0.8419, -2.0325, -1.0000)..controls(  0.6025, -2.1317, -1.0000)and(  0.2592, -2.2000, -1.0000)..( -0.0000, -2.2000, -1.0000)--cycle;
\path[facetback](  0.8419, -2.0325, -1.0000)..controls(  0.9421, -2.2744, -1.0000)and(  1.0417, -2.5148, -0.8922)..(  1.1125, -2.6858, -0.7071)..controls(  1.4289, -2.5548, -0.7071)and(  1.8135, -2.2978, -0.7071)..(  2.0556, -2.0556, -0.7071)..controls(  1.9247, -1.9247, -0.8922)and(  1.7408, -1.7408, -1.0000)..(  1.5556, -1.5556, -1.0000)..controls(  1.3724, -1.7389, -1.0000)and(  1.0814, -1.9334, -1.0000)..(  0.8419, -2.0325, -1.0000)--cycle;
\path[facetback](  1.5556, -1.5556, -1.0000)..controls(  1.7408, -1.7408, -1.0000)and(  1.9247, -1.9247, -0.8922)..(  2.0556, -2.0556, -0.7071)..controls(  2.2978, -1.8135, -0.7071)and(  2.5548, -1.4289, -0.7071)..(  2.6858, -1.1125, -0.7071)..controls(  2.5148, -1.0417, -0.8922)and(  2.2744, -0.9421, -1.0000)..(  2.0325, -0.8419, -1.0000)..controls(  1.9334, -1.0814, -1.0000)and(  1.7389, -1.3724, -1.0000)..(  1.5556, -1.5556, -1.0000)--cycle;
\path[facetback](  2.0325, -0.8419, -1.0000)..controls(  2.2744, -0.9421, -1.0000)and(  2.5148, -1.0417, -0.8922)..(  2.6858, -1.1125, -0.7071)..controls(  2.8169, -0.7961, -0.7071)and(  2.9071, -0.3425, -0.7071)..(  2.9071, -0.0000, -0.7071)..controls(  2.7220, -0.0000, -0.8922)and(  2.4618, -0.0000, -1.0000)..(  2.2000, -0.0000, -1.0000)..controls(  2.2000, -0.2592, -1.0000)and(  2.1317, -0.6025, -1.0000)..(  2.0325, -0.8419, -1.0000)--cycle;
\path[facetback](  2.6858,  1.1125, -0.7071)..controls(  2.8568,  1.1833, -0.5220)and(  2.9564,  1.2246, -0.2618)..(  2.9564,  1.2246, -0.0000)..controls(  2.8121,  1.5729, -0.0000)and(  2.5293,  1.9962, -0.0000)..(  2.2627,  2.2627, -0.0000)..controls(  2.2627,  2.2627, -0.2618)and(  2.1865,  2.1865, -0.5220)..(  2.0556,  2.0556, -0.7071)..controls(  2.2978,  1.8135, -0.7071)and(  2.5548,  1.4289, -0.7071)..(  2.6858,  1.1125, -0.7071)--cycle;
\path[facetback](  2.0556,  2.0556, -0.7071)..controls(  2.1865,  2.1865, -0.5220)and(  2.2627,  2.2627, -0.2618)..(  2.2627,  2.2627, -0.0000)..controls(  1.9962,  2.5293, -0.0000)and(  1.5729,  2.8121, -0.0000)..(  1.2246,  2.9564, -0.0000)..controls(  1.2246,  2.9564, -0.2618)and(  1.1833,  2.8568, -0.5220)..(  1.1125,  2.6858, -0.7071)..controls(  1.4289,  2.5548, -0.7071)and(  1.8135,  2.2978, -0.7071)..(  2.0556,  2.0556, -0.7071)--cycle;
\path[facetback](  1.1125,  2.6858, -0.7071)..controls(  1.1833,  2.8568, -0.5220)and(  1.2246,  2.9564, -0.2618)..(  1.2246,  2.9564, -0.0000)..controls(  0.8763,  3.1007, -0.0000)and(  0.3770,  3.2000, -0.0000)..(  0.0000,  3.2000, -0.0000)..controls(  0.0000,  3.2000, -0.2618)and(  0.0000,  3.0922, -0.5220)..(  0.0000,  2.9071, -0.7071)..controls(  0.3425,  2.9071, -0.7071)and(  0.7961,  2.8169, -0.7071)..(  1.1125,  2.6858, -0.7071)--cycle;
\path[facetback](  0.0000,  2.9071, -0.7071)..controls(  0.0000,  3.0922, -0.5220)and(  0.0000,  3.2000, -0.2618)..(  0.0000,  3.2000, -0.0000)..controls( -0.3770,  3.2000, -0.0000)and( -0.8763,  3.1007, -0.0000)..( -1.2246,  2.9564, -0.0000)..controls( -1.2246,  2.9564, -0.2618)and( -1.1833,  2.8568, -0.5220)..( -1.1125,  2.6858, -0.7071)..controls( -0.7961,  2.8169, -0.7071)and( -0.3425,  2.9071, -0.7071)..(  0.0000,  2.9071, -0.7071)--cycle;
\path[facetback]( -1.1125,  2.6858, -0.7071)..controls( -1.1833,  2.8568, -0.5220)and( -1.2246,  2.9564, -0.2618)..( -1.2246,  2.9564, -0.0000)..controls( -1.5729,  2.8121, -0.0000)and( -1.9962,  2.5293, -0.0000)..( -2.2627,  2.2627, -0.0000)..controls( -2.2627,  2.2627, -0.2618)and( -2.1865,  2.1865, -0.5220)..( -2.0556,  2.0556, -0.7071)..controls( -1.8135,  2.2978, -0.7071)and( -1.4289,  2.5548, -0.7071)..( -1.1125,  2.6858, -0.7071)--cycle;
\path[facetback]( -2.0556,  2.0556, -0.7071)..controls( -2.1865,  2.1865, -0.5220)and( -2.2627,  2.2627, -0.2618)..( -2.2627,  2.2627, -0.0000)..controls( -2.5293,  1.9962, -0.0000)and( -2.8121,  1.5729, -0.0000)..( -2.9564,  1.2246, -0.0000)..controls( -2.9564,  1.2246, -0.2618)and( -2.8568,  1.1833, -0.5220)..( -2.6858,  1.1125, -0.7071)..controls( -2.5548,  1.4289, -0.7071)and( -2.2978,  1.8135, -0.7071)..( -2.0556,  2.0556, -0.7071)--cycle;
\path[facetback]( -2.6858,  1.1125, -0.7071)..controls( -2.8568,  1.1833, -0.5220)and( -2.9564,  1.2246, -0.2618)..( -2.9564,  1.2246, -0.0000)..controls( -3.1007,  0.8763, -0.0000)and( -3.2000,  0.3770, -0.0000)..( -3.2000,  0.0000, -0.0000)..controls( -3.2000,  0.0000, -0.2618)and( -3.0922,  0.0000, -0.5220)..( -2.9071,  0.0000, -0.7071)..controls( -2.9071,  0.3425, -0.7071)and( -2.8169,  0.7961, -0.7071)..( -2.6858,  1.1125, -0.7071)--cycle;
\path[facetback]( -2.6858, -1.1125, -0.7071)..controls( -2.8192, -1.1678, -0.5627)and( -2.9091, -1.2050, -0.3726)..( -2.9423, -1.2187, -0.1718)..controls( -2.7546, -1.5265, -0.4765)and( -2.5390, -1.9967, -0.2145)..( -2.1881, -2.1881, -0.4452)..controls( -2.1542, -2.1542, -0.5418)and( -2.1096, -2.1096, -0.6308)..( -2.0556, -2.0556, -0.7071)..controls( -2.2978, -1.8135, -0.7071)and( -2.5548, -1.4289, -0.7071)..( -2.6858, -1.1125, -0.7071)--cycle;
\path[facetback]( -2.0556, -2.0556, -0.7071)..controls( -2.1085, -2.1085, -0.6323)and( -2.1525, -2.1525, -0.5453)..( -2.1861, -2.1861, -0.4508)..controls( -1.8614, -2.3590, -0.6606)and( -1.5363, -2.7087, -0.4770)..( -1.1478, -2.7711, -0.6003)..controls( -1.1370, -2.7451, -0.6380)and( -1.1253, -2.7166, -0.6738)..( -1.1125, -2.6858, -0.7071)..controls( -1.4289, -2.5548, -0.7071)and( -1.8135, -2.2978, -0.7071)..( -2.0556, -2.0556, -0.7071)--cycle;
\path[facetback]( -1.1125, -2.6858, -0.7071)..controls( -1.1251, -2.7163, -0.6742)and( -1.1368, -2.7444, -0.6388)..( -1.1475, -2.7702, -0.6015)..controls( -0.7891, -2.8263, -0.7140)and( -0.3705, -2.9810, -0.6256)..( -0.0000, -2.9490, -0.6625)..controls( -0.0000, -2.9355, -0.6778)and( -0.0000, -2.9215, -0.6927)..( -0.0000, -2.9071, -0.7071)..controls( -0.3425, -2.9071, -0.7071)and( -0.7961, -2.8169, -0.7071)..( -1.1125, -2.6858, -0.7071)--cycle;
\path[facetback]( -0.0000, -2.9071, -0.7071)..controls( -0.0000, -2.9215, -0.6927)and( -0.0000, -2.9355, -0.6778)..( -0.0000, -2.9490, -0.6625)..controls(  0.3445, -2.9187, -0.6962)and(  0.7986, -2.8284, -0.6962)..(  1.1285, -2.7245, -0.6625)..controls(  1.1234, -2.7121, -0.6778)and(  1.1180, -2.6992, -0.6927)..(  1.1125, -2.6858, -0.7071)..controls(  0.7961, -2.8169, -0.7071)and(  0.3425, -2.9071, -0.7071)..( -0.0000, -2.9071, -0.7071)--cycle;
\path[facetback](  1.1125, -2.6858, -0.7071)..controls(  1.1180, -2.6992, -0.6927)and(  1.1234, -2.7121, -0.6778)..(  1.1285, -2.7245, -0.6625)..controls(  1.4831, -2.6123, -0.6256)and(  1.8106, -2.3091, -0.7140)..(  2.1202, -2.1202, -0.6015)..controls(  2.1005, -2.1005, -0.6388)and(  2.0789, -2.0789, -0.6742)..(  2.0556, -2.0556, -0.7071)..controls(  1.8135, -2.2978, -0.7071)and(  1.4289, -2.5548, -0.7071)..(  1.1125, -2.6858, -0.7071)--cycle;
\path[facetback](  2.0556, -2.0556, -0.7071)..controls(  2.0792, -2.0792, -0.6738)and(  2.1010, -2.1010, -0.6380)..(  2.1209, -2.1209, -0.6003)..controls(  2.4559, -1.9146, -0.4770)and(  2.6224, -1.4671, -0.6606)..(  2.8563, -1.1831, -0.4508)..controls(  2.8123, -1.1649, -0.5453)and(  2.7549, -1.1411, -0.6323)..(  2.6858, -1.1125, -0.7071)..controls(  2.5548, -1.4289, -0.7071)and(  2.2978, -1.8135, -0.7071)..(  2.0556, -2.0556, -0.7071)--cycle;
\path[facetback](  2.6858, -1.1125, -0.7071)..controls(  2.7563, -1.1417, -0.6308)and(  2.8146, -1.1659, -0.5418)..(  2.8589, -1.1842, -0.4452)..controls(  3.1098, -0.8731, -0.2145)and(  3.1290, -0.3561, -0.4765)..(  3.1847, -0.0000, -0.1718)..controls(  3.1488, -0.0000, -0.3726)and(  3.0515, -0.0000, -0.5627)..(  2.9071, -0.0000, -0.7071)..controls(  2.9071, -0.3425, -0.7071)and(  2.8169, -0.7961, -0.7071)..(  2.6858, -1.1125, -0.7071)--cycle;
\path[facetback]( -1.1213, -0.4645,  0.1625)..controls( -1.1130, -0.4610,  0.1088)and( -1.1087, -0.4592,  0.0545)..( -1.1087, -0.4592,  0.0000)..controls( -1.0546, -0.5898,  0.0000)and( -0.9485, -0.7486,  0.0000)..( -0.8485, -0.8485,  0.0000)..controls( -0.8485, -0.8485,  0.1571)and( -0.8760, -0.8760,  0.3136)..( -0.9262, -0.9262,  0.4536)..controls( -0.9656, -0.9055,  0.4788)and( -1.1061, -0.4908,  0.1366)..( -1.1213, -0.4645,  0.1625)--cycle;
\path[facetback]( -0.9217, -0.9217,  0.4410)..controls( -0.8743, -0.8743,  0.3042)and( -0.8485, -0.8485,  0.1524)..( -0.8485, -0.8485,  0.0000)..controls( -0.7486, -0.9485,  0.0000)and( -0.5898, -1.0546,  0.0000)..( -0.4592, -1.1087,  0.0000)..controls( -0.4592, -1.1087,  0.2157)and( -0.4872, -1.1763,  0.4304)..( -0.5367, -1.2957,  0.6028)..controls( -0.6327, -1.2809,  0.6327)and( -0.8539, -0.9592,  0.3960)..( -0.9217, -0.9217,  0.4410)--cycle;
\path[facetback]( -0.5356, -1.2931,  0.5990)..controls( -0.4868, -1.1753,  0.4272)and( -0.4592, -1.1087,  0.2142)..( -0.4592, -1.1087,  0.0000)..controls( -0.3286, -1.1628,  0.0000)and( -0.1414, -1.2000,  0.0000)..( -0.0000, -1.2000,  0.0000)..controls( -0.0000, -1.2000,  0.2414)and( -0.0000, -1.2916,  0.4814)..( -0.0000, -1.4510,  0.6625)..controls( -0.1424, -1.4637,  0.6763)and( -0.4063, -1.3142,  0.5577)..( -0.5356, -1.2931,  0.5990)--cycle;
\path[facetback]( -0.0000, -1.4510,  0.6625)..controls( -0.0000, -1.2916,  0.4814)and( -0.0000, -1.2000,  0.2414)..( -0.0000, -1.2000,  0.0000)..controls(  0.1414, -1.2000,  0.0000)and(  0.3286, -1.1628,  0.0000)..(  0.4592, -1.1087,  0.0000)..controls(  0.4592, -1.1087,  0.2414)and(  0.4943, -1.1933,  0.4814)..(  0.5553, -1.3406,  0.6625)..controls(  0.3932, -1.3920,  0.6455)and(  0.1694, -1.4365,  0.6455)..( -0.0000, -1.4510,  0.6625)--cycle;
\path[facetback](  0.5553, -1.3406,  0.6625)..controls(  0.4943, -1.1933,  0.4814)and(  0.4592, -1.1087,  0.2414)..(  0.4592, -1.1087,  0.0000)..controls(  0.5898, -1.0546,  0.0000)and(  0.7486, -0.9485,  0.0000)..(  0.8485, -0.8485,  0.0000)..controls(  0.8485, -0.8485,  0.2142)and(  0.8995, -0.8995,  0.4272)..(  0.9897, -0.9897,  0.5990)..controls(  0.8783, -1.0586,  0.5577)and(  0.6917, -1.2978,  0.6763)..(  0.5553, -1.3406,  0.6625)--cycle;
\path[facetback](  0.9917, -0.9917,  0.6028)..controls(  0.9003, -0.9003,  0.4304)and(  0.8485, -0.8485,  0.2157)..(  0.8485, -0.8485,  0.0000)..controls(  0.9485, -0.7486,  0.0000)and(  1.0546, -0.5898,  0.0000)..(  1.1087, -0.4592,  0.0000)..controls(  1.1087, -0.4592,  0.1524)and(  1.1424, -0.4732,  0.3042)..(  1.2043, -0.4988,  0.4410)..controls(  1.1560, -0.5594,  0.3960)and(  1.0747, -0.9413,  0.6327)..(  0.9917, -0.9917,  0.6028)--cycle;
\path[facetback](  1.2101, -0.5012,  0.4536)..controls(  1.1445, -0.4741,  0.3136)and(  1.1087, -0.4592,  0.1571)..(  1.1087, -0.4592,  0.0000)..controls(  1.1628, -0.3286,  0.0000)and(  1.2000, -0.1414,  0.0000)..(  1.2000, -0.0000,  0.0000)..controls(  1.2000, -0.0000,  0.0545)and(  1.2047, -0.0000,  0.1088)..(  1.2137, -0.0000,  0.1625)..controls(  1.2097, -0.0301,  0.1366)and(  1.2386, -0.4671,  0.4788)..(  1.2101, -0.5012,  0.4536)--cycle;
\path[facetback](  1.1216,  0.4646, -0.1640)..controls(  1.1536,  0.4778, -0.3678)and(  1.2441,  0.5153, -0.5609)..(  1.3793,  0.5713, -0.7071)..controls(  1.3119,  0.7338, -0.7071)and(  1.1800,  0.9313, -0.7071)..(  1.0556,  1.0556, -0.7071)..controls(  1.0035,  1.0035, -0.6334)and(  0.9601,  0.9601, -0.5479)..(  0.9267,  0.9267, -0.4550)..controls(  0.9661,  0.9060, -0.4802)and(  1.1064,  0.4909, -0.1381)..(  1.1216,  0.4646, -0.1640)--cycle;
\path[facetback](  0.9222,  0.9222, -0.4424)..controls(  0.9562,  0.9562, -0.5401)and(  1.0012,  1.0012, -0.6301)..(  1.0556,  1.0556, -0.7071)..controls(  0.9313,  1.1800, -0.7071)and(  0.7338,  1.3119, -0.7071)..(  0.5713,  1.3793, -0.7071)..controls(  0.5590,  1.3495, -0.6749)and(  0.5475,  1.3219, -0.6404)..(  0.5371,  1.2966, -0.6040)..controls(  0.6330,  1.2818, -0.6340)and(  0.8544,  0.9597, -0.3974)..(  0.9222,  0.9222, -0.4424)--cycle;
\path[facetback](  0.5360,  1.2940, -0.6003)..controls(  0.5468,  1.3200, -0.6380)and(  0.5586,  1.3485, -0.6738)..(  0.5713,  1.3793, -0.7071)..controls(  0.4088,  1.4466, -0.7071)and(  0.1759,  1.4929, -0.7071)..(  0.0000,  1.4929, -0.7071)..controls(  0.0000,  1.4788, -0.6930)and(  0.0000,  1.4652, -0.6785)..(  0.0000,  1.4520, -0.6636)..controls(  0.1424,  1.4647, -0.6774)and(  0.4067,  1.3150, -0.5590)..(  0.5360,  1.2940, -0.6003)--cycle;
\path[facetback](  0.0000,  1.4520, -0.6636)..controls(  0.0000,  1.4652, -0.6785)and(  0.0000,  1.4788, -0.6930)..(  0.0000,  1.4929, -0.7071)..controls( -0.1759,  1.4929, -0.7071)and( -0.4088,  1.4466, -0.7071)..( -0.5713,  1.3793, -0.7071)..controls( -0.5659,  1.3663, -0.6930)and( -0.5607,  1.3537, -0.6785)..( -0.5557,  1.3415, -0.6636)..controls( -0.3936,  1.3929, -0.6467)and( -0.1694,  1.4375, -0.6467)..(  0.0000,  1.4520, -0.6636)--cycle;
\path[facetback]( -0.5557,  1.3415, -0.6636)..controls( -0.5607,  1.3537, -0.6785)and( -0.5659,  1.3663, -0.6930)..( -0.5713,  1.3793, -0.7071)..controls( -0.7338,  1.3119, -0.7071)and( -0.9313,  1.1800, -0.7071)..( -1.0556,  1.0556, -0.7071)..controls( -1.0321,  1.0321, -0.6738)and( -1.0103,  1.0103, -0.6380)..( -0.9904,  0.9904, -0.6003)..controls( -0.8790,  1.0593, -0.5590)and( -0.6921,  1.2987, -0.6774)..( -0.5557,  1.3415, -0.6636)--cycle;
\path[facetback]( -0.9924,  0.9924, -0.6040)..controls( -1.0117,  1.0117, -0.6404)and( -1.0329,  1.0329, -0.6749)..( -1.0556,  1.0556, -0.7071)..controls( -1.1800,  0.9313, -0.7071)and( -1.3119,  0.7338, -0.7071)..( -1.3793,  0.5713, -0.7071)..controls( -1.3081,  0.5418, -0.6301)and( -1.2493,  0.5175, -0.5401)..( -1.2049,  0.4991, -0.4424)..controls( -1.1566,  0.5597, -0.3974)and( -1.0753,  0.9420, -0.6340)..( -0.9924,  0.9924, -0.6040)--cycle;
\path[facetback]( -1.2108,  0.5015, -0.4550)..controls( -1.2544,  0.5196, -0.5479)and( -1.3112,  0.5431, -0.6334)..( -1.3793,  0.5713, -0.7071)..controls( -1.4466,  0.4088, -0.7071)and( -1.4929,  0.1759, -0.7071)..( -1.4929,  0.0000, -0.7071)..controls( -1.3466,  0.0000, -0.5609)and( -1.2487,  0.0000, -0.3678)..( -1.2140,  0.0000, -0.1640)..controls( -1.2100,  0.0301, -0.1381)and( -1.2393,  0.4674, -0.4802)..( -1.2108,  0.5015, -0.4550)--cycle;
\path[facetback]( -1.2164,  0.0000, -0.1779)..controls( -1.2532,  0.0000, -0.3765)and( -1.3500,  0.0000, -0.5642)..( -1.4929,  0.0000, -0.7071)..controls( -1.4929, -0.1759, -0.7071)and( -1.4466, -0.4088, -0.7071)..( -1.3793, -0.5713, -0.7071)..controls( -1.2082, -0.5005, -0.5220)and( -1.1087, -0.4592, -0.2618)..( -1.1087, -0.4592,  0.0000)..controls( -1.1368, -0.3913,  0.0000)and( -1.1604, -0.3081,  0.0000)..( -1.1765, -0.2239,  0.0000)..controls( -1.1776, -0.2180,  0.0055)and( -1.2175, -0.0067, -0.1837)..( -1.2164,  0.0000, -0.1779)--cycle;
\path[facetback](  2.9071,  0.0000, -0.7071)..controls(  3.0530,  0.0000, -0.5612)and(  3.1508,  0.0000, -0.3687)..(  3.1858,  0.0000, -0.1656)..controls(  3.2115,  0.1780, -0.0129)and(  3.1694,  0.4244, -0.1582)..(  3.1374,  0.5972, -0.0000)..controls(  3.0943,  0.8215, -0.0000)and(  3.0314,  1.0435, -0.0000)..(  2.9564,  1.2246, -0.0000)..controls(  2.9564,  1.2246, -0.2618)and(  2.8568,  1.1833, -0.5220)..(  2.6858,  1.1125, -0.7071)..controls(  2.8169,  0.7961, -0.7071)and(  2.9071,  0.3425, -0.7071)..(  2.9071,  0.0000, -0.7071)--cycle;
\path[facetback]( -2.9071,  0.0000, -0.7071)..controls( -3.0922,  0.0000, -0.5220)and( -3.2000,  0.0000, -0.2618)..( -3.2000,  0.0000, -0.0000)..controls( -3.2000, -0.1968, -0.0000)and( -3.1729, -0.4269, -0.0000)..( -3.1265, -0.6515, -0.0000)..controls( -3.0902, -0.8228, -0.1577)and( -3.0349, -1.0651, -0.0134)..( -2.9433, -1.2191, -0.1656)..controls( -2.9110, -1.2058, -0.3687)and( -2.8206, -1.1683, -0.5612)..( -2.6858, -1.1125, -0.7071)..controls( -2.8169, -0.7961, -0.7071)and( -2.9071, -0.3425, -0.7071)..( -2.9071,  0.0000, -0.7071)--cycle;
\path[facetback](  1.2000,  0.0000,  0.0000)..controls(  1.2000,  0.0000, -0.2618)and(  1.3078,  0.0000, -0.5220)..(  1.4929,  0.0000, -0.7071)..controls(  1.4929,  0.1759, -0.7071)and(  1.4466,  0.4088, -0.7071)..(  1.3793,  0.5713, -0.7071)..controls(  1.2472,  0.5166, -0.5642)and(  1.1578,  0.4796, -0.3765)..(  1.1239,  0.4655, -0.1779)..controls(  1.1273,  0.4599, -0.1835)and(  1.1712,  0.2500,  0.0053)..(  1.1725,  0.2443,  0.0000)..controls(  1.1899,  0.1601,  0.0000)and(  1.2000,  0.0738,  0.0000)..(  1.2000,  0.0000,  0.0000)--cycle; 
}
\providecommand{\torusfront}{
\path[facetfront]( -2.9564, -1.2246,  0.0000)..controls( -2.9564, -1.2246,  0.2618)and( -2.8568, -1.1833,  0.5220)..( -2.6858, -1.1125,  0.7071)..controls( -2.5548, -1.4289,  0.7071)and( -2.2978, -1.8135,  0.7071)..( -2.0556, -2.0556,  0.7071)..controls( -2.1865, -2.1865,  0.5220)and( -2.2627, -2.2627,  0.2618)..( -2.2627, -2.2627,  0.0000)..controls( -2.5293, -1.9962,  0.0000)and( -2.8121, -1.5729,  0.0000)..( -2.9564, -1.2246,  0.0000)--cycle;
\path[facetfront]( -2.2627, -2.2627,  0.0000)..controls( -2.2627, -2.2627,  0.2618)and( -2.1865, -2.1865,  0.5220)..( -2.0556, -2.0556,  0.7071)..controls( -1.8135, -2.2978,  0.7071)and( -1.4289, -2.5548,  0.7071)..( -1.1125, -2.6858,  0.7071)..controls( -1.1833, -2.8568,  0.5220)and( -1.2246, -2.9564,  0.2618)..( -1.2246, -2.9564,  0.0000)..controls( -1.5729, -2.8121,  0.0000)and( -1.9962, -2.5293,  0.0000)..( -2.2627, -2.2627,  0.0000)--cycle;
\path[facetfront]( -1.2246, -2.9564,  0.0000)..controls( -1.2246, -2.9564,  0.2618)and( -1.1833, -2.8568,  0.5220)..( -1.1125, -2.6858,  0.7071)..controls( -0.7961, -2.8169,  0.7071)and( -0.3425, -2.9071,  0.7071)..( -0.0000, -2.9071,  0.7071)..controls( -0.0000, -3.0922,  0.5220)and( -0.0000, -3.2000,  0.2618)..( -0.0000, -3.2000,  0.0000)..controls( -0.3770, -3.2000,  0.0000)and( -0.8763, -3.1007,  0.0000)..( -1.2246, -2.9564,  0.0000)--cycle;
\path[facetfront]( -0.0000, -3.2000,  0.0000)..controls( -0.0000, -3.2000,  0.2618)and( -0.0000, -3.0922,  0.5220)..( -0.0000, -2.9071,  0.7071)..controls(  0.3425, -2.9071,  0.7071)and(  0.7961, -2.8169,  0.7071)..(  1.1125, -2.6858,  0.7071)..controls(  1.1833, -2.8568,  0.5220)and(  1.2246, -2.9564,  0.2618)..(  1.2246, -2.9564,  0.0000)..controls(  0.8763, -3.1007,  0.0000)and(  0.3770, -3.2000,  0.0000)..( -0.0000, -3.2000,  0.0000)--cycle;
\path[facetfront](  1.2246, -2.9564,  0.0000)..controls(  1.2246, -2.9564,  0.2618)and(  1.1833, -2.8568,  0.5220)..(  1.1125, -2.6858,  0.7071)..controls(  1.4289, -2.5548,  0.7071)and(  1.8135, -2.2978,  0.7071)..(  2.0556, -2.0556,  0.7071)..controls(  2.1865, -2.1865,  0.5220)and(  2.2627, -2.2627,  0.2618)..(  2.2627, -2.2627,  0.0000)..controls(  1.9962, -2.5293,  0.0000)and(  1.5729, -2.8121,  0.0000)..(  1.2246, -2.9564,  0.0000)--cycle;
\path[facetfront](  2.2627, -2.2627,  0.0000)..controls(  2.2627, -2.2627,  0.2618)and(  2.1865, -2.1865,  0.5220)..(  2.0556, -2.0556,  0.7071)..controls(  2.2978, -1.8135,  0.7071)and(  2.5548, -1.4289,  0.7071)..(  2.6858, -1.1125,  0.7071)..controls(  2.8568, -1.1833,  0.5220)and(  2.9564, -1.2246,  0.2618)..(  2.9564, -1.2246,  0.0000)..controls(  2.8121, -1.5729,  0.0000)and(  2.5293, -1.9962,  0.0000)..(  2.2627, -2.2627,  0.0000)--cycle;
\path[facetfront](  2.9564, -1.2246,  0.0000)..controls(  2.9564, -1.2246,  0.2618)and(  2.8568, -1.1833,  0.5220)..(  2.6858, -1.1125,  0.7071)..controls(  2.8169, -0.7961,  0.7071)and(  2.9071, -0.3425,  0.7071)..(  2.9071, -0.0000,  0.7071)..controls(  3.0922, -0.0000,  0.5220)and(  3.2000, -0.0000,  0.2618)..(  3.2000, -0.0000,  0.0000)..controls(  3.2000, -0.3770,  0.0000)and(  3.1007, -0.8763,  0.0000)..(  2.9564, -1.2246,  0.0000)--cycle;
\path[facetfront](  2.9071,  0.0000,  0.7071)..controls(  2.7220,  0.0000,  0.8922)and(  2.4618,  0.0000,  1.0000)..(  2.2000,  0.0000,  1.0000)..controls(  2.2000,  0.2592,  1.0000)and(  2.1317,  0.6025,  1.0000)..(  2.0325,  0.8419,  1.0000)..controls(  2.2744,  0.9421,  1.0000)and(  2.5148,  1.0417,  0.8922)..(  2.6858,  1.1125,  0.7071)..controls(  2.8169,  0.7961,  0.7071)and(  2.9071,  0.3425,  0.7071)..(  2.9071,  0.0000,  0.7071)--cycle;
\path[facetfront](  2.6858,  1.1125,  0.7071)..controls(  2.5148,  1.0417,  0.8922)and(  2.2744,  0.9421,  1.0000)..(  2.0325,  0.8419,  1.0000)..controls(  1.9334,  1.0814,  1.0000)and(  1.7389,  1.3724,  1.0000)..(  1.5556,  1.5556,  1.0000)..controls(  1.7408,  1.7408,  1.0000)and(  1.9247,  1.9247,  0.8922)..(  2.0556,  2.0556,  0.7071)..controls(  2.2978,  1.8135,  0.7071)and(  2.5548,  1.4289,  0.7071)..(  2.6858,  1.1125,  0.7071)--cycle;
\path[facetfront](  2.0556,  2.0556,  0.7071)..controls(  1.9247,  1.9247,  0.8922)and(  1.7408,  1.7408,  1.0000)..(  1.5556,  1.5556,  1.0000)..controls(  1.3724,  1.7389,  1.0000)and(  1.0814,  1.9334,  1.0000)..(  0.8419,  2.0325,  1.0000)..controls(  0.9421,  2.2744,  1.0000)and(  1.0417,  2.5148,  0.8922)..(  1.1125,  2.6858,  0.7071)..controls(  1.4289,  2.5548,  0.7071)and(  1.8135,  2.2978,  0.7071)..(  2.0556,  2.0556,  0.7071)--cycle;
\path[facetfront](  1.1125,  2.6858,  0.7071)..controls(  1.0417,  2.5148,  0.8922)and(  0.9421,  2.2744,  1.0000)..(  0.8419,  2.0325,  1.0000)..controls(  0.6025,  2.1317,  1.0000)and(  0.2592,  2.2000,  1.0000)..(  0.0000,  2.2000,  1.0000)..controls(  0.0000,  2.4618,  1.0000)and(  0.0000,  2.7220,  0.8922)..(  0.0000,  2.9071,  0.7071)..controls(  0.3425,  2.9071,  0.7071)and(  0.7961,  2.8169,  0.7071)..(  1.1125,  2.6858,  0.7071)--cycle;
\path[facetfront](  0.0000,  2.9071,  0.7071)..controls(  0.0000,  2.7220,  0.8922)and(  0.0000,  2.4618,  1.0000)..(  0.0000,  2.2000,  1.0000)..controls( -0.2592,  2.2000,  1.0000)and( -0.6025,  2.1317,  1.0000)..( -0.8419,  2.0325,  1.0000)..controls( -0.9421,  2.2744,  1.0000)and( -1.0417,  2.5148,  0.8922)..( -1.1125,  2.6858,  0.7071)..controls( -0.7961,  2.8169,  0.7071)and( -0.3425,  2.9071,  0.7071)..(  0.0000,  2.9071,  0.7071)--cycle;
\path[facetfront]( -1.1125,  2.6858,  0.7071)..controls( -1.0417,  2.5148,  0.8922)and( -0.9421,  2.2744,  1.0000)..( -0.8419,  2.0325,  1.0000)..controls( -1.0814,  1.9334,  1.0000)and( -1.3724,  1.7389,  1.0000)..( -1.5556,  1.5556,  1.0000)..controls( -1.7408,  1.7408,  1.0000)and( -1.9247,  1.9247,  0.8922)..( -2.0556,  2.0556,  0.7071)..controls( -1.8135,  2.2978,  0.7071)and( -1.4289,  2.5548,  0.7071)..( -1.1125,  2.6858,  0.7071)--cycle;
\path[facetfront]( -2.0556,  2.0556,  0.7071)..controls( -1.9247,  1.9247,  0.8922)and( -1.7408,  1.7408,  1.0000)..( -1.5556,  1.5556,  1.0000)..controls( -1.7389,  1.3724,  1.0000)and( -1.9334,  1.0814,  1.0000)..( -2.0325,  0.8419,  1.0000)..controls( -2.2744,  0.9421,  1.0000)and( -2.5148,  1.0417,  0.8922)..( -2.6858,  1.1125,  0.7071)..controls( -2.5548,  1.4289,  0.7071)and( -2.2978,  1.8135,  0.7071)..( -2.0556,  2.0556,  0.7071)--cycle;
\path[facetfront]( -2.6858,  1.1125,  0.7071)..controls( -2.5148,  1.0417,  0.8922)and( -2.2744,  0.9421,  1.0000)..( -2.0325,  0.8419,  1.0000)..controls( -2.1317,  0.6025,  1.0000)and( -2.2000,  0.2592,  1.0000)..( -2.2000,  0.0000,  1.0000)..controls( -2.4618,  0.0000,  1.0000)and( -2.7220,  0.0000,  0.8922)..( -2.9071,  0.0000,  0.7071)..controls( -2.9071,  0.3425,  0.7071)and( -2.8169,  0.7961,  0.7071)..( -2.6858,  1.1125,  0.7071)--cycle;
\path[facetfront]( -2.9071,  0.0000,  0.7071)..controls( -2.7220,  0.0000,  0.8922)and( -2.4618,  0.0000,  1.0000)..( -2.2000,  0.0000,  1.0000)..controls( -2.2000, -0.2592,  1.0000)and( -2.1317, -0.6025,  1.0000)..( -2.0325, -0.8419,  1.0000)..controls( -2.2744, -0.9421,  1.0000)and( -2.5148, -1.0417,  0.8922)..( -2.6858, -1.1125,  0.7071)..controls( -2.8169, -0.7961,  0.7071)and( -2.9071, -0.3425,  0.7071)..( -2.9071,  0.0000,  0.7071)--cycle;
\path[facetfront]( -2.6858, -1.1125,  0.7071)..controls( -2.5148, -1.0417,  0.8922)and( -2.2744, -0.9421,  1.0000)..( -2.0325, -0.8419,  1.0000)..controls( -1.9334, -1.0814,  1.0000)and( -1.7389, -1.3724,  1.0000)..( -1.5556, -1.5556,  1.0000)..controls( -1.7408, -1.7408,  1.0000)and( -1.9247, -1.9247,  0.8922)..( -2.0556, -2.0556,  0.7071)..controls( -2.2978, -1.8135,  0.7071)and( -2.5548, -1.4289,  0.7071)..( -2.6858, -1.1125,  0.7071)--cycle;
\path[facetfront]( -2.0556, -2.0556,  0.7071)..controls( -1.9247, -1.9247,  0.8922)and( -1.7408, -1.7408,  1.0000)..( -1.5556, -1.5556,  1.0000)..controls( -1.3724, -1.7389,  1.0000)and( -1.0814, -1.9334,  1.0000)..( -0.8419, -2.0325,  1.0000)..controls( -0.9421, -2.2744,  1.0000)and( -1.0417, -2.5148,  0.8922)..( -1.1125, -2.6858,  0.7071)..controls( -1.4289, -2.5548,  0.7071)and( -1.8135, -2.2978,  0.7071)..( -2.0556, -2.0556,  0.7071)--cycle;
\path[facetfront]( -1.1125, -2.6858,  0.7071)..controls( -1.0417, -2.5148,  0.8922)and( -0.9421, -2.2744,  1.0000)..( -0.8419, -2.0325,  1.0000)..controls( -0.6025, -2.1317,  1.0000)and( -0.2592, -2.2000,  1.0000)..( -0.0000, -2.2000,  1.0000)..controls( -0.0000, -2.4618,  1.0000)and( -0.0000, -2.7220,  0.8922)..( -0.0000, -2.9071,  0.7071)..controls( -0.3425, -2.9071,  0.7071)and( -0.7961, -2.8169,  0.7071)..( -1.1125, -2.6858,  0.7071)--cycle;
\path[facetfront]( -0.0000, -2.9071,  0.7071)..controls( -0.0000, -2.7220,  0.8922)and( -0.0000, -2.4618,  1.0000)..( -0.0000, -2.2000,  1.0000)..controls(  0.2592, -2.2000,  1.0000)and(  0.6025, -2.1317,  1.0000)..(  0.8419, -2.0325,  1.0000)..controls(  0.9421, -2.2744,  1.0000)and(  1.0417, -2.5148,  0.8922)..(  1.1125, -2.6858,  0.7071)..controls(  0.7961, -2.8169,  0.7071)and(  0.3425, -2.9071,  0.7071)..( -0.0000, -2.9071,  0.7071)--cycle;
\path[facetfront](  1.1125, -2.6858,  0.7071)..controls(  1.0417, -2.5148,  0.8922)and(  0.9421, -2.2744,  1.0000)..(  0.8419, -2.0325,  1.0000)..controls(  1.0814, -1.9334,  1.0000)and(  1.3724, -1.7389,  1.0000)..(  1.5556, -1.5556,  1.0000)..controls(  1.7408, -1.7408,  1.0000)and(  1.9247, -1.9247,  0.8922)..(  2.0556, -2.0556,  0.7071)..controls(  1.8135, -2.2978,  0.7071)and(  1.4289, -2.5548,  0.7071)..(  1.1125, -2.6858,  0.7071)--cycle;
\path[facetfront](  2.0556, -2.0556,  0.7071)..controls(  1.9247, -1.9247,  0.8922)and(  1.7408, -1.7408,  1.0000)..(  1.5556, -1.5556,  1.0000)..controls(  1.7389, -1.3724,  1.0000)and(  1.9334, -1.0814,  1.0000)..(  2.0325, -0.8419,  1.0000)..controls(  2.2744, -0.9421,  1.0000)and(  2.5148, -1.0417,  0.8922)..(  2.6858, -1.1125,  0.7071)..controls(  2.5548, -1.4289,  0.7071)and(  2.2978, -1.8135,  0.7071)..(  2.0556, -2.0556,  0.7071)--cycle;
\path[facetfront](  2.6858, -1.1125,  0.7071)..controls(  2.5148, -1.0417,  0.8922)and(  2.2744, -0.9421,  1.0000)..(  2.0325, -0.8419,  1.0000)..controls(  2.1317, -0.6025,  1.0000)and(  2.2000, -0.2592,  1.0000)..(  2.2000, -0.0000,  1.0000)..controls(  2.4618, -0.0000,  1.0000)and(  2.7220, -0.0000,  0.8922)..(  2.9071, -0.0000,  0.7071)..controls(  2.9071, -0.3425,  0.7071)and(  2.8169, -0.7961,  0.7071)..(  2.6858, -1.1125,  0.7071)--cycle;
\path[facetfront](  2.2000,  0.0000,  1.0000)..controls(  1.9382,  0.0000,  1.0000)and(  1.6780,  0.0000,  0.8922)..(  1.4929,  0.0000,  0.7071)..controls(  1.4929,  0.1759,  0.7071)and(  1.4466,  0.4088,  0.7071)..(  1.3793,  0.5713,  0.7071)..controls(  1.5503,  0.6421,  0.8922)and(  1.7907,  0.7417,  1.0000)..(  2.0325,  0.8419,  1.0000)..controls(  2.1317,  0.6025,  1.0000)and(  2.2000,  0.2592,  1.0000)..(  2.2000,  0.0000,  1.0000)--cycle;
\path[facetfront](  2.0325,  0.8419,  1.0000)..controls(  1.7907,  0.7417,  1.0000)and(  1.5503,  0.6421,  0.8922)..(  1.3793,  0.5713,  0.7071)..controls(  1.3119,  0.7338,  0.7071)and(  1.1800,  0.9313,  0.7071)..(  1.0556,  1.0556,  0.7071)..controls(  1.1865,  1.1865,  0.8922)and(  1.3705,  1.3705,  1.0000)..(  1.5556,  1.5556,  1.0000)..controls(  1.7389,  1.3724,  1.0000)and(  1.9334,  1.0814,  1.0000)..(  2.0325,  0.8419,  1.0000)--cycle;
\path[facetfront](  1.5556,  1.5556,  1.0000)..controls(  1.3705,  1.3705,  1.0000)and(  1.1865,  1.1865,  0.8922)..(  1.0556,  1.0556,  0.7071)..controls(  0.9313,  1.1800,  0.7071)and(  0.7338,  1.3119,  0.7071)..(  0.5713,  1.3793,  0.7071)..controls(  0.6421,  1.5503,  0.8922)and(  0.7417,  1.7907,  1.0000)..(  0.8419,  2.0325,  1.0000)..controls(  1.0814,  1.9334,  1.0000)and(  1.3724,  1.7389,  1.0000)..(  1.5556,  1.5556,  1.0000)--cycle;
\path[facetfront](  0.8419,  2.0325,  1.0000)..controls(  0.7417,  1.7907,  1.0000)and(  0.6421,  1.5503,  0.8922)..(  0.5713,  1.3793,  0.7071)..controls(  0.4088,  1.4466,  0.7071)and(  0.1759,  1.4929,  0.7071)..(  0.0000,  1.4929,  0.7071)..controls(  0.0000,  1.6780,  0.8922)and(  0.0000,  1.9382,  1.0000)..(  0.0000,  2.2000,  1.0000)..controls(  0.2592,  2.2000,  1.0000)and(  0.6025,  2.1317,  1.0000)..(  0.8419,  2.0325,  1.0000)--cycle;
\path[facetfront](  0.0000,  2.2000,  1.0000)..controls(  0.0000,  1.9382,  1.0000)and(  0.0000,  1.6780,  0.8922)..(  0.0000,  1.4929,  0.7071)..controls( -0.1759,  1.4929,  0.7071)and( -0.4088,  1.4466,  0.7071)..( -0.5713,  1.3793,  0.7071)..controls( -0.6421,  1.5503,  0.8922)and( -0.7417,  1.7907,  1.0000)..( -0.8419,  2.0325,  1.0000)..controls( -0.6025,  2.1317,  1.0000)and( -0.2592,  2.2000,  1.0000)..(  0.0000,  2.2000,  1.0000)--cycle;
\path[facetfront]( -0.8419,  2.0325,  1.0000)..controls( -0.7417,  1.7907,  1.0000)and( -0.6421,  1.5503,  0.8922)..( -0.5713,  1.3793,  0.7071)..controls( -0.7338,  1.3119,  0.7071)and( -0.9313,  1.1800,  0.7071)..( -1.0556,  1.0556,  0.7071)..controls( -1.1865,  1.1865,  0.8922)and( -1.3705,  1.3705,  1.0000)..( -1.5556,  1.5556,  1.0000)..controls( -1.3724,  1.7389,  1.0000)and( -1.0814,  1.9334,  1.0000)..( -0.8419,  2.0325,  1.0000)--cycle;
\path[facetfront]( -1.5556,  1.5556,  1.0000)..controls( -1.3705,  1.3705,  1.0000)and( -1.1865,  1.1865,  0.8922)..( -1.0556,  1.0556,  0.7071)..controls( -1.1800,  0.9313,  0.7071)and( -1.3119,  0.7338,  0.7071)..( -1.3793,  0.5713,  0.7071)..controls( -1.5503,  0.6421,  0.8922)and( -1.7907,  0.7417,  1.0000)..( -2.0325,  0.8419,  1.0000)..controls( -1.9334,  1.0814,  1.0000)and( -1.7389,  1.3724,  1.0000)..( -1.5556,  1.5556,  1.0000)--cycle;
\path[facetfront]( -2.0325,  0.8419,  1.0000)..controls( -1.7907,  0.7417,  1.0000)and( -1.5503,  0.6421,  0.8922)..( -1.3793,  0.5713,  0.7071)..controls( -1.4466,  0.4088,  0.7071)and( -1.4929,  0.1759,  0.7071)..( -1.4929,  0.0000,  0.7071)..controls( -1.6780,  0.0000,  0.8922)and( -1.9382,  0.0000,  1.0000)..( -2.2000,  0.0000,  1.0000)..controls( -2.2000,  0.2592,  1.0000)and( -2.1317,  0.6025,  1.0000)..( -2.0325,  0.8419,  1.0000)--cycle;
\path[facetfront]( -2.2000,  0.0000,  1.0000)..controls( -1.9382,  0.0000,  1.0000)and( -1.6780,  0.0000,  0.8922)..( -1.4929,  0.0000,  0.7071)..controls( -1.4929, -0.1759,  0.7071)and( -1.4466, -0.4088,  0.7071)..( -1.3793, -0.5713,  0.7071)..controls( -1.5503, -0.6421,  0.8922)and( -1.7907, -0.7417,  1.0000)..( -2.0325, -0.8419,  1.0000)..controls( -2.1317, -0.6025,  1.0000)and( -2.2000, -0.2592,  1.0000)..( -2.2000,  0.0000,  1.0000)--cycle;
\path[facetfront]( -2.0325, -0.8419,  1.0000)..controls( -1.7907, -0.7417,  1.0000)and( -1.5503, -0.6421,  0.8922)..( -1.3793, -0.5713,  0.7071)..controls( -1.3119, -0.7338,  0.7071)and( -1.1800, -0.9313,  0.7071)..( -1.0556, -1.0556,  0.7071)..controls( -1.1865, -1.1865,  0.8922)and( -1.3705, -1.3705,  1.0000)..( -1.5556, -1.5556,  1.0000)..controls( -1.7389, -1.3724,  1.0000)and( -1.9334, -1.0814,  1.0000)..( -2.0325, -0.8419,  1.0000)--cycle;
\path[facetfront]( -1.5556, -1.5556,  1.0000)..controls( -1.3705, -1.3705,  1.0000)and( -1.1865, -1.1865,  0.8922)..( -1.0556, -1.0556,  0.7071)..controls( -0.9313, -1.1800,  0.7071)and( -0.7338, -1.3119,  0.7071)..( -0.5713, -1.3793,  0.7071)..controls( -0.6421, -1.5503,  0.8922)and( -0.7417, -1.7907,  1.0000)..( -0.8419, -2.0325,  1.0000)..controls( -1.0814, -1.9334,  1.0000)and( -1.3724, -1.7389,  1.0000)..( -1.5556, -1.5556,  1.0000)--cycle;
\path[facetfront]( -0.8419, -2.0325,  1.0000)..controls( -0.7417, -1.7907,  1.0000)and( -0.6421, -1.5503,  0.8922)..( -0.5713, -1.3793,  0.7071)..controls( -0.4088, -1.4466,  0.7071)and( -0.1759, -1.4929,  0.7071)..( -0.0000, -1.4929,  0.7071)..controls( -0.0000, -1.6780,  0.8922)and( -0.0000, -1.9382,  1.0000)..( -0.0000, -2.2000,  1.0000)..controls( -0.2592, -2.2000,  1.0000)and( -0.6025, -2.1317,  1.0000)..( -0.8419, -2.0325,  1.0000)--cycle;
\path[facetfront]( -0.0000, -2.2000,  1.0000)..controls( -0.0000, -1.9382,  1.0000)and( -0.0000, -1.6780,  0.8922)..( -0.0000, -1.4929,  0.7071)..controls(  0.1759, -1.4929,  0.7071)and(  0.4088, -1.4466,  0.7071)..(  0.5713, -1.3793,  0.7071)..controls(  0.6421, -1.5503,  0.8922)and(  0.7417, -1.7907,  1.0000)..(  0.8419, -2.0325,  1.0000)..controls(  0.6025, -2.1317,  1.0000)and(  0.2592, -2.2000,  1.0000)..( -0.0000, -2.2000,  1.0000)--cycle;
\path[facetfront](  0.8419, -2.0325,  1.0000)..controls(  0.7417, -1.7907,  1.0000)and(  0.6421, -1.5503,  0.8922)..(  0.5713, -1.3793,  0.7071)..controls(  0.7338, -1.3119,  0.7071)and(  0.9313, -1.1800,  0.7071)..(  1.0556, -1.0556,  0.7071)..controls(  1.1865, -1.1865,  0.8922)and(  1.3705, -1.3705,  1.0000)..(  1.5556, -1.5556,  1.0000)..controls(  1.3724, -1.7389,  1.0000)and(  1.0814, -1.9334,  1.0000)..(  0.8419, -2.0325,  1.0000)--cycle;
\path[facetfront](  1.5556, -1.5556,  1.0000)..controls(  1.3705, -1.3705,  1.0000)and(  1.1865, -1.1865,  0.8922)..(  1.0556, -1.0556,  0.7071)..controls(  1.1800, -0.9313,  0.7071)and(  1.3119, -0.7338,  0.7071)..(  1.3793, -0.5713,  0.7071)..controls(  1.5503, -0.6421,  0.8922)and(  1.7907, -0.7417,  1.0000)..(  2.0325, -0.8419,  1.0000)..controls(  1.9334, -1.0814,  1.0000)and(  1.7389, -1.3724,  1.0000)..(  1.5556, -1.5556,  1.0000)--cycle;
\path[facetfront](  2.0325, -0.8419,  1.0000)..controls(  1.7907, -0.7417,  1.0000)and(  1.5503, -0.6421,  0.8922)..(  1.3793, -0.5713,  0.7071)..controls(  1.4466, -0.4088,  0.7071)and(  1.4929, -0.1759,  0.7071)..(  1.4929, -0.0000,  0.7071)..controls(  1.6780, -0.0000,  0.8922)and(  1.9382, -0.0000,  1.0000)..(  2.2000, -0.0000,  1.0000)..controls(  2.2000, -0.2592,  1.0000)and(  2.1317, -0.6025,  1.0000)..(  2.0325, -0.8419,  1.0000)--cycle;
\path[facetfront](  1.3793,  0.5713,  0.7071)..controls(  1.2082,  0.5005,  0.5220)and(  1.1087,  0.4592,  0.2618)..(  1.1087,  0.4592,  0.0000)..controls(  1.0546,  0.5898,  0.0000)and(  0.9485,  0.7486,  0.0000)..(  0.8485,  0.8485,  0.0000)..controls(  0.8485,  0.8485,  0.2618)and(  0.9247,  0.9247,  0.5220)..(  1.0556,  1.0556,  0.7071)..controls(  1.1800,  0.9313,  0.7071)and(  1.3119,  0.7338,  0.7071)..(  1.3793,  0.5713,  0.7071)--cycle;
\path[facetfront](  1.0556,  1.0556,  0.7071)..controls(  0.9247,  0.9247,  0.5220)and(  0.8485,  0.8485,  0.2618)..(  0.8485,  0.8485,  0.0000)..controls(  0.7486,  0.9485,  0.0000)and(  0.5898,  1.0546,  0.0000)..(  0.4592,  1.1087,  0.0000)..controls(  0.4592,  1.1087,  0.2618)and(  0.5005,  1.2082,  0.5220)..(  0.5713,  1.3793,  0.7071)..controls(  0.7338,  1.3119,  0.7071)and(  0.9313,  1.1800,  0.7071)..(  1.0556,  1.0556,  0.7071)--cycle;
\path[facetfront](  0.5713,  1.3793,  0.7071)..controls(  0.5005,  1.2082,  0.5220)and(  0.4592,  1.1087,  0.2618)..(  0.4592,  1.1087,  0.0000)..controls(  0.3286,  1.1628,  0.0000)and(  0.1414,  1.2000,  0.0000)..(  0.0000,  1.2000,  0.0000)..controls(  0.0000,  1.2000,  0.2618)and(  0.0000,  1.3078,  0.5220)..(  0.0000,  1.4929,  0.7071)..controls(  0.1759,  1.4929,  0.7071)and(  0.4088,  1.4466,  0.7071)..(  0.5713,  1.3793,  0.7071)--cycle;
\path[facetfront](  0.0000,  1.4929,  0.7071)..controls(  0.0000,  1.3078,  0.5220)and(  0.0000,  1.2000,  0.2618)..(  0.0000,  1.2000,  0.0000)..controls( -0.1414,  1.2000,  0.0000)and( -0.3286,  1.1628,  0.0000)..( -0.4592,  1.1087,  0.0000)..controls( -0.4592,  1.1087,  0.2618)and( -0.5005,  1.2082,  0.5220)..( -0.5713,  1.3793,  0.7071)..controls( -0.4088,  1.4466,  0.7071)and( -0.1759,  1.4929,  0.7071)..(  0.0000,  1.4929,  0.7071)--cycle;
\path[facetfront]( -0.5713,  1.3793,  0.7071)..controls( -0.5005,  1.2082,  0.5220)and( -0.4592,  1.1087,  0.2618)..( -0.4592,  1.1087,  0.0000)..controls( -0.5898,  1.0546,  0.0000)and( -0.7486,  0.9485,  0.0000)..( -0.8485,  0.8485,  0.0000)..controls( -0.8485,  0.8485,  0.2618)and( -0.9247,  0.9247,  0.5220)..( -1.0556,  1.0556,  0.7071)..controls( -0.9313,  1.1800,  0.7071)and( -0.7338,  1.3119,  0.7071)..( -0.5713,  1.3793,  0.7071)--cycle;
\path[facetfront]( -1.0556,  1.0556,  0.7071)..controls( -0.9247,  0.9247,  0.5220)and( -0.8485,  0.8485,  0.2618)..( -0.8485,  0.8485,  0.0000)..controls( -0.9485,  0.7486,  0.0000)and( -1.0546,  0.5898,  0.0000)..( -1.1087,  0.4592,  0.0000)..controls( -1.1087,  0.4592,  0.2618)and( -1.2082,  0.5005,  0.5220)..( -1.3793,  0.5713,  0.7071)..controls( -1.3119,  0.7338,  0.7071)and( -1.1800,  0.9313,  0.7071)..( -1.0556,  1.0556,  0.7071)--cycle;
\path[facetfront]( -1.3793,  0.5713,  0.7071)..controls( -1.2082,  0.5005,  0.5220)and( -1.1087,  0.4592,  0.2618)..( -1.1087,  0.4592,  0.0000)..controls( -1.1628,  0.3286,  0.0000)and( -1.2000,  0.1414,  0.0000)..( -1.2000,  0.0000,  0.0000)..controls( -1.2000,  0.0000,  0.2618)and( -1.3078,  0.0000,  0.5220)..( -1.4929,  0.0000,  0.7071)..controls( -1.4929,  0.1759,  0.7071)and( -1.4466,  0.4088,  0.7071)..( -1.3793,  0.5713,  0.7071)--cycle;
\path[facetfront]( -1.3793, -0.5713,  0.7071)..controls( -1.2438, -0.5152,  0.5605)and( -1.1532, -0.4777,  0.3668)..( -1.1213, -0.4645,  0.1625)..controls( -1.1061, -0.4908,  0.1366)and( -0.9656, -0.9055,  0.4788)..( -0.9262, -0.9262,  0.4536)..controls( -0.9597, -0.9597,  0.5470)and( -1.0033, -1.0033,  0.6331)..( -1.0556, -1.0556,  0.7071)..controls( -1.1800, -0.9313,  0.7071)and( -1.3119, -0.7338,  0.7071)..( -1.3793, -0.5713,  0.7071)--cycle;
\path[facetfront]( -1.0556, -1.0556,  0.7071)..controls( -1.0009, -1.0009,  0.6297)and( -0.9558, -0.9558,  0.5392)..( -0.9217, -0.9217,  0.4410)..controls( -0.8539, -0.9592,  0.3960)and( -0.6327, -1.2809,  0.6327)..( -0.5367, -1.2957,  0.6028)..controls( -0.5473, -1.3213,  0.6396)and( -0.5588, -1.3492,  0.6745)..( -0.5713, -1.3793,  0.7071)..controls( -0.7338, -1.3119,  0.7071)and( -0.9313, -1.1800,  0.7071)..( -1.0556, -1.0556,  0.7071)--cycle;
\path[facetfront]( -0.5713, -1.3793,  0.7071)..controls( -0.5584, -1.3481,  0.6734)and( -0.5465, -1.3194,  0.6372)..( -0.5356, -1.2931,  0.5990)..controls( -0.4063, -1.3142,  0.5577)and( -0.1424, -1.4637,  0.6763)..( -0.0000, -1.4510,  0.6625)..controls( -0.0000, -1.4645,  0.6778)and( -0.0000, -1.4785,  0.6927)..( -0.0000, -1.4929,  0.7071)..controls( -0.1759, -1.4929,  0.7071)and( -0.4088, -1.4466,  0.7071)..( -0.5713, -1.3793,  0.7071)--cycle;
\path[facetfront]( -0.0000, -1.4929,  0.7071)..controls( -0.0000, -1.4785,  0.6927)and( -0.0000, -1.4645,  0.6778)..( -0.0000, -1.4510,  0.6625)..controls(  0.1694, -1.4365,  0.6455)and(  0.3932, -1.3920,  0.6455)..(  0.5553, -1.3406,  0.6625)..controls(  0.5604, -1.3530,  0.6778)and(  0.5658, -1.3659,  0.6927)..(  0.5713, -1.3793,  0.7071)..controls(  0.4088, -1.4466,  0.7071)and(  0.1759, -1.4929,  0.7071)..( -0.0000, -1.4929,  0.7071)--cycle;
\path[facetfront](  0.5713, -1.3793,  0.7071)..controls(  0.5658, -1.3659,  0.6927)and(  0.5604, -1.3530,  0.6778)..(  0.5553, -1.3406,  0.6625)..controls(  0.6917, -1.2978,  0.6763)and(  0.8783, -1.0586,  0.5577)..(  0.9897, -0.9897,  0.5990)..controls(  1.0098, -1.0098,  0.6372)and(  1.0318, -1.0318,  0.6734)..(  1.0556, -1.0556,  0.7071)..controls(  0.9313, -1.1800,  0.7071)and(  0.7338, -1.3119,  0.7071)..(  0.5713, -1.3793,  0.7071)--cycle;
\path[facetfront](  1.0556, -1.0556,  0.7071)..controls(  1.0326, -1.0326,  0.6745)and(  1.0113, -1.0113,  0.6396)..(  0.9917, -0.9917,  0.6028)..controls(  1.0747, -0.9413,  0.6327)and(  1.1560, -0.5594,  0.3960)..(  1.2043, -0.4988,  0.4410)..controls(  1.2488, -0.5173,  0.5392)and(  1.3078, -0.5417,  0.6297)..(  1.3793, -0.5713,  0.7071)..controls(  1.3119, -0.7338,  0.7071)and(  1.1800, -0.9313,  0.7071)..(  1.0556, -1.0556,  0.7071)--cycle;
\path[facetfront](  1.3793, -0.5713,  0.7071)..controls(  1.3108, -0.5430,  0.6331)and(  1.2539, -0.5194,  0.5470)..(  1.2101, -0.5012,  0.4536)..controls(  1.2386, -0.4671,  0.4788)and(  1.2097, -0.0301,  0.1366)..(  1.2137, -0.0000,  0.1625)..controls(  1.2482, -0.0000,  0.3668)and(  1.3463, -0.0000,  0.5605)..(  1.4929, -0.0000,  0.7071)..controls(  1.4929, -0.1759,  0.7071)and(  1.4466, -0.4088,  0.7071)..(  1.3793, -0.5713,  0.7071)--cycle;
\path[facetfront](  1.1725,  0.2443,  0.0000)..controls(  1.1712,  0.2500,  0.0053)and(  1.1273,  0.4599, -0.1835)..(  1.1239,  0.4655, -0.1779)..controls(  1.1138,  0.4614, -0.1193)and(  1.1087,  0.4592, -0.0597)..(  1.1087,  0.4592,  0.0000)..controls(  1.1345,  0.3968,  0.0000)and(  1.1565,  0.3214,  0.0000)..(  1.1725,  0.2443,  0.0000)--cycle;
\path[facetfront](  1.1087,  0.4592,  0.0000)..controls(  1.1087,  0.4592, -0.0550)and(  1.1130,  0.4610, -0.1099)..(  1.1216,  0.4646, -0.1640)..controls(  1.1064,  0.4909, -0.1381)and(  0.9661,  0.9060, -0.4802)..(  0.9267,  0.9267, -0.4550)..controls(  0.8761,  0.8761, -0.3146)and(  0.8485,  0.8485, -0.1576)..(  0.8485,  0.8485,  0.0000)..controls(  0.9485,  0.7486,  0.0000)and(  1.0546,  0.5898,  0.0000)..(  1.1087,  0.4592,  0.0000)--cycle;
\path[facetfront](  0.8485,  0.8485,  0.0000)..controls(  0.8485,  0.8485, -0.1529)and(  0.8745,  0.8745, -0.3052)..(  0.9222,  0.9222, -0.4424)..controls(  0.8544,  0.9597, -0.3974)and(  0.6330,  1.2818, -0.6340)..(  0.5371,  1.2966, -0.6040)..controls(  0.4874,  1.1766, -0.4314)and(  0.4592,  1.1087, -0.2162)..(  0.4592,  1.1087,  0.0000)..controls(  0.5898,  1.0546,  0.0000)and(  0.7486,  0.9485,  0.0000)..(  0.8485,  0.8485,  0.0000)--cycle;
\path[facetfront](  0.4592,  1.1087,  0.0000)..controls(  0.4592,  1.1087, -0.2147)and(  0.4870,  1.1756, -0.4283)..(  0.5360,  1.2940, -0.6003)..controls(  0.4067,  1.3150, -0.5590)and(  0.1424,  1.4647, -0.6774)..(  0.0000,  1.4520, -0.6636)..controls(  0.0000,  1.2920, -0.4824)and(  0.0000,  1.2000, -0.2419)..(  0.0000,  1.2000,  0.0000)..controls(  0.1414,  1.2000,  0.0000)and(  0.3286,  1.1628,  0.0000)..(  0.4592,  1.1087,  0.0000)--cycle;
\path[facetfront](  0.0000,  1.2000,  0.0000)..controls(  0.0000,  1.2000, -0.2419)and(  0.0000,  1.2920, -0.4824)..(  0.0000,  1.4520, -0.6636)..controls( -0.1694,  1.4375, -0.6467)and( -0.3936,  1.3929, -0.6467)..( -0.5557,  1.3415, -0.6636)..controls( -0.4944,  1.1937, -0.4824)and( -0.4592,  1.1087, -0.2419)..( -0.4592,  1.1087,  0.0000)..controls( -0.3286,  1.1628,  0.0000)and( -0.1414,  1.2000,  0.0000)..(  0.0000,  1.2000,  0.0000)--cycle;
\path[facetfront]( -0.4592,  1.1087,  0.0000)..controls( -0.4592,  1.1087, -0.2419)and( -0.4944,  1.1937, -0.4824)..( -0.5557,  1.3415, -0.6636)..controls( -0.6921,  1.2987, -0.6774)and( -0.8790,  1.0593, -0.5590)..( -0.9904,  0.9904, -0.6003)..controls( -0.8998,  0.8998, -0.4283)and( -0.8485,  0.8485, -0.2147)..( -0.8485,  0.8485,  0.0000)..controls( -0.7486,  0.9485,  0.0000)and( -0.5898,  1.0546,  0.0000)..( -0.4592,  1.1087,  0.0000)--cycle;
\path[facetfront]( -0.8485,  0.8485,  0.0000)..controls( -0.8485,  0.8485, -0.2162)and( -0.9005,  0.9005, -0.4314)..( -0.9924,  0.9924, -0.6040)..controls( -1.0753,  0.9420, -0.6340)and( -1.1566,  0.5597, -0.3974)..( -1.2049,  0.4991, -0.4424)..controls( -1.1426,  0.4733, -0.3052)and( -1.1087,  0.4592, -0.1529)..( -1.1087,  0.4592,  0.0000)..controls( -1.0546,  0.5898,  0.0000)and( -0.9485,  0.7486,  0.0000)..( -0.8485,  0.8485,  0.0000)--cycle;
\path[facetfront]( -1.1087,  0.4592,  0.0000)..controls( -1.1087,  0.4592, -0.1576)and( -1.1447,  0.4742, -0.3146)..( -1.2108,  0.5015, -0.4550)..controls( -1.2393,  0.4674, -0.4802)and( -1.2100,  0.0301, -0.1381)..( -1.2140,  0.0000, -0.1640)..controls( -1.2048,  0.0000, -0.1099)and( -1.2000,  0.0000, -0.0550)..( -1.2000,  0.0000,  0.0000)..controls( -1.2000,  0.1414,  0.0000)and( -1.1628,  0.3286,  0.0000)..( -1.1087,  0.4592,  0.0000)--cycle;
\path[facetfront]( -1.2000,  0.0000,  0.0000)..controls( -1.2000,  0.0000, -0.0597)and( -1.2056,  0.0000, -0.1193)..( -1.2164,  0.0000, -0.1779)..controls( -1.2175, -0.0067, -0.1837)and( -1.1776, -0.2180,  0.0055)..( -1.1765, -0.2239,  0.0000)..controls( -1.1914, -0.1463,  0.0000)and( -1.2000, -0.0679,  0.0000)..( -1.2000,  0.0000,  0.0000)--cycle;
\path[facetfront](  3.1858,  0.0000, -0.1656)..controls(  3.1952,  0.0000, -0.1109)and(  3.2000,  0.0000, -0.0555)..(  3.2000,  0.0000, -0.0000)..controls(  3.2000,  0.1810, -0.0000)and(  3.1771,  0.3901, -0.0000)..(  3.1374,  0.5972, -0.0000)..controls(  3.1694,  0.4244, -0.1582)and(  3.2115,  0.1780, -0.0129)..(  3.1858,  0.0000, -0.1656)--cycle;
\path[facetfront]( -3.1265, -0.6515, -0.0000)..controls( -3.0841, -0.8571, -0.0000)and( -3.0254, -1.0581, -0.0000)..( -2.9564, -1.2246, -0.0000)..controls( -2.9564, -1.2246, -0.0555)and( -2.9519, -1.2227, -0.1109)..( -2.9433, -1.2191, -0.1656)..controls( -3.0349, -1.0651, -0.0134)and( -3.0902, -0.8228, -0.1577)..( -3.1265, -0.6515, -0.0000)--cycle;
\path[facetfront](  2.9420,  1.2186,  0.1733)..controls(  2.9087,  1.2048,  0.3736)and(  2.8189,  1.1676,  0.5631)..(  2.6858,  1.1125,  0.7071)..controls(  2.5548,  1.4289,  0.7071)and(  2.2978,  1.8135,  0.7071)..(  2.0556,  2.0556,  0.7071)..controls(  2.1093,  2.1093,  0.6312)and(  2.1538,  2.1538,  0.5427)..(  2.1876,  2.1876,  0.4466)..controls(  2.5385,  1.9962,  0.2160)and(  2.7543,  1.5264,  0.4780)..(  2.9420,  1.2186,  0.1733)--cycle;
\path[facetfront](  2.1856,  2.1856,  0.4522)..controls(  2.1520,  2.1520,  0.5461)and(  2.1083,  2.1083,  0.6327)..(  2.0556,  2.0556,  0.7071)..controls(  1.8135,  2.2978,  0.7071)and(  1.4289,  2.5548,  0.7071)..(  1.1125,  2.6858,  0.7071)..controls(  1.1251,  2.7163,  0.6742)and(  1.1368,  2.7444,  0.6388)..(  1.1475,  2.7702,  0.6015)..controls(  1.5359,  2.7078,  0.4782)and(  1.8609,  2.3585,  0.6620)..(  2.1856,  2.1856,  0.4522)--cycle;
\path[facetfront](  1.1471,  2.7693,  0.6028)..controls(  1.1365,  2.7438,  0.6396)and(  1.1250,  2.7159,  0.6745)..(  1.1125,  2.6858,  0.7071)..controls(  0.7961,  2.8169,  0.7071)and(  0.3425,  2.9071,  0.7071)..(  0.0000,  2.9071,  0.7071)..controls(  0.0000,  2.9212,  0.6930)and(  0.0000,  2.9348,  0.6785)..(  0.0000,  2.9480,  0.6636)..controls(  0.3705,  2.9800,  0.6268)and(  0.7888,  2.8254,  0.7153)..(  1.1471,  2.7693,  0.6028)--cycle;
\path[facetfront](  0.0000,  2.9480,  0.6636)..controls(  0.0000,  2.9348,  0.6785)and(  0.0000,  2.9212,  0.6930)..(  0.0000,  2.9071,  0.7071)..controls( -0.3425,  2.9071,  0.7071)and( -0.7961,  2.8169,  0.7071)..( -1.1125,  2.6858,  0.7071)..controls( -1.1179,  2.6988,  0.6930)and( -1.1231,  2.7114,  0.6785)..( -1.1281,  2.7236,  0.6636)..controls( -0.7983,  2.8274,  0.6974)and( -0.3445,  2.9177,  0.6974)..(  0.0000,  2.9480,  0.6636)--cycle;
\path[facetfront]( -1.1281,  2.7236,  0.6636)..controls( -1.1231,  2.7114,  0.6785)and( -1.1179,  2.6988,  0.6930)..( -1.1125,  2.6858,  0.7071)..controls( -1.4289,  2.5548,  0.7071)and( -1.8135,  2.2978,  0.7071)..( -2.0556,  2.0556,  0.7071)..controls( -2.0787,  2.0787,  0.6745)and( -2.1000,  2.1000,  0.6396)..( -2.1196,  2.1196,  0.6028)..controls( -1.8100,  2.3085,  0.7153)and( -1.4827,  2.6114,  0.6268)..( -1.1281,  2.7236,  0.6636)--cycle;
\path[facetfront]( -2.1202,  2.1202,  0.6015)..controls( -2.1005,  2.1005,  0.6388)and( -2.0789,  2.0789,  0.6742)..( -2.0556,  2.0556,  0.7071)..controls( -2.2978,  1.8135,  0.7071)and( -2.5548,  1.4289,  0.7071)..( -2.6858,  1.1125,  0.7071)..controls( -2.7546,  1.1410,  0.6327)and( -2.8118,  1.1647,  0.5461)..( -2.8556,  1.1828,  0.4522)..controls( -2.6218,  1.4668,  0.6620)and( -2.4552,  1.9140,  0.4782)..( -2.1202,  2.1202,  0.6015)--cycle;
\path[facetfront]( -2.8582,  1.1839,  0.4466)..controls( -2.8140,  1.1656,  0.5427)and( -2.7559,  1.1415,  0.6312)..( -2.6858,  1.1125,  0.7071)..controls( -2.8169,  0.7961,  0.7071)and( -2.9071,  0.3425,  0.7071)..( -2.9071,  0.0000,  0.7071)..controls( -3.0511,  0.0000,  0.5631)and( -3.1483,  0.0000,  0.3736)..( -3.1844,  0.0000,  0.1733)..controls( -3.1288,  0.3561,  0.4780)and( -3.1092,  0.8728,  0.2160)..( -2.8582,  1.1839,  0.4466)--cycle;
\path[facetfront]( -2.9423, -1.2187, -0.1718)..controls( -2.9516, -1.2226, -0.1151)and( -2.9564, -1.2246, -0.0576)..( -2.9564, -1.2246, -0.0000)..controls( -2.8121, -1.5729, -0.0000)and( -2.5293, -1.9962, -0.0000)..( -2.2627, -2.2627, -0.0000)..controls( -2.2627, -2.2627, -0.1539)and( -2.2364, -2.2364, -0.3073)..( -2.1881, -2.1881, -0.4452)..controls( -2.5390, -1.9967, -0.2145)and( -2.7546, -1.5265, -0.4765)..( -2.9423, -1.2187, -0.1718)--cycle;
\path[facetfront]( -2.1861, -2.1861, -0.4508)..controls( -2.2357, -2.2357, -0.3115)and( -2.2627, -2.2627, -0.1560)..( -2.2627, -2.2627, -0.0000)..controls( -1.9962, -2.5293, -0.0000)and( -1.5729, -2.8121, -0.0000)..( -1.2246, -2.9564, -0.0000)..controls( -1.2246, -2.9564, -0.2147)and( -1.1969, -2.8895, -0.4283)..( -1.1478, -2.7711, -0.6003)..controls( -1.5363, -2.7087, -0.4770)and( -1.8614, -2.3590, -0.6606)..( -2.1861, -2.1861, -0.4508)--cycle;
\path[facetfront]( -1.1475, -2.7702, -0.6015)..controls( -1.1967, -2.8891, -0.4293)and( -1.2246, -2.9564, -0.2152)..( -1.2246, -2.9564, -0.0000)..controls( -0.8763, -3.1007, -0.0000)and( -0.3770, -3.2000, -0.0000)..( -0.0000, -3.2000, -0.0000)..controls( -0.0000, -3.2000, -0.2414)and( -0.0000, -3.1084, -0.4814)..( -0.0000, -2.9490, -0.6625)..controls( -0.3705, -2.9810, -0.6256)and( -0.7891, -2.8263, -0.7140)..( -1.1475, -2.7702, -0.6015)--cycle;
\path[facetfront]( -0.0000, -2.9490, -0.6625)..controls( -0.0000, -3.1084, -0.4814)and( -0.0000, -3.2000, -0.2414)..( -0.0000, -3.2000, -0.0000)..controls(  0.3770, -3.2000, -0.0000)and(  0.8763, -3.1007, -0.0000)..(  1.2246, -2.9564, -0.0000)..controls(  1.2246, -2.9564, -0.2414)and(  1.1895, -2.8718, -0.4814)..(  1.1285, -2.7245, -0.6625)..controls(  0.7986, -2.8284, -0.6962)and(  0.3445, -2.9187, -0.6962)..( -0.0000, -2.9490, -0.6625)--cycle;
\path[facetfront](  1.1285, -2.7245, -0.6625)..controls(  1.1895, -2.8718, -0.4814)and(  1.2246, -2.9564, -0.2414)..(  1.2246, -2.9564, -0.0000)..controls(  1.5729, -2.8121, -0.0000)and(  1.9962, -2.5293, -0.0000)..(  2.2627, -2.2627, -0.0000)..controls(  2.2627, -2.2627, -0.2152)and(  2.2112, -2.2112, -0.4293)..(  2.1202, -2.1202, -0.6015)..controls(  1.8106, -2.3091, -0.7140)and(  1.4831, -2.6123, -0.6256)..(  1.1285, -2.7245, -0.6625)--cycle;
\path[facetfront](  2.1209, -2.1209, -0.6003)..controls(  2.2115, -2.2115, -0.4283)and(  2.2627, -2.2627, -0.2147)..(  2.2627, -2.2627, -0.0000)..controls(  2.5293, -1.9962, -0.0000)and(  2.8121, -1.5729, -0.0000)..(  2.9564, -1.2246, -0.0000)..controls(  2.9564, -1.2246, -0.1560)and(  2.9210, -1.2099, -0.3115)..(  2.8563, -1.1831, -0.4508)..controls(  2.6224, -1.4671, -0.6606)and(  2.4559, -1.9146, -0.4770)..(  2.1209, -2.1209, -0.6003)--cycle;
\path[facetfront](  2.8589, -1.1842, -0.4452)..controls(  2.9220, -1.2103, -0.3073)and(  2.9564, -1.2246, -0.1539)..(  2.9564, -1.2246, -0.0000)..controls(  3.1007, -0.8763, -0.0000)and(  3.2000, -0.3770, -0.0000)..(  3.2000, -0.0000, -0.0000)..controls(  3.2000, -0.0000, -0.0576)and(  3.1948, -0.0000, -0.1151)..(  3.1847, -0.0000, -0.1718)..controls(  3.1290, -0.3561, -0.4765)and(  3.1098, -0.8731, -0.2145)..(  2.8589, -1.1842, -0.4452)--cycle;
\path[facetfront]( -3.1855,  0.0000,  0.1671)..controls( -3.1503,  0.0000,  0.3697)and( -3.0526,  0.0000,  0.5616)..( -2.9071,  0.0000,  0.7071)..controls( -2.9071, -0.3425,  0.7071)and( -2.8169, -0.7961,  0.7071)..( -2.6858, -1.1125,  0.7071)..controls( -2.8568, -1.1833,  0.5220)and( -2.9564, -1.2246,  0.2618)..( -2.9564, -1.2246,  0.0000)..controls( -3.0314, -1.0435,  0.0000)and( -3.0943, -0.8215,  0.0000)..( -3.1374, -0.5972,  0.0000)..controls( -3.1695, -0.4240,  0.1586)and( -3.2113, -0.1784,  0.0142)..( -3.1855,  0.0000,  0.1671)--cycle;
\path[facetfront](  1.4929,  0.0000,  0.7071)..controls(  1.3496,  0.0000,  0.5638)and(  1.2527,  0.0000,  0.3756)..(  1.2162,  0.0000,  0.1764)..controls(  1.2173,  0.0071,  0.1824)and(  1.1777,  0.2176, -0.0058)..(  1.1765,  0.2239,  0.0000)..controls(  1.1604,  0.3081,  0.0000)and(  1.1368,  0.3913,  0.0000)..(  1.1087,  0.4592,  0.0000)..controls(  1.1087,  0.4592,  0.2618)and(  1.2082,  0.5005,  0.5220)..(  1.3793,  0.5713,  0.7071)..controls(  1.4466,  0.4088,  0.7071)and(  1.4929,  0.1759,  0.7071)..(  1.4929,  0.0000,  0.7071)--cycle;
\path[facetfront]( -1.4929,  0.0000,  0.7071)..controls( -1.3078,  0.0000,  0.5220)and( -1.2000,  0.0000,  0.2618)..( -1.2000,  0.0000,  0.0000)..controls( -1.2000, -0.0738,  0.0000)and( -1.1899, -0.1601,  0.0000)..( -1.1725, -0.2443,  0.0000)..controls( -1.1712, -0.2504, -0.0056)and( -1.1273, -0.4595,  0.1823)..( -1.1236, -0.4654,  0.1764)..controls( -1.1573, -0.4794,  0.3756)and( -1.2469, -0.5165,  0.5638)..( -1.3793, -0.5713,  0.7071)..controls( -1.4466, -0.4088,  0.7071)and( -1.4929, -0.1759,  0.7071)..( -1.4929,  0.0000,  0.7071)--cycle;
\path[facetfront](  3.2000,  0.0000,  0.0000)..controls(  3.2000,  0.0000,  0.2618)and(  3.0922,  0.0000,  0.5220)..(  2.9071,  0.0000,  0.7071)..controls(  2.9071,  0.3425,  0.7071)and(  2.8169,  0.7961,  0.7071)..(  2.6858,  1.1125,  0.7071)..controls(  2.8202,  1.1682,  0.5616)and(  2.9105,  1.2056,  0.3697)..(  2.9430,  1.2190,  0.1671)..controls(  3.0348,  1.0646,  0.0146)and(  3.0901,  0.8232,  0.1581)..(  3.1265,  0.6515,  0.0000)..controls(  3.1729,  0.4269,  0.0000)and(  3.2000,  0.1968,  0.0000)..(  3.2000,  0.0000,  0.0000)--cycle;
}
\tikzset{ 
/torus/.cd,  
theta/.initial=42, 
phi/.initial=11.25, 
scale/.initial=\unitscale,  
}
\newenvironment{torus}[1][]{%
\tikzset{/torus/.cd,#1}%
\pgfmathsetmacro{\thetapar}{\pgfkeysvalueof{/torus/theta}}%
\pgfmathsetmacro{\phipar}{\pgfkeysvalueof{/torus/phi}}%
\tdplotsetmaincoords{\thetapar}{\phipar}%
\begin{tikzpicture}[tdplot_main_coords,line cap=round,line join=round,scale=\pgfkeysvalueof{/torus/scale},thin,baseline={(0,0,0)},
facetback/.style={fill=cyan!5!black!15,draw=black!30},
facetfront/.style={draw=black},
]
\torusback
}{
\pgfresetboundingbox
\torusfront 
\end{tikzpicture}%
}

\usepackage[initials]{amsrefs}
 
\usepackage[
pdftitle={},
pdfauthor={Mario B. Schulz}, 
pdfborder={0 0 0},
]{hyperref}

\theoremstyle{plain}
\newtheorem{theorem}{Theorem}[section]
\newtheorem*{theorem*}{Theorem}
\newtheorem{lemma}[theorem]{Lemma}
\newtheorem{corollary}[theorem]{Corollary}

\theoremstyle{definition}

\theoremstyle{remark}
\newtheorem{remark}[theorem]{Remark}


\title{Free boundary minimal Möbius bands in~toroids}
\author{Mario B. Schulz}
\date{\vspace*{-3.75ex}} 
 
\newcommand\printaddress{{
\setlength{\parindent}{17pt}
\smallskip
\hfill October 2024
\par
{\scshape Mario B. Schulz}
\newline 
Università di Trento, 
Dipartimento di Matematica, 
via Sommarive 14, 
38123 Povo, 
Italy
\newline
\textit{E-mail address:} 
\texttt{mario.schulz@unitn.it}
}} 
 
\begin{document}

\maketitle

\begin{abstract}
We prove that strictly mean convex toroids contain infinitely many (geometrically distinct) embedded free boundary minimal Möbius bands as well as infinitely many embedded free boundary minimal annuli. 
The surfaces in both families are constructed by means of equivariant variational methods and their areas grow linearly with the order of their symmetry groups.  
\end{abstract}

\section{Introduction}
Given a compact, three-dimensional Riemannian manifold $M$ with nonempty boundary $\partial M$, 
can a compact surface of a given topological type be realised as free boundary minimal surface embedded in the ambient manifold $M$? 
A solution $\Sigma\subset M$ of this problem is a critical point for the area functional among all surfaces in $M$ with boundary constrained to $\partial M$. 
Equivalently, $\Sigma$ has vanishing mean curvature and meets the ambient boundary $\partial M$ orthogonally along its own boundary. 
The existence question is open in general and particularly challenging when aiming for unstable critical points. 
In such cases, min-max methods offer promising avenues for existence results. 
The min-max theory for the area functional was pioneered by Almgren--Pitts \cite{Almgren1965, Pitts1981} and later significantly advanced by Marques--Neves \cite{MarquesNeves2014, MarquesNeves2017}, leading to substantial progress in minimal surface theory. 
For the construction of free boundary minimal surfaces with prescribed topology, it is advantageous to employ the min-max theory by Simon--Smith \cite{Smith1982} and Colding--De~Lellis \cite{ColdingDeLellis2003} because this variant imposes stricter criteria on the regularity of the sweepouts in question. 
Min-max methods have indeed been successful in constructing embedded free boundary minimal surfaces 
of various topological types \cite{GruterJost1986,Ketover2016FBMS,Li2015,CarlottoFranzSchulz2022,FranzSchulz2023,SchulzEllipsoids}. 

Controlling the topology of the resulting limit surface is the main difficulty in the min-max approach.   
In the setting of Simon--Smith min-max theory, general genus bounds have been obtained in \cite{DeLellisPellandini2010,Li2015,Ketover2019}. 
The author's joint work with Franz \cite{FranzSchulz2023} also provides a general result providing control on the number of boundary components of a free boundary minimal surface obtained through min-max methods.   
The main result \cite[Theorem 1.8]{FranzSchulz2023} is applicable in ambient manifolds with strictly mean convex boundary and
establishes lower semicontinuity of the first Betti number along min-max sequences.   
Notably, the theorem does not require the assumption of orientability.
Nonorientable surfaces can not be properly embedded in simply connected ambient $3$-manifolds such as the Euclidean unit ball for topological reasons. 
The goal of this article is to demonstrate that certain three-dimensional ambient manifolds with mean convex boundary and nontrivial topology do contain infinitely many embedded free boundary minimal Möbius bands. 
The inspiration for this work dates back over 80 years to the historical origins of the field. 
Indeed, Courant \cite{Courant1940}*{II.\,\S\,3.2} wrote that ``the most interesting problems with free boundaries are those in which the entire boundary is free on a given closed surface \emph{not} of genus zero, e.\,g. on a \emph{torus}.''

We consider the compact toroid $\ambient$ in $\R^3$ bounded by a rotationally symmetric torus with outer radius $\rho>2$ and unit circular vertical cross-sections as ambient manifold.  
More precisely, 
\begin{align}
\label{eqn:ambient} 
\ambient=\Bigl\{(x_1,x_2,x_3)\in\R^3\st\bigl(\sqrt{x_1^2+x_2^2}-\rho\bigr)^2+x_3^2\leq1\Bigr\}. 
\end{align}
The assumption $\rho>2$ implies that the boundary $\partial\ambient$ is smooth and strictly \emph{mean convex}. 
Indeed, the sum of the principal curvatures $\kappa_1=1$ and $\kappa_2\geq-1/(\rho-1)$ is positive if $\rho>2$. 
Mean convexity is an important condition, ensuring that embedded free boundary minimal surfaces $\Sigma$ in $\ambient$ are necessarily properly embedded, i.\,e.~embedded and satisfying $\partial\Sigma=\Sigma\cap\partial \ambient$. 

Given $n\in\N$, we understand the \emph{dihedral group} $\dih_n$ of order $2n$ as the subgroup of Euclidean isometries acting on $\ambient$ generated by the rotation $\rotation_{e_1}^{\pi}$ of angle $\pi$ around the $x_1$-axis and by the rotation $\rotation_{e_3}^{2\pi/n}$ of angle $2\pi/n$ around the $x_3$-axis.  
Being generated by rotations, the dihedral group is orientation-preserving and contains the cyclic group $\Z_n$ as a subgroup. 
The dihedral group has already proven extremely useful for equivariant min-max constructions in the unit ball \cite{CarlottoFranzSchulz2022,FranzSchulz2023} and in ellipsoids \cite{SchulzEllipsoids}. 
For each $k\in\{0,\ldots,2n-1\}$ we define the horizontal segment 
\begin{align}\label{eqn:axes}
\xi_k\vcentcolon=\Bigl\{\Bigl(r\cos\Bigl(\frac{\pi k}{n}\Bigr),r\sin\Bigl(\frac{\pi k}{n}\Bigr),0\Bigr)\st r\in[\rho-1,\rho+1]\Bigr\}
\end{align}
as visualised in Figure \ref{fig:main} (left image). 
The union $\xi_0\cup\ldots\cup\xi_{2n-1}$ is fixed under the action of the dihedral group $\dih_n$ and $\dih_n$ contains the rotation of angle $\pi$ around $\xi_k$ for any $k\in\{0,\ldots,2n-1\}$. 
We prove the following existence result.

\begin{figure}%
\tdplotsetmaincoords{42}{11.25}%
\begin{tikzpicture}[tdplot_main_coords,line cap=round,line join=round,scale=\unitscale,thin,baseline={(0,0,0)},
facetback/.style={fill=cyan!5!black!15,draw=black!30},
facetfront/.style={draw=black},
]
\torusback
\tdplotsetrotatedcoords{0}{0}{-90}
\begin{scope}[tdplot_rotated_coords,black!75,thick]
\draw(0,0,0)--(2.2-1,0,0)(0,0,0)--(0,2.2-1,0);
\end{scope}
\FBMS{twist8}
\draw[latex-latex,very thick](0,0,0)--++(2.2,0,0)node[pos=0.33,above,inner sep=1.5pt]{$\rho$}; 
\draw[latex-latex,very thick](3.2,0,0)--(2.2,0,0)node[pos=0.66,above]{$1$};
\torusfront
\tdplotsetrotatedcoords{0}{0}{-90}
\begin{scope}[tdplot_rotated_coords,black!75,thick]
\draw[->](0,0,0)--(0,0,4.4)node[inner sep=0,below right]{$~x_3$};
\draw[->](2.2+1,0,0)--(4,0,0)node[inner sep=0,above left]{$x_1~$};
\draw[->](0,2.2+1,0)--(0,4,0)node[below left]{$x_2$};
\begin{scope}[red]
\draw(3.2,0,0)node[above right]{$\xi_0$};
\draw({3.2*cos(180/8)},{3.2*sin(180/8)},0)node[above right]{$\xi_1$};
\draw({3.2*cos(2*180/8)},{3.2*sin(2*180/8)},0)node[anchor=-100]{$\xi_2$};
\draw({3.2*cos(3*180/8)},{3.2*sin(3*180/8)},0)node[above]{$\xi_3$};
\end{scope}
\end{scope}
\end{tikzpicture}%
\hfill
\begin{torus}
\FBMS{twist15}
\end{torus}
\caption{$\dih_n$-equivariant free boundary minimal surfaces $\sol_n$ in $\ambient$ for $\rho=2.2$ and $n\in\{8,15\}$.}%
\label{fig:main}%
\end{figure} 

\begin{theorem}\label{thm:main} 
For every $\rho>2$ and every integer $n\geq5(\rho+1)/2$ the toroid $\ambient$ contains an embedded, $\dih_n$-equivariant free boundary minimal surface $\sol_n$ with the following properties. 
\begin{enumerate}[label={\normalfont(\roman*)},nosep]
\item\label{thm:main-i} $\sol_n$ contains the segment $\xi_k$ for all even $k$ and intersects it orthogonally for all odd $k$. 
\item\label{thm:main-ii} $\sol_n$ is a topological Möbius band if $n$ is odd respectively an annulus if $n$ is even. 
\item\label{thm:main-iii} The area of $\sol_n$ is strictly between $n\pi$ and $2n\pi$.  
\end{enumerate} 
\end{theorem}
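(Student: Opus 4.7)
The plan is to apply equivariant Simon--Smith min-max to a carefully chosen one-parameter $\dih_n$-equivariant sweepout of $\ambient$, and to deduce the topological properties of the resulting free boundary minimal surface $\sol_n$ from the $\dih_n$-symmetry together with the Franz--Schulz semicontinuity result [Theorem~1.8] that is cited in the paper. Property~\ref{thm:main-i} should fall out automatically from equivariance: since $\dih_n$ contains the $\pi$-rotation about each $\xi_k$, any embedded $\dih_n$-equivariant surface is forced either to contain $\xi_k$ or to meet it orthogonally in isolated points, and the desired alternating behaviour will be built into the sweepout and preserved in the limit.

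The key construction step is an equivariant sweepout $\{\Sigma_t\}_{t\in[-1,1]}$ of $\ambient$ whose interior slices are smooth embedded $\dih_n$-equivariant surfaces of the topological type described in~\ref{thm:main-ii}, that contain $\xi_k$ for even $k$ and meet $\xi_k$ orthogonally for odd $k$, and that degenerate at $t=\pm 1$ to a one-dimensional skeleton. A natural model is obtained by intersecting $\ambient$ with a family of helicoid-like graphs twisting $n/2$ times around the core circle of the torus; an odd number of half-twists yields a Möbius band, an even number yields an annulus. The hypothesis $n\geq 5(\rho+1)/2$ is used here to ensure that such slices remain embedded for $\rho$ moderately large and, crucially, that the maximal area in the sweepout is strictly less than $2n\pi$. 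The lower area bound $n\pi$ is then obtained by an intersection-number argument: any competitor in the equivariant saturation must contain a slice whose $\dih_n$-orbit accounts for at least $2n$ approximate half-meridians, each contributing area close to $\pi/2$.

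Next, the equivariant min-max width
\[
W_n = \inf \sup_{t\in[-1,1]} \mathcal{H}^2(\Sigma_t'),
\]
taken over $\dih_n$-equivariant sweepouts homotopic to $\{\Sigma_t\}$, is realised by a smooth embedded $\dih_n$-equivariant free boundary minimal surface $\sol_n\subset\ambient$ by the equivariant Simon--Smith/Colding--De~Lellis theory in the mean convex setting, with $n\pi<W_n<2n\pi$. To pin down the topology in~\ref{thm:main-ii} I would apply the cited [Theorem~1.8] of Franz--Schulz, which gives lower semicontinuity of the first Betti number along min-max sequences: since the sweepout slices have $\beta_1=1$, so does the limit $\sol_n$ (after accounting for multiplicity), and combined with the connected Simon--Smith genus bound this restricts $\sol_n$ to be either an annulus or a Möbius band. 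Nonorientability for odd $n$ is then forced by the parity of the rotation of the normal bundle of $\sol_n$ as one traverses the closed curve obtained by concatenating the arcs of $\sol_n$ that join consecutive $\xi_k$, together with the fact that $\dih_n$ acts by an odd number of normal flips along this orbit when $n$ is odd.

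The main obstacle is the usual multiplicity question: a priori the min-max sequence could converge to $\sol_n$ with multiplicity two, in which case $\sol_n$ would be one-sided with an orientable double cover of area $W_n/2<n\pi$, which would contradict the lower area bound only if that bound is strict enough to exclude all such configurations. The hypothesis $n\geq 5(\rho+1)/2$ must therefore be calibrated so that both the upper bound $W_n<2n\pi$ \emph{and} an equivariant catenoid/gap estimate (preventing multiplicity-two degeneration along the axes $\xi_k$) hold simultaneously. Closing this loop, and separating the Möbius band from annulus scenarios via the parity of $n$ rather than through $\beta_1$, is the technical heart of the argument; everything else is a careful adaptation to the toroidal ambient geometry of the scheme already used in the ball and in ellipsoids.
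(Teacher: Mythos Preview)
Your broad outline---equivariant helicoidal sweepout, Simon--Smith min-max, Franz--Schulz $\beta_1$-semicontinuity, parity argument for orientability---matches the paper's strategy. But several steps you flag as routine are in fact the heart of the proof, and your treatment of them contains genuine gaps.

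\textbf{Ruling out disc components.} The semicontinuity bound $\sum_i\beta_1(\Gamma_i)\leq 1$ does \emph{not} force the limit to be a single annulus or M\"obius band: it allows arbitrarily many disc components $\Gamma_i$ with $\beta_1=0$. The paper closes this by proving (Corollary~\ref{cor:uniqueness}) that every embedded free boundary minimal disc in $\ambient$ is a vertical planar slice of area $\pi$; then $\dih_n$-equivariance forces the total area to be an integer multiple of $n\pi$, contradicting the strict bounds $n\pi<W_\Pi<2n\pi$. You never mention this classification, and without it the topological control collapses.

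\textbf{Lower width and multiplicity.} Your ``$2n$ approximate half-meridians'' heuristic is not how the strict lower bound $W_\Pi>n\pi$ is obtained, and it is not clear it can be made to work. The paper instead lifts to the cylinder and proves (Lemma~\ref{lem:projection}) that the projection of a helicoidal M\"obius band onto the cross-sectional disc $\B^2$ is \emph{surjective}, yielding area $\geq\pi$ per fundamental domain; combined with an isoperimetric argument in small balls on the axes $\zeta_k$ this gives the quantitative width estimate. Your multiplicity discussion is also off: the paper does not appeal to a ``catenoid/gap estimate''. Since every min-max slice contains $\xi_0$, the limit $\sol_n$ contains $\xi_0$ and the multiplicity $m_1$ is \emph{odd} (Ketover). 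The same projection argument gives $\hsd^2(\sol_n)\geq n\pi$ directly, so $m_1\hsd^2(\sol_n)=W_\Pi<2n\pi$ forces $m_1=1$. Finally, the hypothesis $n\geq 5(\rho+1)/2$ is used \emph{only} for the upper area bound on the helicoidal sweepout (via the explicit computation in Lemma~\ref{lem:helicoid-area}), not for embeddedness; and the sweepout endpoints are unions of $n$ vertical discs of total area $n\pi$, not one-dimensional skeleta---this exact value is what makes the width inequality $W_\Pi>n\pi$ nontrivial.
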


For comparison, the Euclidean unit ball $\B^3$ is not known to contain infinite families of pairwise noncongruent, embedded free boundary minimal surfaces all of which have the same topology, even though $k$-tuples of surfaces with these properties have been discovered in \cite{CSWstackings,KarpukhinKusnerMcGrathStern} for every $k\in\N$. 
Moreover, Cardona \cite{Cardona} proved that even in the class of branched minimal immersions, the Euclidean unit ball does not contain any free boundary minimal Möbius bands.  
Therefore, Theorem \ref{thm:main} points at striking differences between free boundary minimal surfaces in toroids when compared to those in the unit ball. 

The free boundary problem solved in Theorem~\ref{thm:main} can also be compared with the Dirichlet problem in toroids: Area minimising Möbius bands with prescribed boundary on a mean convex torus have been found in \cite[Corollary 6.3]{Jost1986I}.  
Complete minimal Möbius bands immersed in $\R^3$ have been studied in \cite{Weber2007,Mira2006,Oliveira1986,Meeks1981}. 
Allowing for codimension $2$, Fraser and Schoen \cite{FraserSchoen2016} characterised the \emph{critical Möbius band}, a free boundary minimal surface in $\B^4$. 

We prove Theorem~\ref{thm:main} in Section~\ref{sec:proof} using an equivariant, $1$-parameter min-max scheme. 
Previous constructions of this type rely on area estimates which are uniform with respect to the order of the symmetry group \cite{CarlottoFranzSchulz2022,Ketover2016FBMS,FranzSchulz2023}. 
In this context, property \ref{thm:main-iii} stating that the area of $\sol_n$ grows linearly with $n$ is a remarkable novelty. 
Area estimates are crucial for the min-max construction:  
The strict lower area bound is related with the width estimate established in Section \ref{sec:width}.   
The strict upper area bound relies on the assumption $n\geq5(\rho+1)/2$ and ensures that $\sol_n$ is not just a $\dih_n$-equivariant union of $2n$ planar discs, as detailed in Sections~\ref{sec:sweepout} and~\ref{sec:proof}. 
We prove these estimates by employing a combination of new ideas tailored to the toroidal setting. 
Another key ingredient is our characterization of all free boundary minimal discs in $\ambient$, which is presented in Section~\ref{sec:uniqueness}.

\paragraph{Acknowledgements.}  
This project has received funding from the European Research Council (ERC) under the European Union's Horizon 2020 research and innovation programme (grant agreement No.~947923).

\section{Construction of helicoidal sweepouts}
\label{sec:sweepout}

The goal of this section is to construct an effective $1$-parameter $\dih_n$-sweepout of $\ambient$, 
that is a family $\{\Sigma_t\}_{t\in[0,1]}$ of $\dih_n$-equivariant subsets of $\ambient$ such that 
$\Sigma_t$ is a smooth, properly embedded surface in $\ambient$ for every $t\in\interval{0,1}$, 
varying smoothly in $t\in\interval{0,1}$ and continuously, in the sense of varifolds, for $t\in[0,1]$ 
(cf. \cite[Definition 1.1]{FranzSchulz2023}). 
The novel idea is to use helicoids to construct suitable sweepout slices in cylinders and then map them to $\ambient$.

\paragraph{Cylinders and translations.} 

We denote the Cartesian coordinates on $\R^3$  by $x_1,x_2,x_3$ and the corresponding standard orthonormal basis by $e_1,e_2,e_3$.  
Let $\B^2=\{(x_1,x_2)\in\R^2\st x_1^2+x_2^2\leq1\}$ denote the closed Euclidean unit disc.  
For the construction of surfaces in $\ambient$ it is convenient 
to consider the Euclidean unit cylinder $Z\vcentcolon=\B^2\times\R\subset\R^3$ and the covering map $\Phi_\height\colon Z\to \ambient$ defined for $\height>0$ by 
\begin{align}\label{eqn:covering}
\Phi_\height(x_1,x_2,x_3)&=\begin{pmatrix}
(\rho+x_1)\cos(2x_3/\height)\\
(\rho+x_1)\sin(2x_3/\height) \\
x_2
\end{pmatrix}.
\end{align}
For any $\alpha\in[0,1]$ and any $\height>0$ we introduce the notation  
\begin{align}\label{eqn:sector}
Z_{\height\alpha}&\vcentcolon=\B^2\times\IntervaL{0,\height\pi\alpha}, &
\ambient_\alpha\vcentcolon=\Phi_\height(Z_{\height\alpha}). 
\end{align}
$\ambient_0$ is a vertical slice through $\ambient$ at toroidal angle $0$. 
For $0<\alpha<1$ the set $\ambient_\alpha$ is a torus sector with toroidal angle $2\pi\alpha$ and the restriction of $\Phi_\height$ to $Z_{\height\alpha}$ is a diffeomorphism onto its image. 
For $\alpha=1$ the restriction of $\Phi_\height$ to $Z_\height=\B^2\times[0,\height\pi]$ is surjective and we simply have $\ambient_1=\ambient$. 
The following lemma facilitates area estimates for surfaces in $\ambient$. 
Here and in the following, $\hsd^2$ denotes the $2$-dimensional Hausdorff measure such that $\hsd^2(\Sigma)$ is the area of a surface $\Sigma\subset\R^3$.

\begin{lemma}
\label{lem:biLipschitz}
Let $\Phi_\height\colon Z_\height\to \ambient$ be as in \eqref{eqn:covering} and 
$\Sigma\subset Z_\height$ any properly embedded surface. Then 
\[ 
\frac{2(\rho-1)}{\height}\hsd^2(\Sigma)
\leq\hsd^2\bigl(\Phi_\height(\Sigma)\bigr)
\leq\frac{2(\rho+1)}{\height}\hsd^2(\Sigma).
\] 
\end{lemma}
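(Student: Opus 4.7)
My plan is to reduce the inequality to a pointwise bound on the tangential Jacobian of $\Phi_\height$ and then apply the area formula $\hsd^2(\Phi_\height(\Sigma))=\int_\Sigma J\Phi_\height\,d\hsd^2$, which is available because $\Phi_\height$ restricted to the interior of $Z_\height$ is a diffeomorphism onto its image.

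First I would compute $d\Phi_\height$ at an arbitrary point $p=(x_1,x_2,x_3)\in Z_\height$. A direct differentiation of \eqref{eqn:covering} shows that the pushforwards $d\Phi_\height(e_1)$, $d\Phi_\height(e_2)$, $d\Phi_\height(e_3)$ are mutually orthogonal in $\R^3$ with norms $1$, $1$, and $c\vcentcolon=2(\rho+x_1)/\height$ respectively. Since $(x_1,x_2)\in\B^2$ forces $\abs{x_1}\leq1$, this stretching factor automatically lies in $c\in\bigl[2(\rho-1)/\height,\,2(\rho+1)/\height\bigr]$.

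Next, for any orthonormal pair $(u,v)$ spanning a tangent $2$-plane at $p$, I would expand $\abs{d\Phi_\height(u)\wedge d\Phi_\height(v)}^2$ in the basis $e_1,e_2,e_3$. Using $\abs{u}=\abs{v}=1$ and $u\cdot v=0$, the expression telescopes to the clean identity
\[
(J\Phi_\height)^2 \;=\; 1+(c^2-1)(u_3^2+v_3^2).
\]
Completing $(u,v)$ to an orthonormal basis of $\R^3$ yields $u_3^2+v_3^2\in[0,1]$, so $J\Phi_\height$ is sandwiched between $\min(1,c)$ and $\max(1,c)$, and hence—in the range of $\height$ relevant to the helicoidal sweepouts, where $\height\in[2(\rho-1),2(\rho+1)]$—inside the required interval $\bigl[2(\rho-1)/\height,\,2(\rho+1)/\height\bigr]$. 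Integrating this pointwise bound over $\Sigma$ against $d\hsd^2$ concludes the proof.

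The argument is essentially a one-page calculation, so the main challenge is organisational rather than conceptual. The one genuine observation that makes everything work is the \emph{orthogonality} of the pushforward frame $\{d\Phi_\height(e_i)\}$: this is a structural feature of the helicoidal form of $\Phi_\height$, and it is precisely what reduces the Jacobian to a single-parameter expression in $c$. Once that orthogonality is in hand, the sandwich of $J\Phi_\height$ between the radial extremes $c_{\min}$ and $c_{\max}$ is immediate.
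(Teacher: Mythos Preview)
Your approach matches the paper's: both compute that $(D\Phi_\height)^\intercal D\Phi_\height$ is diagonal with entries $1,1,c^2$ for $c=2(\rho+x_1)/\height$, and bound the area distortion from this singular-value structure. You are in fact more careful than the paper. Your identity $(J\Phi_\height)^2=1+(c^2-1)(u_3^2+v_3^2)$ makes explicit that the tangential Jacobian lies in $[\min(1,c),\max(1,c)]$, which sits inside $\bigl[2(\rho-1)/\height,\,2(\rho+1)/\height\bigr]$ only when $1$ belongs to that interval, i.e.\ when $\height\in[2(\rho-1),2(\rho+1)]$. The paper's two-line proof records the diagonal form, notes $\abs{\det D\Phi_\height}=\abs{c}$, and writes ``the claim follows'' without addressing this point. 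The restriction you flagged is harmless for the paper's applications---the lemma is only ever invoked with $\height=2(\rho\pm1)$---but for arbitrary $\height>0$ the inequality as stated fails (a horizontal disc $\B^2\times\{h\}$ has area $\pi$ both before and after $\Phi_\height$, yet $2(\rho-1)/\height$ can be made arbitrarily large). So your proof is correct for the range that matters, and your caveat is a genuine correction to the statement rather than an oversight.
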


\begin{proof}
Let $D\Phi_\height$ be the Jacobian matrix of $\Phi_\height$. 
Then $(D\Phi_\height)^{\intercal}\cdot(D\Phi_\height)$ is a diagonal matrix with nonzero entries $1,1,a^2$ for $a=2(\rho+x_1)/\height$. 
In particular, $\abs{\det(D\Phi_\height)}=\abs{a}$. 
Since $\abs{x_1}\leq1$ the claim follows. 
\end{proof}

Let $\translation_{y}(x)=x+y$ denote the translation by $y$ in $\R^3$. 
Given $n\in\N$ let $G_n$ be the subgroup of Euclidean isometries acting on $Z=\B^2\times\R$ generated by the rotation $\rotation_{e_1}^{\pi}$ and the vertical translation $\translation_{(\height\pi/n)e_3}$.  
Let $\sk{\translation_{\height\pi e_3}}$ denote the subgroup generated by $\translation_{\height\pi e_3}$. 
The quotient 
\begin{align}
\label{eqn:quotient}
\tilde{Z}_{\height}&\vcentcolon=Z/\sk{\translation_{\height\pi e_3}}
\end{align}
is a solid torus with mean convex boundary. 
In analogy to \eqref{eqn:axes} we define for each $k\in\Z$
\begin{align}\label{eqn:axes2}
\zeta_k\vcentcolon=\Bigl\{\Bigl(r,0,\frac{\height\pi k}{2n}\Bigr)\st r\in[-1,1]\Bigr\}\subset Z.
\end{align}
We also introduce the notation $\tilde{\zeta}_k\subset\tilde{Z}_{\height}$ for the image of $\zeta_k$ under the quotient map $\varpi\colon Z\to\tilde{Z}_\height$.  
Definitions \eqref{eqn:covering}, \eqref{eqn:axes} and \eqref{eqn:axes2} are compatible in the following sense. 

\begin{lemma}\label{lem:symmetry_compatibility}
The map $\Phi_\height\colon Z\to \ambient$ defined in \eqref{eqn:covering} descends to the quotient $\tilde{Z}_{\height}$, where it becomes a $\dih_n$-equivariant diffeomorphism for any $n\in\N$. 
Moreover, $\Phi_\height(\zeta_k)=\xi_k$ for every $k$. 
\end{lemma}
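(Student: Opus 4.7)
The statement is a routine verification whose essence is the $2\pi$-periodicity of $\cos$ and $\sin$. My plan is to treat the three claims in turn.

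\textbf{Descent and bijectivity.} First I would observe that the last coordinate of $\Phi_\height$ depends on $x_3$ only through $\cos(2x_3/\height)$ and $\sin(2x_3/\height)$; since substituting $x_3 \mapsto x_3 + \height\pi$ shifts the argument by $2\pi$, the map $\Phi_\height$ is invariant under $\translation_{\height\pi e_3}$ and thus descends to $\tilde{Z}_\height$. To show the descended map $\tilde{\Phi}_\height\colon\tilde Z_\height\to\ambient$ is a bijection, I would exhibit an explicit inverse: given $(y_1,y_2,y_3)\in\ambient$, set $r=\sqrt{y_1^2+y_2^2}$ (so $r\in[\rho-1,\rho+1]$ because $\rho>2$), and let $x_1=r-\rho\in[-1,1]$, $x_2=y_3$; since $(\sqrt{y_1^2+y_2^2}-\rho)^2+y_3^2\leq1$ by \eqref{eqn:ambient}, the pair $(x_1,x_2)$ lies in $\B^2$. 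Because $\rho+x_1>0$, the angle $\vartheta$ determined by $(y_1,y_2)=r(\cos\vartheta,\sin\vartheta)$ is unique modulo $2\pi$, hence $x_3\vcentcolon=\height\vartheta/2$ is unique modulo $\height\pi$, i.e.\ well-defined in $\tilde Z_\height$.

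\textbf{Smoothness and regularity.} Both $\Phi_\height$ and its inverse are smooth away from the branch locus of polar coordinates, but the axis $\{(x_1,x_2):x_1=-\rho\}$ is disjoint from $\B^2$ by the hypothesis $\rho>2>1$, so no singular issue arises. The Jacobian computation already performed in the proof of Lemma~\ref{lem:biLipschitz} shows $\lvert\det D\Phi_\height\rvert = 2(\rho+x_1)/\height>0$ throughout $Z$, making $\Phi_\height$ a local diffeomorphism. A bijective local diffeomorphism is a diffeomorphism, so $\tilde{\Phi}_\height$ is a diffeomorphism $\tilde Z_\height\to\ambient$.

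\textbf{Equivariance.} I would verify that $\Phi_\height$ intertwines the two generators of $G_n$ with the two generators of $\dih_n$. For the translation generator, a direct substitution yields
\[
\Phi_\height(x_1,x_2,x_3+\tfrac{\height\pi}{n}) = \rotation_{e_3}^{2\pi/n}\bigl(\Phi_\height(x_1,x_2,x_3)\bigr),
\]
using the angle-addition formula with increment $2\pi/n$. For the rotational generator $\rotation_{e_1}^{\pi}\colon(x_1,x_2,x_3)\mapsto(x_1,-x_2,-x_3)$, one obtains
\[
\Phi_\height(x_1,-x_2,-x_3) = \bigl((\rho+x_1)\cos(2x_3/\height),\,-(\rho+x_1)\sin(2x_3/\height),\,-x_2\bigr),
\]
which is $\rotation_{e_1}^{\pi}$ applied to $\Phi_\height(x_1,x_2,x_3)$. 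Since these two generators of $\dih_n$ lie in the image of the two generators of $G_n$, and since $\langle\translation_{\height\pi e_3}\rangle\subset G_n$ acts trivially on $\tilde Z_\height$ while mapping to the identity in $\dih_n$, the descended map is $\dih_n$-equivariant.

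\textbf{Last claim.} Finally, substituting $x_3=\height\pi k/(2n)$ in \eqref{eqn:covering} gives $2x_3/\height = \pi k/n$, so $\Phi_\height(r,0,\height\pi k/(2n)) = ((\rho+r)\cos(\pi k/n),(\rho+r)\sin(\pi k/n),0)$; letting $r$ range over $[-1,1]$ makes $\rho+r$ range over $[\rho-1,\rho+1]$, which is exactly the parametrisation \eqref{eqn:axes} of $\xi_k$. No real obstacle is expected; the only subtlety is book-keeping the descent to the quotient and making sure the two-generator check suffices for $\dih_n$-equivariance.
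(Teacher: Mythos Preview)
Your proof is correct and follows essentially the same approach as the paper's: both verify descent via $2\pi$-periodicity and check equivariance on the two generators $\translation_{(\height\pi/n)e_3}$ and $\rotation_{e_1}^{\pi}$. You are in fact more thorough than the paper, which omits the explicit verification that the descended map is a bijective local diffeomorphism; your construction of the inverse and appeal to the Jacobian computation from Lemma~\ref{lem:biLipschitz} fill this gap cleanly.
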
	
	
\begin{proof}
Since sine and cosine are $2\pi$-periodic functions, it is clear that $\Phi_\height\circ\translation_{\height\pi e_3}=\Phi_\height$. 
Recalling $\rotation_{e_1}^{\pi}(x_1,x_2,x_3)=(x_1,-x_2,-x_3)$ it follows by definition that $\Phi_\height\circ \rotation_{e_1}^{\pi}=\rotation_{e_1}^{\pi}\circ\Phi_\height$. 
Moreover, 
$\Phi_\height\circ\translation_{(\height\pi/n) e_3}
=\rotation_{e_3}^{2\pi/n}\circ\Phi_\height$
 it is easy to verify.  
We conclude that the group action of the factor group $G_n/\sk{\translation_{\height\pi e_3}}$ on $\tilde{Z}_{\height}$ is isomorphic to the action of $\dih_n$ on $\ambient$. 
The statement $\Phi_\height(\zeta_k)=\xi_k$ follows directly from \eqref{eqn:axes} and \eqref{eqn:axes2}. 
\end{proof}

\paragraph{Free boundary helicoids.}
The helicoid $\heli\vcentcolon=\{(x_1,x_2,x_3)\in\R^3\st x_1\tan x_3=x_2\}$ 
is a complete, embedded, singly-periodic, simply connected minimal surface 
(cf. \cite[§\,1.2.2]{ColdingMinicozzi2011}).  
Its minimality was shown by Jean Baptiste Meusnier \cite{meusnier1785} in 1776.  
The helicoid $\heli$ is is a ruled surface parametrised by $\R^2\ni(u,v)\mapsto(v\cos u,v\sin u,u)$. 
In particular, any rescaling $\height\heli$ for $\height>0$ of the helicoid intersects the boundary of the Euclidean unit cylinder $Z=\B^2\times\R$ orthogonally; 
thus $Z\cap\height\heli$ is a free boundary minimal surface in $Z$. 
Recalling \eqref{eqn:axes2}, we notice that $\zeta_k$ intersects
$Z\cap\tfrac{\height}{n}\heli$ orthogonally for every odd $k$ and that 
$Z\cap\tfrac{\height}{n}\heli\cap\{x_2=0\}$ coincides with the union of all $\zeta_k$ with even $k$.  
Moreover, $Z\cap\tfrac{\height}{n}\heli$ is $G_n$-equivariant and descends to a 
 $\dih_n$-equivariant free boundary minimal surface in the quotient $\tilde{Z}_{\height}$ defined in \eqref{eqn:quotient}. 
The following lemma indicates that the area of a helicoid's period inside a cylinder is decreasing under horizontal translation. 

\begin{lemma}
\label{lem:translation}
Given $\height>0$ let $Z_\height=\B^2\times\IntervaL{0,\height\pi}$ and let 
$\heli_{\height,s}\vcentcolon=(Z_\height+(s,0,0))\cap\height\heli$ for $s\in\R$. 
Then $\hsd^2(\heli_{\height,s})=\hsd^2(\heli_{\height,-s})$ is strictly decreasing in $s>0$.
\end{lemma}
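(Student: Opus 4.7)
The plan is to recast $\hsd^2(\heli_{\height,s})$ as the integral of a radially decreasing weight over the translated disc $D(s)\vcentcolon=\B^2+(s,0)\subset\R^2$, at which point both the symmetry and the strict monotonicity become elementary consequences of disc rearrangement.

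First I would parametrise $\height\heli$ by its rulings via $\phi(r,u)=(r\cos u,r\sin u,\height u)$ for $(r,u)\in\R\times[0,\pi)$, which is a bijection onto $\height\heli\cap\{0\leq x_3<\height\pi\}$. The induced first fundamental form is diagonal with $\abs{\phi_r}=1$ and $\abs{\phi_u}^2=r^2+\height^2$, so the area element on $\height\heli$ is $\sqrt{r^2+\height^2}\,dr\,du$. Observing that $\phi(r,u)\in Z_\height+(s,0,0)$ exactly when $(r\cos u,r\sin u)\in D(s)$, and then passing to Cartesian coordinates $(x_1,x_2)=(r\cos u,r\sin u)$, which is a Jacobian-$\abs{r}$ bijection $\R\times[0,\pi)\to\R^2\setminus\{0\}$, I would arrive at the compact formula
\[
\hsd^2(\heli_{\height,s})=\int_{D(s)}w(x)\,dx,\qquad w(x)\vcentcolon=\sqrt{1+\frac{\height^2}{x_1^2+x_2^2}}.
\]
This integral is finite even for $\abs{s}<1$, since the singularity of $w$ at the origin is integrable.

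The symmetry $\hsd^2(\heli_{\height,s})=\hsd^2(\heli_{\height,-s})$ is then immediate, because the reflection $x_1\mapsto-x_1$ preserves $w$ (which depends only on $\abs{x}$) and sends $D(s)$ to $D(-s)$. For the strict monotonicity on $s>0$, I would exploit the fact that $w$ is strictly decreasing in $\abs{x}$. Fix $0<s<s'$ and let $\sigma$ denote the reflection across the vertical line $\{x_1=(s+s')/2\}$; this Euclidean isometry interchanges $D(s)$ with $D(s')$, and consequently also interchanges $D(s)\setminus D(s')$ with $D(s')\setminus D(s)$. Every $p\in D(s')\setminus D(s)$ satisfies $p_1>(s+s')/2$, hence
\[
\abs{p}^2-\abs{\sigma(p)}^2=(s+s')\bigl(2p_1-(s+s')\bigr)>0,
\]
so $w(\sigma(p))>w(p)$. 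Changing variables via $\sigma$ in the first integral of
\[
\hsd^2(\heli_{\height,s})-\hsd^2(\heli_{\height,s'})=\int_{D(s)\setminus D(s')}w\,dx-\int_{D(s')\setminus D(s)}w\,dx
\]
would yield the strictly positive quantity $\int_{D(s')\setminus D(s)}[w\circ\sigma-w]\,dx>0$, completing the argument.

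The main obstacle is recognising the change of variables that converts the intrinsic area of $\heli_{\height,s}$ into a weighted Euclidean area of $D(s)$; once this identification is in hand the monotonicity is a one-line disc-rearrangement argument, and the symmetry is automatic from the radial invariance of $w$.
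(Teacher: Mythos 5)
Your proof is correct and its engine is the same as the paper's: recast $\hsd^2(\heli_{\height,s})$ as the integral of the strictly radially decreasing weight $w(x)=\sqrt{1+\height^2\abs{x}^{-2}}$ over a translated disc, then compare two such integrals by reflecting across the perpendicular bisector of the two centres. The paper reaches the weighted-area identity by writing $\heli_{\height,s}\cap\{x_2>0\}$ as a graph over the upper half-disc $\Omega_s$ and doubling, whereas you derive it in one step from the ruled parametrisation $\phi(r,u)=(r\cos u,r\sin u,\height u)$ together with the polar change of variables; the two derivations are equivalent, and the paper itself observes this in the remark following its area computation for $\heli_{\height,0}$. Where you genuinely tidy things up is in the strict-monotonicity step: you compare $D(s)$ with $D(s')$ for arbitrary $0<s<s'$ in a single stroke, via the reflection $\sigma$ across $\{x_1=(s+s')/2\}$ and the elementary estimate $\abs{p}^2-\abs{\sigma(p)}^2=(s+s')\bigl(2p_1-(s+s')\bigr)>0$ valid on $D(s')\setminus D(s)$. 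The paper instead compares each $\Omega_s$ only to $\Omega_0$ using the reflection $\reflection_s$ across $\{x_1=s/2\}$, which yields $\hsd^2(\heli_{\height,s})<\hsd^2(\heli_{\height,0})$ for $s>0$, and then needs a secondary argument --- the sign of $\partial_s\bigl[f\circ\reflection_s\bigr]$ combined with the nesting $\Omega_0\setminus\Omega_{s_1}\subset\Omega_0\setminus\Omega_{s_2}$ --- to upgrade this to strict monotonicity in $s$. Your direct pairwise comparison dispenses with that second layer entirely and is, to that extent, the cleaner way to organise the same reflection idea.
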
	
	
\begin{proof}
Given $s\in\R$ let $\Omega_s=\{x=(x_1,x_2)\in\R^2\st (x_1-s)^2+x_2^2\leq1,~x_2>0\}$ be the laterally translated upper half-disc (see Figure \ref{fig:translation}). 
Consider the function $w\colon\Omega_s\to\interval{0,\height\pi}$ given by 
\begin{align}\label{eqn:graph}
w(x)&=\begin{cases}
\height\arctan(x_2/x_1) & \text{ if $x_1>0$, } \\
\height\pi/2 & \text{ if $x_1=0$, } \\
\height\arctan(x_2/x_1)+\height\pi  & \text{ if $x_1<0$, }
\end{cases}	
\end{align}
where $\arctan\colon\R\to\interval{-\frac{\pi}{2},\frac{\pi}{2}}$ is the inverse of $\tan\colon\interval{-\frac{\pi}{2},\frac{\pi}{2}}\to\R$. 
By definition, the graph of $w$ coincides with the surface 
$\heli_{\height,s}^{+}\vcentcolon=\heli_{\height,s}\cap\{x_2>0\}$ and by symmetry of $\heli$ we have 
\begin{align}\label{eqn:20240303-1}
\hsd^2(\heli_{\height,s})
&=2\hsd^2(\heli_{\height,s}^+)
=2\int_{\Omega_s}\sqrt{1+\abs{\nabla w}^2}\,dx
=2\int_{\Omega_s}\sqrt{1+\height^2\abs{x}^{-2}}\,dx 
\end{align}
for every $s\in\R$. 
Let $\reflection_0\colon(x_1,x_2)\mapsto(-x_1,x_2)$ and let $f(x)\vcentcolon=\sqrt{1+\height^2\abs{x}^{-2}}$. 
Then $f\circ\reflection_0=f$ and $\reflection_0(\Omega_s)=\Omega_{-s}$. 
From \eqref{eqn:20240303-1} we then obtain $\hsd^2(\heli_{\height,s})=\hsd^2(\heli_{\height,-s})$ as claimed.  

Let us now assume $s>0$ and consider the reflection $\reflection_s\colon(x_1,x_2)\mapsto(s-x_1,x_2)$ in $\R^2$ across $\{x_1=s/2\}$. 
Then $\Omega_s\setminus\Omega_0=\reflection_s(\Omega_0\setminus\Omega_s)$ 
as viusalised in Figure \ref{fig:translation}. 
Since $f\in L_{\text{loc}}^1(\R^2)$, we have 
\begin{align}\label{eqn:20240303-2}
 \int_{\Omega_s}f\,dx
-\int_{\Omega_0}f\,dx	
&=\int_{\Omega_s\setminus\Omega_0}f\,dx
-\int_{\Omega_0\setminus\Omega_s}f\,dx	
=\int_{\Omega_0\setminus\Omega_s}(f\circ\reflection_s)-f\,dx.
\end{align}
Notice that $x_1<s/2$ and thus $(s-x_1)^2>x_1^2$ for every $x=(x_1,x_2)\in\Omega_0\setminus\Omega_s$. 
Hence, we directly obtain $f\bigl(\reflection_s(x)\bigr)<f(x)$ for every $x\in\Omega_0\setminus\Omega_s$. 
Moreover, for every fixed $x\in\Omega_0\setminus\Omega_s$, 
\begin{align*} 
\frac{\partial}{\partial s}f\bigl(\reflection_s(x)\bigr)
&=\frac{\partial}{\partial s}\sqrt{1+\frac{\height^2}{(s-x_1)^2+x_2^2}}
=\frac{-(s-x_1)\height^2}{\bigl((s-x_1)^2+x_2^2\bigr)^2f\bigl(\reflection_s(x)\bigr)}<0.
\end{align*}
Since additionally, $\Omega_0\setminus\Omega_{s_1}\subset\Omega_0\setminus\Omega_{s_2}$ for every $0<s_1<s_2$ we conclude that quantity \eqref{eqn:20240303-2} is strictly decreasing in $s>0$ and the claim follows. 
\end{proof}	

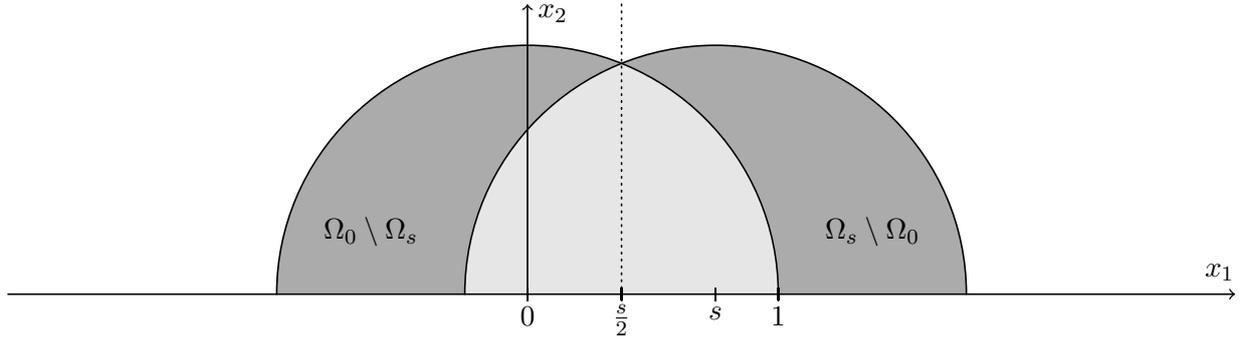
\begin{figure}\centering
\pgfmathsetmacro{\scalepar}{3.3}
\begin{tikzpicture}[line cap=round,line join=round,scale=\scalepar,semithick]
\pgfmathsetmacro{\spar}{0.75}
\pgfmathsetmacro{\xmax}{\textwidth/\scalepar/2cm-0.001}
\pgfmathsetmacro{\ymax}{1.165}
\pgfmathsetmacro{\winkel}{acos(\spar/2)}
\fill[black!33](1,0)arc(0:\winkel:1)arc(180-\winkel:0:1)--cycle;
\fill[black!33,shift={(\spar,0)},xscale=-1](1,0)arc(0:\winkel:1)arc(180-\winkel:0:1)--cycle;
\fill[black!10](1,0)arc(0:\winkel:1)arc(180-\winkel:180:1)--cycle;
\draw plot[vdash](\spar/2,0)node[below]{$\frac{s}{2}$};
\pgfresetboundingbox
\draw plot[vdash](0,0)node[below=1pt]{$0$};
\draw plot[vdash](1,0)node[below=1pt]{$1$};
\draw plot[vdash](\spar,0)node[below=1pt]{$s$};
\draw( 1+\spar/2,0)node[above=3ex]{$\Omega_s\setminus\Omega_0$};
\draw(-1+\spar/2,0)node[above=3ex]{$\Omega_0\setminus\Omega_s$};
\draw[->](\spar/2-\xmax,0)--(\spar/2+\xmax,0)node[above left=1ex and 0,inner sep=0]{$x_1$};
\draw[->](0,0)--(0,\ymax)node[below right,inner sep=0]{~$x_2$};
\draw[dotted](\spar/2,\ymax)--(\spar/2,0);
\draw(1,0)arc(0:180:1);
\draw(1+\spar,0)arc(0:180:1);	
\end{tikzpicture}
\caption{The upper half-disc $\Omega_0$ and its translation $\Omega_s=\Omega_0+(s,0)$.}%
\label{fig:translation}%
\end{figure}

As a direct consequence of Lemma~\ref{lem:translation}, we obtain a proof of the following Morse index estimate. 
While not required for the remainder of the article, this result is of independent interest being an example of index growth not imputable to topology (cf.~\cite{CSWIndexgrowth} for higher-dimensional scenarios).

\pagebreak[2]

\begin{corollary}
\label{cor:index}
Let $\tilde{Z}_{\height}$ be as in \eqref{eqn:quotient} and let 
$\tilde{\heli}_n$ be the $\dih_n$-equivariant free boundary minimal surface in $\tilde{Z}_{\height}$ which lifts to $Z\cap\frac{\height}{n}\heli$ in $Z$. 
Then the Morse index of $\tilde{\heli}_n$ is at least $n$. 
\end{corollary}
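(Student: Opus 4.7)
The plan is to produce $n$ functions $\phi_0,\ldots,\phi_{n-1}$ on $\tilde\heli_n$ with pairwise disjoint supports, each satisfying $Q(\phi_j,\phi_j)<0$ where $Q$ is the second variation quadratic form of $\tilde\heli_n$ as a free boundary minimal surface in $\tilde Z_\height$. Disjoint supports make the Gram matrix $\bigl(Q(\phi_i,\phi_j)\bigr)_{i,j}$ diagonal with strictly negative entries, exhibiting an $n$-dimensional negative subspace and hence Morse index at least $n$.

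First I would cut $\tilde\heli_n$ into $n$ consecutive half-turns $H_0,\ldots,H_{n-1}$ separated by the rulings $\tilde\zeta_0,\tilde\zeta_2,\ldots,\tilde\zeta_{2n-2}$, which all lift in $Z$ to segments parallel to $e_1$. For each $j$ I define $\phi_j\vcentcolon=\langle e_1,\nu\rangle$ on $H_j$ and $\phi_j\equiv 0$ on $\tilde\heli_n\setminus H_j$. Using the helicoidal parametrization $(u,v)\mapsto(\tfrac{\height}{n}v\cos u,\tfrac{\height}{n}v\sin u,\tfrac{\height}{n}u)$ one checks $\phi_j|_{H_j}=-\sin(u)/\sqrt{1+v^2}$ for $u\in[j\pi,(j+1)\pi]$. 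The crucial property is that $\phi_j$ vanishes at both endpoints $u=j\pi$ and $u=(j+1)\pi$, simply because $e_1$ lies tangent to the rulings $\tilde\zeta_{2j}$ and $\tilde\zeta_{2j+2}$ that bound $H_j$. Therefore $\phi_j$ is continuous across those rulings and belongs to $H^1(\tilde\heli_n)$.

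Next I would realize $\phi_j$ as the normal variation of an explicit family. Setting $Z^{(j)}\vcentcolon=\B^2\times[j\height\pi/n,(j+1)\height\pi/n]$, define
\[
V_j^s\vcentcolon=\bigl(\tfrac{\height}{n}\heli-se_1\bigr)\cap Z^{(j)}.
\]
Because $e_1$ is the direction of both the translation and the bounding rulings, the top and bottom boundaries of $V_j^s$ remain exactly $\zeta_{2j}$ and $\zeta_{2j+2}$ for every $s$. Thus replacing $H_j$ by $V_j^s$ while keeping the other half-turns fixed produces a continuous family of properly embedded surfaces in $\tilde Z_\height$, realizing a free boundary variation of $\tilde\heli_n$ whose normal velocity at $s=0$ equals $\phi_j$. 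Invoking Lemma~\ref{lem:translation} with $\height/n$ in place of $\height$ (and a vertical translation isometry to reduce a general $j$ to $j=0$) gives that $\hsd^2(V_j^s)$ is strictly decreasing in $|s|$; combined with smoothness in $s$ and the even symmetry $s\leftrightarrow-s$, this forces $\tfrac{d^2}{ds^2}\big|_{s=0}\hsd^2(V_j^s)<0$.

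The last step, which I expect to be the main technical point, is to identify this second derivative with $Q(\phi_j,\phi_j)$ computed on $\tilde\heli_n\subset\tilde Z_\height$. The free boundary second variation formula applied to $V_j^s$ inside $Z^{(j)}$ produces, a~priori, contributions from all three pieces of $\partial Z^{(j)}$; however the top and bottom faces are flat, so their second fundamental form vanishes, and moreover $\phi_j$ vanishes on the rulings lying in those faces. Consequently only the lateral contribution survives, and this lateral boundary piece coincides with the portion of $\partial\tilde Z_\height$ met by $H_j$, with the same cylindrical second fundamental form. Therefore $Q(\phi_j,\phi_j)=\tfrac{d^2}{ds^2}\big|_{s=0}\hsd^2(V_j^s)<0$, and the pairwise disjoint supports of $\phi_0,\ldots,\phi_{n-1}$ yield the claimed index bound.
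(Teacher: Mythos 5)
You are following essentially the paper's own route: cut $\tilde\heli_n$ along the even rulings into $n$ isometric pieces and, on each piece, exploit the translation variation from Lemma~\ref{lem:translation} to produce a negative direction. The one structural difference is that you exhibit $n$ test functions with pairwise disjoint supports directly, whereas the paper invokes \cite[Corollary~3.2\,(i)]{CSWSpectral} to reduce the index estimate to a single piece with Dirichlet conditions on $\tilde\zeta_0\cup\tilde\zeta_2$; these are interchangeable here, and your observation that $\phi_j=\langle e_1,\nu\rangle$ vanishes at the bounding rulings (because $e_1$ is tangent both to the helicoid and to the ruling there) is precisely what makes $\phi_j$ admissible for the Dirichlet condition.

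There is, however, a genuine gap in the step deducing $\tfrac{d^2}{ds^2}\big|_{s=0}\hsd^2(V_j^s)<0$. Strict decrease in $|s|$ together with smoothness and even symmetry does \emph{not} force the second derivative at $s=0$ to be strictly negative; $f(s)=-s^4$ is a counterexample. Were the second derivative zero, $\phi_j$ would be a null rather than a negative direction and the index count would fail. The conclusion is nevertheless true, but it has to come from a computation rather than from monotonicity. Writing $R(\theta,s)=s\cos\theta+\sqrt{1-s^2\sin^2\theta}$ for the polar radius of the translated upper half-disc and differentiating the coarea expression $\hsd^2(\heli_{\height,s})=2\int_0^\pi\int_0^{R(\theta,s)}\sqrt{r^2+\height^2}\,dr\,d\theta$ twice at $s=0$ (where $R=1$, $\partial_s R=\cos\theta$, $\partial_s^2R=-\sin^2\theta$) gives
\begin{equation*}
\frac{d^2}{ds^2}\Big|_{s=0}\hsd^2(\heli_{\height,s})
=2\int_0^\pi\Bigl[\frac{\cos^2\theta}{\sqrt{1+\height^2}}-\sqrt{1+\height^2}\,\sin^2\theta\Bigr]d\theta
=-\frac{\pi\height^2}{\sqrt{1+\height^2}}<0.
\end{equation*}
You should replace the monotonicity reasoning by this (or an equivalent) second-variation computation. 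The paper's own phrasing, ``strictly maximal area \ldots therefore induces a negative direction'', is compressed at the same spot, so your argument is not in worse shape than the original, but the inference should not be presented as if it followed from monotonicity and parity alone.
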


\begin{proof}
The surface $\tilde{\heli}_n$ can be divided into $n$ pairwise isometric domains $\Omega_1,\ldots,\Omega_n$ by removing the segments $\tilde\zeta_k$ for all even $k$, recalling \eqref{eqn:axes2}.  
By \cite[Corollary~3.2\,(i)]{CSWSpectral} the Morse index of $\tilde{\heli}_n$ is bounded from below by $n$ times the index of the Jacobi operator $J$ on $\Omega_1$ subject to the Dirichlet boundary condition on $\tilde\zeta_0\cup\tilde\zeta_2$.   
We apply Lemma \ref{lem:translation} with $\height/n$ in place of $\height$ to obtain a variation $\Omega_1(s)$ of $\Omega_1$ defined for some $-\varepsilon<s<\varepsilon$, with (strictly) maximal area at $s=0$.  
The corresponding variation vector field at $s=0$ vanishes on the segments $\tilde{\zeta}_0$ and $\tilde{\zeta}_2$ and therefore induces a negative direction for $J$ with the aforementioned boundary condition. 
\end{proof}

\begin{remark}
Let $M$ be any compact, three-dimensional ambient manifold with nonnegative Ricci curvature and \emph{convex} boundary.  
Lima \cite[Theorem~4]{Lima2022} proved that the Morse index of any free boundary minimal surface $\Sigma$ in $M$ is bounded by its topological complexity. 
Corollary~\ref{cor:index} shows that strict convexity of $\partial M$ is a necessary assumption for this result because for any $n\in\N$ the free boundary minimal surface $\tilde{\heli}_{2n}$ in $\tilde{Z}_{\height}$ 
has the topology of an annulus.  
\end{remark}

\begin{lemma}[Area of a piece of a helicoid]
\label{lem:helicoid-area}
Let $\heli_{\height,0}$ be as in Lemma \ref{lem:translation}. 
Then 
\begin{align*}
\hsd^2(\heli_{\height,0})
&=\pi\sqrt{1+\height^2}+\pi\height^2\log\Bigl(\height^{-1}+\sqrt{\height^{-2}+1}\Bigr).
\end{align*}
Moreover, $\hsd^2(\heli_{\height,0})<2\pi$ if $\height\leq\frac{4}{5}$. 
\end{lemma}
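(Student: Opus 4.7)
The formula is essentially a direct computation starting from the integral representation of the area established inside the proof of Lemma~\ref{lem:translation}. My plan is as follows.

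First I would specialise equation \eqref{eqn:20240303-1} to the case $s=0$, obtaining
\[
\hsd^2(\heli_{\height,0})=2\int_{\Omega_0}\sqrt{1+\height^2\abs{x}^{-2}}\,dx,
\]
where $\Omega_0=\{(x_1,x_2)\st x_1^2+x_2^2\leq1,\,x_2>0\}$ is the upper half-disc. Since the integrand depends only on $\abs{x}$, the natural next step is to switch to polar coordinates $(r,\theta)\in[0,1]\times[0,\pi]$, which yields
\[
\hsd^2(\heli_{\height,0})=2\int_0^\pi\!\int_0^1\sqrt{1+\height^2 r^{-2}}\,r\,dr\,d\theta=2\pi\int_0^1\sqrt{r^2+\height^2}\,dr.
\]

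Next I would apply the standard antiderivative
\[
\int\sqrt{r^2+a^2}\,dr=\frac{r}{2}\sqrt{r^2+a^2}+\frac{a^2}{2}\log\bigl(r+\sqrt{r^2+a^2}\bigr)+C,
\]
evaluated between $r=0$ and $r=1$ with $a=\height$. After substituting and dividing out common factors inside the logarithm (writing $\log\bigl((1+\sqrt{1+\height^2})/\height\bigr)=\log(\height^{-1}+\sqrt{\height^{-2}+1})$), the asserted closed-form expression for $\hsd^2(\heli_{\height,0})$ drops out.

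Finally, to establish the bound $\hsd^2(\heli_{\height,0})<2\pi$ for $\height\leq4/5$, I would note that the map
\[
\height\mapsto\pi\sqrt{1+\height^2}+\pi\height^2\log\bigl(\height^{-1}+\sqrt{\height^{-2}+1}\bigr)
\]
is strictly increasing on $\interval{0,\infty}$ (this follows either by differentiating or, more conceptually, by observing that $\height\mapsto\hsd^2(\heli_{\height,0})$ is monotone from its integral representation). Consequently, it suffices to verify the estimate at the endpoint $\height=4/5$, which is a short numerical check: with $\sqrt{1+\height^2}=\sqrt{41}/5$ and $\height^{-1}+\sqrt{\height^{-2}+1}=(5+\sqrt{41})/4$, the resulting quantity rounds to roughly $1.95\pi<2\pi$. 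I do not anticipate any real obstacles here; the only point requiring care is to present the numerical estimate in a form (e.g., via explicit rational bounds on $\sqrt{41}$ and $\log((5+\sqrt{41})/4)$) that is rigorous rather than merely suggestive.
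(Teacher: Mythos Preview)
Your proposal is correct. It follows what the paper itself flags in the Remark immediately after Lemma~\ref{lem:helicoid-area} as the ``alternative approach'': you start from the graph representation \eqref{eqn:20240303-1} and integrate in polar coordinates, whereas the paper's own proof reaches the same one-variable integral $2\pi\int_0^1\sqrt{r^2+\height^2}\,dr$ via the coarea formula applied to the radial function $\varrho(x)=\sqrt{x_1^2+x_2^2}$ on the helicoid (using that the level sets are double helices of length $2\pi\sqrt{r^2+\height^2}$ and that $|\nabla^{\heli_{\height,0}}\varrho|=1$). From that point on---the antiderivative, the monotonicity in $\height$, and the numerical check at $\height=4/5$---the two arguments are identical; your route is arguably more elementary since it recycles \eqref{eqn:20240303-1} directly, while the paper's coarea argument is self-contained and makes the geometry of the helicoid's foliation by helices explicit.
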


\begin{proof}
We recall that $\heli_{\height,0}$ is parametrised by $\phi(u,v)=(v\cos u,v\sin u,\height u)$ defined on $\Interval{0,\pi}\times[-1,1]$. 
In particular, $\heli_{\height,0}$ intersects the positive level sets of the function 
\[
\varrho(x_1,x_2,x_3)\vcentcolon=\sqrt{x_1^2+x_2^2}
\]
orthogonally. 
Hence, $\abs{\nabla^{\heli_{\height,0}}\varrho}=\abs{\nabla\varrho}=1$, where $\nabla^{\heli_{\height,0}}\varrho$ denotes the tangential gradient of the function $\varrho$ restricted to $\heli_{\height,0}$. 
For every $0<r\leq1$ the set $\heli_{\height,0}\cap\varrho^{-1}(r)$ is a double helix
parametrised by $\Interval{0,\pi}\ni u\mapsto\phi(u,\pm r)$. 
Its total length is equal to 
\begin{align*}
\hsd^1\bigl(\heli_{\height,0}\cap\varrho^{-1}(r)\bigr)
&=2\pi\sqrt{r^2+\height^2}.
\end{align*}
The function $r\mapsto2\sqrt{r^2+\height^2}$ has a primitive given by 
$r\sqrt{r^2+\height^2}+\height^2\log\bigl(r+\sqrt{r^2+\height^2}\bigr)$. 
The coarea formula then implies 
\begin{align}
\hsd^2(\heli_{\height,0})
=\int^{1}_{0}\biggl(\int_{\heli_{\height,0}\cap\varrho^{-1}(r)}\frac{1}{\abs{\nabla^{\heli_{\height,0}}\varrho}}d\hsd^1\biggr)\,dr	
&=\pi\int^{1}_{0}2\sqrt{r^2+\height^2}\,dr
\label{eqn:20240304}
\\\notag
&=\pi\sqrt{1+\height^2}+\pi\height^2\log\Bigl(\height^{-1}+\sqrt{\height^{-2}+1}\Bigr). 
\end{align}
The area \eqref{eqn:20240304} is clearly increasing in $\height$. 
For $\height=\frac{4}{5}$ we evaluate $\hsd^2(\heli_{\frac{4}{5},0})\approx1.95\pi<2\pi$. 
\end{proof}

\begin{remark}
An alternative approach is to compute the area of $\heli_{\height,0}$ as in equation \eqref{eqn:20240303-1}. 
Indeed, integrating $\sqrt{1+\height^2\abs{x}^{-2}}$ in polar coordinates over the unit disc yields the same integral as in equation \eqref{eqn:20240304}. 
Numerically, we can solve the equation $\hsd^2(\heli_{\height,0})=2\pi$ for $\height\approx0.829$.
\end{remark}

\begin{lemma}[Optimal sweepout for the helicoid]
\label{lem:sweepout}
Given any $\height>0$, let $Z_\height$ and $\heli_{\height,s}$ be as in Lemma \ref{lem:translation} and 
let $\zeta_0,\zeta_1,\zeta_2\subset Z_\height$ be as in \eqref{eqn:axes2} for $n=1$.
There exists a sweepout $\{\Gamma_t\}_{t\in[0,1]}$ of $Z_{\height}$ with the following properties.
\begin{enumerate}[label={\normalfont(\roman*)}] 
\item\label{lem:sweepout-area} $\hsd^2(\Gamma_0)=\hsd^2(\Gamma_1)=\pi$ and $\hsd^2(\Gamma_t)\leq\hsd^2(\heli_{\height,0})$ for every $0<t<1$.  
\item\label{lem:sweepout-sym} $\Gamma_t$ contains the segments $\zeta_{0}$ and $\zeta_{2}$, intersects $\zeta_1$ orthogonally, and is equivariant with respect to rotation by angle $\pi$ around $\zeta_1$ for every $0<t<1$. 
\item\label{lem:sweepout-topo} 
$\Gamma_t$ descends to a smooth, properly embedded, $\dih_1$-equivariant Möbius band $\tilde\Gamma_t$ in the quotient $\tilde{Z}_{\height}$ defined in~\eqref{eqn:quotient} for every $0<t<1$. 
\item\label{lem:sweepout-perim}  For every $0<t<1$ there exists a set $F_t\subset Z_\height$ of finite perimeter such that 
\begin{itemize}[nosep]
\item 
$\Gamma_t$ is the relative perimeter of $F_t$ in the sense that $\Gamma_t\setminus\partial Z_\height=\partial F_t\setminus\partial Z_\height$;
\item $\hsd^3(F_t)\to0$ as $t\to0$ and $\hsd^3(F_t)\to\hsd^3(Z_\height)$ as $t\to1$; 
\item $t\to t_0$ implies $\hsd^3(F_t\symdiff F_{t_0})\to0$, 
where $F_t\symdiff F_{t_0}\vcentcolon=(F_{t}\setminus F_{t_0})\cup(F_{t_0}\setminus F_{t})$. 
\end{itemize}
\end{enumerate}
\end{lemma}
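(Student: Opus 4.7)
The plan is to take $\Gamma_t$ to be a horizontal translate of the helicoid inside $Z_\height$. For each $s\in\R$ let
\begin{equation*}
\heli'_s := (\height\heli + (s,0,0)) \cap Z_\height.
\end{equation*}
Two geometric facts make this family work. First, the horizontal slice of $\height\heli$ at each of the heights $x_3\in\{0,\height\pi\}$ is exactly the $x_1$-axis, which is invariant under translation in the $e_1$-direction, so $\heli'_s\supset\zeta_0\cup\zeta_2$ for every $s\in\R$. Second, after the isometric change of variable $y := x-(s,0,0)$ the surface $\heli'_s$ becomes $\heli_{\height,-s}$ of Lemma~\ref{lem:translation}, so $\hsd^2(\heli'_s)=\hsd^2(\heli_{\height,-s})$ is strictly maximised at $s=0$; and as $|s|\to\infty$ the integrand $\sqrt{1+\height^2\abs{x}^{-2}}$ on the receding half-disc $\Omega_{-s}$ of Lemma~\ref{lem:translation} tends uniformly to $1$, so dominated convergence gives $\hsd^2(\heli'_s)\to 2\cdot\tfrac{\pi}{2}=\pi$.

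Pick a smooth strictly increasing diffeomorphism $\sigma\colon(0,1)\to\R$ with $\sigma(1/2)=0$ and $\sigma(t)\to\mp\infty$ as $t\to 0,1$, and set $\Gamma_t := \heli'_{\sigma(t)}$. Property~\ref{lem:sweepout-area} is then immediate from the two facts above. For~\ref{lem:sweepout-sym}, the mid-height slice of $\heli'_s$ is a chord through $(s,0,\height\pi/2)$ in the $e_2$-direction, hence orthogonal to $\zeta_1$ (and meeting it precisely when $|s|\leq 1$); moreover $\rotation_{\zeta_1}^{\pi}$ preserves both $\height\heli$ and the translation vector $(s,0,0)$, so $\heli'_s$ inherits the $\pi$-symmetry about $\zeta_1$. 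For~\ref{lem:sweepout-topo}, $\heli'_s$ commutes with $\translation_{\height\pi e_3}$ and hence descends to $\tilde Z_\height$; upstairs it is a single embedded disc when $|s|<1$ and two disjoint discs when $|s|>1$, and a tangent-plane computation at $u\in\{0,\pi\}$ shows that in the quotient the two discs glue smoothly along $\tilde\zeta_0$ to form a Möbius band. For~\ref{lem:sweepout-perim} set $F_t := \{(x_1-\sigma(t))\sin(x_3/\height) - x_2\cos(x_3/\height) < 0\}\cap Z_\height$: each $x_3$-slice is a circular segment whose area varies smoothly and monotonically with $s$, covering the full range $[0,\pi]$, which yields the volume limits $0$ and $\hsd^3(Z_\height)$ together with the $L^1$-continuity.

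The main obstacle will be the two isolated parameter values where $|\sigma(t)|=1$: there $\heli'_s$ is tangent to $\partial Z_\height$ at $(\pm 1,0,\height\pi/2)$, the parameterisation domain degenerates to a bow-tie whose cross-point maps to this tangency, and the smooth-embedding property fails. I would fix this by a localised $\dih_1$-equivariant modification of $\Gamma_t$ in a small neighbourhood of each of these two critical times, desingularising the pinch by pushing it slightly off $\partial Z_\height$ while keeping the $\rotation_{\zeta_1}^{\pi}$-symmetry. Since Lemma~\ref{lem:translation} yields the strict bound $\hsd^2(\heli'_s)<\hsd^2(\heli_{\height,0})$ for $s\neq 0$, there is slack in the area bound to absorb the perturbation; and one can arrange it to be supported away from $\zeta_0\cup\zeta_1\cup\zeta_2$, so that the containment and orthogonality conditions are preserved.
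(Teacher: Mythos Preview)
Your approach has a genuine gap: for $|s|>1$ the quotient surface $\tilde\heli'_s\subset\tilde Z_\height$ is a \emph{disc}, not a Möbius band, so property~\ref{lem:sweepout-topo} fails on the whole range $|\sigma(t)|>1$. Once the translated axis $\{x_1=s,\ x_2=0\}$ leaves the cylinder, the slice of $\height\heli+(s,0,0)$ at height $x_3=c$ is nonempty only when $|\sin(c/\height)|\leq 1/|s|$; hence in the infinite cylinder $Z$ the surface breaks into infinitely many components, one near each height $k\height\pi$, and each component is a topological disc whose height--range has length $2\height\arcsin(1/|s|)<\height\pi$. The quotient map $\varpi$ is therefore injective on any one component, so $\tilde\heli'_s$ is that single disc, with boundary a \emph{contractible} curve on $\partial\tilde Z_\height$. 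Your assertion that the two half--discs in $Z_\height$ ``glue smoothly along $\tilde\zeta_0$ to form a Möbius band'' is thus incorrect: the gluing is indeed smooth (it merely reassembles one full component of the periodic helicoid), but two discs glued along a boundary arc always produce a disc. Correspondingly, $\Gamma_t$ no longer intersects $\zeta_1$ at all for these $t$, so \ref{lem:sweepout-sym} is lost as well.

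This is not repairable by the local desingularisation you sketch near $|s|=1$: the boundary curve of $\tilde\heli'_s$ fails to wind around $\tilde Z_\height$ for \emph{every} $|s|>1$, so no small perturbation can restore the Möbius band topology there. The paper circumvents exactly this obstruction by never letting $|s|$ reach $1$. Instead it fixes some $s_0<1$, and for $|s|\in[s_0,1)$ replaces $\heli_{\height,s}$ by a surface $Y_{\lambda,s}$ obtained by vertically compressing each half of the helicoid via $f_\lambda(x_1,x_2,x_3)=(x_1,x_2,\lambda x_3)$ and joining the two compressed halves across $\zeta_1$ with a narrow rectangular strip of width $2\sqrt{1-s^2}$. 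Sending $\lambda\to0$ as $s\to1$ collapses $Y_{\lambda,s}$ to two half--discs of total area~$\pi$, while the strip keeps the surface connected through $\zeta_1$ and hence preserves both the orthogonal intersection in~\ref{lem:sweepout-sym} and the Möbius band topology in~\ref{lem:sweepout-topo} for all $0<t<1$.
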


\begin{figure}
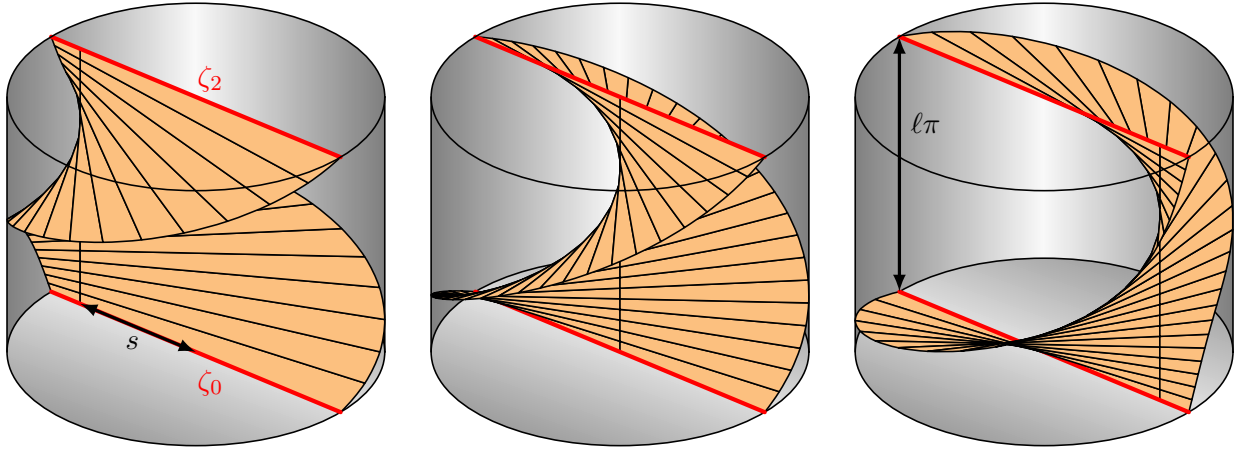
%
\pgfmathsetmacro{\phiO}{40} 
\pgfmathsetmacro{\thetaO}{60}
\tdplotsetmaincoords{\thetaO}{\phiO}
\cylinder{
\path[facet]( -1.0000, -0.0000,  0.0000)..controls( -1.0000, -0.0081,  0.0201)and( -0.9999, -0.0181,  0.0453)..( -0.9997, -0.0263,  0.0654)--( -0.8000,  0.0000,  0.0654)..controls( -0.8000,  0.0000,  0.0453)and( -0.8000,  0.0000,  0.0201)..( -0.8000,  0.0000,  0.0000)--cycle;
\path[facet]( -0.8000,  0.0000,  0.0000)..controls( -0.8000,  0.0000,  0.0201)and( -0.8000,  0.0000,  0.0453)..( -0.8000,  0.0000,  0.0654)--(  0.9724,  0.2333,  0.0654)..controls(  0.9893,  0.1629,  0.0453)and(  1.0000,  0.0725,  0.0201)..(  1.0000,  0.0000,  0.0000)--cycle;
\draw[axis](-1,0,0)--++(2,0,0) node[pos=0.55,below]{$\zeta_0$};
\draw[latex-latex,very thick](0,0,0)--++(-0.8,0,0)node[pos=0.55,below]{$s$};
\path[facet]( -0.9997, -0.0263,  0.0654)..controls( -0.9994, -0.0344,  0.0856)and( -0.9990, -0.0448,  0.1108)..( -0.9986, -0.0532,  0.1309)--( -0.8000,  0.0000,  0.1309)..controls( -0.8000,  0.0000,  0.1108)and( -0.8000,  0.0000,  0.0856)..( -0.8000,  0.0000,  0.0654)--cycle;
\path[facet]( -0.8000,  0.0000,  0.0654)..controls( -0.8000,  0.0000,  0.0856)and( -0.8000,  0.0000,  0.1108)..( -0.8000,  0.0000,  0.1309)--(  0.8914,  0.4532,  0.1309)..controls(  0.9241,  0.3889,  0.1108)and(  0.9555,  0.3037,  0.0856)..(  0.9724,  0.2333,  0.0654)--cycle;
\path[facet]( -0.9986, -0.0532,  0.1309)..controls( -0.9981, -0.0617,  0.1510)and( -0.9974, -0.0725,  0.1762)..( -0.9967, -0.0815,  0.1963)--( -0.8000,  0.0000,  0.1963)..controls( -0.8000,  0.0000,  0.1762)and( -0.8000,  0.0000,  0.1510)..( -0.8000,  0.0000,  0.1309)--cycle;
\path[facet]( -0.8000,  0.0000,  0.1309)..controls( -0.8000,  0.0000,  0.1510)and( -0.8000,  0.0000,  0.1762)..( -0.8000,  0.0000,  0.1963)--(  0.7624,  0.6472,  0.1963)..controls(  0.8087,  0.5926,  0.1762)and(  0.8587,  0.5175,  0.1510)..(  0.8914,  0.4532,  0.1309)--cycle;
\path[facet]( -0.9967, -0.0815,  0.1963)..controls( -0.9959, -0.0904,  0.2165)and( -0.9948, -0.1021,  0.2417)..( -0.9937, -0.1118,  0.2618)--( -0.8000,  0.0000,  0.2618)..controls( -0.8000,  0.0000,  0.2417)and( -0.8000,  0.0000,  0.2165)..( -0.8000,  0.0000,  0.1963)--cycle;
\path[facet]( -0.8000,  0.0000,  0.1963)..controls( -0.8000,  0.0000,  0.2165)and( -0.8000,  0.0000,  0.2417)..( -0.8000,  0.0000,  0.2618)--(  0.5937,  0.8047,  0.2618)..controls(  0.6506,  0.7627,  0.2417)and(  0.7161,  0.7017,  0.2165)..(  0.7624,  0.6472,  0.1963)--cycle;
\path[facet]( -0.9937, -0.1118,  0.2618)..controls( -0.9926, -0.1216,  0.2819)and( -0.9910, -0.1344,  0.3071)..( -0.9894, -0.1453,  0.3272)--( -0.8000,  0.0000,  0.3272)..controls( -0.8000,  0.0000,  0.3071)and( -0.8000,  0.0000,  0.2819)..( -0.8000,  0.0000,  0.2618)--cycle;
\path[facet]( -0.8000,  0.0000,  0.2618)..controls( -0.8000,  0.0000,  0.2819)and( -0.8000,  0.0000,  0.3071)..( -0.8000,  0.0000,  0.3272)--(  0.3964,  0.9181,  0.3272)..controls(  0.4603,  0.8905,  0.3071)and(  0.5368,  0.8467,  0.2819)..(  0.5937,  0.8047,  0.2618)--cycle;
\path[facet]( -0.9894, -0.1453,  0.3272)..controls( -0.9878, -0.1562,  0.3474)and( -0.9854, -0.1707,  0.3726)..( -0.9831, -0.1831,  0.3927)--( -0.8000,  0.0000,  0.3927)..controls( -0.8000,  0.0000,  0.3726)and( -0.8000,  0.0000,  0.3474)..( -0.8000,  0.0000,  0.3272)--cycle;
\path[facet]( -0.8000,  0.0000,  0.3272)..controls( -0.8000,  0.0000,  0.3474)and( -0.8000,  0.0000,  0.3726)..( -0.8000,  0.0000,  0.3927)--(  0.1831,  0.9831,  0.3927)..controls(  0.2499,  0.9707,  0.3726)and(  0.3326,  0.9456,  0.3474)..(  0.3964,  0.9181,  0.3272)--cycle;
\path[facet]( -0.9831, -0.1831,  0.3927)..controls( -0.9808, -0.1955,  0.4128)and( -0.9773, -0.2122,  0.4380)..( -0.9740, -0.2267,  0.4581)--( -0.8000,  0.0000,  0.4581)..controls( -0.8000,  0.0000,  0.4380)and( -0.8000,  0.0000,  0.4128)..( -0.8000,  0.0000,  0.3927)--cycle;
\path[facet]( -0.8000,  0.0000,  0.3927)..controls( -0.8000,  0.0000,  0.4128)and( -0.8000,  0.0000,  0.4380)..( -0.8000,  0.0000,  0.4581)--( -0.0331,  0.9995,  0.4581)..controls(  0.0325,  1.0016,  0.4380)and(  0.1163,  0.9955,  0.4128)..(  0.1831,  0.9831,  0.3927)--cycle;
\path[facet]( -0.9740, -0.2267,  0.4581)..controls( -0.9706, -0.2412,  0.4783)and( -0.9655, -0.2609,  0.5035)..( -0.9606, -0.2781,  0.5236)--( -0.8000,  0.0000,  0.5236)..controls( -0.8000,  0.0000,  0.5035)and( -0.8000,  0.0000,  0.4783)..( -0.8000,  0.0000,  0.4581)--cycle;
\path[facet]( -0.8000,  0.0000,  0.4581)..controls( -0.8000,  0.0000,  0.4783)and( -0.8000,  0.0000,  0.5035)..( -0.8000,  0.0000,  0.5236)--( -0.2394,  0.9709,  0.5236)..controls( -0.1786,  0.9859,  0.5035)and( -0.0987,  0.9973,  0.4783)..( -0.0331,  0.9995,  0.4581)--cycle;
\path[facet]( -0.9606, -0.2781,  0.5236)..controls( -0.9556, -0.2953,  0.5437)and( -0.9481, -0.3188,  0.5689)..( -0.9406, -0.3395,  0.5890)--( -0.8000,  0.0000,  0.5890)..controls( -0.8000,  0.0000,  0.5689)and( -0.8000,  0.0000,  0.5437)..( -0.8000,  0.0000,  0.5236)--cycle;
\path[facet]( -0.8000,  0.0000,  0.5236)..controls( -0.8000,  0.0000,  0.5437)and( -0.8000,  0.0000,  0.5689)..( -0.8000,  0.0000,  0.5890)--( -0.4251,  0.9052,  0.5890)..controls( -0.3720,  0.9301,  0.5689)and( -0.3002,  0.9559,  0.5437)..( -0.2394,  0.9709,  0.5236)--cycle;
\path[facet]( -0.9406, -0.3395,  0.5890)..controls( -0.9332, -0.3601,  0.6092)and( -0.9219, -0.3884,  0.6344)..( -0.9107, -0.4131,  0.6545)--( -0.8000,  0.0000,  0.6545)..controls( -0.8000,  0.0000,  0.6344)and( -0.8000,  0.0000,  0.6092)..( -0.8000,  0.0000,  0.5890)--cycle;
\path[facet]( -0.8000,  0.0000,  0.5890)..controls( -0.8000,  0.0000,  0.6092)and( -0.8000,  0.0000,  0.6344)..( -0.8000,  0.0000,  0.6545)--( -0.5821,  0.8131,  0.6545)..controls( -0.5387,  0.8442,  0.6344)and( -0.4781,  0.8803,  0.6092)..( -0.4251,  0.9052,  0.5890)--cycle;
\path[facet]( -0.9107, -0.4131,  0.6545)..controls( -0.8995, -0.4378,  0.6746)and( -0.8826, -0.4714,  0.6998)..( -0.8659, -0.5003,  0.7199)--( -0.8000,  0.0000,  0.7199)..controls( -0.8000,  0.0000,  0.6998)and( -0.8000,  0.0000,  0.6746)..( -0.8000,  0.0000,  0.6545)--cycle;
\path[facet]( -0.8000,  0.0000,  0.6545)..controls( -0.8000,  0.0000,  0.6746)and( -0.8000,  0.0000,  0.6998)..( -0.8000,  0.0000,  0.7199)--( -0.7069,  0.7073,  0.7199)..controls( -0.6735,  0.7407,  0.6998)and( -0.6256,  0.7820,  0.6746)..( -0.5821,  0.8131,  0.6545)--cycle;
\path[facet]( -0.8659, -0.5003,  0.7199)..controls( -0.8492, -0.5292,  0.7401)and( -0.8000, -0.6000,  0.7854)..( -0.8000, -0.6000,  0.7854)--( -0.8000,  0.0000,  0.7854)..controls( -0.8000,  0.0000,  0.7854)and( -0.8000,  0.0000,  0.7401)..( -0.8000,  0.0000,  0.7199)--cycle;
\path[facet]( -0.8000,  0.0000,  0.7199)..controls( -0.8000,  0.0000,  0.7401)and( -0.8000,  0.0000,  0.7854)..( -0.8000,  0.0000,  0.7854)--( -0.8000,  0.6000,  0.7854)..controls( -0.8000,  0.6000,  0.7854)and( -0.7403,  0.6740,  0.7401)..( -0.7069,  0.7073,  0.7199)--cycle;
\path[facet]( -0.8000, -0.6000,  0.7854)..controls( -0.8000, -0.6000,  0.7854)and( -0.8000, -0.6000,  0.7854)..( -0.8000, -0.6000,  0.7854)--( -0.8000,  0.0000,  0.7854)..controls( -0.8000,  0.0000,  0.7854)and( -0.8000,  0.0000,  0.7854)..( -0.8000,  0.0000,  0.7854)--cycle;
\path[facet]( -0.8000,  0.0000,  0.7854)..controls( -0.8000,  0.0000,  0.7854)and( -0.8000,  0.0000,  0.7854)..( -0.8000,  0.0000,  0.7854)--( -0.8000,  0.6000,  0.7854)..controls( -0.8000,  0.6000,  0.7854)and( -0.8000,  0.6000,  0.7854)..( -0.8000,  0.6000,  0.7854)--cycle;
\path[facet]( -0.8000, -0.6000,  0.7854)..controls( -0.8000, -0.6000,  0.7854)and( -0.7403, -0.6740,  0.8307)..( -0.7069, -0.7073,  0.8508)--( -0.8000,  0.0000,  0.8508)..controls( -0.8000,  0.0000,  0.8307)and( -0.8000,  0.0000,  0.7854)..( -0.8000,  0.0000,  0.7854)--cycle;
\path[facet]( -0.8000,  0.0000,  0.7854)..controls( -0.8000,  0.0000,  0.7854)and( -0.8000,  0.0000,  0.8307)..( -0.8000,  0.0000,  0.8508)--( -0.8659,  0.5003,  0.8508)..controls( -0.8492,  0.5292,  0.8307)and( -0.8000,  0.6000,  0.7854)..( -0.8000,  0.6000,  0.7854)--cycle;
\path[facet]( -0.7069, -0.7073,  0.8508)..controls( -0.6735, -0.7407,  0.8710)and( -0.6256, -0.7820,  0.8962)..( -0.5821, -0.8131,  0.9163)--( -0.8000,  0.0000,  0.9163)..controls( -0.8000,  0.0000,  0.8962)and( -0.8000,  0.0000,  0.8710)..( -0.8000,  0.0000,  0.8508)--cycle;
\path[facet]( -0.8000,  0.0000,  0.8508)..controls( -0.8000,  0.0000,  0.8710)and( -0.8000,  0.0000,  0.8962)..( -0.8000,  0.0000,  0.9163)--( -0.9107,  0.4131,  0.9163)..controls( -0.8995,  0.4378,  0.8962)and( -0.8826,  0.4714,  0.8710)..( -0.8659,  0.5003,  0.8508)--cycle;
\path[facet]( -0.5821, -0.8131,  0.9163)..controls( -0.5387, -0.8442,  0.9364)and( -0.4781, -0.8803,  0.9616)..( -0.4251, -0.9052,  0.9817)--( -0.8000,  0.0000,  0.9817)..controls( -0.8000,  0.0000,  0.9616)and( -0.8000,  0.0000,  0.9364)..( -0.8000,  0.0000,  0.9163)--cycle;
\path[facet]( -0.8000,  0.0000,  0.9163)..controls( -0.8000,  0.0000,  0.9364)and( -0.8000,  0.0000,  0.9616)..( -0.8000,  0.0000,  0.9817)--( -0.9406,  0.3395,  0.9817)..controls( -0.9332,  0.3601,  0.9616)and( -0.9219,  0.3884,  0.9364)..( -0.9107,  0.4131,  0.9163)--cycle;
\path[facet]( -0.4251, -0.9052,  0.9817)..controls( -0.3720, -0.9301,  1.0019)and( -0.3002, -0.9559,  1.0271)..( -0.2394, -0.9709,  1.0472)--( -0.8000,  0.0000,  1.0472)..controls( -0.8000,  0.0000,  1.0271)and( -0.8000,  0.0000,  1.0019)..( -0.8000,  0.0000,  0.9817)--cycle;
\path[facet]( -0.8000,  0.0000,  0.9817)..controls( -0.8000,  0.0000,  1.0019)and( -0.8000,  0.0000,  1.0271)..( -0.8000,  0.0000,  1.0472)--( -0.9606,  0.2781,  1.0472)..controls( -0.9556,  0.2953,  1.0271)and( -0.9481,  0.3188,  1.0019)..( -0.9406,  0.3395,  0.9817)--cycle;
\path[facet]( -0.2394, -0.9709,  1.0472)..controls( -0.1786, -0.9859,  1.0673)and( -0.0987, -0.9973,  1.0925)..( -0.0331, -0.9995,  1.1126)--( -0.8000,  0.0000,  1.1126)..controls( -0.8000,  0.0000,  1.0925)and( -0.8000,  0.0000,  1.0673)..( -0.8000,  0.0000,  1.0472)--cycle;
\path[facet]( -0.8000,  0.0000,  1.0472)..controls( -0.8000,  0.0000,  1.0673)and( -0.8000,  0.0000,  1.0925)..( -0.8000,  0.0000,  1.1126)--( -0.9740,  0.2267,  1.1126)..controls( -0.9706,  0.2412,  1.0925)and( -0.9655,  0.2609,  1.0673)..( -0.9606,  0.2781,  1.0472)--cycle;
\path[facet]( -0.0331, -0.9995,  1.1126)..controls(  0.0325, -1.0016,  1.1328)and(  0.1163, -0.9955,  1.1580)..(  0.1831, -0.9831,  1.1781)--( -0.8000,  0.0000,  1.1781)..controls( -0.8000,  0.0000,  1.1580)and( -0.8000,  0.0000,  1.1328)..( -0.8000,  0.0000,  1.1126)--cycle;
\path[facet]( -0.8000,  0.0000,  1.1126)..controls( -0.8000,  0.0000,  1.1328)and( -0.8000,  0.0000,  1.1580)..( -0.8000,  0.0000,  1.1781)--( -0.9831,  0.1831,  1.1781)..controls( -0.9808,  0.1955,  1.1580)and( -0.9773,  0.2122,  1.1328)..( -0.9740,  0.2267,  1.1126)--cycle;
\path[facet](  0.1831, -0.9831,  1.1781)..controls(  0.2499, -0.9707,  1.1982)and(  0.3326, -0.9456,  1.2234)..(  0.3964, -0.9181,  1.2435)--( -0.8000,  0.0000,  1.2435)..controls( -0.8000,  0.0000,  1.2234)and( -0.8000,  0.0000,  1.1982)..( -0.8000,  0.0000,  1.1781)--cycle;
\path[facet]( -0.8000,  0.0000,  1.1781)..controls( -0.8000,  0.0000,  1.1982)and( -0.8000,  0.0000,  1.2234)..( -0.8000,  0.0000,  1.2435)--( -0.9894,  0.1453,  1.2435)..controls( -0.9878,  0.1562,  1.2234)and( -0.9854,  0.1707,  1.1982)..( -0.9831,  0.1831,  1.1781)--cycle;
\path[facet](  0.3964, -0.9181,  1.2435)..controls(  0.4603, -0.8905,  1.2637)and(  0.5368, -0.8467,  1.2889)..(  0.5937, -0.8047,  1.3090)--( -0.8000,  0.0000,  1.3090)..controls( -0.8000,  0.0000,  1.2889)and( -0.8000,  0.0000,  1.2637)..( -0.8000,  0.0000,  1.2435)--cycle;
\path[facet]( -0.8000,  0.0000,  1.2435)..controls( -0.8000,  0.0000,  1.2637)and( -0.8000,  0.0000,  1.2889)..( -0.8000,  0.0000,  1.3090)--( -0.9937,  0.1118,  1.3090)..controls( -0.9926,  0.1216,  1.2889)and( -0.9910,  0.1344,  1.2637)..( -0.9894,  0.1453,  1.2435)--cycle;
\path[facet](  0.5937, -0.8047,  1.3090)..controls(  0.6506, -0.7627,  1.3291)and(  0.7161, -0.7017,  1.3543)..(  0.7624, -0.6472,  1.3744)--( -0.8000,  0.0000,  1.3744)..controls( -0.8000,  0.0000,  1.3543)and( -0.8000,  0.0000,  1.3291)..( -0.8000,  0.0000,  1.3090)--cycle;
\path[facet]( -0.8000,  0.0000,  1.3090)..controls( -0.8000,  0.0000,  1.3291)and( -0.8000,  0.0000,  1.3543)..( -0.8000,  0.0000,  1.3744)--( -0.9967,  0.0815,  1.3744)..controls( -0.9959,  0.0904,  1.3543)and( -0.9948,  0.1021,  1.3291)..( -0.9937,  0.1118,  1.3090)--cycle;
\path[facet](  0.7624, -0.6472,  1.3744)..controls(  0.8087, -0.5926,  1.3946)and(  0.8587, -0.5175,  1.4198)..(  0.8914, -0.4532,  1.4399)--( -0.8000,  0.0000,  1.4399)..controls( -0.8000,  0.0000,  1.4198)and( -0.8000,  0.0000,  1.3946)..( -0.8000,  0.0000,  1.3744)--cycle;
\path[facet]( -0.8000,  0.0000,  1.3744)..controls( -0.8000,  0.0000,  1.3946)and( -0.8000,  0.0000,  1.4198)..( -0.8000,  0.0000,  1.4399)--( -0.9986,  0.0532,  1.4399)..controls( -0.9981,  0.0617,  1.4198)and( -0.9974,  0.0725,  1.3946)..( -0.9967,  0.0815,  1.3744)--cycle;
\path[facet](  0.8914, -0.4532,  1.4399)..controls(  0.9241, -0.3889,  1.4600)and(  0.9555, -0.3037,  1.4852)..(  0.9724, -0.2333,  1.5053)--( -0.8000,  0.0000,  1.5053)..controls( -0.8000,  0.0000,  1.4852)and( -0.8000,  0.0000,  1.4600)..( -0.8000,  0.0000,  1.4399)--cycle;
\path[facet]( -0.8000,  0.0000,  1.4399)..controls( -0.8000,  0.0000,  1.4600)and( -0.8000,  0.0000,  1.4852)..( -0.8000,  0.0000,  1.5053)--( -0.9997,  0.0263,  1.5053)..controls( -0.9994,  0.0344,  1.4852)and( -0.9990,  0.0448,  1.4600)..( -0.9986,  0.0532,  1.4399)--cycle;
\path[facet](  0.9724, -0.2333,  1.5053)..controls(  0.9893, -0.1629,  1.5255)and(  1.0000, -0.0725,  1.5507)..(  1.0000, -0.0000,  1.5708)--( -0.8000,  0.0000,  1.5708)..controls( -0.8000,  0.0000,  1.5507)and( -0.8000,  0.0000,  1.5255)..( -0.8000,  0.0000,  1.5053)--cycle;
\path[facet]( -0.8000,  0.0000,  1.5053)..controls( -0.8000,  0.0000,  1.5255)and( -0.8000,  0.0000,  1.5507)..( -0.8000,  0.0000,  1.5708)--( -1.0000,  0.0000,  1.5708)..controls( -1.0000,  0.0081,  1.5507)and( -0.9999,  0.0181,  1.5255)..( -0.9997,  0.0263,  1.5053)--cycle;
\draw[axis](-1,0,\hoehe)--++(2,0,0) node[pos=0.55,above]{$\zeta_2$};
}\hfill
\cylinder{
\path[facet]( -1.0000, -0.0000,  0.0000)..controls( -1.0000, -0.0403,  0.0201)and( -0.9967, -0.0906,  0.0453)..( -0.9914, -0.1305,  0.0654)--(  0.0000,  0.0000,  0.0654)..controls(  0.0000,  0.0000,  0.0453)and(  0.0000,  0.0000,  0.0201)..(  0.0000,  0.0000,  0.0000)--cycle;
\path[facet](  0.0000,  0.0000,  0.0000)..controls(  0.0000,  0.0000,  0.0201)and(  0.0000,  0.0000,  0.0453)..(  0.0000,  0.0000,  0.0654)--(  0.9914,  0.1305,  0.0654)..controls(  0.9967,  0.0906,  0.0453)and(  1.0000,  0.0403,  0.0201)..(  1.0000,  0.0000,  0.0000)--cycle;
\draw[axis](-1,0,0)--++(2,0,0);
\path[facet]( -0.9914, -0.1305,  0.0654)..controls( -0.9862, -0.1705,  0.0856)and( -0.9764, -0.2199,  0.1108)..( -0.9659, -0.2588,  0.1309)--(  0.0000,  0.0000,  0.1309)..controls(  0.0000,  0.0000,  0.1108)and(  0.0000,  0.0000,  0.0856)..(  0.0000,  0.0000,  0.0654)--cycle;
\path[facet](  0.0000,  0.0000,  0.0654)..controls(  0.0000,  0.0000,  0.0856)and(  0.0000,  0.0000,  0.1108)..(  0.0000,  0.0000,  0.1309)--(  0.9659,  0.2588,  0.1309)..controls(  0.9764,  0.2199,  0.1108)and(  0.9862,  0.1705,  0.0856)..(  0.9914,  0.1305,  0.0654)--cycle;
\path[facet]( -0.9659, -0.2588,  0.1309)..controls( -0.9555, -0.2977,  0.1510)and( -0.9393, -0.3455,  0.1762)..( -0.9239, -0.3827,  0.1963)--(  0.0000,  0.0000,  0.1963)..controls(  0.0000,  0.0000,  0.1762)and(  0.0000,  0.0000,  0.1510)..(  0.0000,  0.0000,  0.1309)--cycle;
\path[facet](  0.0000,  0.0000,  0.1309)..controls(  0.0000,  0.0000,  0.1510)and(  0.0000,  0.0000,  0.1762)..(  0.0000,  0.0000,  0.1963)--(  0.9239,  0.3827,  0.1963)..controls(  0.9393,  0.3455,  0.1762)and(  0.9555,  0.2977,  0.1510)..(  0.9659,  0.2588,  0.1309)--cycle;
\path[facet]( -0.9239, -0.3827,  0.1963)..controls( -0.9085, -0.4199,  0.2165)and( -0.8862, -0.4651,  0.2417)..( -0.8660, -0.5000,  0.2618)--(  0.0000,  0.0000,  0.2618)..controls(  0.0000,  0.0000,  0.2417)and(  0.0000,  0.0000,  0.2165)..(  0.0000,  0.0000,  0.1963)--cycle;
\path[facet](  0.0000,  0.0000,  0.1963)..controls(  0.0000,  0.0000,  0.2165)and(  0.0000,  0.0000,  0.2417)..(  0.0000,  0.0000,  0.2618)--(  0.8660,  0.5000,  0.2618)..controls(  0.8862,  0.4651,  0.2417)and(  0.9085,  0.4199,  0.2165)..(  0.9239,  0.3827,  0.1963)--cycle;
\path[facet]( -0.8660, -0.5000,  0.2618)..controls( -0.8459, -0.5349,  0.2819)and( -0.8179, -0.5768,  0.3071)..( -0.7934, -0.6088,  0.3272)--(  0.0000,  0.0000,  0.3272)..controls(  0.0000,  0.0000,  0.3071)and(  0.0000,  0.0000,  0.2819)..(  0.0000,  0.0000,  0.2618)--cycle;
\path[facet](  0.0000,  0.0000,  0.2618)..controls(  0.0000,  0.0000,  0.2819)and(  0.0000,  0.0000,  0.3071)..(  0.0000,  0.0000,  0.3272)--(  0.7934,  0.6088,  0.3272)..controls(  0.8179,  0.5768,  0.3071)and(  0.8459,  0.5349,  0.2819)..(  0.8660,  0.5000,  0.2618)--cycle;
\path[facet]( -0.7934, -0.6088,  0.3272)..controls( -0.7688, -0.6407,  0.3474)and( -0.7356, -0.6786,  0.3726)..( -0.7071, -0.7071,  0.3927)--(  0.0000,  0.0000,  0.3927)..controls(  0.0000,  0.0000,  0.3726)and(  0.0000,  0.0000,  0.3474)..(  0.0000,  0.0000,  0.3272)--cycle;
\path[facet](  0.0000,  0.0000,  0.3272)..controls(  0.0000,  0.0000,  0.3474)and(  0.0000,  0.0000,  0.3726)..(  0.0000,  0.0000,  0.3927)--(  0.7071,  0.7071,  0.3927)..controls(  0.7356,  0.6786,  0.3726)and(  0.7688,  0.6407,  0.3474)..(  0.7934,  0.6088,  0.3272)--cycle;
\path[facet]( -0.7071, -0.7071,  0.3927)..controls( -0.6786, -0.7356,  0.4128)and( -0.6407, -0.7688,  0.4380)..( -0.6088, -0.7934,  0.4581)--(  0.0000,  0.0000,  0.4581)..controls(  0.0000,  0.0000,  0.4380)and(  0.0000,  0.0000,  0.4128)..(  0.0000,  0.0000,  0.3927)--cycle;
\path[facet](  0.0000,  0.0000,  0.3927)..controls(  0.0000,  0.0000,  0.4128)and(  0.0000,  0.0000,  0.4380)..(  0.0000,  0.0000,  0.4581)--(  0.6088,  0.7934,  0.4581)..controls(  0.6407,  0.7688,  0.4380)and(  0.6786,  0.7356,  0.4128)..(  0.7071,  0.7071,  0.3927)--cycle;
\path[facet]( -0.6088, -0.7934,  0.4581)..controls( -0.5768, -0.8179,  0.4783)and( -0.5349, -0.8459,  0.5035)..( -0.5000, -0.8660,  0.5236)--(  0.0000,  0.0000,  0.5236)..controls(  0.0000,  0.0000,  0.5035)and(  0.0000,  0.0000,  0.4783)..(  0.0000,  0.0000,  0.4581)--cycle;
\path[facet](  0.0000,  0.0000,  0.4581)..controls(  0.0000,  0.0000,  0.4783)and(  0.0000,  0.0000,  0.5035)..(  0.0000,  0.0000,  0.5236)--(  0.5000,  0.8660,  0.5236)..controls(  0.5349,  0.8459,  0.5035)and(  0.5768,  0.8179,  0.4783)..(  0.6088,  0.7934,  0.4581)--cycle;
\path[facet]( -0.5000, -0.8660,  0.5236)..controls( -0.4651, -0.8862,  0.5437)and( -0.4199, -0.9085,  0.5689)..( -0.3827, -0.9239,  0.5890)--(  0.0000,  0.0000,  0.5890)..controls(  0.0000,  0.0000,  0.5689)and(  0.0000,  0.0000,  0.5437)..(  0.0000,  0.0000,  0.5236)--cycle;
\path[facet](  0.0000,  0.0000,  0.5236)..controls(  0.0000,  0.0000,  0.5437)and(  0.0000,  0.0000,  0.5689)..(  0.0000,  0.0000,  0.5890)--(  0.3827,  0.9239,  0.5890)..controls(  0.4199,  0.9085,  0.5689)and(  0.4651,  0.8862,  0.5437)..(  0.5000,  0.8660,  0.5236)--cycle;
\path[facet]( -0.3827, -0.9239,  0.5890)..controls( -0.3455, -0.9393,  0.6092)and( -0.2977, -0.9555,  0.6344)..( -0.2588, -0.9659,  0.6545)--(  0.0000,  0.0000,  0.6545)..controls(  0.0000,  0.0000,  0.6344)and(  0.0000,  0.0000,  0.6092)..(  0.0000,  0.0000,  0.5890)--cycle;
\path[facet](  0.0000,  0.0000,  0.5890)..controls(  0.0000,  0.0000,  0.6092)and(  0.0000,  0.0000,  0.6344)..(  0.0000,  0.0000,  0.6545)--(  0.2588,  0.9659,  0.6545)..controls(  0.2977,  0.9555,  0.6344)and(  0.3455,  0.9393,  0.6092)..(  0.3827,  0.9239,  0.5890)--cycle;
\path[facet]( -0.2588, -0.9659,  0.6545)..controls( -0.2199, -0.9764,  0.6746)and( -0.1705, -0.9862,  0.6998)..( -0.1305, -0.9914,  0.7199)--(  0.0000,  0.0000,  0.7199)..controls(  0.0000,  0.0000,  0.6998)and(  0.0000,  0.0000,  0.6746)..(  0.0000,  0.0000,  0.6545)--cycle;
\path[facet](  0.0000,  0.0000,  0.6545)..controls(  0.0000,  0.0000,  0.6746)and(  0.0000,  0.0000,  0.6998)..(  0.0000,  0.0000,  0.7199)--(  0.1305,  0.9914,  0.7199)..controls(  0.1705,  0.9862,  0.6998)and(  0.2199,  0.9764,  0.6746)..(  0.2588,  0.9659,  0.6545)--cycle;
\path[facet]( -0.1305, -0.9914,  0.7199)..controls( -0.0906, -0.9967,  0.7401)and( -0.0000, -1.0000,  0.7854)..( -0.0000, -1.0000,  0.7854)--(  0.0000,  0.0000,  0.7854)..controls(  0.0000,  0.0000,  0.7854)and(  0.0000,  0.0000,  0.7401)..(  0.0000,  0.0000,  0.7199)--cycle;
\path[facet](  0.0000,  0.0000,  0.7199)..controls(  0.0000,  0.0000,  0.7401)and(  0.0000,  0.0000,  0.7854)..(  0.0000,  0.0000,  0.7854)--(  0.0000,  1.0000,  0.7854)..controls(  0.0000,  1.0000,  0.7854)and(  0.0906,  0.9967,  0.7401)..(  0.1305,  0.9914,  0.7199)--cycle;
\path[facet]( -0.0000, -1.0000,  0.7854)..controls( -0.0000, -1.0000,  0.7854)and(  0.0000, -1.0000,  0.7854)..(  0.0000, -1.0000,  0.7854)--(  0.0000,  0.0000,  0.7854)..controls(  0.0000,  0.0000,  0.7854)and(  0.0000,  0.0000,  0.7854)..(  0.0000,  0.0000,  0.7854)--cycle;
\path[facet](  0.0000,  0.0000,  0.7854)..controls(  0.0000,  0.0000,  0.7854)and(  0.0000,  0.0000,  0.7854)..(  0.0000,  0.0000,  0.7854)--( -0.0000,  1.0000,  0.7854)..controls( -0.0000,  1.0000,  0.7854)and(  0.0000,  1.0000,  0.7854)..(  0.0000,  1.0000,  0.7854)--cycle;
\path[facet](  0.0000, -1.0000,  0.7854)..controls(  0.0000, -1.0000,  0.7854)and(  0.0906, -0.9967,  0.8307)..(  0.1305, -0.9914,  0.8508)--(  0.0000,  0.0000,  0.8508)..controls(  0.0000,  0.0000,  0.8307)and(  0.0000,  0.0000,  0.7854)..(  0.0000,  0.0000,  0.7854)--cycle;
\path[facet](  0.0000,  0.0000,  0.7854)..controls(  0.0000,  0.0000,  0.7854)and(  0.0000,  0.0000,  0.8307)..(  0.0000,  0.0000,  0.8508)--( -0.1305,  0.9914,  0.8508)..controls( -0.0906,  0.9967,  0.8307)and( -0.0000,  1.0000,  0.7854)..( -0.0000,  1.0000,  0.7854)--cycle;
\path[facet](  0.1305, -0.9914,  0.8508)..controls(  0.1705, -0.9862,  0.8710)and(  0.2199, -0.9764,  0.8962)..(  0.2588, -0.9659,  0.9163)--(  0.0000,  0.0000,  0.9163)..controls(  0.0000,  0.0000,  0.8962)and(  0.0000,  0.0000,  0.8710)..(  0.0000,  0.0000,  0.8508)--cycle;
\path[facet](  0.0000,  0.0000,  0.8508)..controls(  0.0000,  0.0000,  0.8710)and(  0.0000,  0.0000,  0.8962)..(  0.0000,  0.0000,  0.9163)--( -0.2588,  0.9659,  0.9163)..controls( -0.2199,  0.9764,  0.8962)and( -0.1705,  0.9862,  0.8710)..( -0.1305,  0.9914,  0.8508)--cycle;
\path[facet](  0.2588, -0.9659,  0.9163)..controls(  0.2977, -0.9555,  0.9364)and(  0.3455, -0.9393,  0.9616)..(  0.3827, -0.9239,  0.9817)--(  0.0000,  0.0000,  0.9817)..controls(  0.0000,  0.0000,  0.9616)and(  0.0000,  0.0000,  0.9364)..(  0.0000,  0.0000,  0.9163)--cycle;
\path[facet](  0.0000,  0.0000,  0.9163)..controls(  0.0000,  0.0000,  0.9364)and(  0.0000,  0.0000,  0.9616)..(  0.0000,  0.0000,  0.9817)--( -0.3827,  0.9239,  0.9817)..controls( -0.3455,  0.9393,  0.9616)and( -0.2977,  0.9555,  0.9364)..( -0.2588,  0.9659,  0.9163)--cycle;
\path[facet](  0.3827, -0.9239,  0.9817)..controls(  0.4199, -0.9085,  1.0019)and(  0.4651, -0.8862,  1.0271)..(  0.5000, -0.8660,  1.0472)--(  0.0000,  0.0000,  1.0472)..controls(  0.0000,  0.0000,  1.0271)and(  0.0000,  0.0000,  1.0019)..(  0.0000,  0.0000,  0.9817)--cycle;
\path[facet](  0.0000,  0.0000,  0.9817)..controls(  0.0000,  0.0000,  1.0019)and(  0.0000,  0.0000,  1.0271)..(  0.0000,  0.0000,  1.0472)--( -0.5000,  0.8660,  1.0472)..controls( -0.4651,  0.8862,  1.0271)and( -0.4199,  0.9085,  1.0019)..( -0.3827,  0.9239,  0.9817)--cycle;
\path[facet](  0.5000, -0.8660,  1.0472)..controls(  0.5349, -0.8459,  1.0673)and(  0.5768, -0.8179,  1.0925)..(  0.6088, -0.7934,  1.1126)--(  0.0000,  0.0000,  1.1126)..controls(  0.0000,  0.0000,  1.0925)and(  0.0000,  0.0000,  1.0673)..(  0.0000,  0.0000,  1.0472)--cycle;
\path[facet](  0.0000,  0.0000,  1.0472)..controls(  0.0000,  0.0000,  1.0673)and(  0.0000,  0.0000,  1.0925)..(  0.0000,  0.0000,  1.1126)--( -0.6088,  0.7934,  1.1126)..controls( -0.5768,  0.8179,  1.0925)and( -0.5349,  0.8459,  1.0673)..( -0.5000,  0.8660,  1.0472)--cycle;
\path[facet](  0.6088, -0.7934,  1.1126)..controls(  0.6407, -0.7688,  1.1328)and(  0.6786, -0.7356,  1.1580)..(  0.7071, -0.7071,  1.1781)--(  0.0000,  0.0000,  1.1781)..controls(  0.0000,  0.0000,  1.1580)and(  0.0000,  0.0000,  1.1328)..(  0.0000,  0.0000,  1.1126)--cycle;
\path[facet](  0.0000,  0.0000,  1.1126)..controls(  0.0000,  0.0000,  1.1328)and(  0.0000,  0.0000,  1.1580)..(  0.0000,  0.0000,  1.1781)--( -0.7071,  0.7071,  1.1781)..controls( -0.6786,  0.7356,  1.1580)and( -0.6407,  0.7688,  1.1328)..( -0.6088,  0.7934,  1.1126)--cycle;
\path[facet](  0.7071, -0.7071,  1.1781)..controls(  0.7356, -0.6786,  1.1982)and(  0.7688, -0.6407,  1.2234)..(  0.7934, -0.6088,  1.2435)--(  0.0000,  0.0000,  1.2435)..controls(  0.0000,  0.0000,  1.2234)and(  0.0000,  0.0000,  1.1982)..(  0.0000,  0.0000,  1.1781)--cycle;
\path[facet](  0.0000,  0.0000,  1.1781)..controls(  0.0000,  0.0000,  1.1982)and(  0.0000,  0.0000,  1.2234)..(  0.0000,  0.0000,  1.2435)--( -0.7934,  0.6088,  1.2435)..controls( -0.7688,  0.6407,  1.2234)and( -0.7356,  0.6786,  1.1982)..( -0.7071,  0.7071,  1.1781)--cycle;
\path[facet](  0.7934, -0.6088,  1.2435)..controls(  0.8179, -0.5768,  1.2637)and(  0.8459, -0.5349,  1.2889)..(  0.8660, -0.5000,  1.3090)--(  0.0000,  0.0000,  1.3090)..controls(  0.0000,  0.0000,  1.2889)and(  0.0000,  0.0000,  1.2637)..(  0.0000,  0.0000,  1.2435)--cycle;
\path[facet](  0.0000,  0.0000,  1.2435)..controls(  0.0000,  0.0000,  1.2637)and(  0.0000,  0.0000,  1.2889)..(  0.0000,  0.0000,  1.3090)--( -0.8660,  0.5000,  1.3090)..controls( -0.8459,  0.5349,  1.2889)and( -0.8179,  0.5768,  1.2637)..( -0.7934,  0.6088,  1.2435)--cycle;
\path[facet](  0.8660, -0.5000,  1.3090)..controls(  0.8862, -0.4651,  1.3291)and(  0.9085, -0.4199,  1.3543)..(  0.9239, -0.3827,  1.3744)--(  0.0000,  0.0000,  1.3744)..controls(  0.0000,  0.0000,  1.3543)and(  0.0000,  0.0000,  1.3291)..(  0.0000,  0.0000,  1.3090)--cycle;
\path[facet](  0.0000,  0.0000,  1.3090)..controls(  0.0000,  0.0000,  1.3291)and(  0.0000,  0.0000,  1.3543)..(  0.0000,  0.0000,  1.3744)--( -0.9239,  0.3827,  1.3744)..controls( -0.9085,  0.4199,  1.3543)and( -0.8862,  0.4651,  1.3291)..( -0.8660,  0.5000,  1.3090)--cycle;
\path[facet](  0.9239, -0.3827,  1.3744)..controls(  0.9393, -0.3455,  1.3946)and(  0.9555, -0.2977,  1.4198)..(  0.9659, -0.2588,  1.4399)--(  0.0000,  0.0000,  1.4399)..controls(  0.0000,  0.0000,  1.4198)and(  0.0000,  0.0000,  1.3946)..(  0.0000,  0.0000,  1.3744)--cycle;
\path[facet](  0.0000,  0.0000,  1.3744)..controls(  0.0000,  0.0000,  1.3946)and(  0.0000,  0.0000,  1.4198)..(  0.0000,  0.0000,  1.4399)--( -0.9659,  0.2588,  1.4399)..controls( -0.9555,  0.2977,  1.4198)and( -0.9393,  0.3455,  1.3946)..( -0.9239,  0.3827,  1.3744)--cycle;
\path[facet](  0.9659, -0.2588,  1.4399)..controls(  0.9764, -0.2199,  1.4600)and(  0.9862, -0.1705,  1.4852)..(  0.9914, -0.1305,  1.5053)--(  0.0000,  0.0000,  1.5053)..controls(  0.0000,  0.0000,  1.4852)and(  0.0000,  0.0000,  1.4600)..(  0.0000,  0.0000,  1.4399)--cycle;
\path[facet](  0.0000,  0.0000,  1.4399)..controls(  0.0000,  0.0000,  1.4600)and(  0.0000,  0.0000,  1.4852)..(  0.0000,  0.0000,  1.5053)--( -0.9914,  0.1305,  1.5053)..controls( -0.9862,  0.1705,  1.4852)and( -0.9764,  0.2199,  1.4600)..( -0.9659,  0.2588,  1.4399)--cycle;
\path[facet](  0.9914, -0.1305,  1.5053)..controls(  0.9967, -0.0906,  1.5255)and(  1.0000, -0.0403,  1.5507)..(  1.0000, -0.0000,  1.5708)--(  0.0000,  0.0000,  1.5708)..controls(  0.0000,  0.0000,  1.5507)and(  0.0000,  0.0000,  1.5255)..(  0.0000,  0.0000,  1.5053)--cycle;
\path[facet](  0.0000,  0.0000,  1.5053)..controls(  0.0000,  0.0000,  1.5255)and(  0.0000,  0.0000,  1.5507)..(  0.0000,  0.0000,  1.5708)--( -1.0000,  0.0000,  1.5708)..controls( -1.0000,  0.0403,  1.5507)and( -0.9967,  0.0906,  1.5255)..( -0.9914,  0.1305,  1.5053)--cycle;
\draw[axis](-1,0,\hoehe)--++(2,0,0);
}\hfill
\cylinder{
\path[facet]( -1.0000, -0.0000,  0.0000)..controls( -1.0000, -0.0725,  0.0201)and( -0.9893, -0.1629,  0.0453)..( -0.9724, -0.2333,  0.0654)--(  0.8000,  0.0000,  0.0654)..controls(  0.8000,  0.0000,  0.0453)and(  0.8000,  0.0000,  0.0201)..(  0.8000,  0.0000,  0.0000)--cycle;
\path[facet](  0.8000,  0.0000,  0.0000)..controls(  0.8000,  0.0000,  0.0201)and(  0.8000,  0.0000,  0.0453)..(  0.8000,  0.0000,  0.0654)--(  0.9997,  0.0263,  0.0654)..controls(  0.9999,  0.0181,  0.0453)and(  1.0000,  0.0081,  0.0201)..(  1.0000,  0.0000,  0.0000)--cycle;
\draw[axis](-1,0,0)--++(2,0,0);
\path[facet]( -0.9724, -0.2333,  0.0654)..controls( -0.9555, -0.3037,  0.0856)and( -0.9241, -0.3889,  0.1108)..( -0.8914, -0.4532,  0.1309)--(  0.8000,  0.0000,  0.1309)..controls(  0.8000,  0.0000,  0.1108)and(  0.8000,  0.0000,  0.0856)..(  0.8000,  0.0000,  0.0654)--cycle;
\path[facet](  0.8000,  0.0000,  0.0654)..controls(  0.8000,  0.0000,  0.0856)and(  0.8000,  0.0000,  0.1108)..(  0.8000,  0.0000,  0.1309)--(  0.9986,  0.0532,  0.1309)..controls(  0.9990,  0.0448,  0.1108)and(  0.9994,  0.0344,  0.0856)..(  0.9997,  0.0263,  0.0654)--cycle;
\path[facet]( -0.8914, -0.4532,  0.1309)..controls( -0.8587, -0.5175,  0.1510)and( -0.8087, -0.5926,  0.1762)..( -0.7624, -0.6472,  0.1963)--(  0.8000,  0.0000,  0.1963)..controls(  0.8000,  0.0000,  0.1762)and(  0.8000,  0.0000,  0.1510)..(  0.8000,  0.0000,  0.1309)--cycle;
\path[facet](  0.8000,  0.0000,  0.1309)..controls(  0.8000,  0.0000,  0.1510)and(  0.8000,  0.0000,  0.1762)..(  0.8000,  0.0000,  0.1963)--(  0.9967,  0.0815,  0.1963)..controls(  0.9974,  0.0725,  0.1762)and(  0.9981,  0.0617,  0.1510)..(  0.9986,  0.0532,  0.1309)--cycle;
\path[facet]( -0.7624, -0.6472,  0.1963)..controls( -0.7161, -0.7017,  0.2165)and( -0.6506, -0.7627,  0.2417)..( -0.5937, -0.8047,  0.2618)--(  0.8000,  0.0000,  0.2618)..controls(  0.8000,  0.0000,  0.2417)and(  0.8000,  0.0000,  0.2165)..(  0.8000,  0.0000,  0.1963)--cycle;
\path[facet](  0.8000,  0.0000,  0.1963)..controls(  0.8000,  0.0000,  0.2165)and(  0.8000,  0.0000,  0.2417)..(  0.8000,  0.0000,  0.2618)--(  0.9937,  0.1118,  0.2618)..controls(  0.9948,  0.1021,  0.2417)and(  0.9959,  0.0904,  0.2165)..(  0.9967,  0.0815,  0.1963)--cycle;
\path[facet]( -0.5937, -0.8047,  0.2618)..controls( -0.5368, -0.8467,  0.2819)and( -0.4603, -0.8905,  0.3071)..( -0.3964, -0.9181,  0.3272)--(  0.8000,  0.0000,  0.3272)..controls(  0.8000,  0.0000,  0.3071)and(  0.8000,  0.0000,  0.2819)..(  0.8000,  0.0000,  0.2618)--cycle;
\path[facet](  0.8000,  0.0000,  0.2618)..controls(  0.8000,  0.0000,  0.2819)and(  0.8000,  0.0000,  0.3071)..(  0.8000,  0.0000,  0.3272)--(  0.9894,  0.1453,  0.3272)..controls(  0.9910,  0.1344,  0.3071)and(  0.9926,  0.1216,  0.2819)..(  0.9937,  0.1118,  0.2618)--cycle;
\path[facet]( -0.3964, -0.9181,  0.3272)..controls( -0.3326, -0.9456,  0.3474)and( -0.2499, -0.9707,  0.3726)..( -0.1831, -0.9831,  0.3927)--(  0.8000,  0.0000,  0.3927)..controls(  0.8000,  0.0000,  0.3726)and(  0.8000,  0.0000,  0.3474)..(  0.8000,  0.0000,  0.3272)--cycle;
\path[facet](  0.8000,  0.0000,  0.3272)..controls(  0.8000,  0.0000,  0.3474)and(  0.8000,  0.0000,  0.3726)..(  0.8000,  0.0000,  0.3927)--(  0.9831,  0.1831,  0.3927)..controls(  0.9854,  0.1707,  0.3726)and(  0.9878,  0.1562,  0.3474)..(  0.9894,  0.1453,  0.3272)--cycle;
\path[facet]( -0.1831, -0.9831,  0.3927)..controls( -0.1163, -0.9955,  0.4128)and( -0.0325, -1.0016,  0.4380)..(  0.0331, -0.9995,  0.4581)--(  0.8000,  0.0000,  0.4581)..controls(  0.8000,  0.0000,  0.4380)and(  0.8000,  0.0000,  0.4128)..(  0.8000,  0.0000,  0.3927)--cycle;
\path[facet](  0.8000,  0.0000,  0.3927)..controls(  0.8000,  0.0000,  0.4128)and(  0.8000,  0.0000,  0.4380)..(  0.8000,  0.0000,  0.4581)--(  0.9740,  0.2267,  0.4581)..controls(  0.9773,  0.2122,  0.4380)and(  0.9808,  0.1955,  0.4128)..(  0.9831,  0.1831,  0.3927)--cycle;
\path[facet](  0.0331, -0.9995,  0.4581)..controls(  0.0987, -0.9973,  0.4783)and(  0.1786, -0.9859,  0.5035)..(  0.2394, -0.9709,  0.5236)--(  0.8000,  0.0000,  0.5236)..controls(  0.8000,  0.0000,  0.5035)and(  0.8000,  0.0000,  0.4783)..(  0.8000,  0.0000,  0.4581)--cycle;
\path[facet](  0.8000,  0.0000,  0.4581)..controls(  0.8000,  0.0000,  0.4783)and(  0.8000,  0.0000,  0.5035)..(  0.8000,  0.0000,  0.5236)--(  0.9606,  0.2781,  0.5236)..controls(  0.9655,  0.2609,  0.5035)and(  0.9706,  0.2412,  0.4783)..(  0.9740,  0.2267,  0.4581)--cycle;
\path[facet](  0.2394, -0.9709,  0.5236)..controls(  0.3002, -0.9559,  0.5437)and(  0.3720, -0.9301,  0.5689)..(  0.4251, -0.9052,  0.5890)--(  0.8000,  0.0000,  0.5890)..controls(  0.8000,  0.0000,  0.5689)and(  0.8000,  0.0000,  0.5437)..(  0.8000,  0.0000,  0.5236)--cycle;
\path[facet](  0.8000,  0.0000,  0.5236)..controls(  0.8000,  0.0000,  0.5437)and(  0.8000,  0.0000,  0.5689)..(  0.8000,  0.0000,  0.5890)--(  0.9406,  0.3395,  0.5890)..controls(  0.9481,  0.3188,  0.5689)and(  0.9556,  0.2953,  0.5437)..(  0.9606,  0.2781,  0.5236)--cycle;
\path[facet](  0.4251, -0.9052,  0.5890)..controls(  0.4781, -0.8803,  0.6092)and(  0.5387, -0.8442,  0.6344)..(  0.5821, -0.8131,  0.6545)--(  0.8000,  0.0000,  0.6545)..controls(  0.8000,  0.0000,  0.6344)and(  0.8000,  0.0000,  0.6092)..(  0.8000,  0.0000,  0.5890)--cycle;
\path[facet](  0.8000,  0.0000,  0.5890)..controls(  0.8000,  0.0000,  0.6092)and(  0.8000,  0.0000,  0.6344)..(  0.8000,  0.0000,  0.6545)--(  0.9107,  0.4131,  0.6545)..controls(  0.9219,  0.3884,  0.6344)and(  0.9332,  0.3601,  0.6092)..(  0.9406,  0.3395,  0.5890)--cycle;
\path[facet](  0.5821, -0.8131,  0.6545)..controls(  0.6256, -0.7820,  0.6746)and(  0.6735, -0.7407,  0.6998)..(  0.7069, -0.7073,  0.7199)--(  0.8000,  0.0000,  0.7199)..controls(  0.8000,  0.0000,  0.6998)and(  0.8000,  0.0000,  0.6746)..(  0.8000,  0.0000,  0.6545)--cycle;
\path[facet](  0.8000,  0.0000,  0.6545)..controls(  0.8000,  0.0000,  0.6746)and(  0.8000,  0.0000,  0.6998)..(  0.8000,  0.0000,  0.7199)--(  0.8659,  0.5003,  0.7199)..controls(  0.8826,  0.4714,  0.6998)and(  0.8995,  0.4378,  0.6746)..(  0.9107,  0.4131,  0.6545)--cycle;
\path[facet](  0.7069, -0.7073,  0.7199)..controls(  0.7403, -0.6740,  0.7401)and(  0.8000, -0.6000,  0.7854)..(  0.8000, -0.6000,  0.7854)--(  0.8000,  0.0000,  0.7854)..controls(  0.8000,  0.0000,  0.7854)and(  0.8000,  0.0000,  0.7401)..(  0.8000,  0.0000,  0.7199)--cycle;
\path[facet](  0.8000,  0.0000,  0.7199)..controls(  0.8000,  0.0000,  0.7401)and(  0.8000,  0.0000,  0.7854)..(  0.8000,  0.0000,  0.7854)--(  0.8000,  0.6000,  0.7854)..controls(  0.8000,  0.6000,  0.7854)and(  0.8492,  0.5292,  0.7401)..(  0.8659,  0.5003,  0.7199)--cycle;
\path[facet](  0.8000, -0.6000,  0.7854)..controls(  0.8000, -0.6000,  0.7854)and(  0.8000, -0.6000,  0.7854)..(  0.8000, -0.6000,  0.7854)--(  0.8000,  0.0000,  0.7854)..controls(  0.8000,  0.0000,  0.7854)and(  0.8000,  0.0000,  0.7854)..(  0.8000,  0.0000,  0.7854)--cycle;
\path[facet](  0.8000,  0.0000,  0.7854)..controls(  0.8000,  0.0000,  0.7854)and(  0.8000,  0.0000,  0.7854)..(  0.8000,  0.0000,  0.7854)--(  0.8000,  0.6000,  0.7854)..controls(  0.8000,  0.6000,  0.7854)and(  0.8000,  0.6000,  0.7854)..(  0.8000,  0.6000,  0.7854)--cycle;
\path[facet](  0.8000, -0.6000,  0.7854)..controls(  0.8000, -0.6000,  0.7854)and(  0.8492, -0.5292,  0.8307)..(  0.8659, -0.5003,  0.8508)--(  0.8000,  0.0000,  0.8508)..controls(  0.8000,  0.0000,  0.8307)and(  0.8000,  0.0000,  0.7854)..(  0.8000,  0.0000,  0.7854)--cycle;
\path[facet](  0.8000,  0.0000,  0.7854)..controls(  0.8000,  0.0000,  0.7854)and(  0.8000,  0.0000,  0.8307)..(  0.8000,  0.0000,  0.8508)--(  0.7069,  0.7073,  0.8508)..controls(  0.7403,  0.6740,  0.8307)and(  0.8000,  0.6000,  0.7854)..(  0.8000,  0.6000,  0.7854)--cycle;
\path[facet](  0.8659, -0.5003,  0.8508)..controls(  0.8826, -0.4714,  0.8710)and(  0.8995, -0.4378,  0.8962)..(  0.9107, -0.4131,  0.9163)--(  0.8000,  0.0000,  0.9163)..controls(  0.8000,  0.0000,  0.8962)and(  0.8000,  0.0000,  0.8710)..(  0.8000,  0.0000,  0.8508)--cycle;
\path[facet](  0.8000,  0.0000,  0.8508)..controls(  0.8000,  0.0000,  0.8710)and(  0.8000,  0.0000,  0.8962)..(  0.8000,  0.0000,  0.9163)--(  0.5821,  0.8131,  0.9163)..controls(  0.6256,  0.7820,  0.8962)and(  0.6735,  0.7407,  0.8710)..(  0.7069,  0.7073,  0.8508)--cycle;
\path[facet](  0.9107, -0.4131,  0.9163)..controls(  0.9219, -0.3884,  0.9364)and(  0.9332, -0.3601,  0.9616)..(  0.9406, -0.3395,  0.9817)--(  0.8000,  0.0000,  0.9817)..controls(  0.8000,  0.0000,  0.9616)and(  0.8000,  0.0000,  0.9364)..(  0.8000,  0.0000,  0.9163)--cycle;
\path[facet](  0.8000,  0.0000,  0.9163)..controls(  0.8000,  0.0000,  0.9364)and(  0.8000,  0.0000,  0.9616)..(  0.8000,  0.0000,  0.9817)--(  0.4251,  0.9052,  0.9817)..controls(  0.4781,  0.8803,  0.9616)and(  0.5387,  0.8442,  0.9364)..(  0.5821,  0.8131,  0.9163)--cycle;
\path[facet](  0.9406, -0.3395,  0.9817)..controls(  0.9481, -0.3188,  1.0019)and(  0.9556, -0.2953,  1.0271)..(  0.9606, -0.2781,  1.0472)--(  0.8000,  0.0000,  1.0472)..controls(  0.8000,  0.0000,  1.0271)and(  0.8000,  0.0000,  1.0019)..(  0.8000,  0.0000,  0.9817)--cycle;
\path[facet](  0.8000,  0.0000,  0.9817)..controls(  0.8000,  0.0000,  1.0019)and(  0.8000,  0.0000,  1.0271)..(  0.8000,  0.0000,  1.0472)--(  0.2394,  0.9709,  1.0472)..controls(  0.3002,  0.9559,  1.0271)and(  0.3720,  0.9301,  1.0019)..(  0.4251,  0.9052,  0.9817)--cycle;
\path[facet](  0.9606, -0.2781,  1.0472)..controls(  0.9655, -0.2609,  1.0673)and(  0.9706, -0.2412,  1.0925)..(  0.9740, -0.2267,  1.1126)--(  0.8000,  0.0000,  1.1126)..controls(  0.8000,  0.0000,  1.0925)and(  0.8000,  0.0000,  1.0673)..(  0.8000,  0.0000,  1.0472)--cycle;
\path[facet](  0.8000,  0.0000,  1.0472)..controls(  0.8000,  0.0000,  1.0673)and(  0.8000,  0.0000,  1.0925)..(  0.8000,  0.0000,  1.1126)--(  0.0331,  0.9995,  1.1126)..controls(  0.0987,  0.9973,  1.0925)and(  0.1786,  0.9859,  1.0673)..(  0.2394,  0.9709,  1.0472)--cycle;
\path[facet](  0.9740, -0.2267,  1.1126)..controls(  0.9773, -0.2122,  1.1328)and(  0.9808, -0.1955,  1.1580)..(  0.9831, -0.1831,  1.1781)--(  0.8000,  0.0000,  1.1781)..controls(  0.8000,  0.0000,  1.1580)and(  0.8000,  0.0000,  1.1328)..(  0.8000,  0.0000,  1.1126)--cycle;
\path[facet](  0.8000,  0.0000,  1.1126)..controls(  0.8000,  0.0000,  1.1328)and(  0.8000,  0.0000,  1.1580)..(  0.8000,  0.0000,  1.1781)--( -0.1831,  0.9831,  1.1781)..controls( -0.1163,  0.9955,  1.1580)and( -0.0325,  1.0016,  1.1328)..(  0.0331,  0.9995,  1.1126)--cycle;
\path[facet](  0.9831, -0.1831,  1.1781)..controls(  0.9854, -0.1707,  1.1982)and(  0.9878, -0.1562,  1.2234)..(  0.9894, -0.1453,  1.2435)--(  0.8000,  0.0000,  1.2435)..controls(  0.8000,  0.0000,  1.2234)and(  0.8000,  0.0000,  1.1982)..(  0.8000,  0.0000,  1.1781)--cycle;
\path[facet](  0.8000,  0.0000,  1.1781)..controls(  0.8000,  0.0000,  1.1982)and(  0.8000,  0.0000,  1.2234)..(  0.8000,  0.0000,  1.2435)--( -0.3964,  0.9181,  1.2435)..controls( -0.3326,  0.9456,  1.2234)and( -0.2499,  0.9707,  1.1982)..( -0.1831,  0.9831,  1.1781)--cycle;
\path[facet](  0.9894, -0.1453,  1.2435)..controls(  0.9910, -0.1344,  1.2637)and(  0.9926, -0.1216,  1.2889)..(  0.9937, -0.1118,  1.3090)--(  0.8000,  0.0000,  1.3090)..controls(  0.8000,  0.0000,  1.2889)and(  0.8000,  0.0000,  1.2637)..(  0.8000,  0.0000,  1.2435)--cycle;
\path[facet](  0.8000,  0.0000,  1.2435)..controls(  0.8000,  0.0000,  1.2637)and(  0.8000,  0.0000,  1.2889)..(  0.8000,  0.0000,  1.3090)--( -0.5937,  0.8047,  1.3090)..controls( -0.5368,  0.8467,  1.2889)and( -0.4603,  0.8905,  1.2637)..( -0.3964,  0.9181,  1.2435)--cycle;
\path[facet](  0.9937, -0.1118,  1.3090)..controls(  0.9948, -0.1021,  1.3291)and(  0.9959, -0.0904,  1.3543)..(  0.9967, -0.0815,  1.3744)--(  0.8000,  0.0000,  1.3744)..controls(  0.8000,  0.0000,  1.3543)and(  0.8000,  0.0000,  1.3291)..(  0.8000,  0.0000,  1.3090)--cycle;
\path[facet](  0.8000,  0.0000,  1.3090)..controls(  0.8000,  0.0000,  1.3291)and(  0.8000,  0.0000,  1.3543)..(  0.8000,  0.0000,  1.3744)--( -0.7624,  0.6472,  1.3744)..controls( -0.7161,  0.7017,  1.3543)and( -0.6506,  0.7627,  1.3291)..( -0.5937,  0.8047,  1.3090)--cycle;
\path[facet](  0.9967, -0.0815,  1.3744)..controls(  0.9974, -0.0725,  1.3946)and(  0.9981, -0.0617,  1.4198)..(  0.9986, -0.0532,  1.4399)--(  0.8000,  0.0000,  1.4399)..controls(  0.8000,  0.0000,  1.4198)and(  0.8000,  0.0000,  1.3946)..(  0.8000,  0.0000,  1.3744)--cycle;
\path[facet](  0.8000,  0.0000,  1.3744)..controls(  0.8000,  0.0000,  1.3946)and(  0.8000,  0.0000,  1.4198)..(  0.8000,  0.0000,  1.4399)--( -0.8914,  0.4532,  1.4399)..controls( -0.8587,  0.5175,  1.4198)and( -0.8087,  0.5926,  1.3946)..( -0.7624,  0.6472,  1.3744)--cycle;
\path[facet](  0.9986, -0.0532,  1.4399)..controls(  0.9990, -0.0448,  1.4600)and(  0.9994, -0.0344,  1.4852)..(  0.9997, -0.0263,  1.5053)--(  0.8000,  0.0000,  1.5053)..controls(  0.8000,  0.0000,  1.4852)and(  0.8000,  0.0000,  1.4600)..(  0.8000,  0.0000,  1.4399)--cycle;
\path[facet](  0.8000,  0.0000,  1.4399)..controls(  0.8000,  0.0000,  1.4600)and(  0.8000,  0.0000,  1.4852)..(  0.8000,  0.0000,  1.5053)--( -0.9724,  0.2333,  1.5053)..controls( -0.9555,  0.3037,  1.4852)and( -0.9241,  0.3889,  1.4600)..( -0.8914,  0.4532,  1.4399)--cycle;
\path[facet](  0.9997, -0.0263,  1.5053)..controls(  0.9999, -0.0181,  1.5255)and(  1.0000, -0.0081,  1.5507)..(  1.0000, -0.0000,  1.5708)--(  0.8000,  0.0000,  1.5708)..controls(  0.8000,  0.0000,  1.5507)and(  0.8000,  0.0000,  1.5255)..(  0.8000,  0.0000,  1.5053)--cycle;
\path[facet](  0.8000,  0.0000,  1.5053)..controls(  0.8000,  0.0000,  1.5255)and(  0.8000,  0.0000,  1.5507)..(  0.8000,  0.0000,  1.5708)--( -1.0000,  0.0000,  1.5708)..controls( -1.0000,  0.0725,  1.5507)and( -0.9893,  0.1629,  1.5255)..( -0.9724,  0.2333,  1.5053)--cycle;
\draw[latex-latex,very thick](-1,0,0)--++(0,0,pi/2)node[pos=0.66,right]{$\height\pi$};
\draw[axis](-1,0,\hoehe)--++(2,0,0);
}
\caption{Horizontally translated helicoids $\heli_{\height,s}$ inside the cylinder $Z_\height$.}%
\label{fig:helicoid_translation}%
\end{figure}

\begin{proof} 
Given $-1<s<1$, let $\heli_{\height,s}\subset Z_\height$ be the horizontally translated helicoid introduced in Lemma \ref{lem:translation} (see Figure \ref{fig:helicoid_translation}). 
By construction, $\heli_{\height,s}$ contains the segment $\zeta_0$. 
Moreover, the set $c_{\height,s}\vcentcolon=\heli_{\height,s}\cap\{x_3=\height\pi/2\}$ is a straight segment of length $\delta(s)$ which intersects $\zeta_1$ orthogonally (see Figure \ref{fig:vertical_strip}, left image).
It divides $\heli_{\height,s}$ into two connected components which are congruent via rotation $\rotation_{\zeta_1}^{\pi}$ by angle $\pi$ around $\zeta_1$. 
Let $\Lambda_s$ be the connected component of $\heli_{\height,s}\setminus c_{\height,s}$ containing $\zeta_0$. 
Given $0<\lambda\leq 1$, consider the map $f_\lambda\colon(x_1,x_2,x_3)\mapsto(x_1,x_2,\lambda x_3)$ and the union 
\begin{align}\label{eqn:Yls}
Y_{\lambda,s}\vcentcolon=f_\lambda(\Lambda_s)\cup I_{\lambda,s} \cup \rotation_{\zeta_1}^{\pi}f_\lambda(\Lambda_s), 
\end{align}
where $I_{\lambda,s}$ is the planar, rectangular strip of vertical length $(1-\lambda)\height\pi$ and horizontal width $\delta(s)=2\sqrt{1-s^2}$ connecting $f_\lambda(\Lambda_s)$ and $\rotation_{\zeta_1}^{\pi}f_\lambda(\Lambda_s)$ as visualised in Figure \ref{fig:vertical_strip}. 
Since vertical scaling $f_\lambda$ by $0<\lambda\leq 1$ does not increase the area of $\Lambda_s$, we have 
\begin{align}\label{eqn:20240306}
\hsd^2(Y_{\lambda,s})&=2\hsd^2\bigl(f_\lambda(\Lambda_s)\bigr)+\hsd^2(I_{\lambda,s})
<\hsd^2(\heli_{\height,s})+2\height\pi\sqrt{1-s^2} 
\end{align}
for all $0<\lambda\leq1$. 
We may regularise $Y_{\lambda,s}$ to obtain a smooth, $\rotation_{\zeta_1}^{\pi}$-equivariant, properly embedded surface in $Z_\height$ which depends smoothly on $\lambda,s$ and still satisfies inequality \eqref{eqn:20240306}. 
(We do not relabel the regularised surface.)  
By Lemma~\ref{lem:translation} applied to the right hand side in \eqref{eqn:20240306}, there exists $s_0\in\interval{0,1}$ such that $\hsd^2(Y_{\lambda,s})<\hsd^2(\heli_{\height,0})$ for all $s\in\intervaL{-1,-s_0}\cup\Interval{s_0,1}$ and all $0<\lambda\leq1$. 
Let now $\lambda\colon\Interval{s_0,1}\to\intervaL{0,1}$ be a smooth, decreasing function of $s$ such that $\lambda(s_0)=1$ and $\lambda(s)\to0$ as $s\to1$. 
For any $0<t<1$ we define 
\begin{align}
\Gamma_t\vcentcolon=\begin{cases}
\heli_{\height,2t-1} & \text{ if }\abs{2t-1}<s_0, \\
Y_{\lambda(\abs{2t-1}),\,2t-1} & \text{ if }\abs{2t-1}\geq s_0.
\end{cases}
\end{align}
Lemma~\ref{lem:translation} and the choice of $s_0$ imply $\hsd^2(\Gamma_t)\leq\hsd^2(\heli_{\height,0})$ for all $0<t<1$. 
From its explicit parametrisation as a ruled surface or as a graph almost everywhere over $\B^2$ as in \eqref{eqn:graph}, it is evident that 
$\Gamma_t$ converges (in the sense of varifolds) to the union of two horizontal half-discs as $t\to0$ respectively $t\to1$ (see Figure \ref{fig:vertical_strip}, right image). 
Defining $\Gamma_0$ and $\Gamma_1$ as the respective limit, we obtain \ref{lem:sweepout-area}. 
Moreover, $\Gamma_t$ inherits properties \ref{lem:sweepout-sym} and \ref{lem:sweepout-topo} from $\heli_{\height,s}$. 
Finally, we define $F_t$ as the connected component of $Z_\height\setminus\Gamma_t$ containing the end point $(-1,0,\height\pi/2)$ of $\zeta_1$ to prove~\ref{lem:sweepout-perim}. 
\end{proof}

\begin{figure}
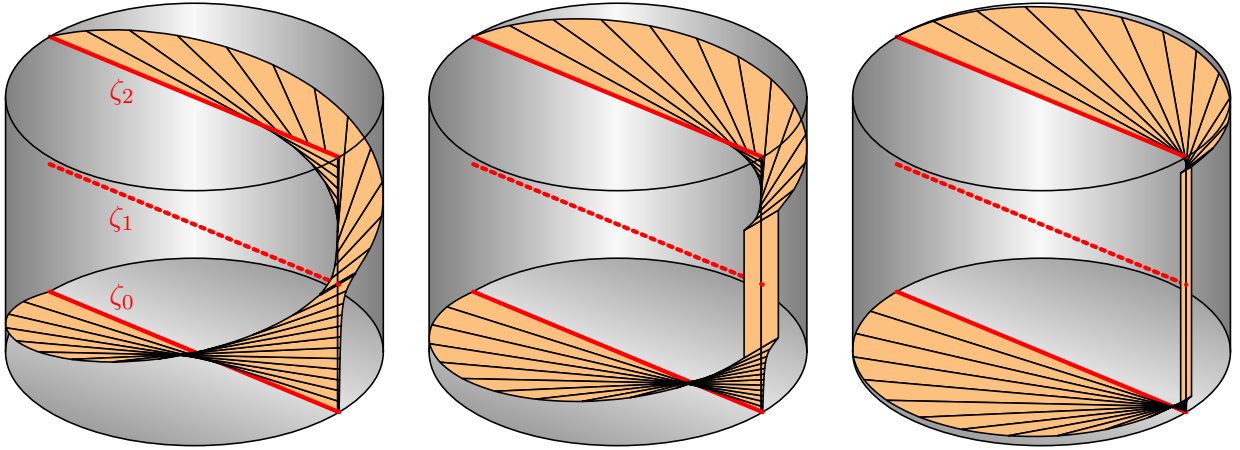
%
\pgfmathsetmacro{\phiO}{40} 
\pgfmathsetmacro{\thetaO}{60}
\tdplotsetmaincoords{\thetaO}{\phiO}
\cylinder{
\path[facet]( -1.0000, -0.0000,  0.0000)..controls( -1.0000, -0.0802,  0.0201)and( -0.9869, -0.1801,  0.0453)..( -0.9663, -0.2575,  0.0654)--(  0.9900,  0.0000,  0.0654)..controls(  0.9900,  0.0000,  0.0453)and(  0.9900,  0.0000,  0.0201)..(  0.9900,  0.0000,  0.0000)--cycle;
\path[facet](  0.9900,  0.0000,  0.0000)..controls(  0.9900,  0.0000,  0.0201)and(  0.9900,  0.0000,  0.0453)..(  0.9900,  0.0000,  0.0654)--(  1.0000,  0.0013,  0.0654)..controls(  1.0000,  0.0009,  0.0453)and(  1.0000,  0.0004,  0.0201)..(  1.0000,  0.0000,  0.0000)--cycle;
\draw[axis](-1,0,0)--++(2,0,0)node[pos=0.25,above]{$\zeta_0$};
\draw[axis,dotted](-1,0,0.7854)--(  0.9900,0,0.7854)node[pos=0.25,below]{$\zeta_1$};
\path[facet]( -0.9663, -0.2575,  0.0654)..controls( -0.9456, -0.3350,  0.0856)and( -0.9072, -0.4282,  0.1108)..( -0.8674, -0.4977,  0.1309)--(  0.9900,  0.0000,  0.1309)..controls(  0.9900,  0.0000,  0.1108)and(  0.9900,  0.0000,  0.0856)..(  0.9900,  0.0000,  0.0654)--cycle;
\path[facet](  0.9900,  0.0000,  0.0654)..controls(  0.9900,  0.0000,  0.0856)and(  0.9900,  0.0000,  0.1108)..(  0.9900,  0.0000,  0.1309)--(  1.0000,  0.0027,  0.1309)..controls(  1.0000,  0.0022,  0.1108)and(  1.0000,  0.0017,  0.0856)..(  1.0000,  0.0013,  0.0654)--cycle;
\path[facet]( -0.8674, -0.4977,  0.1309)..controls( -0.8275, -0.5672,  0.1510)and( -0.7664, -0.6473,  0.1762)..( -0.7100, -0.7042,  0.1963)--(  0.9900,  0.0000,  0.1963)..controls(  0.9900,  0.0000,  0.1762)and(  0.9900,  0.0000,  0.1510)..(  0.9900,  0.0000,  0.1309)--cycle;
\path[facet](  0.9900,  0.0000,  0.1309)..controls(  0.9900,  0.0000,  0.1510)and(  0.9900,  0.0000,  0.1762)..(  0.9900,  0.0000,  0.1963)--(  1.0000,  0.0041,  0.1963)..controls(  1.0000,  0.0037,  0.1762)and(  1.0000,  0.0031,  0.1510)..(  1.0000,  0.0027,  0.1309)--cycle;
\path[facet]( -0.7100, -0.7042,  0.1963)..controls( -0.6536, -0.7610,  0.2165)and( -0.5741, -0.8227,  0.2417)..( -0.5050, -0.8631,  0.2618)--(  0.9900,  0.0000,  0.2618)..controls(  0.9900,  0.0000,  0.2417)and(  0.9900,  0.0000,  0.2165)..(  0.9900,  0.0000,  0.1963)--cycle;
\path[facet](  0.9900,  0.0000,  0.1963)..controls(  0.9900,  0.0000,  0.2165)and(  0.9900,  0.0000,  0.2417)..(  0.9900,  0.0000,  0.2618)--(  1.0000,  0.0058,  0.2618)..controls(  1.0000,  0.0052,  0.2417)and(  1.0000,  0.0046,  0.2165)..(  1.0000,  0.0041,  0.1963)--cycle;
\path[facet]( -0.5050, -0.8631,  0.2618)..controls( -0.4359, -0.9035,  0.2819)and( -0.3432, -0.9426,  0.3071)..( -0.2662, -0.9639,  0.3272)--(  0.9900,  0.0000,  0.3272)..controls(  0.9900,  0.0000,  0.3071)and(  0.9900,  0.0000,  0.2819)..(  0.9900,  0.0000,  0.2618)--cycle;
\path[facet](  0.9900,  0.0000,  0.2618)..controls(  0.9900,  0.0000,  0.2819)and(  0.9900,  0.0000,  0.3071)..(  0.9900,  0.0000,  0.3272)--(  1.0000,  0.0077,  0.3272)..controls(  1.0000,  0.0070,  0.3071)and(  1.0000,  0.0063,  0.2819)..(  1.0000,  0.0058,  0.2618)--cycle;
\path[facet]( -0.2662, -0.9639,  0.3272)..controls( -0.1892, -0.9852,  0.3474)and( -0.0897, -0.9992,  0.3726)..( -0.0100, -1.0000,  0.3927)--(  0.9900,  0.0000,  0.3927)..controls(  0.9900,  0.0000,  0.3726)and(  0.9900,  0.0000,  0.3474)..(  0.9900,  0.0000,  0.3272)--cycle;
\path[facet](  0.9900,  0.0000,  0.3272)..controls(  0.9900,  0.0000,  0.3474)and(  0.9900,  0.0000,  0.3726)..(  0.9900,  0.0000,  0.3927)--(  1.0000,  0.0100,  0.3927)..controls(  1.0000,  0.0092,  0.3726)and(  1.0000,  0.0083,  0.3474)..(  1.0000,  0.0077,  0.3272)--cycle;
\path[facet]( -0.0100, -1.0000,  0.3927)..controls(  0.0698, -1.0007,  0.4128)and(  0.1693, -0.9888,  0.4380)..(  0.2463, -0.9692,  0.4581)--(  0.9900,  0.0000,  0.4581)..controls(  0.9900,  0.0000,  0.4380)and(  0.9900,  0.0000,  0.4128)..(  0.9900,  0.0000,  0.3927)--cycle;
\path[facet](  0.9900,  0.0000,  0.3927)..controls(  0.9900,  0.0000,  0.4128)and(  0.9900,  0.0000,  0.4380)..(  0.9900,  0.0000,  0.4581)--(  0.9999,  0.0129,  0.4581)..controls(  0.9999,  0.0119,  0.4380)and(  0.9999,  0.0107,  0.4128)..(  1.0000,  0.0100,  0.3927)--cycle;
\path[facet](  0.2463, -0.9692,  0.4581)..controls(  0.3234, -0.9496,  0.4783)and(  0.4161, -0.9128,  0.5035)..(  0.4851, -0.8744,  0.5236)--(  0.9900,  0.0000,  0.5236)..controls(  0.9900,  0.0000,  0.5035)and(  0.9900,  0.0000,  0.4783)..(  0.9900,  0.0000,  0.4581)--cycle;
\path[facet](  0.9900,  0.0000,  0.4581)..controls(  0.9900,  0.0000,  0.4783)and(  0.9900,  0.0000,  0.5035)..(  0.9900,  0.0000,  0.5236)--(  0.9999,  0.0171,  0.5236)..controls(  0.9999,  0.0155,  0.5035)and(  0.9999,  0.0140,  0.4783)..(  0.9999,  0.0129,  0.4581)--cycle;
\path[facet](  0.4851, -0.8744,  0.5236)..controls(  0.5542, -0.8361,  0.5437)and(  0.6339, -0.7774,  0.5689)..(  0.6903, -0.7235,  0.5890)--(  0.9900,  0.0000,  0.5890)..controls(  0.9900,  0.0000,  0.5689)and(  0.9900,  0.0000,  0.5437)..(  0.9900,  0.0000,  0.5236)--cycle;
\path[facet](  0.9900,  0.0000,  0.5236)..controls(  0.9900,  0.0000,  0.5437)and(  0.9900,  0.0000,  0.5689)..(  0.9900,  0.0000,  0.5890)--(  0.9997,  0.0235,  0.5890)..controls(  0.9998,  0.0209,  0.5689)and(  0.9998,  0.0186,  0.5437)..(  0.9999,  0.0171,  0.5236)--cycle;
\path[facet](  0.6903, -0.7235,  0.5890)..controls(  0.7468, -0.6697,  0.6092)and(  0.8079, -0.5941,  0.6344)..(  0.8480, -0.5300,  0.6545)--(  0.9900,  0.0000,  0.6545)..controls(  0.9900,  0.0000,  0.6344)and(  0.9900,  0.0000,  0.6092)..(  0.9900,  0.0000,  0.5890)--cycle;
\path[facet](  0.9900,  0.0000,  0.5890)..controls(  0.9900,  0.0000,  0.6092)and(  0.9900,  0.0000,  0.6344)..(  0.9900,  0.0000,  0.6545)--(  0.9994,  0.0350,  0.6545)..controls(  0.9996,  0.0300,  0.6344)and(  0.9997,  0.0260,  0.6092)..(  0.9997,  0.0235,  0.5890)--cycle;
\path[facet](  0.8480, -0.5300,  0.6545)..controls(  0.8880, -0.4660,  0.6746)and(  0.9267, -0.3818,  0.6998)..(  0.9482, -0.3178,  0.7199)--(  0.9900,  0.0000,  0.7199)..controls(  0.9900,  0.0000,  0.6998)and(  0.9900,  0.0000,  0.6746)..(  0.9900,  0.0000,  0.6545)--cycle;
\path[facet](  0.9900,  0.0000,  0.6545)..controls(  0.9900,  0.0000,  0.6746)and(  0.9900,  0.0000,  0.6998)..(  0.9900,  0.0000,  0.7199)--(  0.9981,  0.0616,  0.7199)..controls(  0.9989,  0.0485,  0.6998)and(  0.9992,  0.0400,  0.6746)..(  0.9994,  0.0350,  0.6545)--cycle;
\path[facet](  0.9482, -0.3178,  0.7199)..controls(  0.9696, -0.2538,  0.7401)and(  0.9900, -0.1411,  0.7854)..(  0.9900, -0.1411,  0.7854)--(  0.9900,  0.0000,  0.7854)..controls(  0.9900,  0.0000,  0.7854)and(  0.9900,  0.0000,  0.7401)..(  0.9900,  0.0000,  0.7199)--cycle;
\path[facet](  0.9900,  0.0000,  0.7199)..controls(  0.9900,  0.0000,  0.7401)and(  0.9900,  0.0000,  0.7854)..(  0.9900,  0.0000,  0.7854)--(  0.9900,  0.1411,  0.7854)..controls(  0.9900,  0.1411,  0.7854)and(  0.9973,  0.0746,  0.7401)..(  0.9981,  0.0616,  0.7199)--cycle;
\path[facet](  0.9900, -0.1411,  0.7854)..controls(  0.9900, -0.1411,  0.7854)and(  0.9973, -0.0746,  0.8307)..(  0.9981, -0.0616,  0.8508)--(  0.9900,  0.0000,  0.8508)..controls(  0.9900,  0.0000,  0.8307)and(  0.9900,  0.0000,  0.7854)..(  0.9900,  0.0000,  0.7854)--cycle;
\path[facet](  0.9900,  0.0000,  0.7854)..controls(  0.9900,  0.0000,  0.7854)and(  0.9900,  0.0000,  0.8307)..(  0.9900,  0.0000,  0.8508)--(  0.9482,  0.3178,  0.8508)..controls(  0.9696,  0.2538,  0.8307)and(  0.9900,  0.1411,  0.7854)..(  0.9900,  0.1411,  0.7854)--cycle;
\path[facet](  0.9981, -0.0616,  0.8508)..controls(  0.9989, -0.0485,  0.8710)and(  0.9992, -0.0400,  0.8962)..(  0.9994, -0.0350,  0.9163)--(  0.9900,  0.0000,  0.9163)..controls(  0.9900,  0.0000,  0.8962)and(  0.9900,  0.0000,  0.8710)..(  0.9900,  0.0000,  0.8508)--cycle;
\path[facet](  0.9900,  0.0000,  0.8508)..controls(  0.9900,  0.0000,  0.8710)and(  0.9900,  0.0000,  0.8962)..(  0.9900,  0.0000,  0.9163)--(  0.8480,  0.5300,  0.9163)..controls(  0.8880,  0.4660,  0.8962)and(  0.9267,  0.3818,  0.8710)..(  0.9482,  0.3178,  0.8508)--cycle;
\path[facet](  0.9994, -0.0350,  0.9163)..controls(  0.9996, -0.0300,  0.9364)and(  0.9997, -0.0260,  0.9616)..(  0.9997, -0.0235,  0.9817)--(  0.9900,  0.0000,  0.9817)..controls(  0.9900,  0.0000,  0.9616)and(  0.9900,  0.0000,  0.9364)..(  0.9900,  0.0000,  0.9163)--cycle;
\path[facet](  0.9900,  0.0000,  0.9163)..controls(  0.9900,  0.0000,  0.9364)and(  0.9900,  0.0000,  0.9616)..(  0.9900,  0.0000,  0.9817)--(  0.6903,  0.7235,  0.9817)..controls(  0.7468,  0.6697,  0.9616)and(  0.8079,  0.5941,  0.9364)..(  0.8480,  0.5300,  0.9163)--cycle;
\path[facet](  0.9997, -0.0235,  0.9817)..controls(  0.9998, -0.0209,  1.0019)and(  0.9998, -0.0186,  1.0271)..(  0.9999, -0.0171,  1.0472)--(  0.9900,  0.0000,  1.0472)..controls(  0.9900,  0.0000,  1.0271)and(  0.9900,  0.0000,  1.0019)..(  0.9900,  0.0000,  0.9817)--cycle;
\path[facet](  0.9900,  0.0000,  0.9817)..controls(  0.9900,  0.0000,  1.0019)and(  0.9900,  0.0000,  1.0271)..(  0.9900,  0.0000,  1.0472)--(  0.4851,  0.8744,  1.0472)..controls(  0.5542,  0.8361,  1.0271)and(  0.6339,  0.7774,  1.0019)..(  0.6903,  0.7235,  0.9817)--cycle;
\path[facet](  0.9999, -0.0171,  1.0472)..controls(  0.9999, -0.0155,  1.0673)and(  0.9999, -0.0140,  1.0925)..(  0.9999, -0.0129,  1.1126)--(  0.9900,  0.0000,  1.1126)..controls(  0.9900,  0.0000,  1.0925)and(  0.9900,  0.0000,  1.0673)..(  0.9900,  0.0000,  1.0472)--cycle;
\path[facet](  0.9900,  0.0000,  1.0472)..controls(  0.9900,  0.0000,  1.0673)and(  0.9900,  0.0000,  1.0925)..(  0.9900,  0.0000,  1.1126)--(  0.2463,  0.9692,  1.1126)..controls(  0.3234,  0.9496,  1.0925)and(  0.4161,  0.9128,  1.0673)..(  0.4851,  0.8744,  1.0472)--cycle;
\path[facet](  0.9999, -0.0129,  1.1126)..controls(  0.9999, -0.0119,  1.1328)and(  0.9999, -0.0107,  1.1580)..(  1.0000, -0.0100,  1.1781)--(  0.9900,  0.0000,  1.1781)..controls(  0.9900,  0.0000,  1.1580)and(  0.9900,  0.0000,  1.1328)..(  0.9900,  0.0000,  1.1126)--cycle;
\path[facet](  0.9900,  0.0000,  1.1126)..controls(  0.9900,  0.0000,  1.1328)and(  0.9900,  0.0000,  1.1580)..(  0.9900,  0.0000,  1.1781)--( -0.0100,  1.0000,  1.1781)..controls(  0.0698,  1.0007,  1.1580)and(  0.1693,  0.9888,  1.1328)..(  0.2463,  0.9692,  1.1126)--cycle;
\path[facet](  1.0000, -0.0100,  1.1781)..controls(  1.0000, -0.0092,  1.1982)and(  1.0000, -0.0083,  1.2234)..(  1.0000, -0.0077,  1.2435)--(  0.9900,  0.0000,  1.2435)..controls(  0.9900,  0.0000,  1.2234)and(  0.9900,  0.0000,  1.1982)..(  0.9900,  0.0000,  1.1781)--cycle;
\path[facet](  0.9900,  0.0000,  1.1781)..controls(  0.9900,  0.0000,  1.1982)and(  0.9900,  0.0000,  1.2234)..(  0.9900,  0.0000,  1.2435)--( -0.2662,  0.9639,  1.2435)..controls( -0.1892,  0.9852,  1.2234)and( -0.0897,  0.9992,  1.1982)..( -0.0100,  1.0000,  1.1781)--cycle;
\path[facet](  1.0000, -0.0077,  1.2435)..controls(  1.0000, -0.0070,  1.2637)and(  1.0000, -0.0063,  1.2889)..(  1.0000, -0.0058,  1.3090)--(  0.9900,  0.0000,  1.3090)..controls(  0.9900,  0.0000,  1.2889)and(  0.9900,  0.0000,  1.2637)..(  0.9900,  0.0000,  1.2435)--cycle;
\path[facet](  0.9900,  0.0000,  1.2435)..controls(  0.9900,  0.0000,  1.2637)and(  0.9900,  0.0000,  1.2889)..(  0.9900,  0.0000,  1.3090)--( -0.5050,  0.8631,  1.3090)..controls( -0.4359,  0.9035,  1.2889)and( -0.3432,  0.9426,  1.2637)..( -0.2662,  0.9639,  1.2435)--cycle;
\path[facet](  1.0000, -0.0058,  1.3090)..controls(  1.0000, -0.0052,  1.3291)and(  1.0000, -0.0046,  1.3543)..(  1.0000, -0.0041,  1.3744)--(  0.9900,  0.0000,  1.3744)..controls(  0.9900,  0.0000,  1.3543)and(  0.9900,  0.0000,  1.3291)..(  0.9900,  0.0000,  1.3090)--cycle;
\path[facet](  0.9900,  0.0000,  1.3090)..controls(  0.9900,  0.0000,  1.3291)and(  0.9900,  0.0000,  1.3543)..(  0.9900,  0.0000,  1.3744)--( -0.7100,  0.7042,  1.3744)..controls( -0.6536,  0.7610,  1.3543)and( -0.5741,  0.8227,  1.3291)..( -0.5050,  0.8631,  1.3090)--cycle;
\path[facet](  1.0000, -0.0041,  1.3744)..controls(  1.0000, -0.0037,  1.3946)and(  1.0000, -0.0031,  1.4198)..(  1.0000, -0.0027,  1.4399)--(  0.9900,  0.0000,  1.4399)..controls(  0.9900,  0.0000,  1.4198)and(  0.9900,  0.0000,  1.3946)..(  0.9900,  0.0000,  1.3744)--cycle;
\path[facet](  0.9900,  0.0000,  1.3744)..controls(  0.9900,  0.0000,  1.3946)and(  0.9900,  0.0000,  1.4198)..(  0.9900,  0.0000,  1.4399)--( -0.8674,  0.4977,  1.4399)..controls( -0.8275,  0.5672,  1.4198)and( -0.7664,  0.6473,  1.3946)..( -0.7100,  0.7042,  1.3744)--cycle;
\path[facet](  1.0000, -0.0027,  1.4399)..controls(  1.0000, -0.0022,  1.4600)and(  1.0000, -0.0017,  1.4852)..(  1.0000, -0.0013,  1.5053)--(  0.9900,  0.0000,  1.5053)..controls(  0.9900,  0.0000,  1.4852)and(  0.9900,  0.0000,  1.4600)..(  0.9900,  0.0000,  1.4399)--cycle;
\path[facet](  0.9900,  0.0000,  1.4399)..controls(  0.9900,  0.0000,  1.4600)and(  0.9900,  0.0000,  1.4852)..(  0.9900,  0.0000,  1.5053)--( -0.9663,  0.2575,  1.5053)..controls( -0.9456,  0.3350,  1.4852)and( -0.9072,  0.4282,  1.4600)..( -0.8674,  0.4977,  1.4399)--cycle;
\path[facet](  1.0000, -0.0013,  1.5053)..controls(  1.0000, -0.0009,  1.5255)and(  1.0000, -0.0004,  1.5507)..(  1.0000, -0.0000,  1.5708)--(  0.9900,  0.0000,  1.5708)..controls(  0.9900,  0.0000,  1.5507)and(  0.9900,  0.0000,  1.5255)..(  0.9900,  0.0000,  1.5053)--cycle;
\path[facet](  0.9900,  0.0000,  1.5053)..controls(  0.9900,  0.0000,  1.5255)and(  0.9900,  0.0000,  1.5507)..(  0.9900,  0.0000,  1.5708)--( -1.0000,  0.0000,  1.5708)..controls( -1.0000,  0.0802,  1.5507)and( -0.9869,  0.1801,  1.5255)..( -0.9663,  0.2575,  1.5053)--cycle;
\draw[very thick](  0.9900, -0.1411,  0.7854)--(  0.9900,  0.1411,  0.7854);
\draw[axis](  0.9900,0,0.7854)--(1,0,0.7854);
\draw[axis](-1,0,\hoehe)--++(2,0,0)node[pos=0.25,below]{$\zeta_2$};
}\hfill 
\cylinder{%
\draw[axis,dotted](-1,0,0.7854)--(1,0,0.7854); 
\path[facet]( -1.0000, -0.0000,  0.0000)..controls( -1.0000, -0.0802,  0.0101)and( -0.9869, -0.1801,  0.0227)..( -0.9663, -0.2575,  0.0327)--(  0.9900,  0.0000,  0.0327)..controls(  0.9900,  0.0000,  0.0227)and(  0.9900,  0.0000,  0.0101)..(  0.9900,  0.0000,  0.0000)--cycle;
\path[facet](  0.9900,  0.0000,  0.0000)..controls(  0.9900,  0.0000,  0.0101)and(  0.9900,  0.0000,  0.0227)..(  0.9900,  0.0000,  0.0327)--(  1.0000,  0.0013,  0.0327)..controls(  1.0000,  0.0009,  0.0227)and(  1.0000,  0.0004,  0.0101)..(  1.0000,  0.0000,  0.0000)--cycle;
\draw[axis](-1,0,0)--++(2,0,0);
\path[facet]( -0.9663, -0.2575,  0.0327)..controls( -0.9456, -0.3350,  0.0428)and( -0.9072, -0.4282,  0.0554)..( -0.8674, -0.4977,  0.0654)--(  0.9900,  0.0000,  0.0654)..controls(  0.9900,  0.0000,  0.0554)and(  0.9900,  0.0000,  0.0428)..(  0.9900,  0.0000,  0.0327)--cycle;
\path[facet](  0.9900,  0.0000,  0.0327)..controls(  0.9900,  0.0000,  0.0428)and(  0.9900,  0.0000,  0.0554)..(  0.9900,  0.0000,  0.0654)--(  1.0000,  0.0027,  0.0654)..controls(  1.0000,  0.0022,  0.0554)and(  1.0000,  0.0017,  0.0428)..(  1.0000,  0.0013,  0.0327)--cycle;
\path[facet]( -0.8674, -0.4977,  0.0654)..controls( -0.8275, -0.5672,  0.0755)and( -0.7664, -0.6473,  0.0881)..( -0.7100, -0.7042,  0.0982)--(  0.9900,  0.0000,  0.0982)..controls(  0.9900,  0.0000,  0.0881)and(  0.9900,  0.0000,  0.0755)..(  0.9900,  0.0000,  0.0654)--cycle;
\path[facet](  0.9900,  0.0000,  0.0654)..controls(  0.9900,  0.0000,  0.0755)and(  0.9900,  0.0000,  0.0881)..(  0.9900,  0.0000,  0.0982)--(  1.0000,  0.0041,  0.0982)..controls(  1.0000,  0.0037,  0.0881)and(  1.0000,  0.0031,  0.0755)..(  1.0000,  0.0027,  0.0654)--cycle;
\path[facet]( -0.7100, -0.7042,  0.0982)..controls( -0.6536, -0.7610,  0.1082)and( -0.5741, -0.8227,  0.1208)..( -0.5050, -0.8631,  0.1309)--(  0.9900,  0.0000,  0.1309)..controls(  0.9900,  0.0000,  0.1208)and(  0.9900,  0.0000,  0.1082)..(  0.9900,  0.0000,  0.0982)--cycle;
\path[facet](  0.9900,  0.0000,  0.0982)..controls(  0.9900,  0.0000,  0.1082)and(  0.9900,  0.0000,  0.1208)..(  0.9900,  0.0000,  0.1309)--(  1.0000,  0.0058,  0.1309)..controls(  1.0000,  0.0052,  0.1208)and(  1.0000,  0.0046,  0.1082)..(  1.0000,  0.0041,  0.0982)--cycle;
\path[facet]( -0.5050, -0.8631,  0.1309)..controls( -0.4359, -0.9035,  0.1410)and( -0.3432, -0.9426,  0.1536)..( -0.2662, -0.9639,  0.1636)--(  0.9900,  0.0000,  0.1636)..controls(  0.9900,  0.0000,  0.1536)and(  0.9900,  0.0000,  0.1410)..(  0.9900,  0.0000,  0.1309)--cycle;
\path[facet](  0.9900,  0.0000,  0.1309)..controls(  0.9900,  0.0000,  0.1410)and(  0.9900,  0.0000,  0.1536)..(  0.9900,  0.0000,  0.1636)--(  1.0000,  0.0077,  0.1636)..controls(  1.0000,  0.0070,  0.1536)and(  1.0000,  0.0063,  0.1410)..(  1.0000,  0.0058,  0.1309)--cycle;
\path[facet]( -0.2662, -0.9639,  0.1636)..controls( -0.1892, -0.9852,  0.1737)and( -0.0897, -0.9992,  0.1863)..( -0.0100, -1.0000,  0.1963)--(  0.9900,  0.0000,  0.1963)..controls(  0.9900,  0.0000,  0.1863)and(  0.9900,  0.0000,  0.1737)..(  0.9900,  0.0000,  0.1636)--cycle;
\path[facet](  0.9900,  0.0000,  0.1636)..controls(  0.9900,  0.0000,  0.1737)and(  0.9900,  0.0000,  0.1863)..(  0.9900,  0.0000,  0.1963)--(  1.0000,  0.0100,  0.1963)..controls(  1.0000,  0.0092,  0.1863)and(  1.0000,  0.0083,  0.1737)..(  1.0000,  0.0077,  0.1636)--cycle;
\path[facet]( -0.0100, -1.0000,  0.1963)..controls(  0.0698, -1.0007,  0.2064)and(  0.1693, -0.9888,  0.2190)..(  0.2463, -0.9692,  0.2291)--(  0.9900,  0.0000,  0.2291)..controls(  0.9900,  0.0000,  0.2190)and(  0.9900,  0.0000,  0.2064)..(  0.9900,  0.0000,  0.1963)--cycle;
\path[facet](  0.9900,  0.0000,  0.1963)..controls(  0.9900,  0.0000,  0.2064)and(  0.9900,  0.0000,  0.2190)..(  0.9900,  0.0000,  0.2291)--(  0.9999,  0.0129,  0.2291)..controls(  0.9999,  0.0119,  0.2190)and(  0.9999,  0.0107,  0.2064)..(  1.0000,  0.0100,  0.1963)--cycle;
\path[facet](  0.2463, -0.9692,  0.2291)..controls(  0.3234, -0.9496,  0.2391)and(  0.4161, -0.9128,  0.2517)..(  0.4851, -0.8744,  0.2618)--(  0.9900,  0.0000,  0.2618)..controls(  0.9900,  0.0000,  0.2517)and(  0.9900,  0.0000,  0.2391)..(  0.9900,  0.0000,  0.2291)--cycle;
\path[facet](  0.9900,  0.0000,  0.2291)..controls(  0.9900,  0.0000,  0.2391)and(  0.9900,  0.0000,  0.2517)..(  0.9900,  0.0000,  0.2618)--(  0.9999,  0.0171,  0.2618)..controls(  0.9999,  0.0155,  0.2517)and(  0.9999,  0.0140,  0.2391)..(  0.9999,  0.0129,  0.2291)--cycle;
\path[facet](  0.4851, -0.8744,  0.2618)..controls(  0.5542, -0.8361,  0.2719)and(  0.6339, -0.7774,  0.2845)..(  0.6903, -0.7235,  0.2945)--(  0.9900,  0.0000,  0.2945)..controls(  0.9900,  0.0000,  0.2845)and(  0.9900,  0.0000,  0.2719)..(  0.9900,  0.0000,  0.2618)--cycle;
\path[facet](  0.9900,  0.0000,  0.2618)..controls(  0.9900,  0.0000,  0.2719)and(  0.9900,  0.0000,  0.2845)..(  0.9900,  0.0000,  0.2945)--(  0.9997,  0.0235,  0.2945)..controls(  0.9998,  0.0209,  0.2845)and(  0.9998,  0.0186,  0.2719)..(  0.9999,  0.0171,  0.2618)--cycle;
\path[facet](  0.6903, -0.7235,  0.2945)..controls(  0.7468, -0.6697,  0.3046)and(  0.8079, -0.5941,  0.3172)..(  0.8480, -0.5300,  0.3272)--(  0.9900,  0.0000,  0.3272)..controls(  0.9900,  0.0000,  0.3172)and(  0.9900,  0.0000,  0.3046)..(  0.9900,  0.0000,  0.2945)--cycle;
\path[facet](  0.9900,  0.0000,  0.2945)..controls(  0.9900,  0.0000,  0.3046)and(  0.9900,  0.0000,  0.3172)..(  0.9900,  0.0000,  0.3272)--(  0.9994,  0.0350,  0.3272)..controls(  0.9996,  0.0300,  0.3172)and(  0.9997,  0.0260,  0.3046)..(  0.9997,  0.0235,  0.2945)--cycle;
\path[facet](  0.8480, -0.5300,  0.3272)..controls(  0.8880, -0.4660,  0.3373)and(  0.9267, -0.3818,  0.3499)..(  0.9482, -0.3178,  0.3600)--(  0.9900,  0.0000,  0.3600)..controls(  0.9900,  0.0000,  0.3499)and(  0.9900,  0.0000,  0.3373)..(  0.9900,  0.0000,  0.3272)--cycle;
\path[facet](  0.9900,  0.0000,  0.3272)..controls(  0.9900,  0.0000,  0.3373)and(  0.9900,  0.0000,  0.3499)..(  0.9900,  0.0000,  0.3600)--(  0.9981,  0.0616,  0.3600)..controls(  0.9989,  0.0485,  0.3499)and(  0.9992,  0.0400,  0.3373)..(  0.9994,  0.0350,  0.3272)--cycle;
\path[facet](  0.9482, -0.3178,  0.3600)..controls(  0.9696, -0.2538,  0.3700)and(  0.9900, -0.1411,  0.3927)..(  0.9900, -0.1411,  0.3927)--(  0.9900,  0.0000,  0.3927)..controls(  0.9900,  0.0000,  0.3927)and(  0.9900,  0.0000,  0.3700)..(  0.9900,  0.0000,  0.3600)--cycle;
\path[facet](  0.9900,  0.0000,  0.3600)..controls(  0.9900,  0.0000,  0.3700)and(  0.9900,  0.0000,  0.3927)..(  0.9900,  0.0000,  0.3927)--(  0.9900,  0.1411,  0.3927)..controls(  0.9900,  0.1411,  0.3927)and(  0.9973,  0.0746,  0.3700)..(  0.9981,  0.0616,  0.3600)--cycle;
\path[facet](  0.9900, -0.1411,  0.3927)..controls(  0.9900, -0.1411,  0.3927)and(  0.9900, -0.1411,  1.1781)..(  0.9900, -0.1411,  1.1781)--(  0.9900,  0.0000,  1.1781)..controls(  0.9900,  0.0000,  1.1781)and(  0.9900,  0.0000,  0.3927)..(  0.9900,  0.0000,  0.3927)--cycle;
\path[facet](  0.9900,  0.0000,  0.3927)..controls(  0.9900,  0.0000,  0.3927)and(  0.9900,  0.0000,  1.1781)..(  0.9900,  0.0000,  1.1781)--(  0.9900,  0.1411,  1.1781)..controls(  0.9900,  0.1411,  1.1781)and(  0.9900,  0.1411,  0.3927)..(  0.9900,  0.1411,  0.3927)--cycle;
\path[facet](  0.9900, -0.1411,  1.1781)..controls(  0.9900, -0.1411,  1.1781)and(  0.9973, -0.0746,  1.2008)..(  0.9981, -0.0616,  1.2108)--(  0.9900,  0.0000,  1.2108)..controls(  0.9900,  0.0000,  1.2008)and(  0.9900,  0.0000,  1.1781)..(  0.9900,  0.0000,  1.1781)--cycle;
\path[facet](  0.9900,  0.0000,  1.1781)..controls(  0.9900,  0.0000,  1.1781)and(  0.9900,  0.0000,  1.2008)..(  0.9900,  0.0000,  1.2108)--(  0.9482,  0.3178,  1.2108)..controls(  0.9696,  0.2538,  1.2008)and(  0.9900,  0.1411,  1.1781)..(  0.9900,  0.1411,  1.1781)--cycle;
\path[facet](  0.9981, -0.0616,  1.2108)..controls(  0.9989, -0.0485,  1.2209)and(  0.9992, -0.0400,  1.2335)..(  0.9994, -0.0350,  1.2435)--(  0.9900,  0.0000,  1.2435)..controls(  0.9900,  0.0000,  1.2335)and(  0.9900,  0.0000,  1.2209)..(  0.9900,  0.0000,  1.2108)--cycle;
\path[facet](  0.9900,  0.0000,  1.2108)..controls(  0.9900,  0.0000,  1.2209)and(  0.9900,  0.0000,  1.2335)..(  0.9900,  0.0000,  1.2435)--(  0.8480,  0.5300,  1.2435)..controls(  0.8880,  0.4660,  1.2335)and(  0.9267,  0.3818,  1.2209)..(  0.9482,  0.3178,  1.2108)--cycle;
\path[facet](  0.9994, -0.0350,  1.2435)..controls(  0.9996, -0.0300,  1.2536)and(  0.9997, -0.0260,  1.2662)..(  0.9997, -0.0235,  1.2763)--(  0.9900,  0.0000,  1.2763)..controls(  0.9900,  0.0000,  1.2662)and(  0.9900,  0.0000,  1.2536)..(  0.9900,  0.0000,  1.2435)--cycle;
\path[facet](  0.9900,  0.0000,  1.2435)..controls(  0.9900,  0.0000,  1.2536)and(  0.9900,  0.0000,  1.2662)..(  0.9900,  0.0000,  1.2763)--(  0.6903,  0.7235,  1.2763)..controls(  0.7468,  0.6697,  1.2662)and(  0.8079,  0.5941,  1.2536)..(  0.8480,  0.5300,  1.2435)--cycle;
\path[facet](  0.9997, -0.0235,  1.2763)..controls(  0.9998, -0.0209,  1.2863)and(  0.9998, -0.0186,  1.2989)..(  0.9999, -0.0171,  1.3090)--(  0.9900,  0.0000,  1.3090)..controls(  0.9900,  0.0000,  1.2989)and(  0.9900,  0.0000,  1.2863)..(  0.9900,  0.0000,  1.2763)--cycle;
\path[facet](  0.9900,  0.0000,  1.2763)..controls(  0.9900,  0.0000,  1.2863)and(  0.9900,  0.0000,  1.2989)..(  0.9900,  0.0000,  1.3090)--(  0.4851,  0.8744,  1.3090)..controls(  0.5542,  0.8361,  1.2989)and(  0.6339,  0.7774,  1.2863)..(  0.6903,  0.7235,  1.2763)--cycle;
\path[facet](  0.9999, -0.0171,  1.3090)..controls(  0.9999, -0.0155,  1.3191)and(  0.9999, -0.0140,  1.3317)..(  0.9999, -0.0129,  1.3417)--(  0.9900,  0.0000,  1.3417)..controls(  0.9900,  0.0000,  1.3317)and(  0.9900,  0.0000,  1.3191)..(  0.9900,  0.0000,  1.3090)--cycle;
\path[facet](  0.9900,  0.0000,  1.3090)..controls(  0.9900,  0.0000,  1.3191)and(  0.9900,  0.0000,  1.3317)..(  0.9900,  0.0000,  1.3417)--(  0.2463,  0.9692,  1.3417)..controls(  0.3234,  0.9496,  1.3317)and(  0.4161,  0.9128,  1.3191)..(  0.4851,  0.8744,  1.3090)--cycle;
\path[facet](  0.9999, -0.0129,  1.3417)..controls(  0.9999, -0.0119,  1.3518)and(  0.9999, -0.0107,  1.3644)..(  1.0000, -0.0100,  1.3744)--(  0.9900,  0.0000,  1.3744)..controls(  0.9900,  0.0000,  1.3644)and(  0.9900,  0.0000,  1.3518)..(  0.9900,  0.0000,  1.3417)--cycle;
\path[facet](  0.9900,  0.0000,  1.3417)..controls(  0.9900,  0.0000,  1.3518)and(  0.9900,  0.0000,  1.3644)..(  0.9900,  0.0000,  1.3744)--( -0.0100,  1.0000,  1.3744)..controls(  0.0698,  1.0007,  1.3644)and(  0.1693,  0.9888,  1.3518)..(  0.2463,  0.9692,  1.3417)--cycle;
\path[facet](  1.0000, -0.0100,  1.3744)..controls(  1.0000, -0.0092,  1.3845)and(  1.0000, -0.0083,  1.3971)..(  1.0000, -0.0077,  1.4072)--(  0.9900,  0.0000,  1.4072)..controls(  0.9900,  0.0000,  1.3971)and(  0.9900,  0.0000,  1.3845)..(  0.9900,  0.0000,  1.3744)--cycle;
\path[facet](  0.9900,  0.0000,  1.3744)..controls(  0.9900,  0.0000,  1.3845)and(  0.9900,  0.0000,  1.3971)..(  0.9900,  0.0000,  1.4072)--( -0.2662,  0.9639,  1.4072)..controls( -0.1892,  0.9852,  1.3971)and( -0.0897,  0.9992,  1.3845)..( -0.0100,  1.0000,  1.3744)--cycle;
\path[facet](  1.0000, -0.0077,  1.4072)..controls(  1.0000, -0.0070,  1.4172)and(  1.0000, -0.0063,  1.4298)..(  1.0000, -0.0058,  1.4399)--(  0.9900,  0.0000,  1.4399)..controls(  0.9900,  0.0000,  1.4298)and(  0.9900,  0.0000,  1.4172)..(  0.9900,  0.0000,  1.4072)--cycle;
\path[facet](  0.9900,  0.0000,  1.4072)..controls(  0.9900,  0.0000,  1.4172)and(  0.9900,  0.0000,  1.4298)..(  0.9900,  0.0000,  1.4399)--( -0.5050,  0.8631,  1.4399)..controls( -0.4359,  0.9035,  1.4298)and( -0.3432,  0.9426,  1.4172)..( -0.2662,  0.9639,  1.4072)--cycle;
\path[facet](  1.0000, -0.0058,  1.4399)..controls(  1.0000, -0.0052,  1.4500)and(  1.0000, -0.0046,  1.4626)..(  1.0000, -0.0041,  1.4726)--(  0.9900,  0.0000,  1.4726)..controls(  0.9900,  0.0000,  1.4626)and(  0.9900,  0.0000,  1.4500)..(  0.9900,  0.0000,  1.4399)--cycle;
\path[facet](  0.9900,  0.0000,  1.4399)..controls(  0.9900,  0.0000,  1.4500)and(  0.9900,  0.0000,  1.4626)..(  0.9900,  0.0000,  1.4726)--( -0.7100,  0.7042,  1.4726)..controls( -0.6536,  0.7610,  1.4626)and( -0.5741,  0.8227,  1.4500)..( -0.5050,  0.8631,  1.4399)--cycle;
\path[facet](  1.0000, -0.0041,  1.4726)..controls(  1.0000, -0.0037,  1.4827)and(  1.0000, -0.0031,  1.4953)..(  1.0000, -0.0027,  1.5053)--(  0.9900,  0.0000,  1.5053)..controls(  0.9900,  0.0000,  1.4953)and(  0.9900,  0.0000,  1.4827)..(  0.9900,  0.0000,  1.4726)--cycle;
\path[facet](  0.9900,  0.0000,  1.4726)..controls(  0.9900,  0.0000,  1.4827)and(  0.9900,  0.0000,  1.4953)..(  0.9900,  0.0000,  1.5053)--( -0.8674,  0.4977,  1.5053)..controls( -0.8275,  0.5672,  1.4953)and( -0.7664,  0.6473,  1.4827)..( -0.7100,  0.7042,  1.4726)--cycle;
\path[facet](  1.0000, -0.0027,  1.5053)..controls(  1.0000, -0.0022,  1.5154)and(  1.0000, -0.0017,  1.5280)..(  1.0000, -0.0013,  1.5381)--(  0.9900,  0.0000,  1.5381)..controls(  0.9900,  0.0000,  1.5280)and(  0.9900,  0.0000,  1.5154)..(  0.9900,  0.0000,  1.5053)--cycle;
\path[facet](  0.9900,  0.0000,  1.5053)..controls(  0.9900,  0.0000,  1.5154)and(  0.9900,  0.0000,  1.5280)..(  0.9900,  0.0000,  1.5381)--( -0.9663,  0.2575,  1.5381)..controls( -0.9456,  0.3350,  1.5280)and( -0.9072,  0.4282,  1.5154)..( -0.8674,  0.4977,  1.5053)--cycle;
\path[facet](  1.0000, -0.0013,  1.5381)..controls(  1.0000, -0.0009,  1.5481)and(  1.0000, -0.0004,  1.5607)..(  1.0000, -0.0000,  1.5708)--(  0.9900,  0.0000,  1.5708)..controls(  0.9900,  0.0000,  1.5607)and(  0.9900,  0.0000,  1.5481)..(  0.9900,  0.0000,  1.5381)--cycle;
\path[facet](  0.9900,  0.0000,  1.5381)..controls(  0.9900,  0.0000,  1.5481)and(  0.9900,  0.0000,  1.5607)..(  0.9900,  0.0000,  1.5708)--( -1.0000,  0.0000,  1.5708)..controls( -1.0000,  0.0802,  1.5607)and( -0.9869,  0.1801,  1.5481)..( -0.9663,  0.2575,  1.5381)--cycle;
\draw[axis](-1,0,\hoehe)--++(2,0,0);
\draw[axis](  0.9900,0,0.7854)--(1,0,0.7854);
}\hfill
\cylinder{%
\draw[axis,dotted](-1,0,0.7854)--(1,0,0.7854);
\path[facet]( -1.0000, -0.0000,  0.0000)..controls( -1.0000, -0.0805,  0.0020)and( -0.9868, -0.1809,  0.0045)..( -0.9660, -0.2587,  0.0065)--(  0.9990,  0.0000,  0.0065)..controls(  0.9990,  0.0000,  0.0045)and(  0.9990,  0.0000,  0.0020)..(  0.9990,  0.0000,  0.0000)--cycle;
\path[facet](  0.9990,  0.0000,  0.0000)..controls(  0.9990,  0.0000,  0.0020)and(  0.9990,  0.0000,  0.0045)..(  0.9990,  0.0000,  0.0065)--(  1.0000,  0.0001,  0.0065)..controls(  1.0000,  0.0001,  0.0045)and(  1.0000,  0.0000,  0.0020)..(  1.0000,  0.0000,  0.0000)--cycle;
\draw[axis](-1,0,0)--++(2,0,0);
\path[facet]( -0.9660, -0.2587,  0.0065)..controls( -0.9451, -0.3365,  0.0086)and( -0.9064, -0.4300,  0.0111)..( -0.8662, -0.4998,  0.0131)--(  0.9990,  0.0000,  0.0131)..controls(  0.9990,  0.0000,  0.0111)and(  0.9990,  0.0000,  0.0086)..(  0.9990,  0.0000,  0.0065)--cycle;
\path[facet](  0.9990,  0.0000,  0.0065)..controls(  0.9990,  0.0000,  0.0086)and(  0.9990,  0.0000,  0.0111)..(  0.9990,  0.0000,  0.0131)--(  1.0000,  0.0003,  0.0131)..controls(  1.0000,  0.0002,  0.0111)and(  1.0000,  0.0002,  0.0086)..(  1.0000,  0.0001,  0.0065)--cycle;
\path[facet]( -0.8662, -0.4998,  0.0131)..controls( -0.8259, -0.5695,  0.0151)and( -0.7643, -0.6499,  0.0176)..( -0.7074, -0.7068,  0.0196)--(  0.9990,  0.0000,  0.0196)..controls(  0.9990,  0.0000,  0.0176)and(  0.9990,  0.0000,  0.0151)..(  0.9990,  0.0000,  0.0131)--cycle;
\path[facet](  0.9990,  0.0000,  0.0131)..controls(  0.9990,  0.0000,  0.0151)and(  0.9990,  0.0000,  0.0176)..(  0.9990,  0.0000,  0.0196)--(  1.0000,  0.0004,  0.0196)..controls(  1.0000,  0.0004,  0.0176)and(  1.0000,  0.0003,  0.0151)..(  1.0000,  0.0003,  0.0131)--cycle;
\path[facet]( -0.7074, -0.7068,  0.0196)..controls( -0.6505, -0.7638,  0.0216)and( -0.5702, -0.8254,  0.0242)..( -0.5005, -0.8657,  0.0262)--(  0.9990,  0.0000,  0.0262)..controls(  0.9990,  0.0000,  0.0242)and(  0.9990,  0.0000,  0.0216)..(  0.9990,  0.0000,  0.0196)--cycle;
\path[facet](  0.9990,  0.0000,  0.0196)..controls(  0.9990,  0.0000,  0.0216)and(  0.9990,  0.0000,  0.0242)..(  0.9990,  0.0000,  0.0262)--(  1.0000,  0.0006,  0.0262)..controls(  1.0000,  0.0005,  0.0242)and(  1.0000,  0.0005,  0.0216)..(  1.0000,  0.0004,  0.0196)--cycle;
\path[facet]( -0.5005, -0.8657,  0.0262)..controls( -0.4308, -0.9060,  0.0282)and( -0.3373, -0.9448,  0.0307)..( -0.2596, -0.9657,  0.0327)--(  0.9990,  0.0000,  0.0327)..controls(  0.9990,  0.0000,  0.0307)and(  0.9990,  0.0000,  0.0282)..(  0.9990,  0.0000,  0.0262)--cycle;
\path[facet](  0.9990,  0.0000,  0.0262)..controls(  0.9990,  0.0000,  0.0282)and(  0.9990,  0.0000,  0.0307)..(  0.9990,  0.0000,  0.0327)--(  1.0000,  0.0008,  0.0327)..controls(  1.0000,  0.0007,  0.0307)and(  1.0000,  0.0006,  0.0282)..(  1.0000,  0.0006,  0.0262)--cycle;
\path[facet]( -0.2596, -0.9657,  0.0327)..controls( -0.1818, -0.9866,  0.0347)and( -0.0815, -0.9999,  0.0373)..( -0.0010, -1.0000,  0.0393)--(  0.9990,  0.0000,  0.0393)..controls(  0.9990,  0.0000,  0.0373)and(  0.9990,  0.0000,  0.0347)..(  0.9990,  0.0000,  0.0327)--cycle;
\path[facet](  0.9990,  0.0000,  0.0327)..controls(  0.9990,  0.0000,  0.0347)and(  0.9990,  0.0000,  0.0373)..(  0.9990,  0.0000,  0.0393)--(  1.0000,  0.0010,  0.0393)..controls(  1.0000,  0.0009,  0.0373)and(  1.0000,  0.0008,  0.0347)..(  1.0000,  0.0008,  0.0327)--cycle;
\path[facet]( -0.0010, -1.0000,  0.0393)..controls(  0.0795, -1.0001,  0.0413)and(  0.1798, -0.9870,  0.0438)..(  0.2576, -0.9663,  0.0458)--(  0.9990,  0.0000,  0.0458)..controls(  0.9990,  0.0000,  0.0438)and(  0.9990,  0.0000,  0.0413)..(  0.9990,  0.0000,  0.0393)--cycle;
\path[facet](  0.9990,  0.0000,  0.0393)..controls(  0.9990,  0.0000,  0.0413)and(  0.9990,  0.0000,  0.0438)..(  0.9990,  0.0000,  0.0458)--(  1.0000,  0.0013,  0.0458)..controls(  1.0000,  0.0012,  0.0438)and(  1.0000,  0.0011,  0.0413)..(  1.0000,  0.0010,  0.0393)--cycle;
\path[facet](  0.2576, -0.9663,  0.0458)..controls(  0.3353, -0.9455,  0.0478)and(  0.4288, -0.9070,  0.0503)..(  0.4985, -0.8669,  0.0524)--(  0.9990,  0.0000,  0.0524)..controls(  0.9990,  0.0000,  0.0503)and(  0.9990,  0.0000,  0.0478)..(  0.9990,  0.0000,  0.0458)--cycle;
\path[facet](  0.9990,  0.0000,  0.0458)..controls(  0.9990,  0.0000,  0.0478)and(  0.9990,  0.0000,  0.0503)..(  0.9990,  0.0000,  0.0524)--(  1.0000,  0.0017,  0.0524)..controls(  1.0000,  0.0016,  0.0503)and(  1.0000,  0.0014,  0.0478)..(  1.0000,  0.0013,  0.0458)--cycle;
\path[facet](  0.4985, -0.8669,  0.0524)..controls(  0.5682, -0.8268,  0.0544)and(  0.6485, -0.7654,  0.0569)..(  0.7054, -0.7088,  0.0589)--(  0.9990,  0.0000,  0.0589)..controls(  0.9990,  0.0000,  0.0569)and(  0.9990,  0.0000,  0.0544)..(  0.9990,  0.0000,  0.0524)--cycle;
\path[facet](  0.9990,  0.0000,  0.0524)..controls(  0.9990,  0.0000,  0.0544)and(  0.9990,  0.0000,  0.0569)..(  0.9990,  0.0000,  0.0589)--(  1.0000,  0.0024,  0.0589)..controls(  1.0000,  0.0021,  0.0569)and(  1.0000,  0.0019,  0.0544)..(  1.0000,  0.0017,  0.0524)--cycle;
\path[facet](  0.7054, -0.7088,  0.0589)..controls(  0.7623, -0.6522,  0.0609)and(  0.8239, -0.5723,  0.0634)..(  0.8642, -0.5032,  0.0654)--(  0.9990,  0.0000,  0.0654)..controls(  0.9990,  0.0000,  0.0634)and(  0.9990,  0.0000,  0.0609)..(  0.9990,  0.0000,  0.0589)--cycle;
\path[facet](  0.9990,  0.0000,  0.0589)..controls(  0.9990,  0.0000,  0.0609)and(  0.9990,  0.0000,  0.0634)..(  0.9990,  0.0000,  0.0654)--(  1.0000,  0.0037,  0.0654)..controls(  1.0000,  0.0031,  0.0634)and(  1.0000,  0.0027,  0.0609)..(  1.0000,  0.0024,  0.0589)--cycle;
\path[facet](  0.8642, -0.5032,  0.0654)..controls(  0.9044, -0.4341,  0.0675)and(  0.9431, -0.3415,  0.0700)..(  0.9640, -0.2660,  0.0720)--(  0.9990,  0.0000,  0.0720)..controls(  0.9990,  0.0000,  0.0700)and(  0.9990,  0.0000,  0.0675)..(  0.9990,  0.0000,  0.0654)--cycle;
\path[facet](  0.9990,  0.0000,  0.0654)..controls(  0.9990,  0.0000,  0.0675)and(  0.9990,  0.0000,  0.0700)..(  0.9990,  0.0000,  0.0720)--(  1.0000,  0.0074,  0.0720)..controls(  1.0000,  0.0052,  0.0700)and(  1.0000,  0.0043,  0.0675)..(  1.0000,  0.0037,  0.0654)--cycle;
\path[facet](  0.9640, -0.2660,  0.0720)..controls(  0.9848, -0.1904,  0.0740)and(  0.9990, -0.0447,  0.0785)..(  0.9990, -0.0447,  0.0785)--(  0.9990,  0.0000,  0.0785)..controls(  0.9990,  0.0000,  0.0785)and(  0.9990,  0.0000,  0.0740)..(  0.9990,  0.0000,  0.0720)--cycle;
\path[facet](  0.9990,  0.0000,  0.0720)..controls(  0.9990,  0.0000,  0.0740)and(  0.9990,  0.0000,  0.0785)..(  0.9990,  0.0000,  0.0785)--(  0.9990,  0.0447,  0.0785)..controls(  0.9990,  0.0447,  0.0785)and(  1.0000,  0.0096,  0.0740)..(  1.0000,  0.0074,  0.0720)--cycle;
\path[facet](  0.9990, -0.0447,  0.0785)..controls(  0.9990, -0.0447,  0.0785)and(  0.9990, -0.0447,  1.4923)..(  0.9990, -0.0447,  1.4923)--(  0.9990,  0.0000,  1.4923)..controls(  0.9990,  0.0000,  1.4923)and(  0.9990,  0.0000,  0.0785)..(  0.9990,  0.0000,  0.0785)--cycle;
\path[facet](  0.9990,  0.0000,  0.0785)..controls(  0.9990,  0.0000,  0.0785)and(  0.9990,  0.0000,  1.4923)..(  0.9990,  0.0000,  1.4923)--(  0.9990,  0.0447,  1.4923)..controls(  0.9990,  0.0447,  1.4923)and(  0.9990,  0.0447,  0.0785)..(  0.9990,  0.0447,  0.0785)--cycle;
\path[facet](  0.9990, -0.0447,  1.4923)..controls(  0.9990, -0.0447,  1.4923)and(  1.0000, -0.0096,  1.4968)..(  1.0000, -0.0074,  1.4988)--(  0.9990,  0.0000,  1.4988)..controls(  0.9990,  0.0000,  1.4968)and(  0.9990,  0.0000,  1.4923)..(  0.9990,  0.0000,  1.4923)--cycle;
\path[facet](  0.9990,  0.0000,  1.4923)..controls(  0.9990,  0.0000,  1.4923)and(  0.9990,  0.0000,  1.4968)..(  0.9990,  0.0000,  1.4988)--(  0.9640,  0.2660,  1.4988)..controls(  0.9848,  0.1904,  1.4968)and(  0.9990,  0.0447,  1.4923)..(  0.9990,  0.0447,  1.4923)--cycle;
\path[facet](  1.0000, -0.0074,  1.4988)..controls(  1.0000, -0.0052,  1.5008)and(  1.0000, -0.0043,  1.5033)..(  1.0000, -0.0037,  1.5053)--(  0.9990,  0.0000,  1.5053)..controls(  0.9990,  0.0000,  1.5033)and(  0.9990,  0.0000,  1.5008)..(  0.9990,  0.0000,  1.4988)--cycle;
\path[facet](  0.9990,  0.0000,  1.4988)..controls(  0.9990,  0.0000,  1.5008)and(  0.9990,  0.0000,  1.5033)..(  0.9990,  0.0000,  1.5053)--(  0.8642,  0.5032,  1.5053)..controls(  0.9044,  0.4341,  1.5033)and(  0.9431,  0.3415,  1.5008)..(  0.9640,  0.2660,  1.4988)--cycle;
\path[facet](  1.0000, -0.0037,  1.5053)..controls(  1.0000, -0.0031,  1.5074)and(  1.0000, -0.0027,  1.5099)..(  1.0000, -0.0024,  1.5119)--(  0.9990,  0.0000,  1.5119)..controls(  0.9990,  0.0000,  1.5099)and(  0.9990,  0.0000,  1.5074)..(  0.9990,  0.0000,  1.5053)--cycle;
\path[facet](  0.9990,  0.0000,  1.5053)..controls(  0.9990,  0.0000,  1.5074)and(  0.9990,  0.0000,  1.5099)..(  0.9990,  0.0000,  1.5119)--(  0.7054,  0.7088,  1.5119)..controls(  0.7623,  0.6522,  1.5099)and(  0.8239,  0.5723,  1.5074)..(  0.8642,  0.5032,  1.5053)--cycle;
\path[facet](  1.0000, -0.0024,  1.5119)..controls(  1.0000, -0.0021,  1.5139)and(  1.0000, -0.0019,  1.5164)..(  1.0000, -0.0017,  1.5184)--(  0.9990,  0.0000,  1.5184)..controls(  0.9990,  0.0000,  1.5164)and(  0.9990,  0.0000,  1.5139)..(  0.9990,  0.0000,  1.5119)--cycle;
\path[facet](  0.9990,  0.0000,  1.5119)..controls(  0.9990,  0.0000,  1.5139)and(  0.9990,  0.0000,  1.5164)..(  0.9990,  0.0000,  1.5184)--(  0.4985,  0.8669,  1.5184)..controls(  0.5682,  0.8268,  1.5164)and(  0.6485,  0.7654,  1.5139)..(  0.7054,  0.7088,  1.5119)--cycle;
\path[facet](  1.0000, -0.0017,  1.5184)..controls(  1.0000, -0.0016,  1.5205)and(  1.0000, -0.0014,  1.5230)..(  1.0000, -0.0013,  1.5250)--(  0.9990,  0.0000,  1.5250)..controls(  0.9990,  0.0000,  1.5230)and(  0.9990,  0.0000,  1.5205)..(  0.9990,  0.0000,  1.5184)--cycle;
\path[facet](  0.9990,  0.0000,  1.5184)..controls(  0.9990,  0.0000,  1.5205)and(  0.9990,  0.0000,  1.5230)..(  0.9990,  0.0000,  1.5250)--(  0.2576,  0.9663,  1.5250)..controls(  0.3353,  0.9455,  1.5230)and(  0.4288,  0.9070,  1.5205)..(  0.4985,  0.8669,  1.5184)--cycle;
\path[facet](  1.0000, -0.0013,  1.5250)..controls(  1.0000, -0.0012,  1.5270)and(  1.0000, -0.0011,  1.5295)..(  1.0000, -0.0010,  1.5315)--(  0.9990,  0.0000,  1.5315)..controls(  0.9990,  0.0000,  1.5295)and(  0.9990,  0.0000,  1.5270)..(  0.9990,  0.0000,  1.5250)--cycle;
\path[facet](  0.9990,  0.0000,  1.5250)..controls(  0.9990,  0.0000,  1.5270)and(  0.9990,  0.0000,  1.5295)..(  0.9990,  0.0000,  1.5315)--( -0.0010,  1.0000,  1.5315)..controls(  0.0795,  1.0001,  1.5295)and(  0.1798,  0.9870,  1.5270)..(  0.2576,  0.9663,  1.5250)--cycle;
\path[facet](  1.0000, -0.0010,  1.5315)..controls(  1.0000, -0.0009,  1.5335)and(  1.0000, -0.0008,  1.5361)..(  1.0000, -0.0008,  1.5381)--(  0.9990,  0.0000,  1.5381)..controls(  0.9990,  0.0000,  1.5361)and(  0.9990,  0.0000,  1.5335)..(  0.9990,  0.0000,  1.5315)--cycle;
\path[facet](  0.9990,  0.0000,  1.5315)..controls(  0.9990,  0.0000,  1.5335)and(  0.9990,  0.0000,  1.5361)..(  0.9990,  0.0000,  1.5381)--( -0.2596,  0.9657,  1.5381)..controls( -0.1818,  0.9866,  1.5361)and( -0.0815,  0.9999,  1.5335)..( -0.0010,  1.0000,  1.5315)--cycle;
\path[facet](  1.0000, -0.0008,  1.5381)..controls(  1.0000, -0.0007,  1.5401)and(  1.0000, -0.0006,  1.5426)..(  1.0000, -0.0006,  1.5446)--(  0.9990,  0.0000,  1.5446)..controls(  0.9990,  0.0000,  1.5426)and(  0.9990,  0.0000,  1.5401)..(  0.9990,  0.0000,  1.5381)--cycle;
\path[facet](  0.9990,  0.0000,  1.5381)..controls(  0.9990,  0.0000,  1.5401)and(  0.9990,  0.0000,  1.5426)..(  0.9990,  0.0000,  1.5446)--( -0.5005,  0.8657,  1.5446)..controls( -0.4308,  0.9060,  1.5426)and( -0.3373,  0.9448,  1.5401)..( -0.2596,  0.9657,  1.5381)--cycle;
\path[facet](  1.0000, -0.0006,  1.5446)..controls(  1.0000, -0.0005,  1.5466)and(  1.0000, -0.0005,  1.5491)..(  1.0000, -0.0004,  1.5512)--(  0.9990,  0.0000,  1.5512)..controls(  0.9990,  0.0000,  1.5491)and(  0.9990,  0.0000,  1.5466)..(  0.9990,  0.0000,  1.5446)--cycle;
\path[facet](  0.9990,  0.0000,  1.5446)..controls(  0.9990,  0.0000,  1.5466)and(  0.9990,  0.0000,  1.5491)..(  0.9990,  0.0000,  1.5512)--( -0.7074,  0.7068,  1.5512)..controls( -0.6505,  0.7638,  1.5491)and( -0.5702,  0.8254,  1.5466)..( -0.5005,  0.8657,  1.5446)--cycle;
\path[facet](  1.0000, -0.0004,  1.5512)..controls(  1.0000, -0.0004,  1.5532)and(  1.0000, -0.0003,  1.5557)..(  1.0000, -0.0003,  1.5577)--(  0.9990,  0.0000,  1.5577)..controls(  0.9990,  0.0000,  1.5557)and(  0.9990,  0.0000,  1.5532)..(  0.9990,  0.0000,  1.5512)--cycle;
\path[facet](  0.9990,  0.0000,  1.5512)..controls(  0.9990,  0.0000,  1.5532)and(  0.9990,  0.0000,  1.5557)..(  0.9990,  0.0000,  1.5577)--( -0.8662,  0.4998,  1.5577)..controls( -0.8259,  0.5695,  1.5557)and( -0.7643,  0.6499,  1.5532)..( -0.7074,  0.7068,  1.5512)--cycle;
\path[facet](  1.0000, -0.0003,  1.5577)..controls(  1.0000, -0.0002,  1.5597)and(  1.0000, -0.0002,  1.5622)..(  1.0000, -0.0001,  1.5643)--(  0.9990,  0.0000,  1.5643)..controls(  0.9990,  0.0000,  1.5622)and(  0.9990,  0.0000,  1.5597)..(  0.9990,  0.0000,  1.5577)--cycle;
\path[facet](  0.9990,  0.0000,  1.5577)..controls(  0.9990,  0.0000,  1.5597)and(  0.9990,  0.0000,  1.5622)..(  0.9990,  0.0000,  1.5643)--( -0.9660,  0.2587,  1.5643)..controls( -0.9451,  0.3365,  1.5622)and( -0.9064,  0.4300,  1.5597)..( -0.8662,  0.4998,  1.5577)--cycle;
\path[facet](  1.0000, -0.0001,  1.5643)..controls(  1.0000, -0.0001,  1.5663)and(  1.0000, -0.0000,  1.5688)..(  1.0000, -0.0000,  1.5708)--(  0.9990,  0.0000,  1.5708)..controls(  0.9990,  0.0000,  1.5688)and(  0.9990,  0.0000,  1.5663)..(  0.9990,  0.0000,  1.5643)--cycle;
\path[facet](  0.9990,  0.0000,  1.5643)..controls(  0.9990,  0.0000,  1.5663)and(  0.9990,  0.0000,  1.5688)..(  0.9990,  0.0000,  1.5708)--( -1.0000,  0.0000,  1.5708)..controls( -1.0000,  0.0805,  1.5688)and( -0.9868,  0.1809,  1.5663)..( -0.9660,  0.2587,  1.5643)--cycle;
\draw[axis](-1,0,\hoehe)--++(2,0,0);
\draw[axis](  0.9990,0,0.7854)--(1,0,0.7854);
}
\caption{
The surfaces $Y_{\lambda,s}$ defined in \eqref{eqn:Yls}: 
deforming a translated helicoid into two half-discs.}%
\label{fig:vertical_strip}%
\end{figure}

\begin{corollary}[Sweepout construction]\label{cor:sweepout}
For every $\rho>2$ and every integer $n\geq5(\rho+1)/2$ there exists a $\dih_n$-sweepout $\{\Sigma_t\}_{t\in[0,1]}$ of $\ambient$ with the following properties.
\begin{enumerate}[label={\normalfont(\roman*)}] 
\item\label{cor:sweepout-area} $\hsd^2(\Sigma_0)=\hsd^2(\Sigma_1)=n\pi$ and $\hsd^2(\Sigma_t)<2n\pi$ for every $t\in[0,1]$.  
\item\label{cor:sweepout-sym} 
For every $t\in\interval{0,1}$ the surface $\Sigma_t$ contains the segment $\xi_k$ defined in \eqref{eqn:axes} if $k$ is even and intersects it orthogonally if $k$ is odd.
\item\label{cor:sweepout-topo}
For every $t\in\interval{0,1}$ the surface $\Sigma_t$ is a Möbius band if $n$ is odd, or an annulus if $n$ is~even. 
\item\label{cor:sweepout-perim} 
For every $t\in[0,1]$ there exists a set $E_t\subset \ambient_{1/n}$ of finite perimeter such that 
\begin{itemize}[nosep]
\item 
$\Sigma_t\cap \ambient_{1/n}$ is the relative perimeter of $E_t$ in $\ambient_{1/n}$ for every $0<t<1$;
\item $E_0=\emptyset$ and $E_1=\ambient_{1/n}$; 
\item $\displaystyle\lim_{t\to t_0}\hsd^3(E_t\symdiff E_{t_0})=0$ for any $t_0\in[0,1]$.
\end{itemize}
\end{enumerate}
\end{corollary}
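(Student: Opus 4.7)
The plan is to construct the sweepout inside the Euclidean cylinder $Z$, exploiting the helicoidal slices produced in Lemma~\ref{lem:sweepout}, and then push it forward to $\ambient$ via the covering map $\Phi_\height$. Specifically, I will fix $\height = 4n/5$ so that $\height/n = 4/5$ satisfies the hypothesis of Lemma~\ref{lem:helicoid-area} and hence $\hsd^2(\heli_{4/5,0}) < 2\pi$ strictly. Applying Lemma~\ref{lem:sweepout} with $4/5$ in place of $\height$ produces a sweepout $\{\gamma_t\}_{t\in[0,1]}$ of the short cylinder $Z_{4/5} = \B^2 \times [0,4\pi/5]$ whose interior slices descend to $\dih_1$-equivariant Möbius bands. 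Treating $\gamma_t$ as living in this fundamental domain, I extend it to all of $Z$ by $\translation_{(4\pi/5)e_3}$-periodicity and pass to the quotient $\tilde Z_\height = Z/\sk{\translation_{\height\pi e_3}}$, obtaining a family $\tilde\Sigma_t$ consisting of $n$ vertically stacked isometric copies of $\gamma_t$. Finally I set $\Sigma_t \vcentcolon= \Phi_\height(\tilde\Sigma_t) \subset \ambient$; by Lemma~\ref{lem:symmetry_compatibility} this family is $\dih_n$-equivariant, and smoothness together with varifold-continuity in $t$ are inherited from $\{\gamma_t\}$.

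Properties (ii), (iii), (iv) then follow with moderate effort. For (ii), the $\rotation_{\zeta_1}^\pi$-equivariance and containment/orthogonality behavior of $\gamma_t$ at $\zeta_0, \zeta_1, \zeta_2$ given by Lemma~\ref{lem:sweepout}(ii) extend to all $\tilde\zeta_k$ via the periodic extension, and then to the segments $\xi_k$ via the identity $\Phi_\height(\zeta_k) = \xi_k$. For (iii), since the canonical projection $\tilde Z_\height \to \tilde Z_{4/5}$ is an $n$-fold cyclic cover and $\tilde\Sigma_t$ is the preimage of the Möbius band $\tilde\gamma_t$, the topology of $\tilde\Sigma_t$ coincides with that of the $n$-fold cyclic cover of a Möbius band, which is an annulus iff $n$ is even (the orientation character $\pi_1 \cong \Z \to \{\pm1\}$ vanishes on $n\Z$ iff $n$ is even). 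For (iv), setting $E_t \vcentcolon= \Phi_\height(F_t)$ with $F_t \subset Z_{4/5}$ provided by Lemma~\ref{lem:sweepout}(iv) works, since $\Phi_\height$ restricts to a diffeomorphism from $Z_{4/5}$ onto $\ambient_{1/n}$; all the required finite-perimeter data transfer, with volume comparisons controlled by Lemma~\ref{lem:biLipschitz}.

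The delicate step is the strict area estimate in (i), which is also the principal obstacle. The endpoints $\Sigma_0 = \Sigma_1$ consist of $n$ vertical cross-sectional unit discs of $\ambient$ (the $\Phi_\height$-images of the full horizontal discs arising when adjacent stacked copies of $\gamma_0$ glue their complementary half-discs), giving $\hsd^2(\Sigma_0) = \hsd^2(\Sigma_1) = n\pi$. For $0 < t < 1$, combining $\hsd^2(\gamma_t) \leq \hsd^2(\heli_{4/5,0}) < 2\pi$ from Lemmas~\ref{lem:sweepout}(i) and~\ref{lem:helicoid-area}, additivity across the $n$ stacked copies, and the bi-Lipschitz bound of Lemma~\ref{lem:biLipschitz}, one obtains
\[
\hsd^2(\Sigma_t) \leq \frac{2(\rho+1)}{\height} \cdot n \cdot \hsd^2(\gamma_t) < \frac{5(\rho+1)}{2n} \cdot 2n\pi = 5(\rho+1)\pi \leq 2n\pi,
\]
where the last inequality is precisely the hypothesis $n \geq 5(\rho+1)/2$. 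The sharp coupling between the choice $\height/n = 4/5$ and the lower bound on $n$ is the essential tension of the construction: without it, the downstream min-max argument could degenerate to a union of $2n$ planar discs rather than a nontrivial surface.
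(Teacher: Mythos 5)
Your proof is correct and follows essentially the same route as the paper: build the sweepout in the cylinder via Lemma~\ref{lem:sweepout}, stack $n$ vertically translated copies, and push forward by $\Phi_\height$, with Lemma~\ref{lem:biLipschitz} controlling the area, Lemma~\ref{lem:symmetry_compatibility} giving equivariance and the $\xi_k$-behaviour, and the half-twist count giving the Möbius/annulus dichotomy. The only cosmetic difference is your normalisation $\height=4n/5$ (which fixes the helicoid pitch at the critical value $4/5$ and lets the hypothesis $n\geq5(\rho+1)/2$ enter at the last step $5(\rho+1)\pi\leq 2n\pi$) versus the paper's $\height=2(\rho+1)$ (which makes the upper bi-Lipschitz constant in Lemma~\ref{lem:biLipschitz} exactly~$1$ and lets the hypothesis enter earlier via $\height/n\leq4/5$); both close the same strict inequality $\hsd^2(\Sigma_t)<2n\pi$.
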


\begin{proof}
Let the real number $\rho>2$ and the integer $n\geq5(\rho+1)/2$ be fixed. 
Choosing $\height=2(\rho+1)$, let $\{\Gamma_t\}_{t\in[0,1]}$ be the sweepout of $Z_{\height/n}$ constructed in Lemma
\ref{lem:sweepout} (which we apply with $\height/n$ in place of $\height$). 
By assumption $\height/n\leq4/5$, therefore Lemma \ref{lem:sweepout} \ref{lem:sweepout-area} 
and Lemma \ref{lem:helicoid-area} imply 
$\hsd^2(\Gamma_t)<2\pi$ for every $t\in[0,1]$. 
Recalling the surjective map $\Phi_\height\colon Z_\height\to \ambient$ from \eqref{eqn:covering}, we define for every $t\in[0,1]$
\begin{align}\label{eqn:Sigma_t}
\hat\Gamma_t&\vcentcolon=\bigcup_{k=0}^{n-1}\translation_{(\height\pi k/n)e_3}(\Gamma_t),
&
\Sigma_t&\vcentcolon=\Phi_\height(\hat\Gamma_t). 
\end{align}
Lemma \ref{lem:biLipschitz} and our choice of $\height$ imply 
$\hsd^2(\Sigma_t)\leq\hsd^2(\hat\Gamma_t)=n\hsd^2(\Gamma_t)<2n\pi$. 
By construction, $\Sigma_0$ and $\Sigma_1$ coincide with the union 
\[
\bigcup_{k=0}^{n-1}\rotation^{2\pi k/n}_{e_3}\ambient_0
\]
of $n$ vertical discs in $\ambient$. 
This completes the proof of claim \ref{cor:sweepout-area}.
Claim \ref{cor:sweepout-sym} follows directly from Lemma~\ref{lem:sweepout}~\ref{lem:sweepout-sym} and Lemma \ref{lem:symmetry_compatibility}, which also imply that $\Sigma_t$ is a properly embedded, $\dih_n$-equivariant surface in $\ambient$. 
The boundary $\partial\Gamma_t\cap\partial Z$ is a half-twist of a double-helix (see Figures~\ref{fig:helicoid_translation}--\ref{fig:vertical_strip}). 
Therefore, $\Sigma_t$ comprises $n$ such half-twists and claim \ref{cor:sweepout-topo} follows.  
Recalling the finite perimeter set $F_t\subset Z_{\height/n}$ from Lemma~\ref{lem:sweepout}~\ref{lem:sweepout-perim} we  define $E_t\vcentcolon=\Phi_\height(F_t)$.
Then claim \ref{cor:sweepout-perim} follows directly from Lemma~\ref{lem:sweepout}~\ref{lem:sweepout-perim} and the fact that $\Phi_\height$ restricts to a diffeomorphism $Z_{\height/n}\to\ambient_{1/n}$.
\end{proof}

\section{Width Estimate}\label{sec:width}

Let $\{\Sigma_t\}_{t\in[0,1]}$ be the $\dih_n$-sweepout of $\ambient$ constructed in Corollary~\ref{cor:sweepout}. 
Its $\dih_n$-saturation $\Pi$ is defined as the set of all $\{f(t,\Sigma_t)\}_{t\in[0,1]}$, where $f\colon[0,1]\times \ambient\to \ambient$ is smooth such that $f(t,\cdot)$ is a diffeomorphism which commutes with the $\dih_n$-action for all $t\in[0,1]$ and coincides with the identity for $t\in\{0,1\}$ 
(cf. \cite[Definition~1.3]{FranzSchulz2023}). 
The corresponding min-max width of $\Pi$ is  
\[
W_\Pi\vcentcolon=\adjustlimits\inf_{\{\Lambda_t\}\in \Pi~}\sup_{t\in[0,1]}\hsd^2({\Lambda_t}).
\]
The width estimate $W_\Pi>\max\{\hsd^2(\Sigma_0),\hsd^2(\Sigma_1)\}$ is the essential condition for applying the min-max theorem (cf.~\cite[Theorem~1.4]{FranzSchulz2023} and references therein). 
The proof of the width estimate typically relies on the (relative) isoperimetric inequality in the ambient space. 

\begin{lemma}[Isoperimetric inequality in the unit ball \cite{BokowskiSperner1979}]
\label{lem:isoball}
Any surface $\Sigma$ dividing the Euclidean unit ball $\B^3\subset\R^3$ into two equal volumes has area $\hsd^2(\Sigma)\geq\pi$. 
\end{lemma}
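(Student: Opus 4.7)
The plan is to prove this sharp relative isoperimetric inequality in $\B^3$ by combining standard regularity theory with a symmetrization argument and a classification of axisymmetric free-boundary constant mean curvature (CMC) surfaces.

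First, the direct method in geometric measure theory yields a set $E\subset\B^3$ of finite perimeter with $|E|=\tfrac{2\pi}{3}$ minimising the relative perimeter $P(E;\mathrm{int}\,\B^3)$. De~Giorgi interior regularity together with Grüter's boundary regularity for free-boundary perimeter minimisers in three dimensions imply that $\partial^{*}E\cap\mathrm{int}\,\B^3$ is a smooth CMC surface meeting $\partial\B^3$ orthogonally. Next, I would apply spherical (Schwarz) symmetrization about the $x_3$-axis: each horizontal slice $E\cap\{x_3=t\}$ is replaced by the concentric disc of the same two-dimensional measure, which preserves the volume and does not increase the relative perimeter. Hence one may assume $E$ is axially symmetric.

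The third step is to classify axially symmetric smooth CMC surfaces in $\B^3$ meeting $\partial\B^3$ orthogonally by analysing the ODE satisfied by the profile curve $\gamma$ in the half-disc $\{(r,z):r\geq 0,~r^{2}+z^{2}\leq 1\}$. One finds that the admissible $\gamma$ are either horizontal segments (giving flat discs) or arcs of circles orthogonal to the unit circle at their endpoints (giving spherical caps). The free-boundary condition forces any horizontal disc candidate to pass through the origin, and hence to have area exactly $\pi$. A short explicit computation of the volume enclosed by the non-flat candidates shows that at the equi-volume value $|E|=\tfrac{2\pi}{3}$ only the equatorial disc survives, and its area is $\pi$.

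The main obstacle is the classification step: rigorously solving the profile ODE and ruling out all non-planar axisymmetric free-boundary CMC candidates at the equi-volume value constitutes the core of the original Bokowski--Sperner argument. By contrast, existence, regularity and symmetrization are by now standard tools. A conceptually different but equally valid route would be to invoke the full isoperimetric profile $I_{\B^3}(v)$ of the unit ball, which is explicitly known and satisfies $I_{\B^3}(\tfrac{2\pi}{3})=\pi$ realised by the equatorial disc; either way the sharp constant $\pi$ comes down to identifying the extremal configuration.
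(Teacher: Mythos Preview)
The paper does not give a proof of this lemma at all: it is stated with a citation to Bokowski--Sperner \cite{BokowskiSperner1979} and used as a black box. There is therefore no argument in the paper to compare your outline against.

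Your sketch is a reasonable roadmap toward the classical result, and you correctly identify the classification of axisymmetric free-boundary CMC candidates as the crux. One point to tighten: the assertion that the admissible profile curves are \emph{only} horizontal segments or circular arcs is not true for CMC critical points in general---axisymmetric CMC surfaces are Delaunay surfaces, and pieces of unduloids or nodoids can in principle meet $\partial\B^3$ orthogonally. What rules them out here is not the free-boundary ODE alone but the \emph{minimising} property (stability, or a direct comparison), which is exactly the content you defer to the original reference. Also, after Schwarz symmetrization the competitor is axially symmetric but a~priori only a set of finite perimeter; you should either re-invoke regularity for the symmetrised minimiser or argue that symmetrisation preserves the class in which the direct method was run. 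With those caveats your plan is sound, and the alternative you mention---quoting the known isoperimetric profile $I_{\B^3}$ of the ball and evaluating it at the half-volume---is precisely how the paper treats the lemma: as an established fact.
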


\begin{lemma}[Isoperimetric inequality in cylinders]
\label{lem:isocyl}
Given $\height>0$, let $F\subset Z_\height=\B^2\times[0,\height\pi]$ be any set with finite perimeter and Lebesgue measure $\hsd^3(F)=\frac{1}{2}\hsd^3(Z_\height)$. 
Then its relative perimeter satisfies 
\(P(F;Z_\height)\geq\min\{\pi,2\pi\height\}\).
\end{lemma}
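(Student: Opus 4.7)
The plan is to reduce the problem to a one-dimensional variational problem via slicing. Set $v(t)=\hsd^2(F\cap\{x_3=t\})$ for $t\in[0,\height\pi]$; by Fubini, $v$ is a BV function with values in $[0,\pi]$ satisfying $\int_0^{\height\pi}v(t)\,dt=\hsd^3(F)=\pi^2\height/2$, i.e.~$v$ has mean $\pi/2$.

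I would derive two complementary lower bounds on the relative perimeter. First, applying the coarea formula to $x_3$ restricted to the reduced boundary $\Sigma=\partial^\ast F\cap\text{int}(Z_\height)$, using the tangential-gradient bound $\abs{\nabla^\Sigma x_3}\leq 1$, together with the sharp relative isoperimetric inequality in $\B^2$ slice by slice, yields
\[
P(F;Z_\height)\geq\int_\Sigma\abs{\nabla^\Sigma x_3}\,d\hsd^2=\int_0^{\height\pi}\hsd^1\bigl(\Sigma\cap\{x_3=t\}\bigr)\,dt\geq\int_0^{\height\pi}h(v(t))\,dt,
\]
where $h$ is the concave symmetric relative isoperimetric profile of the closed unit disc, with $h(\pi/2)=2$ realised uniquely by a diameter. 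Second, BV slicing in the vertical direction yields $P(F;Z_\height)\geq TV(v;(0,\height\pi))$, since the vertical component of the perimeter measure of $F$ equals the distributional derivative of $v$.

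The two natural competitors each saturate one of these bounds: the vertical bisection $\{x_1\geq 0\}\cap Z_\height$ corresponds to $v\equiv\pi/2$, for which $\int h(v)\,dt=2\pi\height$ and $TV(v)=0$; the horizontal bisection $\B^2\times[0,\height\pi/2]$ corresponds to $v=\pi\chi_{[0,\height\pi/2]}$, for which $TV(v)=\pi$ and $\int h(v)\,dt=0$.

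To conclude, I would verify $P(F;Z_\height)\geq\min\{\pi,2\pi\height\}$ by case analysis. If the essential range of $v$ spans all of $[0,\pi]$, i.e.~$v$ attains values arbitrarily close to both $0$ and $\pi$, then $TV(v)\geq\pi$, handling the regime $\height\geq 1/2$ immediately. Otherwise $v$ is essentially confined to a proper subinterval of $[0,\pi]$; the mean constraint $\int v=\pi^2\height/2$ forces this subinterval to straddle $\pi/2$, so combining a Poincaré-type estimate for BV functions with prescribed mean (controlling $\int\abs{v-\pi/2}$ by $TV(v)$) with the quantitative concavity of $h$ around its maximum yields a sufficient lower bound on $\int h(v)\,dt$. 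The main obstacle is this concluding step in the regime $\height<1/2$, where the two linear bounds must be balanced carefully to recover the sharp constant $2\pi\height$.
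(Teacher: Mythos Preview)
Your two slicing bounds $P\geq A:=\int_0^{\height\pi}h(v(t))\,dt$ and $P\geq B:=TV(v)$ are both correct, but reducing to the one-dimensional profile $v$ loses too much to recover the sharp constant, and the concluding step you flag as the ``main obstacle'' is in fact a genuine gap. Consider the profile $v=\pi/2-\varepsilon$ on $[0,\height\pi/2)$ and $v=\pi/2+\varepsilon$ on $[\height\pi/2,\height\pi]$. Since $h$ is strictly concave with maximum $h(\pi/2)=2$ and $h'(\pi/2)=0$ (the optimal arc at half-area is the diameter, which has curvature zero), one has $h(\pi/2\pm\varepsilon)=2-c\,\varepsilon^2+o(\varepsilon^2)$ for some $c>0$. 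Hence $A=2\pi\height-c\pi\height\,\varepsilon^2+o(\varepsilon^2)$ while $B=2\varepsilon$, so $\max(A,B)<2\pi\height$ for all small $\varepsilon>0$. The analogous perturbation of the horizontal competitor, $v=\pi-\varepsilon$ on one half and $v=\varepsilon$ on the other, gives $B=\pi-2\varepsilon$ and $A=\height\pi\,h(\varepsilon)=O(\sqrt{\varepsilon})$, so $\max(A,B)<\pi$ as well. In each case the deficit in one bound is second order in $\varepsilon$ while the other bound grows only to first order, so no Poincar\'e-type control of $\int\abs{v-\pi/2}$ by $TV(v)$ combined with the quadratic concavity of $h$ can restore the sharp constant. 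The underlying reason is that the coarea estimate uses only $\abs{\nabla^\Sigma x_3}$ and the BV-slicing estimate uses only $\abs{\nu_3}$; each discards one leg of the pointwise identity $\abs{\nabla^\Sigma x_3}^2+\nu_3^2=1$, and once that coupling is lost the problem for $v$ alone is strictly weaker than the original.

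The paper does not attempt any self-contained argument here: it simply quotes Ros's classification of isoperimetric regions in $\B^2\times\R$ (Theorem~23\,(d) in \cite{Ros2005}), which identifies the half-volume minimizers as either the flat disc $\B^2\times\{\height\pi/2\}$ or the rectangle $[-1,1]\times[0,\height\pi]$. That classification rests on the structure theory of stable free boundary CMC hypersurfaces and is not accessible through one-dimensional slicing.
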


\begin{proof}
Theorem~23\;(d) in \cite{Ros2005} implies that among all hypersurfaces in $Z_\height$ dividing it in two equal volumes, the area is minimised either by $\B^2\times\{\height\pi/2\}$ or by $[-1,1]\times[0,\height\pi]$.   
\end{proof}

If $\height<\frac{1}{2}$ then Lemma~\ref{lem:isocyl} is insufficient to establish the width estimate for the sweepout $\{\Gamma_t\}_{t\in[0,1]}$ constructed in Lemma~\ref{lem:sweepout}, because $2\pi\height<\pi=\hsd^2(\Gamma_0)$. 
The following crucial insight allows us to overcome this limitation of the isoperimetric inequality in cylinders.

\begin{lemma}\label{lem:projection}
Given $\height>0$ let $\varpi\colon Z\to\tilde{Z}_\height$ be the quotient map associated with \eqref{eqn:quotient}. 
Let $\tilde\Sigma\subset\tilde{Z}_\height$ be any properly embedded Möbius band which allows an ambient diffeomorphism $\varphi\colon\tilde{Z}_\height\to\tilde{Z}_\height$ such that 
$\varphi(\tilde\Sigma)=\varpi(Z\cap\frac{\height}{n}X)$ for some $n\in\Z\setminus\{0\}$, where $X$ denotes the helicoid as in Section~\ref{sec:sweepout}. 
Then the projection map $p\colon\tilde\Sigma\to\B^2$ given by 
$(x_1,x_2,x_3)\mapsto(x_1,x_2)$ is surjective.
\end{lemma}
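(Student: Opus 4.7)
The plan is to apply $\Z_2$-intersection theory in the solid torus $\tilde Z_\height$. Given any point $q$ in the open disk $\mathrm{int}(\B^2)$, the vertical segment $\{q\}\times[0,\height\pi]$ descends under $\varpi$ to a closed loop $\gamma_q$ contained in the interior of $\tilde Z_\height$, whose homology class generates $H_1(\tilde Z_\height;\Z_2)\cong\Z_2$. By Lefschetz duality $H_2(\tilde Z_\height,\partial\tilde Z_\height;\Z_2)\cong\Z_2$ as well, and the $\Z_2$-intersection pairing between these groups is non-degenerate. For $q$ in a generic (and hence dense) subset of $\mathrm{int}(\B^2)$ we have $\gamma_q\pitchfork\tilde\Sigma$, so the parity of $\#(\tilde\Sigma\cap\gamma_q)$ equals the pairing $[\tilde\Sigma]\cdot[\gamma_q]\pmod{2}$.

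Since every automorphism of $\Z_2$ is the identity, both $[\tilde\Sigma]\in H_2(\tilde Z_\height,\partial\tilde Z_\height;\Z_2)$ and $[\gamma_q]\in H_1(\tilde Z_\height;\Z_2)$ are preserved under any self-diffeomorphism of $\tilde Z_\height$; in particular $[\tilde\Sigma]=[\tilde\Sigma_0]$, where $\tilde\Sigma_0\vcentcolon=\varpi(Z\cap\tfrac{\height}{n}\heli)$. To verify that this common class is nonzero, I would compute directly on the model $\tilde\Sigma_0$: for $q=(x_1,x_2)\in\mathrm{int}(\B^2)$ with $x_1\neq 0$, the defining equation $x_1\tan(n x_3/\height)=x_2$ admits exactly $\abs{n}$ solutions $x_3$ in the fundamental period $[0,\height\pi)$, and the assumption that $\tilde\Sigma$ is a Möbius band forces $n$ to be odd, making the count odd. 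Combining these observations yields that $\#(\tilde\Sigma\cap\gamma_q)$ is odd, so $q\in p(\tilde\Sigma)$; closedness of $p(\tilde\Sigma)\subset\B^2$ together with density of such $q$ then gives $p(\tilde\Sigma)=\B^2$.

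The main subtlety is the non-orientability of the Möbius band, which is precisely the reason the argument must be run with $\Z_2$ coefficients throughout (rather than with integral fundamental classes and signed intersection numbers). A secondary technical point to be careful about is confirming that the diffeomorphism $\varphi$ need not preserve the projection $p$, so one cannot simply transport $p$ through $\varphi$; instead, the whole argument hinges on the homological invariants $[\tilde\Sigma]$ and $[\gamma_q]$, both of which are unavoidably fixed by $\varphi_*$ because the relevant homology groups are copies of $\Z_2$.
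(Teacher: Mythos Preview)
Your argument is correct and rests on the same underlying idea as the paper---the parity of intersections between a vertical core circle and the surface---but the execution differs. The paper argues by contradiction: assuming $y\notin p(\tilde\Sigma)$, it pushes the vertical circle $\tilde\gamma$ through $\varphi$ into the complement of the explicit helicoidal model $\tilde\Sigma_0$, then observes by hand that $\tilde Z_\height\setminus\tilde\Sigma_0$ is a single solid torus which a horizontal disc $\tilde D$ (minus its diameter $\tilde D\cap\tilde\Sigma_0$) separates into two pieces; this forces the intersection number of $\varphi(\tilde\gamma)$ with $\tilde D$ to be even, contradicting that $\varphi(\tilde\gamma)$ generates $\pi_1(\tilde Z_\height)$. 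Your route replaces this geometric inspection of the complement by the $\Z_2$-homology machinery: because $H_1(\tilde Z_\height;\Z_2)$ and $H_2(\tilde Z_\height,\partial\tilde Z_\height;\Z_2)$ are both $\Z_2$, any self-diffeomorphism acts trivially on them, so $[\tilde\Sigma]\cdot[\gamma_q]$ equals the value computed on the model, which is $1$ since $n$ is odd.

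The paper's argument is more elementary (no Lefschetz duality) but relies on identifying the complement of the helicoidal Möbius band explicitly. Your argument is more systematic, makes the role of non\-orientability transparent via the choice of $\Z_2$ coefficients, and would transplant to other ambient solid tori without change. Both implicitly need that the Möbius-band hypothesis forces $n$ to be odd; you state this, while the paper encodes it in the phrase ``$\tilde Z_\height\setminus\varphi(\tilde\Sigma)$ is connected.''
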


\begin{proof}
Towards a contradiction, suppose that there exists $y\in\B^2\setminus p(\tilde\Sigma)$. 
Then $\tilde\gamma\vcentcolon=\varpi(\{y\}\times\R)$ is a simple closed curve in $\tilde{Z}_\height\setminus\tilde\Sigma$. 
Let $h\in\R$ such that the disc $\tilde{D}\vcentcolon=\varpi(\B^2\times\{h\})\subset\tilde{Z}_\height$ intersects the curve $\varphi(\tilde\gamma)$ transversally. 
On the one hand, the curve $\varphi(\tilde\gamma)$ has \emph{odd} intersection number with $\tilde{D}$ because by definition the homotopy class of $\tilde\gamma$ generates the fundamental group of $\tilde{Z}_\height$. 
On the other hand, since $\varphi(\tilde\Sigma)$ is a helicoidal Möbius band by assumption, $\tilde{Z}_\height\setminus\varphi(\tilde\Sigma)$ is connected with the topology of a solid torus and 
$\tilde{D}\setminus\varphi(\tilde\Sigma)$ has exactly two connected components (two half-discs, separated by a line segment) which divide $\tilde{Z}_\height\setminus\varphi(\tilde\Sigma)$ into exactly \emph{two} connected components. 
Therefore the closed curve $\varphi(\tilde\gamma)\subset\tilde{Z}_\height\setminus\varphi(\tilde\Sigma)$ has \emph{even} intersection number with $\tilde{D}\setminus\varphi(\tilde\Sigma)$. 
This contradiction proves the surjectivity of $p$.   
\end{proof}

\begin{lemma}[Width estimate]\label{lem:width}
Let $\Pi$ be the $\dih_n$-saturation of the 
$\dih_n$-sweepout $\{\Sigma_t\}_{t\in[0,1]}$ of $\ambient$ constructed in Corollary~\ref{cor:sweepout} for $\rho>2$ and $n\geq5(\rho+1)/2$. 
Then its min-max width satisfies 
\[n\pi<W_\Pi<2n\pi.\] 
\end{lemma}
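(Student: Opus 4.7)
The upper bound $W_\Pi < 2n\pi$ is immediate from Corollary \ref{cor:sweepout}\ref{cor:sweepout-area}: since $\{\Sigma_t\}_{t\in[0,1]}\in\Pi$ has $\sup_t\hsd^2(\Sigma_t)<2n\pi$, one has $W_\Pi\leq\sup_t\hsd^2(\Sigma_t)<2n\pi$.

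For the lower bound, my plan is to combine Lemma \ref{lem:projection} with the $\dih_n$-equivariance through a projection argument. I would introduce the horizontal projection $q\colon\ambient\to\B^2$ defined by $q(x_1,x_2,x_3)=\bigl(\sqrt{x_1^2+x_2^2}-\rho,\,x_3\bigr)$. A direct computation gives $Dq\,Dq^\intercal=I$, so $q$ is $1$-Lipschitz and a Riemannian submersion with circular fibers $q^{-1}(y)$ of length $2\pi(\rho+y_1)$ on which the subgroup $\Z_n\leq\dih_n$ generated by $\rotation_{e_3}^{2\pi/n}$ acts freely. Given any $\{\Lambda_t\}_{t\in[0,1]}=\{f(t,\Sigma_t)\}\in\Pi$ and $t\in\interval{0,1}$, pulling $\Lambda_t$ back via $\Phi_\height^{-1}$ and quotienting by $\translation_{(\height\pi/n)e_3}$ yields a properly embedded Möbius band $\breve\Lambda_t\subset\tilde Z_{\height/n}$ isotopic to $\varpi(Z\cap\tfrac{\height}{n}\heli)$---the quotient is a Möbius band because the translation induces the identification $(u,v)\sim(u+\pi,-v)$ on the standard helicoid parametrisation.

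Applying Lemma \ref{lem:projection} to $\breve\Lambda_t$ in $\tilde Z_{\height/n}$ gives surjectivity of the horizontal projection $\breve\Lambda_t\to\B^2$. Lifting back to $\ambient$, $q^{-1}(y)\cap\Lambda_t\neq\emptyset$ for every $y\in\B^2$, and the free $\Z_n$-action on each fiber forces $\abs{q^{-1}(y)\cap\Lambda_t}\geq n$. The coarea formula applied to the $1$-Lipschitz map $q$ then yields
\[
\hsd^2(\Lambda_t)\geq\int_{\B^2}\abs{q^{-1}(y)\cap\Lambda_t}\,dy\geq n\pi.
\]
Equality in this chain would force the tangential Jacobian of $q|_{\Lambda_t}$ to equal $1$ identically (all tangent planes avoiding the toroidal direction $\partial_\theta$) together with exact multiplicity $n$; combined with $\Z_n$-equivariance, this would force $\Lambda_t$ to decompose as $n$ disjoint planar cross-section discs, contradicting the connectedness of the Möbius band or annulus for $t\in\interval{0,1}$ from Corollary \ref{cor:sweepout}\ref{cor:sweepout-topo}. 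Hence $\hsd^2(\Lambda_t)>n\pi$ strictly for each such $t$.

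The main technical obstacle will be upgrading this pointwise strict inequality to the uniform bound $W_\Pi>n\pi$. I would argue by contradiction and compactness: if $W_\Pi=n\pi$, a minimising sequence of sweepouts has all slice areas in $[n\pi,\,n\pi+\epsilon_k]$ with $\epsilon_k\to 0$, and varifold compactness extracts (at any fixed intermediate time) a stationary $\dih_n$-equivariant limit of mass $n\pi$ which, by the rigidity above, must be supported on the $n$ planar discs $\Sigma_0$. This forces the finite-perimeter sets $E_t$ from Corollary \ref{cor:sweepout}\ref{cor:sweepout-perim} to accumulate at either $\emptyset$ or $\ambient_{1/n}$ uniformly in $t\in\interval{0,1}$, contradicting both the boundary conditions $E_0=\emptyset$, $E_1=\ambient_{1/n}$ and the continuity $\hsd^3(E_t\symdiff E_{t_0})\to 0$.
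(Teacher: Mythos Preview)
Your coarea argument for the non-strict inequality $\hsd^2(\Lambda_t)\geq n\pi$ is correct and clean; it packages the paper's use of Lemma~\ref{lem:projection} and Lemma~\ref{lem:biLipschitz} into a single step in $\ambient$ via the Riemannian submersion~$q$. The rigidity in the equality case (forcing $n$ planar cross-section discs, hence contradicting connectedness) is also fine for each fixed~$t\in\interval{0,1}$.

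The genuine gap is in your last paragraph. First, the varifold limit of an arbitrary sequence of surfaces has no reason to be \emph{stationary}; that word must simply be deleted. More seriously, the passage from ``$\Lambda_t^k$ is varifold-close to $\Sigma_0$'' to ``the sets $f^k(t,E_t)$ accumulate at $\emptyset$ or $\ambient_{1/n}$ uniformly in $t$'' is unjustified: varifold convergence of boundaries does not by itself control enclosed regions, and the uniformity in $t$ is precisely the difficulty. Making your sketch rigorous would require a quantitative stability statement for the coarea inequality (near-equality forces proximity to the discs in a norm that also controls the enclosed set), which you have not supplied and which is at least as hard as the direct argument below.

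The paper bypasses compactness and proves a quantitative bound. Working in the cylinder with $\height=2(\rho-1)$ so that Lemma~\ref{lem:biLipschitz} gives a \emph{lower} area comparison $n\hsd^2(\tilde L_t)\leq\hsd^2(\Lambda_t)$, projection surjectivity yields $\hsd^2(\tilde L_t\setminus T_r)\geq(1-r^2)\pi$ for a tube $T_r$ of radius $r\in\interval{\height\pi/(5n),\height\pi/(4n)}$ about the core circle. The extra area is then found inside $T_r$ by placing disjoint balls $B_r(p_k)$ centred at the axis points $p_k\in\zeta_k$. For even $k$ the surface $L_t$ contains $\zeta_k$ and is invariant under the half-turn about it, so it bisects $B_r(p_k)$ and Lemma~\ref{lem:isoball} gives $\hsd^2(L_t\cap B_r(p_k))\geq\pi r^2$ for \emph{every}~$t$. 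For odd $k$ one instead tracks the volume of $\Omega_t\cap B_r(p_1)$, where $\Omega_t$ is the region enclosed by $L_t$: by Corollary~\ref{cor:sweepout}\,\ref{cor:sweepout-perim} this volume varies continuously from $0$ to $\hsd^3(B_r(p_1))$, so at some $t_0$ it equals half the ball and Lemma~\ref{lem:isoball} again yields $\pi r^2$. Summing, $\hsd^2(\tilde L_{t_0})\geq(1+r^2)\pi$, hence $\sup_t\hsd^2(\Lambda_t)\geq n(1+r^2)\pi$ with $r>2(\rho-1)\pi/(5n)$ independent of the sweepout. The strict inequality $W_\Pi>n\pi$ follows without any limiting argument.
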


\begin{proof}
The upper bound follows directly from Corollary~\ref{cor:sweepout}~\ref{cor:sweepout-area}.  
For the lower bound, let $\{\Lambda_t\}_{t\in[0,1]}\in\Pi$ be arbitrary. 
By definition, there exists a smooth function $f\colon[0,1]\times \ambient\to \ambient$, where $f(t,\cdot)$ is a 
$\dih_n$-equivariant diffeomorphism for all $t\in[0,1]$ which coincides with the identity for $t\in\{0,1\}$, such that $\Lambda_t=f(t,\Sigma_t)$. 
Corollary~\ref{cor:sweepout}~\ref{cor:sweepout-sym} and the properties of $f(t,\cdot)$ imply that $\xi_k\subset\Lambda_t$ for every even $k$, where we recall the segments $\xi_k$ from \eqref{eqn:axes}. 
By Corollary~\ref{cor:sweepout}~\ref{cor:sweepout-perim} there exists a continuous family $\{E_t\}_{t\in[0,1]}$ of finite perimeter sets in the torus sector $\ambient_{1/n}$ such that $\Sigma_t\cap\ambient_{1/n}$ is the relative perimeter of the set $E_t\subset\ambient_{1/n}$ for all $0<t<1$. 
We also define $E_t'\vcentcolon=\rotation_{e_1}^{\pi}(\ambient_{1/n}\setminus E_t)$. 
Let $\Phi_\height\colon Z\to\ambient$ be as defined in \eqref{eqn:covering}. 
In the proof of Corollary \ref{cor:sweepout} we chose the value $2(\rho+1)$ for $\height$ in order to obtain upper area bounds using Lemma~\ref{lem:biLipschitz}. 
Now, aiming for lower area bounds, we instead choose $\height=2(\rho-1)$ and define for any $0<t<1$ 
\begin{align}\label{eqn:setFt}
L_t&\vcentcolon=\Phi_\height^{-1}(\Lambda_t), 
&
F_t&\vcentcolon=\Phi_\height^{-1}\bigl(f(t,E_t)\bigr), & 
F_t'&\vcentcolon=\Phi_\height^{-1}\bigl(f(t,E_t')\bigr). 
\end{align} 
Then $L_t$ is a properly embedded, $\height\pi/n$-periodic surface in $Z=\B^2\times\R$ containing $\zeta_k$ defined in \eqref{eqn:axes2} for every even $k$ by Lemma~\ref{lem:symmetry_compatibility}. 
In particular, it passes to a properly embedded surface $\tilde L_t$ in the quotient $\tilde{Z}_{\height/n}$ as defined in \eqref{eqn:quotient}. 
Up to a vertical translation (which does not change area) we may assume that $L_t$ intersects $Z_\height$ transversally. 
Lemma~\ref{lem:biLipschitz} and our choice of $\height$ then imply 
\begin{align}\label{eqn:20240907-2}
n\hsd^2(\tilde L_t)
=\hsd^2(L_t\cap Z_\height)
&\leq\hsd^2\bigl(\Phi_\height(L_t\cap Z_\height)\bigr)
=\hsd^2(\Lambda_t).
\end{align}
Similarly, $\Phi_\height^{-1}(\Sigma_t)$ passes to a properly embedded surface $\tilde\Gamma_t$ in $\tilde{Z}_{\height/n}$. 
Moreover, $\tilde L_t$ and $\tilde\Gamma_t$ are related by an ambient diffeomorphism. 
Recalling the definition \eqref{eqn:Sigma_t} of $\Sigma_t$ and Lemma~\ref{lem:sweepout}~\ref{lem:sweepout-topo} we know that $\tilde\Gamma_t$ and hence $\tilde L_t$ is a topological Möbius band satisfying the hypothesis of Lemma~\ref{lem:projection}. 
Hence, the projection map $p\colon\tilde L_t\to\B^2$ given by $(x_1,x_2,x_3)\mapsto(x_1,x_2)$ is surjective. 
Given any $r>0$ to be chosen, let $T_r=\{x\in\tilde{Z}_{\height/n}\st x_1^2+x_2^2\leq r^2\}$. 
Since $p$ is surjective and does not increase area, 
\begin{align}\label{eqn:20240907-1}
\hsd^2(\tilde L_t\setminus T_r)
&\geq\hsd^2\bigl(p(\tilde L_t\setminus T_r)\bigr)
=(1-r^2)\pi.
\end{align}
It remains to prove a refined estimate for $\hsd^2(\tilde L_t\cap T_r)$. 
Let $B_r(p_k)\subset Z$ denote the Euclidean ball of radius $r>0$ around the point $p_k=(0,0,k\height\pi/(2n))\in\zeta_k$ for all $k\in\Z$.  
We may choose $r\in\interval{\height\pi/(5n),\height\pi/(4n)}$ such that $L_t$ intersects $\partial B_r(p_k)$ transversally 
and such that the balls $B_r(p_k)$ are disjoint for varying $k$, as shown in Figure~\ref{fig:balls}. 
We claim that there exists $t_0\in\interval{0,1}$ such that for every $k\in\Z$  
\begin{align}\label{eqn:lower_area_bound}
\hsd^2\bigl(L_{t_0}\cap B_r(p_k)\bigr)\geq\pi r^2.
\end{align}

\begin{figure}
\centering
\pgfmathsetmacro{\scalepar}{5}
\begin{tikzpicture}[line cap=round,line join=round,scale=\scalepar,semithick]
\pgfmathsetmacro{\rhopar}{2.5}
\pgfmathsetmacro{\npar}{ceil(5*(\rhopar+1)/2)}
\pgfmathsetmacro{\lpar}{2*\rhopar-2}
\pgfmathsetmacro{\rpar}{\lpar*pi/\npar/4.25}
\pgfmathsetmacro{\Rpar}{\lpar*pi/\npar/2}
\pgfmathsetmacro{\ymax}{1.3}
\pgfmathsetmacro{\xmax}{\textwidth/\scalepar/2cm-0.001}
\pgfmathsetmacro{\ymin}{-0.2}
\shade[top color=white,bottom color=white,middle color=black!20]
(1,\ymax)rectangle(-1,\ymin);
\draw[->](-\xmax,0)--(\xmax,0)node[inner sep=0,below left={1ex and 0pt}]{$x_1$};
\draw[->](0,0)--(0,\ymax)node[inner sep=0,below right]{~$x_3$};	
\draw(-1,\ymax)node[rotate=90,inner sep=0,left](NW){$\ldots$};
\draw( 1,\ymax)node[rotate=90,inner sep=0,left](NE){$\ldots$};
\draw(NW.west)++(0,-1pt)-|(-1,0)node[sloped,pos=1,right]{$\ldots$};
\draw(NE.west)++(0,-1pt)-|( 1,0)node[sloped,pos=1,right]{$\ldots$}; 
\draw plot[plus](0,0);
\draw plot[plus](-1,0);  
\draw plot[plus](1,0); 
\begin{scope}[axis]
\draw(-1,0)--(1,0)node[near end,below]{$\zeta_0$};
\path(-1,  \Rpar)--++(2,0)node[near end,below]{$\zeta_1$};
\draw(-1,2*\Rpar)--++(2,0)node[near end,below]{$\zeta_2$};
\draw[loosely dotted](0,\Rpar)--++(1,0)(0,\Rpar)--++(-1,0);
\end{scope}
\draw plot[bullet](0,2*\Rpar)node[below right]{$p_2$};
\draw plot[bullet](0,  \Rpar)node[below right]{$p_1$}circle(\rpar);
\draw plot[bullet](0,0)node[below right]{$p_0$};
\draw (\rpar,0)arc(0:180:\rpar);
\draw[dotted](\rpar,0)arc(0:-20:\rpar)(-\rpar,0)arc(180:200:\rpar);
\draw (\rpar,2*\Rpar)arc(0:-180:\rpar);
\draw[dotted](\rpar,2*\Rpar)arc(0:20:\rpar)(-\rpar,2*\Rpar)arc(180:160:\rpar);
\draw[dotted](-\rpar,0)--++(0,2*\Rpar)(\rpar,0)--++(0,2*\Rpar);
\begin{scope}[very thick]
\draw[latex-latex](-1,0)--++(0,\Rpar)node[left,midway]{$\dfrac{\height\pi}{2n}$};
\draw[latex-latex](-\rpar,0)--(0,0)node[below,midway]{$r$};
\end{scope} 
\draw(-2/3,3*\rpar)node{$Z$};
\end{tikzpicture}
\caption{Vertical cross-section of the Euclidean unit cylinder $Z$.}%
\label{fig:balls}%
\end{figure}
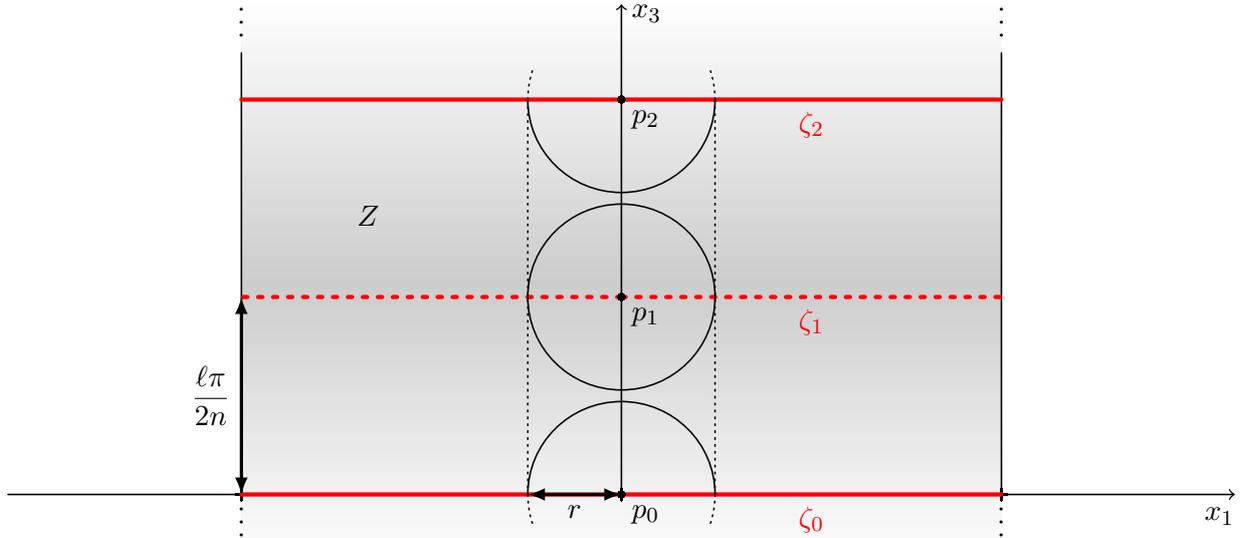

If $k$ is even, \eqref{eqn:lower_area_bound} follows from Lemma~\ref{lem:isoball} (see also \cite[Lemma~4.4]{SchulzEllipsoids}) and the fact that 
$L_{t_0}$ contains the segment $\zeta_k$ and is equivariant with respect to the rotation of angle $\pi$ around $\zeta_k$ for any choice of $t_0\in\interval{0,1}$. 
Thus, it suffices to prove \eqref{eqn:lower_area_bound} for $k=1$, recalling the periodicity of $L_t$.

The surface $L_t$ divides the cylinder $Z$ in two connected components:  
Let $\Omega_t$ denote the closure of the connected component of $Z\setminus L_t$ containing the interior of the set $F_t$ defined in \eqref{eqn:setFt}. 
Then $\Omega_t$ is $2\height\pi/n$-periodic and the closure of any connected component of $F_t\cup F_t'$ is a fundamental domain for $\Omega_t$.   
The intersection $Q_t\vcentcolon=\Omega_t\cap B_r(p_1)$ is a possibly empty set of finite perimeter. 
As stated in Corollary~\ref{cor:sweepout}~\ref{cor:sweepout-perim}, 
$\{E_t\}_{t\in\interval{0,1}}$ and thus $\{E_t'\}_{t\in\interval{0,1}}$ are continuous families of sets of finite perimeter. 
This property is inherited by 
$F_t\cup F_t'$ and thus by $Q_t$, {i.\,e.}  
\begin{align}\label{eqn:continuity}
\displaystyle\lim_{t\to t_0}\hsd^3(Q_t\symdiff Q_{t_0})=0 
\end{align} 
for any $t_0\in\interval{0,1}$. 	
By definition, $\Phi_\height(B_r(p_1))\subset\ambient_{1/n}$. 
Since $E_0=\emptyset$ and $E_1=\ambient_{1/n}$, 
and since $f(t,\cdot)$ is the identity for $t\in\{0,1\}$,  
we have $Q_0=\emptyset$ and $Q_1=B_r(p_1)$. 
By \eqref{eqn:continuity} there exists $t_0\in\interval{0,1}$ such that 
$\hsd^3(Q_{t_0})=\tfrac{1}{2}\hsd^3\bigl(B_r(p_1)\bigr)$. 
Claim \eqref{eqn:lower_area_bound} then follows from Lemma \ref{lem:isoball}. 

By choice of $r$, estimate \eqref{eqn:lower_area_bound} implies 
$\hsd^2(\tilde L_t\cap T_r)\geq2\pi r^2$ (cf.~Figure~\ref{fig:balls}). 
Combined with inequality \eqref{eqn:20240907-1} we obtain $\hsd^2(\tilde L_t)\geq(1+r^2)\pi$ and thus $\hsd^2(\Lambda_t)\geq n(1+r^2)\pi$ by \eqref{eqn:20240907-2}.
Since $\{\Lambda_t\}_{t\in[0,1]}\in\Pi$ is arbitrary, and since $r>\height\pi/(5n)=2(\rho-1)\pi/(5n)$ we obtain 
\begin{align}\label{eqn:20240917}
W_\Pi&>n\biggl(1+ \Bigl(\frac{2(\rho-1)\pi}{5n}\Bigr)^2\biggr)\pi
\end{align}
implying $W_\Pi>n\pi$ as claimed. 
\end{proof}

\begin{remark} 
The width estimate \eqref{eqn:20240917} being quantitative in terms of the parameter $\rho$ and the order $2n$ of the symmetry group is a novelty compared with the respective arguments in \cite{Ketover2016FBMS,CarlottoFranzSchulz2022,SchulzEllipsoids}. 
Moreover, it is remarkable that the upper and lower bounds on the width \emph{increase} with the order of the symmetry group rather than being uniform in $n$.  
\end{remark}

\section{Uniqueness of free boundary minimal discs in cylinders and toroids}
\label{sec:uniqueness}
 
Nitsche \cite{Nitsche1985} proved that the planar, equatorial disc is unique  
in the class of immersed free boundary minimal discs in the Euclidean unit ball $\B^3$ up to ambient isometries. 
Fraser and Schoen \cite{FraserSchoen2015} later extended this result to higher codimensions. 
In more general ambient manifolds, free boundary minimal discs are harder to characterise. 
In certain ellipsoids for example a surprising variety of nonplanar free boundary minimal discs have been discovered \cite{Petrides,HaslhoferKetover,SchulzEllipsoids}. 
In contrast, a free boundary minimal disc in a cylinder is necessarily planar and orthogonal to the cylinder axis by Corollary 5 in \cite[§\,5.5]{DHKW1992}.   
This interlude section contains an alternative proof of this result in a more general setting. 
Based on this, we characterise all free boundary minimal discs in the toroid $\ambient$, which is a key ingredient for the proof of Theorem~\ref{thm:main}.

\begin{theorem}\label{thm:uniqueness_cylinder1}
Let $(\B^2,g_{\B^2})$ be the compact Euclidean unit disc in $\R^2$ and let $f\colon\B^2\to\interval{0,\infty}$ be smooth. 
Let $\Sigma$ be any compact, connected free boundary minimal surface in the 
Riemannian manifold $(Z,g)$, where $Z=\B^2\times\R$ is a cylinder equipped with Cartesian coordinates $(x_1,x_2,x_3)$ and $g=g_{\B^2}+f^2\,dx_3^2$. 
Then $\Sigma=\B^2\times\{h\}$ for some $h\in\R$. 
\end{theorem}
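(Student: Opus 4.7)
The plan is a maximum-principle argument against the foliation of $Z$ by horizontal slices $L_h := \B^2 \times \{h\}$. First I would check that each $L_h$ is a totally geodesic, free boundary surface in $(Z,g)$: the metric components $g_{11}=g_{22}=1$, $g_{33}=f^2$ depend only on $(x_1,x_2)$ and have no cross terms, so a short computation of the Christoffel symbols yields $\nabla_{\partial_i}\partial_j = 0$ for all $i,j\in\{1,2\}$. Moreover at any boundary point $(p,h)\in S^1\times\{h\}$ the outward conormal of $\partial L_h$ in $L_h$ equals $p^{(1)}\partial_1+p^{(2)}\partial_2$, which is exactly the outward $g$-unit normal of $\partial Z$; hence the free boundary condition holds for each leaf.

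Next I would set $h_* := \max_\Sigma x_3$ (finite by compactness) and pick $p\in\Sigma$ with $x_3(p)=h_*$. Then $\Sigma\subset\{x_3\leq h_*\}$ and $\Sigma$ touches $L_{h_*}$ at $p$ from one side. The crucial observation is that $T_p\Sigma = T_p L_{h_*}$ in either possible case. If $p\in\mathrm{int}(\Sigma)$, the condition $\nabla^\Sigma x_3(p)=0$ gives $dx_3|_{T_p\Sigma}=0$, so the two $2$-planes $T_p\Sigma$ and $\ker dx_3 = T_p L_{h_*}$ must coincide. If instead $p\in\partial\Sigma$, the free boundary condition forces $T_p\Sigma$ to contain the horizontal outward conormal $p^{(1)}\partial_1+p^{(2)}\partial_2$, while the tangent of $\partial\Sigma$ at $p$ lies in $\ker dx_3$ because $x_3|_{\partial\Sigma}$ is also maximised at $p$; once again $T_p\Sigma = T_p L_{h_*}$.

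With the tangency established, the conclusion follows from the strong maximum principle for minimal hypersurfaces: the classical interior version if $p\in\mathrm{int}(\Sigma)$, and its free-boundary counterpart (as formulated for instance by Ambrozio--Carlotto--Sharp) if $p\in\partial\Sigma$. Either way $\Sigma$ and $L_{h_*}$ coincide on a neighbourhood of $p$. The set $\{q\in\Sigma : x_3(q)=h_*\}$ is thus nonempty, closed, and, by applying the same argument at any point of coincidence, open in $\Sigma$; by connectedness $\Sigma\subset L_{h_*}$, and compactness together with $\partial\Sigma\subset\partial Z\cap L_{h_*}=\partial L_{h_*}$ forces $\Sigma = L_{h_*}$.

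The main obstacle is the boundary case, where the touching point sits on $\partial Z$ and the usual interior Hopf argument is unavailable: one has to invoke the free-boundary strong maximum principle for properly embedded minimal hypersurfaces meeting the ambient boundary orthogonally. Beyond this, the proof reduces to the Christoffel-symbol computation confirming that horizontal slices are totally geodesic and to standard open--closed reasoning.
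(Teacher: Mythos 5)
Your proof is correct, but it takes a genuinely different route from the paper's.

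The paper parametrises $\Sigma$ conformally by $u=(u_1,u_2,u_3)$ and tests criticality of area against the ambient one-parameter family of diffeomorphisms $\varphi_t(x_1,x_2,x_3)=(x_1,x_2,e^t x_3)$, all of which preserve $\partial Z$ because $f$ and the boundary are $x_3$-independent. Since $\Sigma$ is stationary for such variations, the first variation of area vanishes; conformality and the block-diagonal form of $g$ reduce that first variation to $\int_\Omega\bigl((\partial_1 u_3)^2+(\partial_2 u_3)^2\bigr)f^2=0$, forcing $u_3$ to be constant directly. Your argument instead exhibits the horizontal discs $L_h=\B^2\times\{h\}$ as a foliation of $Z$ by totally geodesic free boundary leaves (which your Christoffel-symbol and conormal computations correctly establish), slides the topmost leaf onto $\Sigma$, and invokes the strong maximum principle — interior version in the interior, free-boundary version on $\partial Z$ — followed by open-closed. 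Both proofs are valid. Your version is the more geometrically intuitive foliation argument, but it does genuinely rely on the free-boundary strong maximum principle for minimal hypersurfaces meeting $\partial Z$ orthogonally, which is a nontrivial external ingredient and implicitly requires $\Sigma$ to be smoothly immersed without branch points at the touching locus. The paper's variational argument is self-contained (only the first variation formula and conformality enter), needs no boundary maximum principle, and applies verbatim to conformal minimal immersions including branched ones. Conversely, your approach makes transparent \emph{why} the result is true — the ambient admits a totally geodesic free boundary foliation — which is not as visible from the paper's computation, and it generalises more readily to ambient manifolds admitting such foliations even when no convenient conformal test vector field is available.

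One small point worth being careful about in a final write-up: at the touching point $p$ you should note that, because the leaves $L_h$ foliate $Z$ and $\Sigma$ is compact, the maximum $h_*$ is attained and $\Sigma\subset\{x_3\le h_*\}$ with one-sided tangency, and that a tangency at $p\in\partial\Sigma$ is genuinely possible (indeed, for the horizontal disc itself every boundary point realises the maximum), so the free-boundary maximum principle is not an optional extra but is actually needed in general.
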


\begin{proof} 
We may assume that $\Sigma$ is the image of some conformal map $u=(u_1,u_2,u_3)\colon\Omega\to Z$. 
Denoting the partial derivatives of $u$ by $\partial_1 u$ and $\partial_2u$, we have 
$\abs{\partial_1u}_g^2=\abs{\partial_2u}_g^2$ and $g(\partial_1u,\partial_2u)=0$.  
Consider the diffeomorphism $\varphi_t\colon Z\to Z$ given by $\varphi_t(x_1,x_2,x_3)=(x_1,x_2,e^{t}x_3)$. 
By the area formula \cite[Theorem~8.1]{Maggi2012}
\begin{align*}
\hsd^2\bigl(\varphi_t\bigl(\Sigma\bigr)\bigr)
&=\int_{\Omega}\sqrt{
\abs{D\varphi_t\partial_1u}_g^2
\abs{D\varphi_t\partial_2u}_g^2
-g(D\varphi_t\partial_1u,D\varphi_t\partial_2u)^2 
}.
\intertext{Since $\abs{D\varphi_t\partial_ju}_g^2
=\abs{\partial_ju}_g^2+(e^{2t}-1)(f\partial_ju_3)^2$
and 
$g(D\varphi_t\partial_1u,D\varphi_t\partial_2u)
=(e^{2t}-1)(f\partial_1u_3)(f\partial_2u_3)$,  
and since $f$ and thus $g$ are independent of $x_3$, the stationarity of $\Sigma$ implies that 
}
0=\frac{\partial}{\partial t}\Big\vert_{t=0}\hsd^2\bigl(\varphi_t\bigl(\Sigma\bigr)\bigr)
&=\int_{\Omega}
\frac{\abs{\partial_1u}_g^2(f\partial_2u_3)^2+\abs{\partial_2u}_g^2(f\partial_1u_3)^2}{\abs{\partial_1u}_g\abs{\partial_2u}_g}
=\int_{\Omega}\Bigl((\partial_1u_3)^2+(\partial_2u_3)^2\Bigr)f^2.
\end{align*}
We conclude that $u_3$ is constant, relying on the assumption that the domain is connected.  
\end{proof}

\begin{corollary}\label{cor:uniqueness}
Any embedded free boundary minimal disc $\Sigma$ in the toroid $\ambient$ defined in \eqref{eqn:ambient} coincides with the vertical planar slice $\ambient_0$ up to a rotation of the coordinate system. 
\end{corollary}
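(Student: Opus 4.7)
The strategy is to exploit the covering map $\Phi_\height\colon Z\to\ambient$ from \eqref{eqn:covering} (for any convenient $\height>0$) and reduce the statement to Theorem~\ref{thm:uniqueness_cylinder1}. First I would compute the pullback of the Euclidean metric: the Jacobian computation in the proof of Lemma~\ref{lem:biLipschitz} shows
\[
\Phi_\height^*g_{\mathrm{Eucl}}\;=\;dx_1^2+dx_2^2+\Bigl(\tfrac{2(\rho+x_1)}{\height}\Bigr)^{\!2}dx_3^2\;=\;g_{\B^2}+f^2\,dx_3^2
\]
on $Z=\B^2\times\R$, with $f(x_1,x_2)\vcentcolon=2(\rho+x_1)/\height$ smooth and strictly positive on $\B^2$ because $\rho>2$. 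Consequently $\Phi_\height$ becomes a local isometry from $(Z,g)$ onto $\ambient$, and the warped product structure of $g$ is exactly the one required by Theorem~\ref{thm:uniqueness_cylinder1}.

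Next I would lift $\Sigma$ to this universal cover. Since $\Sigma$ is a topological disc, hence simply connected, the inclusion $\Sigma\hookrightarrow\ambient$ admits a continuous lift $\tilde\iota\colon\Sigma\to Z$ with $\Phi_\height\circ\tilde\iota=\mathrm{id}_\Sigma$, unique up to postcomposition with a deck transformation $\translation_{k\height\pi e_3}$. Its image $\tilde\Sigma\vcentcolon=\tilde\iota(\Sigma)$ is a compact connected subset of $Z$, and the embeddedness of $\Sigma$ together with the local injectivity of $\Phi_\height$ forces $\Phi_\height$ to be injective on $\tilde\Sigma$, so $\tilde\Sigma$ is itself embedded. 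Because $\Phi_\height$ is a local isometry carrying $\partial Z$ to $\partial\ambient$, the surface $\tilde\Sigma$ is a compact connected free boundary minimal surface in $(Z,g)$ with boundary on $\partial Z$.

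Applying Theorem~\ref{thm:uniqueness_cylinder1} then yields $\tilde\Sigma=\B^2\times\{h\}$ for some $h\in\R$. Pushing back down via \eqref{eqn:covering}, $\Sigma=\Phi_\height(\B^2\times\{h\})$ is precisely the vertical planar disc of $\ambient$ at toroidal angle $2h/\height$, so $\Sigma=\rotation_{e_3}^{2h/\height}\,\ambient_0$, which is the claim. The only genuinely non-formal step is the lifting: one has to verify that $\tilde\Sigma$ is a bona fide embedded disc with boundary on $\partial Z$ and that minimality and the free boundary condition transfer across $\Phi_\height$. Once this is in place, the analytic content of the corollary has already been carried out in Theorem~\ref{thm:uniqueness_cylinder1}.
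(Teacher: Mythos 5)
Your proof is correct and takes essentially the same route as the paper: lift $\Sigma$ to the universal cover $Z=\B^2\times\R$ via the covering map $\Phi_\height$, observe that the pullback metric has the warped product form $g_{\B^2}+f^2\,dx_3^2$ required by Theorem~\ref{thm:uniqueness_cylinder1}, and conclude that the lift is a horizontal slice. The only cosmetic difference is that you invoke the lifting criterion (simple connectedness of $\Sigma$) directly to produce a compact embedded lift, whereas the paper reaches the same compactness conclusion by a short discussion of the homotopy class of $\partial\Sigma$ in $\ambient$ and in $\partial\ambient$; both arguments establish exactly what is needed to apply the cylinder theorem.
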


\begin{proof}
Since $\Sigma\subset\ambient$ is a properly embedded disc, the boundary curve $\partial\Sigma$ is contractible inside the toroid $\ambient$ (including its interior). 
Restricted to the boundary $\partial\ambient$, the curve $\partial\Sigma$ is therefore either contractible in $\partial\ambient$ or homotopic to the circle $\partial\ambient_0$. 
In any case, 
recalling the map $\Phi_\height\colon Z\to\ambient$ from \eqref{eqn:covering} and setting $\height=1$, any connected component $\Gamma$ of $\Phi_\height^{-1}(\Sigma)\subset Z$ is compact. 
Since $\Gamma$ is a compact, connected free boundary minimal surface in the Riemannian manifold $(Z,\Phi_\height^*g_{\ambient})$ and since $\Phi_\height^*g_{\ambient}=g_{\B^2}+f^2\,d x_3^2$ for $f=2(\rho+x_1)/\height$ the claim follows from Theorem~\ref{thm:uniqueness_cylinder1}.
\end{proof}

\section{Topological control}\label{sec:proof}

In this section we prove Theorem~\ref{thm:main} 
using $\dih_n$-equivariant min-max methods employing the sweepout constructed in Section~\ref{sec:sweepout} and the width estimate obtained in Section~\ref{sec:width}. 
The control of the topology of the resulting limit surface relies on the author's previous work with Franz, specifically \cite[Theorem~1.8]{FranzSchulz2023} stating that the first Betti number $\betti_1$ and the genus complexity $\gsum$ defined in \cite[Definition~1.6]{FranzSchulz2023} are lower semicontinuous along min-max sequences.  
However, if $\Sigma$ is an annulus or a Möbius band then $\betti_1(\Sigma)=1$ and $\gsum(\Sigma)=0$ in both cases.
To distinguish between them, additional information is required.
The following lemma implies that one cannot obtain an annulus from a Möbius band through surgery. 
It involves the boundary complexity $\bsum$ defined in \cite[Definition~1.6]{FranzSchulz2023} as the sum of the number of boundary components minus~$1$ over each connected component with boundary. 
For example, an arbitrary finite union $\Sigma_0$ of topological spheres, discs and Möbius bands satisfies $\bsum(\Sigma_0)=0$ while an annulus $\Sigma_1$ satisfies $\bsum(\Sigma_1)=1$. 
Note that in general, the boundary complexity can increase through surgery.

\begin{lemma}\label{lem:surgery}
Given a smooth, compact, topological Möbius band $\Sigma$ which is properly embedded in some three-dimensional ambient manifold $M$, let $\hat\Sigma$ be obtained from $\Sigma$ through surgery in the sense of \cite[Definition~3.1]{FranzSchulz2023}. 
Then $\bsum(\hat\Sigma)=0$. 
In particular, $\hat\Sigma$ does not contain any annuli. 
\end{lemma}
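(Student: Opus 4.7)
The plan is to exploit the fact that a surgery in the sense of \cite[Definition~3.1]{FranzSchulz2023} is performed only along finitely many pairwise disjoint simple closed curves sitting in the \emph{interior} of the surface: one excises a thin annular neighbourhood of each such curve and caps off the two resulting circles with disjoint embedded discs. In particular, the operation does not alter the boundary, so $\partial\hat\Sigma=\partial\Sigma$ as subsets of $M$.

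First I would recall that a topological Möbius band has precisely one boundary circle, and therefore $\partial\hat\Sigma$ also consists of a single embedded circle. Writing $\hat\Sigma$ as the disjoint union of its connected components, this unique circle must belong to exactly one component $\hat\Sigma_0$, while every other component of $\hat\Sigma$ is closed. By definition, $\bsum(\hat\Sigma)$ is the sum over connected components with nonempty boundary of the number of boundary circles minus $1$. Since only $\hat\Sigma_0$ contributes and does so by $1-1=0$, we obtain $\bsum(\hat\Sigma)=0$.

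For the ``in particular'' clause, I would observe that an annulus has exactly two boundary circles and hence contributes $+1$ to $\bsum$. Consequently, no connected component of $\hat\Sigma$ can be homeomorphic to an annulus, as this would contradict $\bsum(\hat\Sigma)=0$ established above.

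The only point requiring real care is to verify directly from \cite[Definition~3.1]{FranzSchulz2023} that each surgery curve is indeed interior to $\Sigma$, so that the boundary of the surface is genuinely preserved under the operation. Once this is in hand, the remainder of the argument is a purely combinatorial count of boundary circles and uses nothing about the topology of $\Sigma$ beyond the fact that $\partial\Sigma$ is connected.
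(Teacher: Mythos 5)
Your argument hinges on the claim that surgery in the sense of \cite[Definition~3.1]{FranzSchulz2023} is performed only along simple closed curves in the \emph{interior} of the surface, so that $\partial\hat\Sigma=\partial\Sigma$. This is not correct: the definition of surgery in that reference also allows ``cutting away a half-neck,'' which is a compression along a strip meeting $\partial\Sigma$ in two arcs. This operation does change the boundary, and indeed can change the number of boundary circles (for instance, compressing a Möbius band along a boundary-parallel arc yields a disc together with another Möbius band, so the boundary goes from one circle to two). The paper itself warns of precisely this in the sentence immediately preceding the lemma: ``Note that in general, the boundary complexity can increase through surgery.'' So the premise ``$\partial\hat\Sigma$ is a single circle'' on which your whole count rests does not hold, and the conclusion $\bsum(\hat\Sigma)=0$ does not follow from a boundary count alone.

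What the paper actually does is track Euler characteristic. Cutting a half-neck raises the Euler characteristic by $1$, so $\chi(\hat\Sigma)=\chi(\Sigma)+1=1$, and since genus complexity stays zero one has the relation $1=2\hat c_{\mathrm O}+\hat c_{\mathrm N}-\hat c_{\mathrm b}$. Combined with the observation that the single boundary circle of the Möbius band can only split into $\hat c_{\mathrm b}\in\{1,2\}$ circles after one half-neck cut, this forces $\hat\Sigma$ to be either a disc or the disjoint union of a disc and a Möbius band; in either case every component with boundary has a single boundary circle, hence $\bsum(\hat\Sigma)=0$, and the argument is then iterated. Your ``purely combinatorial count of boundary circles'' skips exactly the step where the number of boundary circles might increase, which is the whole difficulty the lemma addresses.
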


\begin{proof}
Focusing on the boundary complexity, it suffices to consider a surface $\hat\Sigma$ which is obtained from $\Sigma$ by cutting away a half-neck in the sense of \cite[Definition~3.1\,(b)]{FranzSchulz2023}. 
In this case, the Euler characteristics of $\Sigma$ and $\hat\Sigma$ are related by $\chi(\hat\Sigma)=\chi(\Sigma)+1=1$. 
We denote by $\hat{c}_{\mathrm{O}}$ respectively $\hat{c}_{\mathrm{N}}$ the number of orientable respectively nonorientable connected components of $\hat\Sigma$ and by $\hat{c}_{\mathrm{b}}$ the number of its boundary components. 
By \cite[Lemma~3.4]{FranzSchulz2023}, the genus complexity of $\hat\Sigma$ necessarily vanishes 
and \cite[Corollary~A.2]{FranzSchulz2023} then implies  
\begin{align}\label{eqn:Euler}
1=\chi(\hat\Sigma)=2\hat{c}_{\mathrm{O}}+\hat{c}_{\mathrm{N}}-\hat{c}_{\mathrm{b}}.
\end{align}  
Since $\partial\Sigma$ is connected we have either $\hat{c}_{\mathrm{b}}=1$ or $\hat{c}_{\mathrm{b}}=2$. 
In the first case, the surface stays connected, i.\,e. $\hat{c}_{\mathrm{O}}+\hat{c}_{\mathrm{N}}=1$, and \eqref{eqn:Euler} implies $\hat{c}_{\mathrm{O}}=1$ and $\hat{c}_{\mathrm{N}}=0$. 
Thus, $\hat\Sigma$ is a topological disc and any further surgery operation cannot increase the boundary complexity. 
In the second case, $2\hat{c}_{\mathrm{O}}+\hat{c}_{\mathrm{N}}=3$ and since necessarily $\hat{c}_{\mathrm{N}}\leq2$ we have 
$\hat{c}_{\mathrm{O}}=1=\hat{c}_{\mathrm{N}}$. 
Since each of the two connected components must have boundary, $\hat\Sigma$ is the union of a disc and a Möbius band. 
The claim follows by iterating the argument. 
\end{proof}

\begin{proof}[Proof of Theorem~\ref{thm:main}]
Given $\rho>2$ and $n\geq5(\rho+1)/2$ let $\{\Sigma_t\}_{t\in[0,1]}$ be the $\dih_n$-equivariant sweepout of $\ambient$ constructed in Corollary~\ref{cor:sweepout}. 
The width estimate stated in Lemma~\ref{lem:width} and the mean-convexity of $\partial\ambient$ imply that the min-max theorem \cite[Theorem~1.4]{FranzSchulz2023} applies:  
There exists a min-max sequence $\{\Sigma^j\}_{j\in\N}$ converging in the sense of varifolds to 
\(
\Gamma\vcentcolon= \sum_{i=1}^k m_i\Gamma_i
\)
for some $k\in\N$, 
where the varifolds $\Gamma_1,\ldots,\Gamma_k$ are induced by pairwise disjoint, connected, embedded free boundary minimal surfaces in $\ambient$ and where the multiplicities $m_1,\ldots,m_k$ are positive integers. 
Moreover, $\Gamma$ is $\dih_n$-equviarant. 
Recalling Lemma~\ref{lem:width}, we have
\begin{align}\label{eqn:npi2npi}
\abs{\Gamma}\vcentcolon=
\sum_{i=1}^k m_i\hsd^2(\Gamma_i)
&=W_\Pi\in\interval{n\pi,2n\pi}.
\end{align}
Corollary~\ref{cor:sweepout}~\ref{cor:sweepout-topo} implies that all the surfaces in the min-max sequence have first Betti number $\betti_1(\Sigma^j)=1$. 
Applying the topological lower semicontinuity result \cite[Theorem~1.8]{FranzSchulz2023}, we obtain 
\begin{align}\label{eqn:betti1}
\sum_{i=1}^k \betti_1(\Gamma_i)\leq 1.
\end{align}
Since $\ambient\subset\R^3$ does not contain any closed minimal surfaces, \eqref{eqn:betti1} implies that each $\Gamma_i$ is either a topological disc, an annulus or a Möbius band, and that at most one of them is not a disc.  
 
Towards a contradiction suppose that $\betti_1(\Gamma_i)=0$ for some $i\in\{1,\ldots,k\}$. 
Up to relabelling we may assume $i=1$. 
Then $\Gamma_1$ is a free boundary minimal disc in $\ambient$. 
The uniqueness result Corollary~\ref{cor:uniqueness} implies that $\Gamma_1$ coincides with the vertical slice $\ambient_0$ up to a rotation of the coordinate system. 
In particular, $\hsd^2(\Gamma_1)=\pi$. 
Being $\dih_n$-equivariant, the union $\Gamma_1\cup\ldots\cup\Gamma_k$ also contains 
$V=\bigcup_{j=1}^n\rotation^{2\pi j/n}_{e_3}\Gamma_1$ having area $\hsd^2(V)=n\pi$. 
If there exists $j\in\{1,\ldots,k\}$ such that $\betti_1(\Gamma_j)=1$, then $\Gamma_j$ is disjoint from $V$ which by $\dih_n$-equivariance implies that the support of $\Gamma$ contains at least $n$ pairwise disjoint copies of $\Gamma_j$ contradicting \eqref{eqn:betti1}.
 
We conclude that if $\Gamma_1$ is a disc then all $\Gamma_1,\ldots,\Gamma_k$ are planar, vertical discs isometric to $\ambient_0$.  
In this case, the $\dih_n$-equivariance implies that $\abs{\Gamma}$ is an integer multiple of $n\pi$ contradicting \eqref{eqn:npi2npi}. 
As a result we obtain $k=1$ and $\Gamma=m_1\Gamma_1$ with $\betti_1(\Gamma_1)=1$. 
In particular, the free boundary minimal surface $\sol_n\vcentcolon=\Gamma_1$ is either an annulus or a Möbius band. 
It remains to prove properties~\ref{thm:main-i}--\ref{thm:main-iii} stated in the theorem. 
We will also prove $m_1=1$ to obtain the area estimate stated in \ref{thm:main-iii}. 

\begin{itemize}[wide]
\item[\ref{thm:main-i}] 
Let the integer $k\in\{0,\ldots,2n-1\}$ be even. 
By construction, every surface along the min-max sequence contains the segment $\xi_k$ defined in~\eqref{eqn:axes}. 
Consequently, $\xi_k\subset \sol_n$ which also implies that the multiplicity $m_1$ is odd (see \cite[§\,7.3]{Ketover2016FBMS}). 

Now let the integer $k\in\{0,\ldots,2n-1\}$ be odd. 
By \cite[Lemma~3.4~(2)]{Ketover2016Equivariant} (applied in a suitable ball in $\ambient$ containing $\xi_k$) we have either $\xi_k\subset\sol_n$ or $\sol_n\cap\xi_k$ is finite (possibly empty) and every intersection is orthogonal. 
By Lemma~\ref{cor:sweepout}~\ref{cor:sweepout-sym}, every surface along the min-max sequence intersects $\xi_k$ orthogonally. 
The equivariance with respect to rotation by angle $\pi$ around $\xi_k$ then implies that if $\xi_k\subset\sol_n$, the multiplicity $m_1\in\N$ is even by \cite[Theorem~3.2.iv]{Ketover2016FBMS} (see also \cite[Theorem~1.3.f]{Ketover2016Equivariant}). 
This however contradicts the fact shown above that $m_1$ is odd. 
Hence, the intersection $\sol_n\cap\xi_k$ is finite and orthogonal if not empty. 
It remains to prove that $\sol_n\cap\xi_k$ is in fact nonempty. 

The quotient $\tilde\ambient=\ambient/\Z_n$ obtained by factoring out the cyclic subgroup of order $n$ is a smooth, mean convex manifold. 
Let $\varpi\colon\ambient\to\tilde\ambient$ denote the quotient map. 
Since the surfaces $\sol_n$ and $\Sigma^j$ are $\Z_n$-equivariant in $\ambient$, 
their quotients $\tilde\sol_n=\varpi(\sol_n)$ and $\tilde\Sigma^j=\varpi(\Sigma^j)$ are smooth, properly embedded surfaces in $\tilde\ambient$.  
Moreover, the sequence $\{\tilde{\Sigma}^j\}_{j\in\N}$ converges in the sense of varifolds to $m_1\tilde\sol_n$ as $j\to\infty$ and is $\Z_2$-almost minimizing, where $\Z_2$ is the action of the group $\dih_n$ reduced the quotient $\tilde\ambient$. 
Recalling Lemma~\ref{lem:sweepout}~\ref{lem:sweepout-topo} and Corollary~\ref{cor:sweepout}, the surface $\tilde\Sigma^j$ has the topology of a Möbius band. 
We claim that $\tilde\sol_n$ is also a Möbius band in $\tilde\ambient$. 
A variant of the Riemann--Hurwitz formula (see e.\,g.~\cite[§\,IV.3]{Freitag2011}) implies $\betti_1(\tilde\sol_n)=\betti_1(\sol_n)=1$. 
Suppose that $\tilde\sol_n$ is not a Möbius band, in which case it must be an annulus. 
Let $U\subset\tilde\ambient$ be a thin tubular neighborhood around $\tilde\sol_n$. 
By \cite[Theorem~4.11]{FranzSchulz2023} we can apply a topological surgery procedure to all surfaces $\tilde\Sigma^j$ with sufficienlty large $j$, resulting in $\Z_2$-equivariant surfaces $\hat\Sigma^j\subset U$
such that the sequence $\{\hat\Sigma^j\}_{j}$ still converges to $m_1\tilde\sol_n$ in the sense of varifolds and such that the boundary complexity $\bsum$ defined in \cite[Definition~1.6]{FranzSchulz2023} satisfies
\begin{align*}
\bsum(\tilde\sol_n)&\leq\liminf_{j\to\infty}\bsum(\hat\Sigma^j).  
\end{align*}
If $\tilde\sol_n$ is an annulus, then $\bsum(\tilde\sol_n)=1$ implying that $\bsum(\hat\Sigma^j)\geq1$ for all sufficiently large $j$. 
Using Lemma~\ref{lem:surgery} we obtain a contradiction to the fact that $\hat\Sigma^j$ is obtained from the Möbius band $\tilde\Sigma^j$ via surgery. 
Consequently $\tilde\sol_n$ is a topological Möbius band. 

Let $\tilde\xi_k=\varpi(\xi_k)$ for $k\in\{0,1\}$. 
Statement \ref{thm:main-i} implies $\tilde\xi_0\subset\tilde\sol_n$ and $\tilde\sol_n\setminus\tilde\xi_0$ is a topological disc. 
We claim that $\tilde\xi_1$ intersects $\tilde\sol_n$. 
Let $\rotation$ be the generator of the $\Z_2$-action on $\tilde\ambient$. 
Note that $\tilde\xi_0\cup\tilde\xi_1$ is the singular locus of this group action. 
Given $p\in\tilde\sol_n\setminus(\tilde\xi_0\cup\tilde\xi_1)$ we have $p\neq\rotation p\in\tilde\sol_n$. 
Since $\tilde\sol_n\setminus\tilde\xi_0$ is connected, a curve $\gamma\subset\tilde\sol_n\setminus\tilde\xi_0$ connects $p$ and $\rotation p$. 
If $\gamma$ is disjoint from $\tilde\xi_1$ then $\gamma\cup\rotation\gamma$ contains a simple closed curve winding around $\tilde\xi_1$. 
This curve is contractible in $\tilde\sol_n\setminus\tilde\xi_0$ because $\tilde\sol_n\setminus\tilde\xi_0$ is a topological disc. 
Therefore, $\tilde\sol_n\cap\tilde\xi_1$ must be nonempty. 
Consequently, the intersection $\sol_n\cap\xi_k$ is nonempty for all odd $k$ as claimed.

\pagebreak[2]

\item[\ref{thm:main-ii}] 
We now investigate the orientability of $\sol_n$ depending on $n$. 
Since the quotient $\tilde\sol_n$ is a topological Möbius band, there exists a curve in $\partial\sol_n$ connecting the start point of the segment $\xi_0$ (measured by distance from the origin) with the end point of the segment $\xi_2$ without intersecting any $\xi_k$ for $k\notin\{0,2\}$. 
This means that $\sol_n$ makes an odd number $j$ of half-twists between $\xi_0$ and $\xi_2$. 
Thus, $\sol_n$ comprises $n j$ half-twists in total, implying \ref{thm:main-ii}. 

Alternatively one can argue as follows:  
The singular locus of the subgroup generated by $\rotation_{e_1}^{\pi}\in\dih_n$ acting on $\ambient$ is given by 
$\xi_0\cup\xi_n$. 
Let $n$ be odd and suppose that $\sol_n$ is orientable. 
Then there exists a globally smooth choice of a unit normal vector field $\nu$ on $\sol_n$. 
The map $\rotation_{e_1}^{\pi}$ acts as an isometry on $\sol_n$ and preserves $\nu$ at $\xi_n\cap\sol_n$ because the intersection is orthogonal and nonemtpy as shown in \ref{thm:main-i}. 
However, $\rotation_{e_1}^{\pi}$ reverses $\nu$ on $\xi_0\subset\sol_n$ which contradicts the assumption of orientability. 
Let now $n$ be even. 
Then $\sol_n$ contains both $\xi_0$ and $\xi_n$ by \ref{thm:main-i}. 
Since $\sol_n$ is either an annulus or a Möbius band, the set $\sol_n\setminus(\xi_0\cup\xi_n)$ has two connected components $V_1$ and $V_2=\rotation_{e_1}^{\pi}V_1$, both of them topological discs.  
Given a smooth unit normal vector field $\nu$ on $V_1$ we obtain a compatible unit normal vector field on $V_2$ by mapping $-\nu$ to $V_2$ via $\rotation_{e_1}^{\pi}$. 
Therefore, $\sol_n$ is orientable. 

\item[\ref{thm:main-iii}] For the area estimate, we set $\height=2(\rho-1)$ as in \eqref{eqn:setFt} and consider the surface 
\(
L_n\vcentcolon=\Phi_\height^{-1}(\sol_n)
\) 
which, by Lemma~\ref{lem:symmetry_compatibility}, is $\height\pi/n$-periodic in $Z=\B^2\times\R$ containing $\zeta_k$ defined in \eqref{eqn:axes2} for every even $k$. 
It passes to a properly embedded surface $\tilde L_n$ in the quotient $\tilde{Z}_{\height/n}$ as defined in \eqref{eqn:quotient}. 
Moreover, $\tilde L_n$ has the same topology as the Möbius band $\tilde\sol_n$. 
As in the proof of Lemma~\ref{lem:width} we may apply Lemmata~\ref{lem:projection} and~\ref{lem:biLipschitz} to obtain
\begin{align}
\label{eqn:20240909}
\hsd^2(\sol_n)
&=n\hsd^2(\tilde\sol_n)\geq n\hsd^2(\tilde L_n)\geq n\pi.
\end{align}
Since $m_1\hsd^2(\sol_n)=W_\Pi\in\interval{n\pi,2n\pi}$ by \eqref{eqn:npi2npi}, estimate \eqref{eqn:20240909} implies $m_1=1$ and thus we have 
$n\pi<\hsd^2(\sol_n)<2n\pi$ as claimed. 
\qedhere
\end{itemize}
\end{proof}

\section{Conjectures and simulations}\label{sec:conjectures}

We conjecture that the surfaces $\sol_n\subset\ambient$ constructed in Theorem~\ref{thm:main}~\ref{thm:main-i}--\ref{thm:main-ii} exist in fact for \emph{any} number $n\in\N$ of half-twists.
We visualise the conjectural surfaces for $n\in\{1,3\}$ in Figure~\ref{fig:1}. 
However, we expect that the upper area bound in claim \ref{thm:main-iii} does require a lower bound on~$n$ as stated in the theorem. 
Moreover, if $n$ is too small depending on the radius $\rho$, we expect the minimal surfaces $\sol_n$ to have $\dih_n$-equivariant index greater than $1$. 
Their variational construction would thus require multi-parameter sweepouts, which are beyond the scope of this article.

Furthermore, it seems that -- unlike discs -- free boundary minimal annuli and Möbius bands in $\ambient$ are surprisingly nonunique even if one prescribes their full symmetry group (see Figure~\ref{fig:2}).  
Given any integer $n\geq0$, there may exist \emph{infinitely} many embedded solutions with $n$ half-twists. 
Some inspiration for this conjecture comes from a method called ``stacking'' (cf.~\cite{CSWstackings}). 
The simulations visualised in Figure~\ref{fig:3} suggest that a union of (sufficiently many) pairwise disjoint free boundary minimal discs in $\ambient$ can be connected with  \emph{a single} half-catenoidal bridge between each pair of adjacent discs, and then perturbed into a free boundary minimal surface. 
(This might be surprising in view of \cite{CSWstackings}, where the stacking construction requires \emph{many} half-catenoidal bridges between each layer.)
The relative positions of these bridges then determine the number $n$ of half-twists.

\begin{figure}%
\begin{torus}
\FBMS{twist1}
\end{torus}
\hfill
\begin{torus}
\FBMS{twist3}
\end{torus}
\caption{Free boundary minimal Möbius bands $\sol_n\subset\ambient$ with $n\in\{1,3\}$ half-twists.}%
\label{fig:1}%
\medskip
\begin{torus}
\FBMS{twist1b}
\end{torus}
\hfill
\begin{torus}
\FBMS{twist1c}
\end{torus}
\caption{Nonuniqueness of $\dih_1$-equivariant free boundary minimal Möbius bands in $\ambient$.}%
\label{fig:2}%
\medskip
\begin{torus}
\FBMS{twist0b}
\end{torus}
\hfill
\begin{torus}
\FBMS{twist2b}
\end{torus}
\caption{Free boundary minimal disc stackings in $\ambient$ with $0$ respectively $2$ half-twists.}%
\label{fig:3}%
\end{figure}

\clearpage
\bibliography{fbms-bibtex}

\printaddress

\end{document}